\newtheorem{prop}{Proposition}[section]
\newtheorem{coro}[prop]{Corollary}
\newtheorem{lem}[prop]{Lemma}
\newtheorem{rem}[prop]{Remark}
\newtheorem{exe}[prop]{Example}
\newtheorem{defi}[prop]{Definition}
\newtheorem{theo}[prop]{Theorem}
\theoremstyle{plain}  
\newtheorem{theorem}{Theorem}[section]      
\newtheorem{lemma}[prop]{Lemma}     
\newtheorem{proposition}[prop]{Proposition}
\newtheorem{definition}[prop]{Definition}         
\theoremstyle{remark}
\newcommand{\Q}{\mathbb Q}
\newcommand{\C}{\mathbb C}
\newcommand{\N}{\mathbb N}
\newcommand{\Z}{\mathbb Z}
\newcommand{\Mg}{\mathcal{M}_g}
\newcommand{\Mgan}{\mathcal{M}_{g}^{an}}
\newcommand{\Mgone}{\mathcal{M}_{g}^1}
\newcommand{\Mgonean}{\mathcal{M}_{g}^{1 \ an}}
\newcommand{\Mgbar}{\overline{\mathcal{M}_g}}
\newcommand{\Mgbaran}{\overline{\mathcal{M}^{an}_g}}
\newcommand{\Mgobar}{\overline{\mathcal{M}^1_{g}}}
\newcommand{\Mgobaran}{\overline{\mathcal{M}^{1 \ an}_{g}}}
\newcommand{\Aut}{{\rm{Aut}}}  
\newcommand{\Out}{{\rm{Out}}}  
\newcommand{\Mod}{{\rm{Mod}}} 
\newcommand{\PMod}{{\rm{PMod}}}           
\newcommand{\ro}{{\widetilde{\rho}}}
\newenvironment{prv}{Proof:}{$\Box$}
\begin{document}

\title[Examples of Orbifold K\"ahler Groups]{Orbifold K\"ahler Groups related to Mapping Class groups}

\date{December, 13, 2021}

\author[P.~Eyssidieux]{Philippe Eyssidieux}
\address{Institut Fourier,
Laboratoire de Math\'ematiques UMR 5582,   
Universit\'e Grenoble Alpes,
CS 40700, 
38058 Grenoble, France}
\email{philippe.eyssidieux@univ-grenoble-alpes.fr}
\urladdr{http://www-fourier.univ-grenoble-alpes.fr/$\sim$eyssi/}
\thanks{The  research of P. E.  was partially supported by the ANR project Hodgefun ANR-16-CE40-0011-01.}

\author[L.~Funar]{Louis Funar}
\address{Institut Fourier,
Laboratoire de Math\'ematiques UMR 5582,   
Universit\'e Grenoble Alpes,
CS 40700, 
38058 Grenoble, France}
\email{louis.funar@univ-grenoble-alpes.fr} 
\urladdr{http://www-fourier.univ-grenoble-alpes.fr/$\sim$funar/}

\begin{abstract} We construct certain orbifold compactifications of the 
moduli stack of pointed stable curves over $\C$ and study their fundamental groups 
by means of their quantum representations. This enables to construct interesting K\"ahler groups  and 
to settle most of the candidates for a counter-example to the Shafarevich conjecture on holomorphic convexity proposed in 1998 by Bogomolov and Katzarkov, using 
TQFT representations of the mapping class groups. 

\vspace{0.2cm}
\noindent 2010 MSC Classification: 14 H 10, 32 Q 30 (Primary) 32 J 25, 32 G 15,   14 D 23,  57 M 07, 20 F 38, 20 G 20, 22 E 40 (Secondary).  
 
\vspace{0.2cm} 
\noindent Keywords: K\"ahler Orbifolds, K\"ahler Groups,   Mapping class group, 
Moduli space of Curves, Deligne-Mumford stacks, Topological Quantum Field Theory.	
\end{abstract}

\maketitle

\tableofcontents

\section{Introduction and statements}

We denote by $\Mod(\Sigma_g)$ the mapping class group of a closed oriented surface $\Sigma_g$ of genus $g$. Further 
$\Mod(\Sigma_g^1)$ denotes  the mapping class group of the pair $(\Sigma_g,\ast)$, namely the group of isotopy classes of  orientation preserving homeomorphisms
of $\Sigma_g$ fixing $\ast$. It is well-known that $\Mod(\Sigma_g^1)$ is isomorphic to the mapping class group of the punctured surface $(\Sigma_g,\ast)$. 
 By forgetting the marked point  one obtains a 
surjective homomorphism $\Mod(\Sigma_g^1)\to \Mod(\Sigma_g)$ which is part of the Birman exact sequence: 
\[ 1\to \pi_1(\Sigma_g) \to \Mod(\Sigma_g^1)\to \Mod(\Sigma_g)\to 1\]
 
The Dehn-Nielsen-Baer theorem states that 
the map associating to $\varphi\in \Mod(\Sigma_g^1)$ the automorphism $\varphi_*:\pi_1(\Sigma_g,\ast)\to \pi_1(\Sigma_g,\ast)$
provides an  isomorphism between $\Mod(\Sigma_g^1)$ and $\Aut^+(\pi_g)$ and induces an isomorphism 
$\Mod(\Sigma_g)\to \Out^+(\pi_g)$.
Furthermore, the diagram below is commutative: 
\[\begin{array}{ccccccccc}
 1& \to & \pi_1(\Sigma_g) & \to & \Mod(\Sigma_g^1)&\to & \Mod(\Sigma_g) &\to& 1\\
  &  & \downarrow & & \downarrow & & \downarrow & & \\
 1 & \to & \pi_1(\Sigma_g) & \to & \Aut^+(\pi_g) & \to &\Out^+  (\pi_g) & \to & 1 \\
 \end{array}
 \] 
where the top horizontal line is the Birman exact sequence.

The groups $\Mod(\Sigma_g)$ and $\Mod(\Sigma_g^1)$  occur in complex algebraic geometry as the 
fundamental groups of the analytifications of the moduli stacks of curves $\Mgan$ and $\Mgonean$ respectively. 
More generally, let  $\Mod(\Sigma_g^n)$ be the mapping class group of the 
genus $g$ orientable surface $\Sigma_g^n$ with $n$ punctures (or marked points) and 
$\PMod(\Sigma_g^n)$ denote the {\em pure} mapping class group of those classes which fix 
{\em pointwise} the punctures. Then $\PMod(\Sigma_g^n)$ and  $\Mod(\Sigma_g^n)$ also occur as the 
fundamental groups of the analytification $\mathcal{M}^{n \ an}_{g}$ of the moduli stacks of curves 
 and 
$[S_n\backslash{\mathcal{M}^{n \ an}_{g}}]$, respectively, where $S_n$ acts on $\mathcal{M}^{n \ an}_{g}$ permuting the markings.  
These stacks are separated, smooth, Deligne-Mumford and their moduli spaces are quasiprojective, 
but non proper except in the trivial case  $g=0,n=3$. 

Constructing stacky compactifications of such objects 
can be used an intermediate step to construct interesting K\"ahler groups \cite{ajm,EysOrb2} and  
in the present case, to recast some of the basic ideas of Teichm\"uller level structures \cite{ BoPi,Bry, Loo}.

For every integer $p\geq 2$ we introduce the subgroup $\Mod(\Sigma_g^n)[p]\subset \Mod(\Sigma_g)$ 
generated by the $p$-th powers of Dehn twists along simple closed curves in $\Sigma_g^n$. 
Note that  $\Mod(\Sigma_g^n)[p]\subset \PMod(\Sigma_g^n)$ is a normal subgroup of $\Mod(\Sigma_g)$ and in fact a characteristic one. 
More generally, there are only finitely many, say $N_{g,n}$ 
conjugacy classes of Dehn twists, or equivalently, 
distinct orbits of essential simple closed curves on $\Sigma_g^n$ under the mapping class group action.
For instance $N_{g}=\lfloor \frac{g}{2}\rfloor+1$, $N_{g,1}=g$. 
Simple closed curves orbits are determined by the homeomorphism type of the 
complementary subsurface, which might be either connected for non-separating curves 
or disconnected and hence determined by the set/pair of genera of its two components, for separating curves. 
Fix an enumeration of these homeomorphism types starting with the non-separating one. For each  vector $\mathbf k=(k_0,k_1,k_2,\ldots,k_{N_{g,n}-1})$ we define  
$\Mod(\Sigma_g^n)[\mathbf k]$ as the (normal) subgroup generated 
by $k_0$-th powers of Dehn twist along non-separating simple closed curves and 
$k_j$-th powers of Dehn twists along  simple closed curves of type $j$.
As a shortcut we use $\Mod(\Sigma_g^n)[k;m]$ for $\mathbf k=(k,m,m,m\ldots,m)$ and 
 $\Mod(\Sigma_g^n)[k;-]$ for $\mathbf k=(k;)$, where $k_i$ are absent for $i>0$.  

The groups $\Mod(\Sigma_{g})\slash \Mod(\Sigma_{g})[p]$ were introduced and studied in \cite{F} and  $\Mod(\Sigma_{g}^1)\slash \Mod(\Sigma_{g}^1)[p]$ in \cite{LF}. 
In \cite{AF} the authors proved that they are virtually K\"ahler groups by considering 
the Deligne-Mumford compactifications of the moduli spaces of curves for some level structures 
for which they are smooth projective varieties (see e.g. \cite{Boggi}). 
To show that these groups are actually K\"ahler we study the images of the isotropy groups 
under quantum representations of mapping class groups in order to be able to use the uniformizability criterion of \cite{vjm}. 
Note that quantum representations of these groups cannot be injective unless their images are non-arithmetic subgroups of higher rank Lie groups (see \cite{FP2}).   

Our first main result is:

\begin{theorem}\label{kahler}
Let $g,n \in \N$ such that $2-2g-n<0$, $(g,n)\not = (1,1), (2,0)$.

For every ${\mathbf{k}} \in \mathbb{N}_{>0}^{N_{g,n}}$ the quotient $\PMod(\Sigma_{g}^n)\slash \Mod(\Sigma_{g}^n)[\mathbf k]$  
is the fundamental group of a compact K\"ahler orbifold ${\overline{\mathcal{M}_{g}^{n \ an}}}[\mathbf k]$ compactifying $\mathcal{M}_{g}^{n \ an}$
 whose moduli space is the moduli space  
${\overline{{M}_{g}^{n \ an}}}$ of stable $n$-punctured curves of genus $g$.

If $p\geq 5$ is odd, ${\overline{\mathcal{M}_{g}^{n \ an}}}[p]$  is uniformizable, hence 
$\PMod(\Sigma_{g}^n)\slash \Mod(\Sigma_{g}^n)[p]$ is a K\"ahler group, i.e. it occurs as the fundamental group of a compact  K\"ahler manifold.

If $p\ge 10$ is even and $(g,n,p)\neq (2,0,12)$, $\overline{{\mathcal M}_g^{n \; an}}[2p,  p/{\rm g.c.d.}(p,4)]$ is uniformizable and
$\Mod(\Sigma_g^n)/\Mod(\Sigma_g^n)[2p,  p/{\rm g.c.d.}(p,4)]$
is a K\"ahler group.
\end{theorem}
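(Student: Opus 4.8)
The plan is to construct the orbifold compactifications $\overline{\mathcal{M}_g^{n\ an}}[\mathbf k]$ directly as stack-theoretic quotients and then verify K\"ahler-ness and uniformizability via quantum representations. First I would recall the complex-analytic structure on the Deligne–Mumford compactification $\overline{M_g^{n}}$ and its normal crossing boundary divisor $\partial\overline{M_g^{n}} = \bigcup_i D_i$, with the $D_i$ grouped according to the $N_{g,n}$ topological types of simple closed curves (the non-separating one giving the interior boundary divisor, the separating ones giving the various products). The orbifold $\overline{\mathcal{M}_g^{n\ an}}[\mathbf k]$ should be the orbifold with underlying space $\overline{M_g^{n}}$, trivial orbifold structure over the open part $\mathcal{M}_g^{n\ an}$ (modulo the generic automorphisms of curves), and transverse $\Z/k_j\Z$ isotropy along the divisor $D_i$ of type $j$ — i.e. locally $[\, \Delta \,/\, \mu_{k_j}\,]$ in the normal direction. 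The key point is that $\pi_1^{orb}$ of this orbifold is exactly $\PMod(\Sigma_g^n)/\Mod(\Sigma_g^n)[\mathbf k]$: this follows from van Kampen for orbifolds together with the fact that a loop around $D_i$ maps to a Dehn twist of type $j$ in $\PMod(\Sigma_g^n) = \pi_1^{orb}(\mathcal{M}_g^{n\ an})$, so killing $k_j$-th powers of these loops is precisely the relation imposed by the orbifold structure. One must also check that $2-2g-n<0$ with $(g,n)\neq(1,1),(2,0)$ guarantees the stack is genuinely a smooth DM stack and that no extra hyperelliptic-type collapsing occurs.

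Next, for the K\"ahler statements, the strategy is to invoke the uniformizability criterion of \cite{vjm}: an orbifold with projective underlying space and a K\"ahler orbifold metric whose local uniformizing groups embed compatibly is uniformizable, hence its $\pi_1^{orb}$ is a K\"ahler group. The metric is not the obstruction — $\overline{M_g^{n}}$ is projective, so a K\"ahler orbifold metric adapted to the transverse $\mu_{k_j}$-structure exists by standard gluing (ramified covers of $\mathcal{O}(D_i)$). The real work is producing, near each boundary stratum, a finite \emph{uniformizing} cover realizing the orbifold structure, i.e. a finite group acting on a smooth manifold with the prescribed stabilizers; equivalently, one needs a finite-index subgroup of $\PMod(\Sigma_g^n)/\Mod(\Sigma_g^n)[\mathbf k]$ that is torsion-free and acts freely on the relevant cover of $\overline{M_g^{n}}$. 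This is where the quantum representations enter: following the references \cite{AF,F,LF,FP2}, one uses a TQFT (Reshetikhin–Turaev / Witten) representation $\rho_p$ of $\Mod(\Sigma_g^n)$ which sends a Dehn twist of type $j$ to a matrix whose order divides the stated level parameter ($2p$ or $p$, with the $\gcd(p,4)$ correction reflecting the precise order of the twist eigenvalues at even level), so that $\rho_p$ factors through the quotient by $\Mod(\Sigma_g^n)[\mathbf k]$ for the indicated $\mathbf k$. One then checks that the images of the finite isotropy groups under $\rho_p$ are injective — this is the content one extracts from the analysis of quantum representations — so that a congruence-type kernel gives the desired torsion-free finite-index subgroup and the uniformizing covers exist.

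I expect the main obstacle to be precisely the control of the isotropy images under the quantum representation at the boundary: one must verify that at the stated levels $p\geq 5$ odd (resp.\ $p\geq 10$ even) the representation $\rho_p$ is \emph{faithful on every local orbifold group}, including the cyclic groups generated by boundary twist powers \emph{and} the finite automorphism groups of stable curves appearing over the deeper strata, and that these two sources of torsion interact correctly. The numerology — why $2p$ paired with $p/\gcd(p,4)$ for the non-separating versus separating twists, and why $(g,n,p)=(2,0,12)$ must be excluded — comes from computing the exact order of $\rho_p$ of a Dehn twist (a root of unity determined by the conformal weights $\theta_c$ of the TQFT) and comparing with the order needed to make the orbifold smooth; the excluded case is a genuine failure where these orders do not match or faithfulness on isotropy breaks down. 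The lower bounds $p\geq 5$, $p\geq 10$ are the thresholds below which the quantum representation has too small an image (or the TQFT is degenerate) to separate the torsion. Once faithfulness on isotropy is established, applying \cite{vjm} is essentially formal, and the passage from "uniformizable orbifold" to "K\"ahler group" is the standard statement that the (smooth, projective-by-projectivity-of-$\overline{M_g^n}$, hence K\"ahler) uniformizing manifold has the orbifold fundamental group as its own $\pi_1$.
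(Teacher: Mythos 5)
Your overall plan matches the paper's approach: root stacks on the boundary divisor, van Kampen for the fundamental group, the K\"ahler structure from projectivity of $\overline{M_g^n}$, factorization of quantum representations through the Dehn--twist--power quotient via an explicit order computation, and then the uniformizability criterion of \cite{vjm} applied to a finite quotient in which the local isotropy embeds. The numerology you describe for the even case (the $\gcd(p,4)$ correction coming from the order of the twist eigenvalue) is also what the paper computes in Lemma \ref{lemma:ordereven}. However, two points need correction.

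The genuine gap is in the faithfulness step. You propose to verify that the quantum representation $\rho_p$ alone ``is faithful on every local orbifold group, including \dots the finite automorphism groups of stable curves appearing over the deeper strata.'' The paper does \emph{not} establish this, and it is not known: an automorphism of a stable curve $C$ that preserves each irreducible component of $C$ and acts trivially on the dual graph $\Gamma(\underline\alpha)$ need not be detected by $\rho_p$ at a fixed finite level. What the paper proves (Proposition \ref{faithfulisotropy}) is that the \emph{direct sum} $\rho_{p,(2,\ldots,2)}\oplus\theta_{p}$ (respectively $\rho_{p,(2,\ldots,2)}\oplus\theta_{2p}$ in the even case), where $\theta_p$ is the homology representation $\Mod(\Sigma_g^n)\to\Aut(H_1(\Sigma_g^n;\Z/p\Z))$, is faithful on isotropy. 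The quantum representation handles the twist lattice $\Z^{\underline\alpha}$ (Proposition \ref{prop:stabilizer}) and the dual-graph automorphisms (Lemma \ref{graphdetect}), but the automorphisms of the normalization fixing the graph are detected instead by Serre's lemma on $\theta_p$ (Lemma \ref{torsiondetect}); the two together split the isotropy group as an extension and cover all cases. Without $\theta_p$ the argument simply has no mechanism to see these automorphisms, so you cannot close the loop. You should also note that the paper needs the image of the combined representation to be infinite (Proposition \ref{infty-closed}) only to feed Selberg's lemma and extract a torsion-free finite-index normal subgroup $J$ of the image; the finite quotient by $J$ is then the target into which the isotropy groups inject, which is exactly the uniformizability condition.

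A smaller but real technical omission: the boundary divisor $\mathcal D^n$ is normal crossing but not \emph{simple} normal crossing (individual components such as $\mathcal D^1_0$ self-intersect), so the fiber product of root stacks $\prod_i\mathcal S\left[\sqrt[p_i]{\mathcal D_i}\right]$ is singular along the self-crossings. The paper must then take Vistoli's canonical stack of this fiber product (Proposition \ref{rootstack}) to obtain the smooth Deligne--Mumford stack $\overline{\mathcal M_g^{n\ an}}[\mathbf p]$. Your local model $[\Delta/\mu_{k_j}]$ in the normal direction is correct near a generic point of $\mathcal D_i$, but the global construction needs this extra step to resolve the non-simple crossings.
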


It is an important open question whether one can construct a complex projective surface whose fundamental group is infinite   and
has no complex linear finite dimensional  representation with infinite image. For most $p$, 
one knows that $\Mod(\Sigma_g)\slash \Mod(\Sigma_g)[p]$ is infinite  but the proof uses a linear representation \cite{F}.
This representation can be lifted to a representation of $\PMod(\Sigma_g^n)\slash \Mod(\Sigma_g^n)[p]$
with infinite image. Hence our construction gives infinite K\"ahler groups but will not solve that question. 
The fact that the representation comes from a Reshetikhin-Turaev TQFT is however striking from an algebro-geometric perspective since it is not known
to the authors if these representations are motivic  or rigid.
 
However, our construction was motivated by its relation to certain complex projective surfaces proposed by Bogomolov and Katzarkov \cite{BK} as  potential counterexamples to the Shafarevich
conjecture on holomorphic convexity  that remained unsettled up to now. 

These surfaces are obtained in the following way. Let $\pi:S\to C$ be a smooth projective surface fibered over a curve such that the regular 
fibers have genus $g\ge 2$ and the singular fibers  are reduced and have only nodes. Let $N\in \N^*$ be a positive integer. 
Perform a base change along a covering $C'\to C$  with uniform ramification index $N$ at the images of the  singular fibers. Then, for most $N$, and if $\pi$ is stable,  the surface
$S\times_C C'$,  which has a finite number of singular points of type $A_{N-1}$ lying over the nodes in the singular fibers,  has a
quasi-\'etale
cover $S(N)$ which is a smooth projective surface inheriting from $\pi$ a fibration over some curve. The image $I_N$ of the fundamental group of a general fiber $F$ of this fibration in $\pi_1(S(N))$ is 
what they call a Burnside type quotient of $\pi_g$, and is
an infinite group if $N$ is large enough. 
The only \lq obvious\rq \ linear representations of $\pi_1(S(N))$ with infinite image have finite image when restricted to $I_N$. If $\pi$ is not stable, or $N$ is even,  then $S(N)$ may be only a non developable orbisurface. 
 
However, we shall see that quantum representations give rise to representations of $\pi_1(S(N))$, the reason being that $S(N)$ maps non trivially 
to $\Mgobaran[N]$. The recent result  \cite{KoSa} translates into  the fact 
that most of them have an  infinite
image on its general fiber. A remarkable property of TQFT representations, their functoriality  when passing to subsurfaces,  actually enables to settle the question raised by \cite{BK}
in most cases:

\begin{theorem}\label{bogo-katza}
If $N \not\in \{1, 2,3,4,6,8,12, 16,20,24\}$  
the universal covering space of the Bogomolov-Katzarkov  surface ${S}(N)$ is holomorphically convex. 
Moreover, if $\pi:S\to C$ is a stable curve over $C$, then  it is Stein. 
 \end{theorem}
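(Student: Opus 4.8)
The plan is to map $S(N)$ into one of the uniformizable compact K\"ahler orbifolds produced by Theorem~\ref{kahler}, to pull back a quantum representation, and to apply the linear Shafarevich machinery to $S(N)$ equipped with that representation, using the fibration structure of $S(N)$ together with \cite{KoSa} to control the part of $\pi_1(S(N))$ the representation fails to detect. First I would (re)construct the classifying morphism. The family $S(N)\to C'$ of genus $g$ curves underlying the Bogomolov--Katzarkov construction, together with the tautological marked point carried by its total space, defines a morphism $C'\to\Mgbaran$ and, after Knudsen stabilization along the locus where the marked point meets a node, a morphism from $S(N)$ to the universal curve. Because the ramification index of $C'\to C$ was taken equal to $N$, the $\Z/N$ orbifold structure transverse to the boundary divisor of $\Mgbaran[N]$ matches the local monodromy exactly, and the map lifts to an orbifold morphism $\phi_N\colon S(N)\to\Mgobaran[N]$; when $N$ is even $S(N)$ itself acquires isolated orbifold points and the relevant target is rather the uniformizable orbifold $\Mgobaran[2N,\,N/\gcd(N,4)]$ of Theorem~\ref{kahler}, which is what enlarges the list of excluded even $N$. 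In every case $\phi_N$ is non-constant, and on a general fibre $F$ of $S(N)\to C'$ it induces $\pi_1(F)\twoheadrightarrow I_N\hookrightarrow\PMod(\Sigma_g^1)/\Mod(\Sigma_g^1)[N]$, or its even analogue.

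By Theorem~\ref{kahler} the target $\mathcal Y_N$ is a uniformizable compact K\"ahler orbifold, and its orbifold fundamental group carries the Reshetikhin--Turaev quantum representation $\rho_N$ at level $N$: a finite-dimensional unitary representation which factors through the level quotient because the relevant powers of Dehn twists act trivially. Set $\rho:=\rho_N\circ(\phi_N)_*\colon\pi_1^{\mathrm{orb}}(S(N))\to\mathrm U(r)$. The decisive point is the functoriality of TQFT under passage to subsurfaces, through which \cite{KoSa} yields that, for $N$ outside the stated finite set, the composite $\pi_1(F)\to I_N\xrightarrow{\rho_N}\mathrm U(r)$ has infinite image; the same mechanism gives, more generally, that for such $N$ the representation $\rho$ has finite image only on rational curves in $S(N)$ --- every non-rational fibre component and every horizontal curve of $S(N)\to C'$ having infinite $\rho$-image.

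Next I would invoke the linear Shafarevich theorem, in the form valid for compact K\"ahler orbifolds (cf.\ \cite{EysOrb2}), applied to the projective (orbi-)surface $S(N)$ with the reductive, indeed unitary, representation $\rho$: the covering $\widetilde{S(N)}_\rho$ defined by $\ker\rho$ is holomorphically convex, and its Remmert reduction is the $\bar\rho$-cover of the projective $\rho$-Shafarevich variety $\mathrm{Sh}_\rho(S(N))$, where $\bar\rho$ is the representation $\rho$ induces there. Since $\rho$ is infinite on a general fibre and the family is non-isotrivial, $\dim\mathrm{Sh}_\rho(S(N))=2$, so the Shafarevich morphism is bimeromorphic onto its image and contracts at most finitely many rational, hence simply connected, curves. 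These lift isomorphically to the universal cover; the Stein Remmert reduction $\mathcal Z$ of $\widetilde{S(N)}_\rho$ pulls back along the further covering to a space $\mathcal Z'$ which is again Stein, since an unbranched cover of a Stein space is Stein; and $\widetilde{S(N)}\to\mathcal Z'$ is proper with compact connected fibres, so $\widetilde{S(N)}$ is holomorphically convex. When $\pi$ is stable, for admissible $N$ the construction produces a surface $S(N)$ on which $\rho$ has infinite image on every positive-dimensional subvariety; then the Shafarevich morphism is finite, $\widetilde{S(N)}_\rho$ is holomorphically convex with no compact positive-dimensional analytic subset and hence Stein, and therefore so is its unbranched cover $\widetilde{S(N)}$.

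The main obstacle, I expect, is twofold. Geometrically, upgrading $\phi_N$ to a genuine orbifold morphism with the correct target requires a delicate local analysis near the boundary of the interplay between the $A_{N-1}$ singularities of $S\times_C C'$, the quasi-\'etale cover $S(N)\to S\times_C C'$, the Knudsen stabilization where the marked point meets a node, and the $\Z/N$ (respectively the $2N$-type) orbifold structure of the moduli orbifold; this analysis, combined with the finite-image exceptions of \cite{KoSa} and the uniformizability range of Theorem~\ref{kahler}, is exactly what pins down the precise set of excluded $N$ and the claim that the only $\rho$-contracted subvarieties are (simply connected) rational curves. Function-theoretically, $\rho$ is far from injective on $\pi_1(S(N))$, so passing from holomorphic convexity of the $\rho$-cover to that of the universal cover is not formal: it relies on $\ker\rho$ meeting the image of a general fibre in an infinite-index subgroup --- so that the undetected directions are carried by Stein fibres over a one-dimensional (and, after the relevant cover, Stein) base --- together with permanence of Steinness and holomorphic convexity under the further covering; in the non-developable cases (unstable $\pi$, or $N$ even) this descent must be carried out at the orbifold level, which is where only holomorphic convexity, and not Steinness, survives in general.
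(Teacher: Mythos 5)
Your proposal follows essentially the same route as the paper: classify $S(N)$ into the uniformizable orbifold $\Mgobaran[N]$ (resp.\ its even analogue), pull back the quantum representation, use \cite{KoSa} together with TQFT functoriality on essential subsurfaces of the stable fibre components to get infinite image on every positive-dimensional subvariety not contained in a chain of rational curves, and conclude via the linear Shafarevich theorem of \cite{EKPR}/\cite{KR}, with Steinness in the stable case and only holomorphic convexity in general because the contracted loci are simply connected trees of $(-2)$-curves. The only cosmetic difference is bookkeeping: the paper works with the $GL_M$-Shafarevich morphism for the intersection of kernels of all $M$-dimensional representations rather than with the single representation $\rho$, but the underlying argument is identical.
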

 
 If $S(N)$ is not developpable, its universal covering space is a complex analytic Deligne-Mumford stack, and we say it is Stein, resp. holomorphically convex, if such is its moduli space (a normal complex analytic space with only quotient singularities). 
 
We believe  the examples constructed here to be extremely interesting from the point of view of  uniformization in several complex variables. 
It does not seem to be trivial task to settle the Shafarevich and the Toledo conjectures for ${\overline{\mathcal{M}_{g}}}[p]$. 
We also conjecture Kazhdan's property (T) for its fundamental group since it would follow from Kazhdan's property (T) for the mapping class group - in particular we would have $q^{+}({\overline{\mathcal{M}_{g}}}[p])=0$, which follows from 
the Ivanov conjecture. 
 We hope to come back on these questions in future work. 
 
 \vskip 0.3cm

In the next section we define the Mumford-Deligne stack  ${\overline{\mathcal M_g^{n \; an}}}[\mathbf{k}]$ 
and show that it is an orbifold with the fundamental group  $\PMod(\Sigma_{g}^n)\slash \Mod(\Sigma_{g}^n)[\mathbf{k}]$. In section 3, we introduce quantum representations of mapping class 
groups, compute 
the orders of Dehn twists and prove they factor through $\Mod(\Sigma_{g}^n)\slash \Mod(\Sigma_{g}^n)[p]$ if $p$ is odd and  $\Mod(\Sigma_{g}^n)\slash \Mod(\Sigma_{g}^n)[2p, p/g.c.d(4,p)]$ if $p$ even. 
We prove that these quantum representations have infinite images in most cases. These results are further used in section 4 to show that 
 the orbifolds   ${\overline{\mathcal M_g^{n \; an}}}[p]$ are uniformizable, thereby proving Theorem \ref{kahler}. 
 In section 5, we explain the construction of the 
Bogomolov-Katzarkov surfaces by starting with a non isotrivial fibration of a 
smooth complex projective surface, namely the pull-back of the universal curve 
${\overline{\mathcal M_g^{1 \; an}}}$ along some holomorphic maps from
a smooth algebraic curve into ${\overline{\mathcal M_g^{an}}}$ and then taking the 
cartesian product with ${\overline{\mathcal M_g^{1 \; an}}}[p]$. We show that the Bogomolov-Katzarkov surfaces  obtained in that manner are uniformizable K\"ahler 
orbifolds, when the orbifolds ${\overline{\mathcal M_g^{1 \; an}}}[p]$ and  
${\overline{\mathcal M_g^{1 \; an}}}[2p, p/{\rm g.c.d}(4,p)]$ for even  $p$ respectively, are uniformizable.  
Section 6 contains a proof of Theorem \ref{bogo-katza} answering in the affirmative the Shafarevich conjecture on holomorphic convexity for these surfaces in most cases, 
by analyzing the images of their fundamental groups under the quantum representations above and using \cite{EKPR} (we actually only need the reductive surface case proved in \cite{KR}).

All algebraic varieties and stacks will be over $\C$ and we will mainly think of them through their analytification as complex-analytic objects. 

\vspace{0.5cm}

{\bf Acknowledgements}. The authors are indebted to J.~Aramayona, M.~Boggi, 
L.~Katzarkov, B.~Klingler, J.~March\'e and C.~Simpson for helpful discussions.

\section{The K\"ahler Orbifold ${\overline{\mathcal M_g^{n \; an}}}[p]$ and its fundamental group}
\subsection{Conventions}

As advocated by \cite{art:lerman2010}, we follow the conventions of \cite{EysOrb2} and view
an orbifold as a smooth Deligne-Mumford  stack with trivial generic isotropy groups relative to the category of complex 
analytic spaces with Hausdorff topology. There is an analytification $2$-functor  from  algebraic Deligne-Mumford stacks over $\C$ to complex analytic Deligne-Mumford stacks and an 
underlying topological stack functor from 
complex analytic Deligne-Mumford stacks to topological Deligne-Mumford stacks \cite{No1,No2,bn}. We will only consider separated quasi-compact algebraic stacks locally of finite type 
and separated analytical/topological 
stacks with proper  diagonal.

On a  complex or topological 
stack we use the Hausdorff topology,  see \cite{Stpj} for the relation with fancier topologies in the algebraic case. 
In the complex analytic and topological case,  we can always
refine a covering of  a Deligne-Mumford stack by a covering by open substacks that are quotients of a proper action of a finite group. If $\mathcal{X}$ is an algebraic 
Deligne-Mumford stack $\mathcal{X}$ there is a structural sheaf $\mathcal{O}_{\mathcal{X}}$, an abelian category of coherent algebraic sheaves and similarly in the complex analytic case, 
see \cite{LMB, CG}. 
When we represent a stack by an \'etale groupoid a sheaf comes by descent from an equivariant sheaf on the groupoid. 

An effective Cartier divisor of an algebraic/analytic is a closed substack which is defined by an invertible ideal sheaf, locally generated by a nonzero divisor. It is actually better to
slightly enlarge the notion and in our terminology a Cartier divisor on $\mathcal{X}$ will be a pair $(s,\mathcal{L})$ of an invertible sheaf $\mathcal{L}$ and a section
$s: \mathcal{O}_{\mathcal{X}} \to \mathcal{L}$ up to equivalence (multiplication by an invertible function). 
The  resulting groupoid of Cartier divisors is naturally equivalent to the groupoid ${\rm Hom}(\mathcal{X}, [\mathbb{A}^1/ \mathbb{G}_m])$ (see \cite{Ols}, p.778) where $[\mathbb{A}^1/ \mathbb{G}_m])$ is the quotient stack of the usual action of 
$\mathbb{G}_m$ on $\mathbb{A}^1$, and is an algebraic stack which is not Deligne-Mumford.

The multiplication map $m:[\mathbb{A}^1/ \mathbb{G}_m]\times [\mathbb{A}^1/ \mathbb{G}_m] \to [\mathbb{A}^1/ \mathbb{G}_m]$ enables one to define the sum of two divisors by the composition
$\mathcal{X} \to [\mathbb{A}^1/ \mathbb{G}_m]\times [\mathbb{A}^1/ \mathbb{G}_m]\buildrel{m}\over\longrightarrow [\mathbb{A}^1/ \mathbb{G}_m]$ and the pull back 
$f^*=\circ f: {\rm Hom}(\mathcal{X}, [\mathbb{A}^1/ \mathbb{G}_m]) \to {\rm Hom}(\mathcal{Y}, [\mathbb{A}^1/ \mathbb{G}_m])$, for every $f\in Ob({\rm Hom}(\mathcal{Y}, \mathcal{X}))$. 

\subsection{} 
Recall (see \cite{No2,art:lerman2010} 
that an {\em orbifold} is a smooth Deligne-Mumford stack with trivial generic isotropy 
groups relative to the category of complex analytic spaces with the classical topology. 
An orbifold $\mathcal X$ is called {\em developable} if its universal covering stack is a smooth manifold, namely if every local inertia morphism $\pi_1(\mathcal X,x)_{loc}\to \pi_1(\mathcal X,x)$
is injective, for $x$ an orbifold point of $\mathcal X$. 
The orbifold $\mathcal X$ is {\em uniformizable} whenever the profinite completion of a 
local inertia morphism, namely $\pi_1(\mathcal X,x)_{loc}\to \widehat{\pi_1(\mathcal X,x)}$
is still injective. In particular developable orbifolds with residually finite 
fundamental groups are uniformizable.

\subsection{}

For $2g-2+n>0$ the stack $\overline{\mathcal{M}_{g}^n}$ of $n$-punctured 
genus $g$ stable curves is a smooth proper Deligne-Mumford stack, whose analytification 
$\overline{\mathcal{M}_{g}^{n \; an}}$ is 
a smooth proper complex analytic Deligne-Mumford stack too (see \cite{Knu}, Thm. 2.7), actually 
an orbifold if $(g,n)\neq (2,0)$. 
Moreover, $\overline{\mathcal{M}_{g}^{n \; an}}$  carries a K\"ahler metric in the sense of \cite {EySa}, 
from Knudsen-Mumford's theorem that the moduli space $\overline{M_g^n}$ is projective 
(see \cite{Knu2}, Thm.6.1). Observe that  the forgetful map $f_1: \Mgobar\to \Mgbar$ is proper and representable (\cite{DMu}),
more precisely it is schematic (we use the convention of \cite {LMB} for representability).

\subsection{}

The stack of smooth curves 
 is realized as an open substack $j:\Mg \hookrightarrow \Mgbar$ and $\Mgbar \setminus \Mg$ is a union
of irreducible Cartier divisors $\mathcal{D}_0, \mathcal{D}_1, \ldots, \mathcal{D}_{\lfloor \frac{g}{2} \rfloor}$.

The generic point of $\mathcal{D}_0$ is a genus $g-1$ curve with one node and the generic point of 
$\mathcal{D}_k$ has two irreducible smooth
curves as components which are meeting at a node and whose respective genera are $k$ and $g-k$. 
The total divisor 
\[\mathcal{D}=\sum_{i=0}^{\lfloor \frac{g}{2} \rfloor }  \mathcal{D}_i,\] 
is a normal crossing divisor (see \cite{DMu}). 

\subsection{}

The map $j^*f_1$ is the forgetful map $\Mgone\to \Mg$ and is  representable, smooth and proper.  Hence $\mathcal{D}^1:=\Mgobar \setminus \Mgone$ is a closed substack
with a representable proper map $f_1|_{\mathcal{D}^1}: \mathcal{D}^1 \to \mathcal{D}$. Now,  
\[\mathcal{D}^1=\sum_{i=0}^{g-1} \mathcal{D}^1_{i}\]
where the generic point of $\mathcal{D}^1_{i}$ for $i>0$ represents a nodal curve with a component of genus $i$ carrying 
the one marked point and a component of genus $g-i$,   both meeting at a node which is distinct from the marked point. On the other hand the generic point of 
$\mathcal{D}^1_0$ 
represents an irreducible nodal curve of genus $g-1$ with one marked point.  
It follows that $f_1^* (\mathcal{D}_i)=\mathcal{D}_ {1,i} + \mathcal{D}_{1,g-i}$, is 
a divisor whose two components meet transversally in codimension $1$,  when $0<i<\frac{g}{2}$ whereas 
$f^*_1(\mathcal{D}_0)=\mathcal{D}^1_{0}$. 
If $g\equiv 0\; ({\rm mod}\; 2)$, then  
$f^*_1(\mathcal{D}_{\frac{g}{2}})=\mathcal{D}^1_{\frac{g}{2}}$. The divisors 
$\mathcal{D}^1_{i}$ are irreducible, the total divisor $\mathcal{D}^1$
is a normal crossing divisor (see \cite{Knu}, Thm.2.7) but not a simple normal crossing one. 
For instance $\mathcal{D}^1_{0}$ is singular at a singular point of the generic fiber of $\mathcal{D}^1_{0} \to \mathcal{D}_{0}$.

More generally, the stack ${\mathcal M}_g^n$ of smooth  $n$-pointed smooth curves of genus $g$ 
is an open substack $j:{\mathcal M}_g^n\to \overline{{\mathcal M}_g^n}$ of the 
Deligne-Mumford stack of stable $n$-pointed curves of genus $g$, when $2g-2+n>0$. 
Then $\mathcal D^n=  \overline{{\mathcal M}_g^n}\setminus {\mathcal M}_g^n$ is 
a union of irreducible Cartier divisors $\mathcal D_i$, $i=0,\ldots, N_{g,n}-1$. 
Knudsen proved in \cite{Knu} that $\overline{{\mathcal M}_g^{n+1}}$ is equivalent to the stack of  
$n$-pointed stable curves endowed with an extra section, namely the universal $n$-pointed stable curve. 
Moreover,  $\mathcal D^n=  \overline{{\mathcal M}_g^n}\setminus {\mathcal M}_g^n$ is 
a union of irreducible Cartier divisors $\mathcal D_i$, $i=0,\ldots, N_{g,n}-1$ and a normal crossings divisor. 
More specifically the projection map $f_{n+1}: \overline{{\mathcal M}_g^{n+1}}
 \to \overline{{\mathcal M}_g^n}$ has the property that 
 \[ D^{n+1}=f_{n+1}^{-1}(D^n)+ \sum_{i=1}^n S^{i, n+1},\]
 where a point of $S_{n+1}^i$ corresponds to a stable curve with a rational component 
 having two marked points labeled $i$ and $n+1$, namely $S^{i,n+1}$ is the 
 image of the canonical $i$-th section $\overline{{\mathcal M}_g^{n}}
 \to \overline{{\mathcal M}_g^{n+1}}$. 
 
\begin{lem} \label{pullback}
We have $f_{n+1}^*\mathcal{D}^n=\mathcal{D}^{n+1}- \sum_{i=1}^n  S^{i, n+1}$ and in particular, 
$f_1^*\mathcal{D}=\mathcal{D}^1$. 
\end{lem}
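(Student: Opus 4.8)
The plan is to upgrade the set-theoretic boundary description recalled just above --- Knudsen's formula $\mathcal{D}^{n+1}=f_{n+1}^{-1}(\mathcal{D}^n)+\sum_{i=1}^n S^{i,n+1}$, in which all the divisors are reduced --- to an equality of Cartier divisors. Since $\mathcal{D}^{n+1}-\sum_i S^{i,n+1}$ is, by our conventions, the reduced divisor whose irreducible components are the boundary components of $\overline{\mathcal{M}_g^{n+1}}$ other than the tautological sections $S^{i,n+1}$, it is enough to check two things: (i) the support of the Cartier divisor $f_{n+1}^*\mathcal{D}^n$ is exactly the union of those components; and (ii) $f_{n+1}^*\mathcal{D}^n$ has multiplicity one along each of them. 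For (i): a prime divisor $E$ lies in the support of $f_{n+1}^*\mathcal{D}^n$ iff $E\subseteq f_{n+1}^{-1}(\mathcal{D}^n)$, so by Knudsen's formula $E$ is a boundary component of $\overline{\mathcal{M}_g^{n+1}}$; and $E$ is not one of the $S^{i,n+1}$, because $f_{n+1}$ restricts on $S^{i,n+1}$ --- the image of the $i$-th tautological section of $f_{n+1}$ --- to an isomorphism onto $\overline{\mathcal{M}_g^n}$, so that $f_{n+1}(S^{i,n+1})=\overline{\mathcal{M}_g^n}\not\subseteq\mathcal{D}^n$. Everything thus reduces to the multiplicity statement (ii).

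For (ii), fix such a component $E$ and let $\eta$ be its generic point. The key point is that $f_{n+1}$ is smooth at $\eta$. Indeed, the locus $Z\subseteq\overline{\mathcal{M}_g^{n+1}}$ of nodes of the fibres of the flat family of stable curves $f_{n+1}:\overline{\mathcal{M}_g^{n+1}}\to\overline{\mathcal{M}_g^n}$ is closed, maps into $\mathcal{D}^n$, and is finite over its image (a stable curve has only finitely many nodes), so $\dim Z\le\dim\mathcal{D}^n=\dim\overline{\mathcal{M}_g^{n+1}}-2$; hence $Z$ contains no codimension-one point, in particular $\eta\notin Z$, so the fibre of $f_{n+1}$ through $\eta$ is smooth at $\eta$, and flatness of $f_{n+1}$ then makes $f_{n+1}$ itself smooth at $\eta$. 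Next, a dimension count --- $f_{n+1}$ has relative dimension one and $\dim E=\dim\overline{\mathcal{M}_g^n}>\dim\mathcal{D}^n$ --- forces $f_{n+1}|_E$ to dominate a single component $\mathcal{D}_j$ of the normal crossing divisor $\mathcal{D}^n$; thus $\eta$ maps to the generic point of $\mathcal{D}_j$, near which $\mathcal{D}^n$ coincides with the reduced prime divisor $\mathcal{D}_j$. Pulling back a local equation of $\mathcal{D}_j$ along a morphism smooth at $\eta$ yields a local equation of $E$ at $\eta$ that is still reduced (since $\overline{\mathcal{M}_g^{n+1}}$ is smooth, $\mathcal{O}_{\overline{\mathcal{M}_g^{n+1}},\eta}$ is a discrete valuation ring, and smoothness makes the fibre regular at $\eta$, so the pulled-back equation generates the maximal ideal). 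Hence $f_{n+1}^*\mathcal{D}^n$ has multiplicity one along $E$.

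Assembling (i) and (ii), $f_{n+1}^*\mathcal{D}^n$ equals the reduced sum of the boundary components of $\overline{\mathcal{M}_g^{n+1}}$ distinct from the sections $S^{i,n+1}$, which is precisely $\mathcal{D}^{n+1}-\sum_{i=1}^n S^{i,n+1}$; specialising to $n=0$, where no sections $S^{i,1}$ occur, gives $f_1^*\mathcal{D}=\mathcal{D}^1$. I do not expect a genuine obstacle: the one non-formal ingredient is that $f_{n+1}$ is unramified in codimension one --- equivalently, that its nodal locus $Z$ has codimension two in the total space --- so that the boundary $\mathcal{D}^n$ pulls back to the boundary with all multiplicities equal to one, up to the horizontal tautological sections. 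This is a standard feature of the universal stable curve (it amounts to $f_{n+1}$ being log smooth for the divisorial boundary log structures), and the finer bookkeeping --- that the $S^{i,n+1}$ are exactly the boundary components of $\overline{\mathcal{M}_g^{n+1}}$ that do not dominate $\overline{\mathcal{M}_g^n}$ through a node of the generic fibre --- is already subsumed in the displayed formula of Knudsen that we take as input.
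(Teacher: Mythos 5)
Your argument is correct. The paper offers no proof of its own, only the citation to Knudsen's proof of Theorem 2.7, and what you have written is the standard local analysis that reference contains: the nodal locus of $f_{n+1}$ has codimension two, so $f_{n+1}$ is smooth at the generic point $\eta$ of each non-section boundary component, and the Cartier pullback of a prime divisor through a morphism smooth at $\eta$ is reduced at $\eta$. One small elision worth flagging: in concluding that $f_{n+1}^*t$ generates the maximal ideal of the discrete valuation ring $\mathcal{O}_{\overline{\mathcal{M}_g^{n+1}},\eta}$ you tacitly use that $\eta$ is a generic point of the fibre $f_{n+1}^{-1}(f_{n+1}(\eta))$ (both $\eta$ and $f_{n+1}(\eta)$ have codimension one), so regularity of the fibre at $\eta$ means its local ring there is zero-dimensional and hence a field, giving $\mathfrak{m}_\eta=\mathfrak{m}_{f_{n+1}(\eta)}\mathcal{O}_{\overline{\mathcal{M}_g^{n+1}},\eta}$; this is a one-line supplement, not a gap.
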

\begin{proof}
See (\cite{Knu}, Proof of Thm.2.7). 
\end{proof}

\subsection{The canonical stack of the root stack of a normal crossing divisor
 that may not be simple}

The first ingredient of our construction is the {\em root stack} $\mathcal{S}\left[\sqrt[p]{\mathcal{E}}\right]$ associated to a Cartier divisor  $\mathcal {E}$ on a stack  $\mathcal{S}$, 
which was introduced independently in \cite{AOV,cadman2007} and used in  \cite{ajm}.  
Specifically, it is defined as:   

\[ \mathcal{S}\left[\sqrt[p]{\mathcal{E}}\right]:= \mathcal{S} \times_{\phi_{\mathcal{E}}, [\mathbb{A}^1\slash \mathbb{G}_m] , \_^p} \left[\mathbb{A}^1\slash \mathbb{G}_m\right],\]
where 
$\phi_{\mathcal{E}}: \mathcal{S} \to \left[\mathbb{A}^1\slash \mathbb{G}_m\right]$ is the natural map and $\_^p$ is the $p$-th power map, $p\in \N^*$, the case $p=1$ being trivial. 
We use $2$-fibered products  above as one should (see \cite{LMB}). 

Note that the morphism $\mathcal{S}\left[\sqrt[p]{\mathcal{E}}\right]\to {\mathcal S}$ 
is universal among morphisms for which $\mathcal E$ pulls back to $p$ times a Cartier divisor.

The same construction can be performed with respect to
the topological and the analytic categories after application of $\_^{an}, \_^{top}$ since  these lax 2-functors preserve finite homotopy limits.

The second ingredient is the construction of the {\em canonical stack} due to Vistoli (see \cite{Vistoli}).
Recall that a Deligne-Mumford stack $\mathcal X$ over a field of characteristic zero 
has quotient singularities if it has a (locally \'etale in the algebraic case) cover 
which is a scheme with quotient singularities, namely locally the quotient of a 
smooth variety by a finite group. 
If $\mathcal{X}$ is a separated analytic  Deligne-Mumford stack  of finite
type with generic trivial isotropy having only quotient singularities 
then there is a smooth canonical stack
$\mathcal{X}^{can} \to\mathcal{X}$  which is an equivalence in codimension 1 and 
which is unique up to equivalence.

 Let $\mathcal{S}^{an}$ be an analytic smooth Deligne-Mumford stack  and $\mathcal{E}$ be a normal crossings divisor. 
 Set $\Delta^n$ for the open disk around the origin in $\C^n$ and $E_j=\{z_j=0\}\subset \Delta^n$ be the 
coordinate hyperplanes associated to a system of complex coordinates $(z_j)$. 
 An \'etale map $ \eta: \Delta^{n} \to \mathcal{S}^{an}$ is  said to be a chart adapted to  
 $\mathcal{E}$ if  
 \[\eta^*\mathcal{E}=\left\{ (z_i)_ {i=1}^n;  \  \prod_{c=1}^k z_{i_c} =0 \right\}=\sum_{c=1}^k {E}^{\eta}_{i_c},\]
where the map $c\mapsto i_c$ is injective. In order to be precise, we shall 
 at some point display the dependence of $k$ in $\eta$ and use the notation $k=k(\eta)$. 

\begin{prop}\label{rootstack}  
If $\mathcal{S}^{an}$ is an analytic smooth Deligne-Mumford stack with trivial generic isotropy and $\mathcal{E}$ is a normal crossing divisor on  $\mathcal{S}^{an}$, then there exists a smooth Deligne-Mumford stack 
$e_p:\mathcal{S}^{an}[p,\mathcal{E}] \to \mathcal{S}^{an}$ unique up to equivalence
with the same moduli space as $\mathcal{S}^{an}$ such that for every adapted chart $\eta$ we have an equivalence:
\[\Delta^n \times_{\mathcal{S}^{an}, \eta, e_p} \mathcal{S}^{an}\left[p,\mathcal{E}\right]  \simeq  \Delta^n\left[\sqrt[p]{E_{i_1}^{\eta}}\right] \times_{\Delta^n}\times \ldots
\times_{\Delta^n} \Delta^n\left[\sqrt[p]{ E_{i_k}^{\eta}}\right]. \]
\end{prop}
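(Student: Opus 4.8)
The plan is to define $\mathcal{S}^{an}[p,\mathcal{E}]$ to be Vistoli's canonical stack of the ordinary root stack $\mathcal{S}^{an}[\sqrt[p]{\mathcal{E}}]$ of the Cartier divisor $\mathcal{E}$, and then to verify the asserted local description by a toric computation in one adapted chart. It is straightforward that $\mathcal{S}^{an}[\sqrt[p]{\mathcal{E}}]$ is a separated analytic Deligne--Mumford stack of finite type with proper diagonal and trivial generic isotropy, since it agrees with $\mathcal{S}^{an}$ over the dense open substack $\mathcal{S}^{an}\setminus\mathcal{E}$; so the only hypothesis of the canonical stack construction left to check is that it has quotient singularities. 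In an adapted chart $\eta\colon\Delta^n\to\mathcal{S}^{an}$ with $\eta^*\mathcal{E}=\{z_{i_1}\cdots z_{i_k}=0\}$ the pullback of the root stack is $[\,\{w^p=z_{i_1}\cdots z_{i_k}\}/\mu_p\,]$, with $\mu_p$ scaling $w$; computing invariant monomials one identifies the scheme $\{w^p=z_{i_1}\cdots z_{i_k}\}$ with the quotient $\mathbb{A}^k/\!/\mu_p^{k-1}$, where $\mu_p^{k-1}=\ker(\mu_p^k\to\mu_p)$ is the kernel of the product homomorphism, acting coordinatewise, and $z_{i_c}=a_c^p$, $w=a_1\cdots a_k$. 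Thus $\mathcal{S}^{an}[\sqrt[p]{\mathcal{E}}]$ is \'etale-locally a quotient of a smooth scheme by a finite group, hence has quotient singularities (and is genuinely singular for $k\geq 2$); moreover the invariants of the scaling $\mu_p$ recover $\mathcal{O}_{\Delta^n}$, so its coarse space is that of $\mathcal{S}^{an}$. We may therefore set $\mathcal{S}^{an}[p,\mathcal{E}]:=\bigl(\mathcal{S}^{an}[\sqrt[p]{\mathcal{E}}]\bigr)^{can}$: a smooth Deligne--Mumford stack, unique up to equivalence, which, being an equivalence in codimension $1$, has the same moduli space as $\mathcal{S}^{an}$.

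For the local equivalence, note that both constructions commute with the \'etale base change $\eta$: the root stack because it is a $2$-fibered product, and the canonical stack because \'etale pullback preserves smoothness and equivalences in codimension $1$, so by uniqueness the result is again the canonical stack. Hence $\Delta^n\times_{\mathcal{S}^{an},\eta,e_p}\mathcal{S}^{an}[p,\mathcal{E}]\simeq\bigl(\Delta^n[\sqrt[p]{z_{i_1}\cdots z_{i_k}}]\bigr)^{can}$, and it remains to identify this with the iterated root stack $\mathcal{R}:=\Delta^n[\sqrt[p]{E^{\eta}_{i_1}}]\times_{\Delta^n}\cdots\times_{\Delta^n}\Delta^n[\sqrt[p]{E^{\eta}_{i_k}}]$. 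Now $\mathcal{R}=[\,\{w_1^p=z_{i_1},\dots,w_k^p=z_{i_k}\}/\mu_p^k\,]$, and the displayed scheme is the graph of a morphism, hence a polydisc, so $\mathcal{R}$ is smooth. On $\mathcal{R}$ each $E^{\eta}_{i_c}$ pulls back to $p$ times the tautological Cartier divisor $\widetilde{E}_{i_c}$, so $\eta^*\mathcal{E}=\sum_c E^{\eta}_{i_c}$ pulls back to $p\sum_c\widetilde{E}_{i_c}$; the universal property of the root stack of $\eta^*\mathcal{E}$ then yields a canonical $\Delta^n$-morphism $\mathcal{R}\to\Delta^n[\sqrt[p]{z_{i_1}\cdots z_{i_k}}]$. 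Over the complement of $\bigcup_{c<c'}\{z_{i_c}=z_{i_{c'}}=0\}$, a closed subset of codimension $2$, the divisor $\eta^*\mathcal{E}$ is locally a single smooth coordinate hyperplane and both stacks reduce to its ordinary $p$-th root stack, so this morphism is an equivalence in codimension $1$. Since $\mathcal{R}$ is smooth, uniqueness of the canonical stack gives $\mathcal{R}\simeq\bigl(\Delta^n[\sqrt[p]{z_{i_1}\cdots z_{i_k}}]\bigr)^{can}$, which is the required equivalence.

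Uniqueness of $\mathcal{S}^{an}[p,\mathcal{E}]$ is then formal: adapted charts cover $\mathcal{S}^{an}$, so any smooth $\mathcal{T}\to\mathcal{S}^{an}$ with the same moduli space satisfying the stated local equivalences carries a Cartier divisor --- obtained by gluing the canonical tautological divisors on adapted charts --- whose $p$-th multiple is the pullback of $\mathcal{E}$; this produces a morphism $\mathcal{T}\to\mathcal{S}^{an}[\sqrt[p]{\mathcal{E}}]$ which chart by chart is an equivalence in codimension $1$, so $\mathcal{T}$ is the canonical stack. I expect the main difficulty to be the middle step --- pinning down the canonical stack of $\Delta^n[\sqrt[p]{z_{i_1}\cdots z_{i_k}}]$ as the iterated root stack $\mathcal{R}$ --- which combines the toric normal form of $\{w^p=z_{i_1}\cdots z_{i_k}\}$, the construction of the comparison morphism via the root-stack universal property, and the reduction of the codimension-$1$ verification to the case of a single smooth branch; the rest is bookkeeping with \'etale charts.
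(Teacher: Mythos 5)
Your proof is correct, but it takes a genuinely different route from the paper's. You build $\mathcal{S}^{an}[p,\mathcal{E}]$ as the canonical stack of the \emph{single} root stack $\mathcal{S}^{an}[\sqrt[p]{\mathcal{E}}]$ of the whole divisor, whereas the paper forms the canonical stack of the $2$-fibered product $\mathcal{S}^{an}[\sqrt[p]{\mathcal{E}_1}]\times_{\mathcal{S}^{an}}\cdots\times_{\mathcal{S}^{an}}\mathcal{S}^{an}[\sqrt[p]{\mathcal{E}_r}]$ over the global irreducible components of $\mathcal{E}$. Both routes hinge on showing the starting singular object has quotient singularities and that its canonical stack pulls back, in an adapted chart, to the iterated root stack $\mathcal{R}$. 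Your toric normal form $\{w^p=z_{i_1}\cdots z_{i_k}\}\cong\Delta^k/\!/\mu_p^{k-1}$ (with $\mu_p^{k-1}=\ker(\mu_p^k\to\mu_p)$) is a more direct way of seeing quotient singularities than the paper's fundamental-group argument on the complement of a codimension-$2$ locus, and your explicit construction of the comparison morphism $\mathcal{R}\to\Delta^n[\sqrt[p]{z_{i_1}\cdots z_{i_k}}]$ via the root-stack universal property, followed by the codimension-$1$ reduction to a single smooth branch, fills in a step the paper leaves largely implicit. Your route also sidesteps all the bookkeeping with partitions $I_1,\dots,I_m$ of the local branches into global components, which the paper needs precisely because $\mathcal{E}$ may be non-simple normal crossing (so a component may self-intersect); this is a genuine simplification. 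The trade-off is that the paper's model, being already a product over irreducible components, makes the next statement (allowing a different ramification index $p_i$ on each $\mathcal{E}_i$) an immediate corollary, whereas from your construction that generalization requires a bit more work --- e.g.\ by taking instead the fibered product over the irreducible components of the root stacks with the respective exponents, or by adapting the chart-level analysis.
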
   
\begin{proof}
We first claim that  
$\mathcal{S}^{an}\left[\sqrt[p]{\mathcal{E}_1}\right]
\times_{\mathcal{S}} \mathcal{S}\left[\sqrt[p]{\mathcal{E}_2}\right]\times_{\mathcal{S}} \cdots\times_{\mathcal{S}} \mathcal{S}\left[\sqrt[p]{\mathcal{E}_k}\right] $ has quotient singularities. 

Indeed, if one removes a codimension $\ge 2$
closed substack $\mathcal{Z}$ 
from the {\em smooth} stack 
\[\mathcal{V}=\Delta^n\left[\sqrt[p]{E_{i_1}^{\eta}}\right] \times_{\Delta^n} \cdots\times_{\Delta^n} \Delta^n\left[\sqrt[p]{ E_{i_k}^{\eta}}\right]\] 
the fundamental group will not change. Such a $\mathcal{Z}$ is actually the substack over $Z\subset \Delta^n$ where $Z$ is a closed analytic subspace. 
Then the universal covering stack of $\mathcal{V}$ is $\Delta^n$ and $\mathcal{V}=[\mu_p^k \backslash \Delta^n]$. Here $\mu_p$ is the group of $p$-th roots of unity and the 
$s$-th factor acts by multiplication of the $s$-th coordinate. 

Consider now the stack $\mathcal{U}=\Delta^n\left[\sqrt[p]{E_{I_1}}\right] \times_{\Delta^n} \cdots \times_{\Delta^n}\Delta^n \left[\sqrt[p]{E_{I_m}}\right]$, where 
we denote $E_{I}=\sum_{j\in I} E_j^{\eta}$, and $I_j$, for $1\le l \le  m$ form a partition  
of the set $\{ i_1, \ldots, i_k \}$.   
The inclusion of the residual gerbe at the origin in $\mathcal{U}$ is a deformation retract. 
Then the local uniformizability of analytic Deligne-Mumford stacks implies that
$\mathcal{U}\simeq[G\backslash U]$, where $U$ is an analytic space, $G$ has a fixed point over the origin and the natural map $m_{\mathcal U}:U\to \Delta^n$ is $G$-equivariant. Then 
$\mathcal{U}\setminus m^{-1}(Z)$ is a connected uniformizable stack \'etale over $\Delta^n\setminus Z$, in particular
$[\mathcal{U}\setminus m^{-1}(Z)]\simeq[G'\backslash(\Delta ^ n \setminus  m_{\mathcal V}^{-1}(Z)) ]$, for some finite group $G'$. 
As the root stack construction behaves well under pull back, there is 
an  isomorphism ${U}=G'\backslash \Delta^n$, where $G'=\ker (\mu_p^  k \to \mu_p^m)$ 
is the kernel of the homomorphism $(\zeta_l)\mapsto (\prod_{l \in I_1} \zeta_l,\ldots, \prod_{l \in I_m}\zeta_l)$. This proves the claim. 

Furthermore, we can define, using Vistoli's canonical stack which applies to a Deligne-Mumford stack $\mathcal{X}$ with quotient singularities and yields a smooth Deligne-Mumford stack together with a map 
$\psi:\mathcal{X}^{can} \to \mathcal{X}$ which is an equivalence in codimension one, see \cite{GerSa}:

\[ \mathcal{S}^{an}[p, {\mathcal{E}}]=\left(\mathcal{S}^{an}\left[\sqrt[p]{\mathcal{E}_1}\right]
\times_{\mathcal{S}} \mathcal{S}\left[\sqrt[p]{\mathcal{E}_2}\right]\times_{\mathcal{S}} \cdots \times_{\mathcal{S}}\mathcal{S}\left[\sqrt[p]{\mathcal{E}_k}\right]\right)^{can}.\]
The uniqueness statement is the main theorem in \cite{GerSa}  at least in the algebraic case. The proof given there can be adapted the (easier) analytic case. 

\end{proof}

Obviously, the construction does not require the ramification indices to be equal and the proof above shows that: 
\begin{prop}
Let $\mathcal{E}_1, \ldots, \mathcal{E}_r$, $1\leq r\leq \infty$, be the irreducible components of $\mathcal{E}$ and fix  $\mathbf{p}:=(p_1,\ldots, p_r)\in \mathbb{N}^{*r}$. 
There is a unique (up to equivalence) smooth Deligne-Mumford stack $\mathcal{S}^{an}\left[\mathbf{p}, \mathcal{E}\right] \to \mathcal{S}^{an}$  which is an equivalence 
outside $\mathcal{E}$ and ramifies with index $p_i$ on $\mathcal{E}_i$. 
\end{prop}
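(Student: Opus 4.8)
The plan is to reduce to the equal-exponent case of Proposition~\ref{rootstack}, re-running its proof while carrying the individual indices $p_i$ along, and to extract existence and uniqueness from Vistoli's canonical stack.

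First I would form the iterated fibre product
\[\mathcal{T}\ :=\ \mathcal{S}^{an}\!\left[\sqrt[p_1]{\mathcal{E}_1}\right]\times_{\mathcal{S}^{an}}\cdots\times_{\mathcal{S}^{an}}\mathcal{S}^{an}\!\left[\sqrt[p_r]{\mathcal{E}_r}\right].\]
The irreducible components of a closed analytic normal crossing divisor form a locally finite family, so on any chart $\eta:\Delta^n\to\mathcal{S}^{an}$ adapted to $\mathcal{E}$ only finitely many $\mathcal{E}_i$ meet the image; hence this product is locally a finite one and well defined even for $r=\infty$, and one sets $\mathcal{S}^{an}[\mathbf{p},\mathcal{E}]:=\mathcal{T}^{can}$. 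The only thing to check is that $\mathcal{T}$ has quotient singularities, and this is where the argument of Proposition~\ref{rootstack} is used nearly verbatim.

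On an adapted chart, relabel the branches of $\eta^*\mathcal{E}$ as $E_1^\eta,\dots,E_k^\eta$ and let $\{I_1,\dots,I_m\}$ be the partition of $\{1,\dots,k\}$ grouping the branches by the global component they lie on, with exponent $q_j:=p_i$ attached to the block $I_j$ coming from $\mathcal{E}_i$. Exactly as in Proposition~\ref{rootstack} one gets
\begin{align*}
\mathcal{T}\times_{\mathcal{S}^{an},\eta}\Delta^n\ &\simeq\ \Delta^n\!\left[\sqrt[q_1]{\textstyle\sum_{c\in I_1}E_c^\eta}\right]\times_{\Delta^n}\cdots\times_{\Delta^n}\Delta^n\!\left[\sqrt[q_m]{\textstyle\sum_{c\in I_m}E_c^\eta}\right]\\
&\simeq\ \left[\,\textstyle\prod_{j}\mu_{q_j}\ \backslash\ V\,\right],
\end{align*}
where $V=\{(z,(w_j))\in\Delta^n\times\C^m:\ w_j^{q_j}=\prod_{c\in I_j}z_c,\ 1\le j\le m\}$ and the $j$-th factor $\mu_{q_j}$ scales $w_j$. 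Since the blocks $I_j$ are pairwise disjoint, $V$ splits as a product over $j$ of the hypersurfaces $\{w_j^{q_j}=\prod_{c\in I_j}z_c\}$ (times a polydisc in the remaining coordinates), and each of these is the quotient of a polydisc by the finite group $\ker\big(\mu_{q_j}^{I_j}\to\mu_{q_j}\big)$, the map being the product of coordinates — this is precisely the computation in the proof of Proposition~\ref{rootstack}, the only modification being that the root exponent depends on the block. Hence $V$, and therefore $\mathcal{T}$, has quotient singularities, Vistoli's canonical stack $\mathcal{T}^{can}\to\mathcal{T}$ exists, and being an equivalence in codimension one it is an equivalence away from $\mathcal{E}$ (over $\mathcal{S}^{an}\setminus\mathcal{E}$ each root stack, hence $\mathcal{T}$, already coincides with the smooth stack $\mathcal{S}^{an}\setminus\mathcal{E}$) and it does not change the codimension-one behaviour, so $\mathcal{S}^{an}[\mathbf{p},\mathcal{E}]\to\mathcal{S}^{an}$ is an equivalence outside $\mathcal{E}$ and ramifies with index $p_i$ along $\mathcal{E}_i$.

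For uniqueness I would argue as follows. If $\mathcal{X}\to\mathcal{S}^{an}$ is smooth, an equivalence outside $\mathcal{E}$, and ramified with index $p_i$ along $\mathcal{E}_i$, then the local smooth model shows that the pullback of $\mathcal{E}_i$ is $p_i$ times a Cartier divisor, so the universal property of the root stack (\cite{AOV,cadman2007}) yields a morphism $\mathcal{X}\to\mathcal{S}^{an}[\sqrt[p_i]{\mathcal{E}_i}]$ over $\mathcal{S}^{an}$ for every $i$, hence a morphism $\mathcal{X}\to\mathcal{T}$. This morphism is an equivalence in codimension $\le 1$: it is an isomorphism over $\mathcal{S}^{an}\setminus\mathcal{E}$ by hypothesis, and at the generic point of each $\mathcal{E}_i$ both sides have ramification index $p_i$ over $\mathcal{S}^{an}$ and the morphism respects it. Since $\mathcal{X}$ is smooth, the uniqueness of the canonical stack (the main theorem of \cite{GerSa}, in the analytic form already invoked for Proposition~\ref{rootstack}) gives $\mathcal{X}\simeq\mathcal{T}^{can}=\mathcal{S}^{an}[\mathbf{p},\mathcal{E}]$.

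I expect the genuine content to be confined to the local quotient-singularity computation with unequal exponents and its globalisation — i.e.\ checking that the proof of Proposition~\ref{rootstack} is insensitive to the equality of the root exponents and that the local canonical stacks glue through the uniqueness statement; the local finiteness for $r=\infty$, the equivalence off $\mathcal{E}$, and the ramification indices are then formal.
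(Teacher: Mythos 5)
Your proposal is correct and takes essentially the same route as the paper, which in fact only says that the proof of Proposition~\ref{rootstack} goes through with unequal indices. You have filled in exactly the details the paper leaves implicit: the local quotient-singularity computation with block-dependent exponents, the observation that local finiteness of the components handles $r=\infty$, and the uniqueness via the universal property of root stacks combined with the canonical-stack uniqueness from \cite{GerSa}.
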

 
This is probably  valid in the algebraic  category over an algebraically closed field assuming tame ramification. 

\begin{rem} (See \cite{EySa} for a proof.) If $\mathcal{S}^{an}$ carries a K\"ahler metric, so does $\mathcal{S}^{an}\left[\mathbf{p}, \mathcal{E}\right]$. 
 
\end{rem}

\subsection{The analytic stacks $\Mgbaran[\mathbf{p}]$ }

\begin{defi} Given $\mathbf{p}\in (\N^*)^{N_{g,n}}$ we denote by  ${\overline{\mathcal{M}_{g}^{n \ an}}}[\mathbf{p}]$
the smooth proper Deligne Mumford stack  ${\overline{\mathcal{M}_{g}^{n \ an}}}[\mathbf{p},\mathcal{D}]$.
If $p, q\geq 2$ we shall use the specific notation  ${\overline{\mathcal{M}_{g}^{n \ an}}}[p,q]$ for 
the smooth proper Deligne Mumford stack  ${\overline{\mathcal{M}_{g}^{n \ an}}}[(p,q,q,\ldots,q),\mathcal{D}]$ and will drop $q$ from the notation, when $q=p$.  
\end{defi}

\begin{theo}\label{Kahlerorbifold}
 Let $g,n \in \N$ such that $2g-2+n>0$, $(g,n)\not\in\{(1,1), (2,0)\}$. 
 The quotient $\PMod(\Sigma_{g}^n)\slash \Mod(\Sigma_{g}^n)[\mathbf{p}]$  
 is the fundamental group of the compact K\"ahler orbifold
${\overline{\mathcal{M}_{g}^{n \ an}}}[\mathbf{p}]$. 
 \end{theo}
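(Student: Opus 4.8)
The plan is to compute the orbifold fundamental group of $\overline{\mathcal{M}_g^{n\ an}}[\mathbf{p}]$ from its description as a root stack (Proposition \ref{rootstack}) and to identify it with the stated quotient. First I would recall that, since $\overline{\mathcal{M}_g^{n\ an}}$ is an orbifold with $\pi_1$ equal to $\PMod(\Sigma_g^n)$ (the analytic moduli stack having fundamental group the pure mapping class group) and $\mathcal{M}_g^{n\ an}$ is its open substack with complement the normal crossing divisor $\mathcal{D}=\sum_i \mathcal{D}_i$, the inclusion $\mathcal{M}_g^{n\ an}\hookrightarrow \overline{\mathcal{M}_g^{n\ an}}$ induces a surjection $\PMod(\Sigma_g^n)\to \PMod(\Sigma_g^n)$ whose kernel is normally generated by the loops around the divisors $\mathcal{D}_i$; such a meridian loop is (conjugate to) the Dehn twist $T_{c_i}$ along a simple closed curve $c_i$ of the homeomorphism type labelled $i$, because the local monodromy of the family of stable curves around the boundary divisor $\mathcal{D}_i$ is exactly that Dehn twist (the classical Picard–Lefschetz / Dehn twist computation at a node). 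I would quote this meridian-is-Dehn-twist fact and then build from it.

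Next I would use Proposition \ref{rootstack} to understand how passing to $\overline{\mathcal{M}_g^{n\ an}}[\mathbf{p}]$ changes $\pi_1$. On an adapted chart $\eta\colon \Delta^n\to \overline{\mathcal{M}_g^{n\ an}}$ the pullback of the stack is $\prod_{\Delta^n} \Delta^n[\sqrt[p_{i_c}]{E_{i_c}^\eta}]$, whose $\pi_1$ is $\Z^{k}$ generated by the meridians $\mu_{i_c}$ — this replaces, in a neighbourhood of the stratum, the meridian $\mu_{i_c}$ by a generator whose $p_{i_c}$-th power is the \emph{old} meridian of $\mathcal{D}_{i_c}$. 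Globally, since the canonical stack map is an equivalence in codimension one and $\overline{\mathcal{M}_g^{n\ an}}[\mathbf{p}]$ has the same moduli space and the same open part $\mathcal{M}_g^{n\ an}$, a van Kampen / Seifert–van Kampen argument for stacks (covering $\overline{\mathcal{M}_g^{n\ an}}[\mathbf{p}]$ by $\mathcal{M}_g^{n\ an}$ glued to tubular neighbourhoods of the root-stack strata) gives a presentation: $\pi_1(\overline{\mathcal{M}_g^{n\ an}}[\mathbf{p}])$ is the quotient of $\pi_1(\mathcal{M}_g^{n\ an})=\PMod(\Sigma_g^n)$ by the normal subgroup generated by the $p_i$-th powers of the meridians of $\mathcal{D}_i$ — equivalently, by the $p_i$-th powers of Dehn twists along curves of type $i$. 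That normal subgroup is by definition $\Mod(\Sigma_g^n)[\mathbf{p}]$, so $\pi_1(\overline{\mathcal{M}_g^{n\ an}}[\mathbf{p}])\cong \PMod(\Sigma_g^n)/\Mod(\Sigma_g^n)[\mathbf{p}]$.

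It remains to justify the orbifold and Kähler assertions. For the orbifold property I would check that $\overline{\mathcal{M}_g^{n\ an}}[\mathbf{p}]$ has trivial generic isotropy: the root stack construction does not change the generic point, and $\overline{\mathcal{M}_g^{n\ an}}$ is already an orbifold away from $(g,n)=(2,0)$ which is excluded (the case $(1,1)$ is also excluded); smoothness is part of Proposition \ref{rootstack}, and properness is clear since the moduli space $\overline{M_g^n}$ is proper and the root stack is proper over $\overline{\mathcal{M}_g^{n\ an}}$. For the Kähler property I would invoke the Remark following the second proposition (citing \cite{EySa}): $\overline{\mathcal{M}_g^{n\ an}}$ carries a Kähler metric in the sense of \cite{EySa} by Knudsen–Mumford projectivity of $\overline{M_g^n}$, and the root stack of a Kähler orbifold along a normal crossing divisor with arbitrary ramification indices is again Kähler. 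Compactness follows from properness together with the Kähler condition.

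\textbf{Main obstacle.} I expect the delicate point to be the global van Kampen step for the non-simple normal crossing divisor: the strata of $\mathcal{D}^n$ can be singular (as the excerpt emphasizes for $\mathcal{D}^1_0$), so one cannot naively take products of disks, and one must argue — as in the proof of Proposition \ref{rootstack}, using local uniformizability of analytic Deligne–Mumford stacks and the kernel-of-$\mu_p^k\to\mu_p^m$ description — that the canonical stack correction does not introduce any new $\pi_1$, i.e. that it is an isomorphism on fundamental groups (being an equivalence in codimension one and the complement of a codimension $\ge 2$ substack being $\pi_1$-negligible in the smooth locus). Pinning down that the meridian around $\mathcal{D}_i$ is conjugate to the type-$i$ Dehn twist in $\PMod(\Sigma_g^n)$ with the correct identification of conjugacy classes (one per homeomorphism type of complementary subsurface) is the other point requiring care, though it is classical.
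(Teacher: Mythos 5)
Your proposal follows essentially the same route as the paper: identify meridians around the boundary divisors $\mathcal{D}_i$ with the corresponding Dehn twists in $\pi_1(\mathcal{M}_g^{n\ an})\cong\PMod(\Sigma_g^n)$, invoke Proposition~\ref{rootstack} for smoothness/properness of $\overline{\mathcal{M}_g^{n\ an}}[\mathbf{p}]$, delete the codimension-$2$ singular locus of $\mathcal{D}^n$ to reduce to the simple normal crossing case, apply the stack-theoretic van Kampen theorem of \cite{Zoo}, and deduce the K\"ahler property from \cite{EySa} via projectivity of $\overline{M_g^n}$; your ``main obstacle'' paragraph correctly anticipates the codimension-$2$ deletion that the paper performs. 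One small slip: you write that $\overline{\mathcal{M}_g^{n\ an}}$ has $\pi_1=\PMod(\Sigma_g^n)$ and that the inclusion induces a surjection $\PMod(\Sigma_g^n)\to\PMod(\Sigma_g^n)$ --- it is the open stack $\mathcal{M}_g^{n\ an}$ whose fundamental group is $\PMod(\Sigma_g^n)$, while $\overline{\mathcal{M}_g^{n\ an}}$ is simply connected (exactly because all Dehn twists become meridians), so the surjection in question is onto the quotient group rather than onto $\PMod(\Sigma_g^n)$ itself; your later computation uses the correct $\pi_1$, so this does not affect the argument.
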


 \begin{proof}[Proof of Theorem \ref{Kahlerorbifold}]
There is an isomorphism
$\pi_1(\mathcal{M}_{g}^{n \ an})\simeq \Mod(\Sigma_{g}^{n})$ which carries the conjugate of the meridian loops around the various components of $\mathcal{D}^n$ to 
the Dehn twists along simple closed curves. 
By Proposition \ref{rootstack} the analytic stack ${\overline{\mathcal{M}_{g}^{n \ an}}}[\mathbf{p}]$
 is a proper smooth Deligne Mumford orbifold endowed with a map ${\overline{\mathcal{M}_{g}^{n \ an}}}[\mathbf{p}] \to {\overline{\mathcal{M}_{g}^{n \ an}}}$ inducing 
an isomorphism on the moduli space.
As 
${\overline{\mathcal{M}_{g}^{n \ an}}}[\mathbf{p}]$ is smooth, we can delete the codimension $2$ singular substack of $\mathcal{D}^n$ without changing  the orbifold fundamental group. 
The stack-theoretic   Van Kampen theorem  from \cite{Zoo} implies that 
the fundamental group of the  orbifold
${\overline{\mathcal{M}_{g}^{n \ an}}}[\mathbf{p}]$ is 
$\PMod(\Sigma_{g}^n)\slash \Mod(\Sigma_{g}^n)[\mathbf{p}]$.

Eventually, the orbifold ${\overline{\mathcal{M}_{g}^{n \ an}}}[\mathbf{p}]$  has projective moduli space and hence is a  K\"ahler orbifold by \cite{EySa}.  
\end{proof}
 
For the easy description of the isotropy groups of these orbifolds, see section \ref{subsect:isotropy}. 
 
\begin{exe}
The group  $\PMod(\Sigma_{0}^5)\slash \Mod(\Sigma_{0}^5)[5]$ is a  uniform complex hyperbolic group constructed by Hirzebruch:
it corresponds to the quotient of the surface of  degree $5^5$ over $\mathbb{P}^2$ which ramifies 
with multiplicity $5$ on $CEVA(2)$ by the (abelian)  group of this Galois covering. For general $p$ see \cite{ajm}. 
  \end{exe}

 A similar statement works for $\Mod(\Sigma_g^n)\slash \Mod(\Sigma_{g}^n)[p]$ taking the quotient of ${\overline{\mathcal{M}_{g}^{n \ an}}}[p]$
 by the group of permutations of $n$ punctures, see \cite{Rom} for group actions on stacks.

  \begin{rem} \label{codim2calculation}
   If we delete the singular locus $\mathcal{D}^{n, sing}$ of $\mathcal D$, the divisor of the pair $$({\overline{\mathcal{M}_{g}^{n \ an}}} \setminus \mathcal{D}^{n, sing}, \mathcal{D}^{n, reg})$$
   is actually an effective smooth divisor. The root stack $$ {\overline{\mathcal{M}_{g}^{n \ an \ 0}}}[\mathbf{p}]:= ({\overline{\mathcal{M}_{g}^{n \ an}}} \setminus \mathcal{D}^{n, sing}) [\sqrt[\mathbf{p}]{\mathcal{D}^{n, reg}} ]$$ is the complement of an 
closed substack of codimension at least $2$  in ${\overline{\mathcal{M}_{g}^{n \ an}}}[\mathbf{p}]$ hence the natural map  ${\overline{\mathcal{M}_{g}^{n \ an \ 0}}}[\mathbf{p}]\to {\overline{\mathcal{M}_{g}^{n \ an}}}[\mathbf{p}]$ 
induces an isomorphism
of the fundamental groups.

Uniformizability of $ {\overline{\mathcal{M}_{g}^{n \ an}}}[\mathbf{p}]$ can then be formulated only in terms of the moduli map $$\chi: {\overline{\mathcal{M}_{g}^{n \ an \ 0}}}[\mathbf{p}]\to {\overline{{M}_{g}^{n \ an}}}.$$ 
Indeed the isotropy morphism at (a lift of) a point $x \in {\overline{{M}_{g}^{n \ an}}}$ identifies with   the morphism $\pi_1 ( \chi^{-1}( U)) \to {\overline{\mathcal{M}_{g}^{n \ an \ 0}}}[\mathbf{p}]$ where $U\subset {\overline{{M}_{g}^{n \ an}}} $ is a small neighborhood of $x$. 
If it holds we can construct an \'etale Galois covering scheme $Z \to {\overline{\mathcal{M}_{g}^{n \ 0}}}[\mathbf{p}]$, smooth and quasiprojective, such that, if $G$ is the Galois group,   $[Z/G]={\overline{\mathcal{M}_{g}^{n \ 0}}}[\mathbf{p}]$, 
the normalization $\overline{Z}$ of ${\overline{{M}_{g}^{n}}}$ in the field of rational functions of $Z$ is smooth, the $G$ action extends to $\overline{Z}$ and $[\overline{Z} / G] \cong {\overline{\mathcal{M}_{g}^{n}}}[\mathbf{p}]$
  \end{rem}

\subsection{Orbifold compactifications and Teichm\"uller level structures}

A {\em Teichm\"uller level structure} $\lambda$ is a 
finite index normal  subgroup  $H_{\lambda} \triangleleft \PMod(\Sigma_g^n)$ (see \cite{Bry}). 
The natural finite \'etale $ G_{\lambda}:=   H_{\lambda} \backslash \PMod(\Sigma_g^n)$-covering $\mathcal{M}_g^{n, \lambda}\to \mathcal{M}_g^{n}$
can be compactified uniquely to a finite  representable $G_{\lambda}$-ramified covering $\overline{\mathcal{M}_g^{n, \lambda}}\to \overline{\mathcal{M}_g^{n}}$  (see \cite{BoPi}). 

Denote by $\overline{\mathcal{M}_g^{n}}[\lambda]$ the stack quotient $[G_{\lambda}\backslash \overline{\mathcal{M}_g^{n, \lambda}}]$ (in the sense of \cite{Rom}). 
Then $\overline{\mathcal{M}_g^{n}}[\lambda]$ is a
 Deligne-Mumford stack compactifying  ${\mathcal{M}_g^{n}}$ and dominating $\overline{\mathcal{M}_g^{n}}$. In particular, its moduli space is $\overline{M_g^n}$.

The scheme $\overline{\mathcal{M}_g^{n, \lambda}}$ or equivalently the stack $\overline{\mathcal{M}_g^{n}}[\lambda]$ may or may  not be smooth. 
The abelian level structure correspond to root stacks of $\mathcal{D}_0$ that are not smooth at the double locus of $\mathcal{D}_0$.  
For some  level structures $\lambda$,  $\overline{\mathcal{M}_g^{n}}[\lambda]$ is however smooth
(which is equivalent to  $\overline{\mathcal{M}_g^{n, \lambda}}$ being smooth)  and 
uniformizable (see \cite{Loo}  for $n=0$, and \cite{Boggi} in general),  
hence  $\overline{M^n_{g}}$ is a quotient of a smooth variety by a finite group.

Let $\mathcal{D}_g^n := \overline{\mathcal{M}_g^{n}}\setminus {\mathcal{M}_g^{n}}$.  
Then according to Knudsen (see \cite{Knu}) this is a divisor with normal crossings with an 
irreducible decomposition
\[\mathcal{D}_g^n= \sum_{i=1}^{N_{g,n}}\mathcal{D}_{g,i}^n.\]

Given a Teichm\"uller level structure  $\lambda$, we define 
$k_i(\lambda)\in  \N^*$ to be the order of the image of a Dehn twist 
corresponding to a loop encircling once $D_{g,i}^n$ in $G_{\lambda}$ and denote by $\mathbf{k}(\lambda)$ the vector $(k_i)_i=(k_i(\lambda))_i\in \N_{\ge 1}^{N_{g,n}}$.

\begin{prop} The Deligne-Mumford stack $\overline{\mathcal{M}_g^{n}}[\lambda]$ lies in a diagram of Deligne-Mumford stacks whose moduli space is $\overline{M_g^n}$ 
in which all maps are \'etale in codimension 1:
\[ \overline{\mathcal{M}_g^{n}}[\mathbf{k}(\lambda),\mathcal{D}_g^n ]  \to \overline{\mathcal{M}_g^{n}}[\lambda] \to
{\overline{\mathcal{M}_g^{n}}}\left[\sqrt[k_1]{\mathcal{D}_{g, 1}^n }\right]
\times_{\overline{\mathcal{M}_g^{n}}} {\overline{\mathcal{M}_g^{n}}} \left[\sqrt[k_2]{\mathcal{D}_{g, 2}^n }\right] \times_{\overline{\mathcal{M}_g^{n}}} \cdots 
\]
Furthermore, $ \overline{\mathcal{M}_g^{n}}[\lambda] $ is smooth if and only if  the first map is an equivalence and
$\overline{\mathcal{M}_g^{n}}[\lambda] \simeq \overline{\mathcal{M}_g^{n}}[\mathbf{k}(\lambda),\mathcal{D}_g^n ]$ 
\end{prop}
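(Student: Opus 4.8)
The statement relates three Deligne–Mumford stacks compactifying $\mathcal{M}_g^n$: the root stack $\overline{\mathcal{M}_g^{n}}[\mathbf{k}(\lambda),\mathcal{D}_g^n]$, the level-structure compactification $\overline{\mathcal{M}_g^{n}}[\lambda]$, and the iterated fibre product of root stacks of the individual components $\mathcal{D}_{g,i}^n$. The plan is to produce the two maps in the diagram using universal properties of root stacks, check that each is étale in codimension $1$, and then identify the smoothness of $\overline{\mathcal{M}_g^{n}}[\lambda]$ with the leftmost map being an equivalence via Vistoli's canonical stack (as in Proposition \ref{rootstack}).

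\textbf{Construction of the maps.} First I would construct the rightmost object. By definition each single-component root stack ${\overline{\mathcal{M}_g^{n}}}\bigl[\sqrt[k_i]{\mathcal{D}_{g,i}^n}\bigr]$ is universal among morphisms to $\overline{\mathcal{M}_g^{n}}$ along which $\mathcal{D}_{g,i}^n$ pulls back to $k_i$ times a Cartier divisor; the fibre product $\mathcal{F}:=\prod_{\overline{\mathcal{M}_g^{n}}} {\overline{\mathcal{M}_g^{n}}}\bigl[\sqrt[k_i]{\mathcal{D}_{g,i}^n}\bigr]$ is then universal with this property simultaneously for all $i$. To get the middle map $\overline{\mathcal{M}_g^{n}}[\lambda]\to\mathcal{F}$, I would use that on $\overline{\mathcal{M}_g^{n,\lambda}}$ the pullback of each $\mathcal{D}_{g,i}^n$ is, by the very definition of $k_i(\lambda)$ as the ramification order of the corresponding Dehn twist in $G_\lambda$, divisible by $k_i$ as a Cartier divisor (this is where I invoke \cite{BoPi} for the structure of $\overline{\mathcal{M}_g^{n,\lambda}}\to\overline{\mathcal{M}_g^{n}}$); hence by the universal property there is a canonical $\overline{\mathcal{M}_g^{n,\lambda}}\to\mathcal{F}$, which is $G_\lambda$-equivariant (naturality of the construction, using the pullback $f^*$ of Cartier divisors described in the Conventions), and descends to $[G_\lambda\backslash\overline{\mathcal{M}_g^{n,\lambda}}]=\overline{\mathcal{M}_g^{n}}[\lambda]\to\mathcal{F}$. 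For the leftmost map, note $\overline{\mathcal{M}_g^{n}}[\mathbf{k}(\lambda),\mathcal{D}_g^n]=\mathcal{F}^{can}$ by Proposition \ref{rootstack} and its $\mathbf{p}$-indexed variant; so the map $\mathcal{F}^{can}\to\mathcal{F}$ is Vistoli's canonical-stack morphism, and composing with $\overline{\mathcal{M}_g^{n}}[\lambda]\to\mathcal{F}$ one still needs a lift $\overline{\mathcal{M}_g^{n}}[\mathbf{k}(\lambda),\mathcal{D}_g^n]\to\overline{\mathcal{M}_g^{n}}[\lambda]$ — this is obtained because $\mathcal{F}^{can}$ is smooth, the map to $\mathcal{F}$ is an iso in codimension $1$, and the (étale, representable) map $\overline{\mathcal{M}_g^{n,\lambda}}\to\overline{\mathcal{M}_g^{n}}$ combined with the universal property over the locus where everything is smooth lets one extend the rational map $\overline{\mathcal{M}_g^{n}}[\mathbf{k}(\lambda),\mathcal{D}_g^n]\dashrightarrow\overline{\mathcal{M}_g^{n}}[\lambda]$ to a morphism by normality (purity of the branch locus). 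All three stacks have moduli space $\overline{M_g^n}$ by construction and by the discussion preceding Lemma \ref{pullback}.

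\textbf{Étale in codimension 1, and the smoothness criterion.} Étaleness in codimension $1$ of all arrows is checked on an adapted chart $\eta:\Delta^n\to\overline{\mathcal{M}_g^{n}}$: over the complement of the codimension-$\ge 2$ locus where two branches of $\mathcal{D}_g^n$ meet, all three stacks become the same smooth root stack $\Delta^n[\sqrt[k_i]{E_i}]$ over each smooth branch, so the maps are equivalences there; this is exactly the computation in the proof of Proposition \ref{rootstack}, together with the fact (Remark \ref{codim2calculation}) that deleting $\mathcal{D}^{n,sing}$ does not change fundamental groups or the codimension-$1$ geometry. Finally, for the last sentence: if $\overline{\mathcal{M}_g^{n}}[\lambda]$ is smooth, then $\overline{\mathcal{M}_g^{n}}[\lambda]\to\mathcal{F}$ is a morphism from a smooth stack to $\mathcal{F}$ that is an equivalence in codimension $1$, so by the uniqueness of the canonical stack (\cite{GerSa}, adapted to the analytic case as in Proposition \ref{rootstack}) it factors through an equivalence $\overline{\mathcal{M}_g^{n}}[\lambda]\simeq\mathcal{F}^{can}=\overline{\mathcal{M}_g^{n}}[\mathbf{k}(\lambda),\mathcal{D}_g^n]$, which forces the first map to be an equivalence; conversely if the first map is an equivalence then $\overline{\mathcal{M}_g^{n}}[\lambda]\simeq\overline{\mathcal{M}_g^{n}}[\mathbf{k}(\lambda),\mathcal{D}_g^n]$ is smooth by Proposition \ref{rootstack}.

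\textbf{Main obstacle.} The delicate point is not the codimension-$1$ comparison (routine, via adapted charts) but verifying that $\overline{\mathcal{M}_g^{n}}[\lambda]\to\mathcal{F}$ really exists as a morphism of stacks with the claimed equivariance — i.e. that the ramification order $k_i(\lambda)$ controls the pullback Cartier divisor $\mathcal{D}_{g,i}^n$ on all of $\overline{\mathcal{M}_g^{n,\lambda}}$ and not merely generically along $\mathcal{D}_{g,i}^n$. Here one must quote precisely the structure theory of compactified level covers (\cite{BoPi,Boggi}): the cover $\overline{\mathcal{M}_g^{n,\lambda}}\to\overline{\mathcal{M}_g^{n}}$ is, locally in an adapted chart, a product of cyclic covers totally ramified along the coordinate hyperplanes with indices dividing the $k_i$, so the divisibility is uniform. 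Once that is in hand, the rest is bookkeeping with universal properties and the canonical-stack uniqueness theorem.
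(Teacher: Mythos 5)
Your treatment of the second assertion (smoothness $\Leftrightarrow$ first map an equivalence) is correct, and your route via uniqueness of the Vistoli canonical stack is an acceptable substitute for the paper's one-line invocation of purity of the branch locus — the two are interchangeable here. You also correctly flag the first assertion's real difficulty: whether $\overline{\mathcal{M}_g^n}[\lambda]\to\mathcal{F}$ exists as a morphism of stacks, not merely away from a codimension-$2$ locus. Where you go wrong is in the resolution you propose. You assert that $\overline{\mathcal{M}_g^{n,\lambda}}\to\overline{\mathcal{M}_g^n}$ is, in an adapted chart, a \emph{product} of cyclic covers totally ramified along the coordinate hyperplanes. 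That is false in general. Locally at a curve with $r$ nodes, the level cover is the normalization of $\Delta^r$ in the cover of $\Delta^r\setminus E$ given by the kernel $L$ of $\mathbb{Z}^r\to G_\lambda$; since each generator maps to an element of order $k_{\tau(j)}$, one has $L\supseteq\prod_j k_{\tau(j)}\mathbb{Z}$, and the cover is $\Delta^r_{\mathbf{u}}/H$ with $H=L/\prod_j k_{\tau(j)}\mathbb{Z}\subseteq\prod_j\mu_{k_{\tau(j)}}$ acting coordinatewise. The product of cyclic covers is $\Delta^r_{\mathbf{u}}$ itself, i.e. the case $H=1$, which is exactly the smooth case. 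When $H\neq 1$ — as happens for abelian level structures, which the paper itself notes are singular at the double locus of $\mathcal{D}_0$ — the Weil divisor $\{u_j=0\}/H$ is not Cartier on $\Delta^r/H$ (its defining equation $u_j$ transforms by a nontrivial character of $H$), so $f_\lambda^*\mathcal{D}_{g,i}^n$ is not $k_i$ times a Cartier divisor on $\overline{\mathcal{M}_g^{n,\lambda}}$, and the universal property of the root stack gives no map. (The map $[\Delta^r/H]\to\mathcal{F}$ from the stack exists, but it does not factor through the coarse space $\Delta^r/H$, nor through $[(\Delta^r/H)/G']$.)

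So your argument for the existence of the middle arrow has a genuine gap; only its restriction to the complement of the codimension-$\ge 2$ singular locus of the pair is automatic, which is presumably the reading behind the paper's laconic ``the first statement is clear'' and the phrase ``étale in codimension $1$.'' In contrast, the leftmost arrow $\overline{\mathcal{M}_g^n}[\mathbf{k}(\lambda),\mathcal{D}_g^n]\to\overline{\mathcal{M}_g^n}[\lambda]$ does exist globally: with the local description above one has $\prod_j\mu_{k_{\tau(j)}}/H\cong Q\subseteq G_\lambda$, and the $\prod_j\mu_{k_{\tau(j)}}$-equivariant map $\Delta^r\to\Delta^r/H$ induces $[\Delta^r/\prod_j\mu_{k_{\tau(j)}}]\to[(\Delta^r/H)/Q]$, which glues. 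If you want to present a correct argument for the middle arrow you should either (i) state explicitly that it is only defined, and an equivalence, on the complement of a codimension-$\ge 2$ closed substack, or (ii) restrict attention to levels $\lambda$ for which the local monodromy $\mathbb{Z}^r\to G_\lambda$ is injective on $\prod_j(\mathbb{Z}/k_{\tau(j)})$ at every boundary stratum, which is precisely the smoothness criterion of the second assertion and is the case treated by Looijenga, Pikaart–de Jong and Boggi.
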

\begin{proof}
 The first statement  is clear, the second one is a consequence of the purity of the branch locus.
\end{proof}

It seems to be a rather delicate problem to understand the $\mathbf{k}(\lambda)$ that occur  and give rise to a smooth uniformizable compactification. 

\section{Quantum mapping class group representations}
\subsection{The setting of the skein TQFT}\label{tqft}

 A 3d TQFT   is, loosely speaking\footnote{See \cite{Tu} for a correct definition and a discussion of TQFT with anomalies.},  a 
functor from the category of surfaces into the category of finite dimensional vector spaces. 
Specifically, the objects of the first category are closed oriented surfaces endowed with 
colored banded points and morphisms between two objects are oriented cobordisms 
decorated by colored uni-trivalent ribbon graphs compatible with the banded points.
A banded point on a surface is a point with a tangent vector at that point, or equivalently 
a germ of an oriented interval embedded in the surface. There is a corresponding  
surface with colored boundary obtained by deleting a small neighborhood of the 
banded points and letting the boundary circles inherit the colors of the respective points.

We will use the TQFT functor $\mathcal V_p$, 
for $p\geq 3$ and a primitive root of unity $A$ of order $2p$, as 
defined in \cite{BHMV}. 
The vector space  associated by the functor $\mathcal V_p$ to a surface 
is called the {\em space of conformal blocks}.  Let $\Sigma_g$ denote the genus $g$ closed orientable surface, 
$H_g$ be a genus $g$ handlebody with $\partial H_g=\Sigma_g$. 
 Assume given a finite set  $\mathcal Y$ of banded 
points on $\Sigma_g$. Let $G$ be a uni-trivalent ribbon graph 
embedded in $H_g$ in such a way that 
$H_g$ retracts onto $G$, its univalent vertices are the banded points $\mathcal Y$ and it has no other intersections 
with $\Sigma_g$.

The natural number $p\geq 3$ is called the {\em level} of the TQFT. 
We define the {\em set of colors} in level $p$ to be 
${\mathcal C}_p=\{0,2,4,\ldots,p-3\}$, if $p$ is odd and 
${\mathcal C}_p=\{0,1,2,\ldots,\frac{p-4}{2}\}$, if $p$ is even, respectively. 
 
An edge coloring of $G$ is called {\em{$p$-admissible}} if the following conditions are satisfied: 

\begin{enumerate}
\item The {\em triangle inequality} is satisfied at each trivalent vertex of $G$. Namely,  if $i,j,k$ are the colors of the incoming edges,  one has $|j-k|\le i\le j+k$. 
\item The sum of of the three colors around a vertex is even, this condition being empty for odd $p$.
\item The sum of  the three colors around a vertex is at most $q=2p-4$, if $p$ is odd and 
at most $q=p-4$, if $p$ is even, respectively. We call it the {\em $q$-bound inequality}. 
\end{enumerate}

Fix a coloring of the banded points $\mathcal Y$. Then there exists a basis of the space of conformal blocks associated to the surface $(\Sigma_g, \mathcal Y)$ with the  colored banded points (or the corresponding surface with colored boundary) 
which is indexed by the set of all $p$-admissible colorings of $G$ 
extending the boundary coloring. We  denote by $W_{g, p, (i_1,i_2,\ldots,i_r)}$ (or, dropping $p$ from the notation, by $W_{g, (i_1,i_2,\ldots,i_r)}$, 
if $p$ is uniquely determined by the context)  the vector space associated to the closed surface $\Sigma_g$ with $r$ banded points colored by $i_1,i_2,\ldots,i_r\in \mathcal C_p$. Note that 
banded points colored by $0$ do not contribute. 

Observe that an admissible $p$-coloring of $G$ provides an element of the skein module \cite{Prz} 
$S_{A}(H_g, \mathcal Y, (i_1,i_2,\ldots,i_r))$ \footnote{If the context allows, we will use the shorthand notations $S_{A}(H_g,   \mathcal Y, (i_1,i_2,\ldots,i_r))=S_{A}(H_g, \mathcal Y)=
S_{A}(H_g)$.} of the handlebody  with the banded boundary points $\mathcal Y$ colored by $(i_1,i_2,\ldots,i_r)$, evaluated at the  
primitive $2p$-th root of unity $A$. This skein element is obtained by cabling the edges of $G$ by the Jones-Wenzl 
idempotents  prescribed by the coloring and having banded points colors fixed.  Jones-Wenzl idempotents were discovered by V. Jones \cite{Jo}
and their inductive construction is due to Wenzl \cite{We}.

We suppose that $H_g$ is embedded in a standard way into the $3$-sphere $S^3$, so that the closure of 
its complement is also a genus $g$ handlebody $\overline{H}_g$.  There is then a 
sesquilinear form: 
\[ \langle \;,\; \rangle=\langle \;,\; \rangle_p: S_{A}(H_g, \mathcal Y, (i_1,i_2,\ldots,i_r))\times S_{A}(\overline{H}_g, - \mathcal Y, (i_1,i_2,\ldots,i_r))\to \C\]
defined by 
\[ \langle x, y \rangle= \langle x \sqcup y \rangle.\]
Here $x\sqcup y$ is the element of $S_{A}(S^3)$ is the union of  $x$ and $y$ along $\mathcal Y$ in 
$H_g\cup\overline{H}_g=S^3$, and $\langle \; \rangle: S_{A}(S^3)\to \C$ is the Kauffman bracket invariant \cite{Ka}. 

The space of conformal blocks $W_{g,(i_1,i_2,\ldots,i_r)}$ is the quotient 
$S_{A}(H_g)/\ker \langle\;,\; \rangle$ by the left kernel of the sesquilinear form above. It follows that $W_{g,(i_1,i_2,\ldots,i_r)}$ is endowed with
an induced {\em Hermitian form} $H_{A}$. 

The projections of skein elements associated 
to the $p$-admissible colorings of a trivalent graph $G$ as above form an orthogonal basis of $W_{g,(i_1,i_2,\ldots,i_r)}$ with respect to $H_{A}$. It is known (\cite{BHMV}) 
that $H_{A}$ only depends on the $p$-th root of unity $\zeta_p=A^2$ and that in this orthogonal basis the diagonal entries belong to the totally real maximal subfield $\Q(\zeta_p+\overline{\zeta_p})$ (after rescaling). 
If $\sigma$ is a $p$-admissible coloring of $G$ extending the boundary coloring, then  
the diagonal term of $H_A$ associated to this basis vector is 
\begin{equation}\label{hermitian}
\eta^{g+1} \frac{\prod_{v\; {\rm vertex}}\langle v\rangle}{\prod_{e \; {\rm edge}}\langle e\rangle},
\end{equation}
where products are taken over the set of all trivalent vertices $v$ and all edges $e$ of the graph $G$,
and 
\[ \eta=\frac{1}{2p}(A\kappa)^3(A^2-A^{-2})\sum_{j=1}^{2p}(-1)^j A^{j^2}, \]
where 
\[\kappa^6= A^{-6-\frac{p(p+1)}{2}}\]
is a fixed parameter of the theory. Note that changing $\kappa$ to $-\kappa$ will change the signature of $H_A$ into its opposite. 
For an edge $e$ labeled by the color $a$ we put 
\[ \langle e \rangle = (-1)^a [a+1],\]
and for a trivalent vertex whose adjacent edges are labeled in counterclockwise order 
by the colors $a,b,c$ we put 
\begin{equation} \langle v \rangle = (-1)^{i+j+k+1}\frac{[i+j+k+1]![i]![j]![k]!}{[a]![b]![c]!},\end{equation}
where 
\begin{equation}
i=\frac{a+b-c}{2}, \; j=\frac{b+c-a}{2}, \; k=\frac{c+a-b}{2}, 
\end{equation}
\begin{equation}
[m]=\frac{A^{2m}-A^{-2m}}{A^2-A^{-2}}, \; [m]!=[1][2]\cdots [m-1][m].
\end{equation}

Let $G'\subset G$ be a uni-trivalent subgraph  whose degree one vertices are colored, corresponding to a 
sub-surface $\Sigma'$ of $\Sigma_g$ with colored boundary. The projections in $W_{g,(i_1,i_2,\ldots,i_r)}$ of skein elements associated to 
the $p$-admissible colorings of $G'$ form an orthogonal basis of the space of conformal blocks 
associated to the surface $\Sigma'$ with colored boundary components. 

There is a linear geometric action of the mapping class groups of the handlebodies with marked banded boundary $(H_g, \mathcal{Y})$ and $(\overline{H}_g, - \mathcal{Y})$ respectively
on their skein modules and hence on the space of conformal blocks. Moreover, these actions extend to a projective  
action $\rho_{g, p, (i_1,\ldots,i_r),A}$ of $\Mod(\Sigma_g^r)$ on $W_{g,(i_1,i_2,\ldots,i_r)}$ respecting 
the Hermitian form $H_{\zeta_p}=H_A$. When referring to $\rho_{g, p, (i_1,\ldots,i_r),A}$ the subscript 
specifying the genus $g$ will most often be dropped when its value will be clear from the context.
Notice that the mapping class group of an essential (i.e. without annuli or disks complements) 
sub-surface $\Sigma'\subset \Sigma_g$ is a subgroup of $\Mod(\Sigma_g)$ which preserves the subspace of conformal blocs 
associated to $\Sigma'$ with colored boundary.  It is worthy to note that  $\rho_{p,(i_1,\ldots,i_r), A}$ only depends on 
$\zeta_p=A^2$, so we can unambigously shift the notation for this representation to $\rho_{p,(i_1,\ldots,i_r),\zeta_p}$. We will often drop the reference to the root of unity $\zeta_p$, as changing it 
amounts to use a Galois conjugacy of the real cyclotomic field.  
 
There is a central extension $\widetilde{\Mod(\Sigma_g)}$ of $\Mod(\Sigma_g)$ by $\Z$ and a linear representation  
 $\ro_{p, \zeta_p}$ on $W_{g}$ which  resolves the projective ambiguity of $\rho_{p,\zeta_p}$. 
 The largest such central extension has class $12$ times the Euler class (see \cite{Ger,MR}), but 
 the central extension considered in this paper is  an index $12$ subgroup of it, called $\widetilde{\Gamma}_1$ 
 in \cite{MR}.

 We denote by $\Sigma_{g,n}^r$ the compact orientable surface of genus $g$ with $n$ boundary components and $r$ marked points. Then $\Mod(\Sigma_{g,n}^r)$ denotes the pure mapping class group of $\Sigma_{g,n}^r$ which fixes pointwise boundary components and marked points. 
 
We consider a subsurface $\Sigma_{g,r}\subset \Sigma_{g+r}$ whose complement consists of $r$ copies of $\Sigma_{1,1}$. 
Let $\widetilde{Mod(\Sigma_{g,r})}$ be the pull-back of the central extension $\widetilde{\Mod(\Sigma_g)}$ to 
the subgroup $\Mod(\Sigma_{g,r})\subset \Mod(\Sigma_{g+r})$. Then $\widetilde{\Mod(\Sigma_{g,r})}$ is also a central extension, which we denote 
$\widetilde{\Mod(\Sigma_{g}^r)}$, of $\Mod(\Sigma_g^r)$ by $\Z^{r+1}$.

\begin{definition}\label{qrep}
Let $p\geq 4$ and $\zeta_p$  a primitive 
$p$-th root of unity. We denote 
by $\ro_{p, \zeta_p, (i_1,i_2,\ldots,i_r)}$ the linear representation of the central extension 
$\widetilde{\Mod(\Sigma_{g}^r)}$ which acts on the vector space $W_{g,p,(i_1,i_2,\ldots,i_r)}$ associated by the
TQFT to the surface with the corresponding colored banded points (see \cite{Ger,MR}).  
\end{definition}

The functor $\mathcal V_p$ associates to a handlebody $H_g$ the projection of the skein element 
corresponding to the trivial coloring of the trivalent graph $G$ by $0$. The invariant associated 
to a closed 3-manifold is given by pairing the two vectors associated to handlebodies in a Heegaard decomposition 
of some genus $g$ and taking into account the twisting by the gluing mapping class action on $W_g$. 

One should notice that the skein TQFT $\mathcal V_p$ is unitary, in the sense that $H_{\zeta_p}$ is 
a positive definite Hermitian form when $\zeta_p=(-1)^p \exp\left(\frac{2\pi i}{p}\right)$, corresponding to  
$A_p=(-1)^{\frac{p-1}{2}}\exp\left(\frac{(p+1)\pi i}{2p}\right)$. 
For the sake of notational simplicity, from now we will drop the subscript $p$ 
in $\zeta_p$, when the order of the root of unity will be clear from the context and the precise choice of the root of given order won't matter.  
Note that for a general primitive $p$-th root of unity, 
the isometries of $H_{\zeta}$ form a pseudo-unitary group.

Now, the image $\rho_{p,\zeta}(T_{\gamma})$  of a right hand Dehn twist $T_{\gamma}$  in 
a convenient basis given by a trivalent graph is easy to describe. 
Assume that the simple curve $\gamma$ is the boundary of a small disk intersecting once transversely 
an edge  $e$ of the graph $G$. Consider $v\in W_g$ be a vector of the basis given by colorings of the graph $G$ 
and assume that edge $e$ is labeled by the color $c(e)\in \mathcal C_p$. Then the action of 
the (canonical)  lift  $\widetilde{T_{\gamma}}$ of the Dehn twist $T_{\gamma}$ in $\widetilde{\Mod(\Sigma_g)}$ 
is given by (see \cite{BHMV}, 5.8) :
\[ \ro_{p,\zeta}(\widetilde{T_{\gamma}}) v =(-1)^{c(e)}A^{c(e)(c(e)+2)} v.\]

Actually the central extension $1 \to \mathbb{Z} \to \widetilde{\Mod(\Sigma_g)}\to \Mod(\Sigma_g) \to 1 $ is trivial over
$\Mod(H_g) <  \Mod(\Sigma_g)$ which enables to define $\ro_{p,\zeta}(\widetilde{\sigma})$ for every mapping class $\sigma\in \Mod(H_g)$
as the geometric action of $\sigma$ on the reduced skein module of $H_g$. 
  The condition on $\gamma$ implies that $T_{\gamma} \in \Mod(H_g)$.

\subsection{Spaces of conformal blocks in genus zero}

In the rest of the article, $p$ will be a positive integer, $n$ a non negative integer, $i_1, \ldots, i_n$ a choice of $n$ colors
in $\mathcal{C}_p$ and $\rho_{p, (i_1, \ldots, i_n)}$ 
will denote the homomorphism of $\Mod(\Sigma_g^n)$  in the {\em projective} general linear group
of the space of conformal blocks deduced from $\tilde \rho_p$. 
We use the convention that the color $0$ be dropped from the boundary colors, as the corresponding representations are the same. 

A simple closed curve on a surface is {\em essential} if it is neither null-homotopic nor homotopic to a 
boundary component. Essential simple closed curves  are either called {\em non-separating}, 
when their complement is connected, or {\em separating}, otherwise. The {\em genus} of a separating 
simple closed curve is the minimum of the genera of the two connected subsurfaces 
which it bounds. 

\begin{defi}
Define $\delta_p(\mathbf i)$, where $\mathbf i=(i_1,i_2,\ldots,i_n)$, 
$i_1,i_2,\ldots,i_n\in \mathcal C_p$, as 
the number of those $j\in \mathcal C_p$ such that 
$\dim W_{0,p,(i_1,i_2,\ldots,i_n,j)}\neq 0$. 
The $n$-tuple $\mathbf i=(i_1,i_2,\ldots,i_n)$ is  said to be {\em generic} when 
$\delta_p(\mathbf i)\geq 3$. 
\end{defi}

The aim of this section is to give explicit formulas for $\delta_p(i_1,i_2,\ldots,i_n)$.

\begin{lem}\label{lemma:null}
Let $p\geq 5$ be odd. 
The space of conformal blocks $W_{0,p,(i_1,i_2,\ldots,i_m)}$, $m\geq 3$, the colors 
$i_1,i_2,\ldots,i_m$ not all  equal to $0$, is non-zero if and only if the condition $P(k,m)$ below is satisfied:
\begin{equation}\label{P} 2 \sum_{\alpha=1}^{2k+1} i_{s_{\alpha}} \leq 2k(p-2)+ \sum_{t=1}^m i_t, \end{equation} 
for all $0\leq k\leq \frac{m-1}{2}$ and  all subsets $\{s_1,s_2,\ldots,s_{2k+1}\}\subset\{1,2,\ldots,m\}$.
\end{lem}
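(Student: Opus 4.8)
The plan is to prove the equivalence even \emph{without} the hypothesis ``not all equal to $0$'', which is harmless: if all $i_t=0$ then $W_{0,p,(0,\dots,0)}\cong\C$ and every inequality $P(k,m)$ reads $0\le 2k(p-2)$, so both sides of the stated equivalence are trivially true; from now on no restriction on the $i_t$ is needed. The proof is an induction on $m$ carried out entirely on the combinatorial side. By the description of the basis of conformal blocks recalled above, $W_{0,p,(i_1,\dots,i_m)}\neq 0$ if and only if the ``caterpillar'' trivalent tree whose $m$ leaves carry the colours $i_1,\dots,i_m$ admits a $p$-admissible colouring of its $m-3$ internal edges. Since all colours of $\mathcal C_p$ are even the parity condition is vacuous; and if $c$ is one of the three colours at a trivalent vertex then $2c\le c+(c'+c'')\le 2(p-2)$ by the triangle inequality and the $q$-bound, so $c\le p-2$, hence $c\le p-3$ since $p$ is odd. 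Thus the condition that internal edge colours lie in $\mathcal C_p$ is automatic, and the only constraints to be satisfied are, at each vertex, the triangle inequality and the $q$-bound $c+c'+c''\le 2(p-2)$.

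For $m=3$ the tree is a single vertex, so $W_{0,p,(i_1,i_2,i_3)}\neq0$ iff $(i_1,i_2,i_3)$ is $p$-admissible; here $P(0,3)$ is the triple of triangle inequalities $2i_s\le i_1+i_2+i_3$ and $P(1,3)$ (the unique subset of size $3$ being $\{1,2,3\}$) is the $q$-bound $i_1+i_2+i_3\le 2(p-2)$, giving the equivalence. For $m\ge4$ one peels off the trivalent vertex of the caterpillar that is adjacent to the leaves $i_{m-1}$ and $i_m$: if $j$ denotes the colour of the internal edge at that vertex, a $p$-admissible colouring exists iff there is $j\in\mathcal C_p$ with $(j,i_{m-1},i_m)$ $p$-admissible and $W_{0,p,(i_1,\dots,i_{m-2},j)}\neq0$; by the induction hypothesis the latter is equivalent to all inequalities $P(k,m-1)$ holding for the tuple $(i_1,\dots,i_{m-2},j)$.

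Next one sorts the inequalities $P(k,m-1)$ for $(i_1,\dots,i_{m-2},j)$ according to whether the chosen $(2k+1)$-subset contains the slot of $j$. Those that do not give lower bounds $j\ge 2\sum_{s\in S'}i_s-\sum_{t=1}^{m-2}i_t-2k(p-2)$ with $S'\subseteq\{1,\dots,m-2\}$ of odd cardinality, and those that do give upper bounds $j\le 2k(p-2)+\sum_{t=1}^{m-2}i_t-2\sum_{s\in S''}i_s$ with $S''\subseteq\{1,\dots,m-2\}$ of even cardinality. Adjoining $|i_{m-1}-i_m|\le j\le\min\bigl(i_{m-1}+i_m,\,2(p-2)-i_{m-1}-i_m\bigr)$, which expresses $p$-admissibility of $(j,i_{m-1},i_m)$, and observing that every bound occurring here is an even integer, we see that such a $j$ exists iff $\max(\text{lower bounds})\le\min(\text{upper bounds})$. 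Expanding into the $2\times3$ pairwise inequalities: the two inequalities $|i_{m-1}-i_m|\le i_{m-1}+i_m$ and $|i_{m-1}-i_m|\le 2(p-2)-i_{m-1}-i_m$ are automatic (the latter because $i_{m-1}\le p-3<p-2$); the comparison of the two induction-produced bounds reduces, after cancellation, to $\sum_{s\in S'\cap S''}i_s-\sum_{s\in D}i_s\le(k'+k'')(p-2)$, where $D$ is the complement of $S'\cup S''$ in $\{1,\dots,m-2\}$, which is automatic because $|S'\cap S''|\le k'+k''$ and each $i_s\le p-3$; and the remaining three inequalities $L\le i_{m-1}+i_m$, $L\le 2(p-2)-i_{m-1}-i_m$, $|i_{m-1}-i_m|\le U$ transform, after clearing the factor $2$, exactly into the inequalities $P(k,m)$ for the subsets $S\subseteq\{1,\dots,m\}$ with, respectively, $S\cap\{m-1,m\}=\varnothing$, $S\supseteq\{m-1,m\}$, and $|S\cap\{m-1,m\}|=1$. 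As these three cases exhaust all odd-cardinality subsets of $\{1,\dots,m\}$ and the ranges of $k$ match, the induction is complete.

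The main difficulty is this last bookkeeping: one must verify that the three non-automatic pairwise inequalities produce \emph{exactly} the family $\{P(k,m)\}$ --- nothing missing, nothing superfluous --- which comes down to matching the cardinalities $|S'|=2k+1$ and $|S''|=2k$ occurring in $P(\cdot,m-1)$ with the cardinalities $|S|=2k+1$ occurring in $P(\cdot,m)$ in each of the three cases, and checking that the admissible range $0\le k\le\frac{m-2}{2}$ for the $(m-1)$-point conditions corresponds to $0\le k\le\frac{m-1}{2}$ for the $m$-point ones. Keeping track of the (harmless) parities of all quantities, so that the nonemptiness of the interval of valid colours $j$ is governed purely by the inequality $\max\le\min$, is routine but should be done carefully.
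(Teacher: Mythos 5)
Your proof is correct and follows essentially the same route as the paper: characterize nonvanishing of the genus-$0$ block via $p$-admissible colourings of a caterpillar tree, then induct on $m$ by peeling off the last trivalent vertex and reducing to the nonemptiness of an interval of admissible values for the new internal colour $j$. The only noteworthy differences are presentational and in favour of your write-up: you prove necessity and sufficiency in a single induction (the paper proves necessity separately by summing $q$-bound and triangle inequalities over a pants decomposition, and proves only sufficiency by induction); you explicitly verify the pairwise comparison between a lower bound and an upper bound both arising from the induction hypothesis (the paper tacitly treats this as automatic — your cardinality argument $|S'\cap S''|\le k'+k''$ is the justification they left unstated); and you observe that the hypothesis ``not all $i_t=0$'' is superfluous and that the constraint $j\le p-3$ follows directly from the triangle inequality and $q$-bound at the peeled vertex, whereas the paper instead derives it from the conditions $P(k,m+1)$ and $P(k+1,m+1)$ together with the parity of the colours (both derivations use the same parity trick, so this is cosmetic).
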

\begin{proof}
These conditions for $k=0$  (triangle inequalities) and $k=1$  (the $q$-bound inequality) 
are equivalent to the $p$-admissibility, in the first non-trivial case $m=3$. 

Consider now a pants decomposition of the holed sphere, every pair of pants having one 
boundary curve from $\partial \Sigma_{0,m}$ except two of them each of which has precisely two boundary
curves from $\partial \Sigma_{0,m}$.  We choose a subset of $k$ vertices of the dual graph  
such that their stars are disjoint. We sum the $q$-bound inequalities  corresponding to these $k$ vertices 
with the triangle inequalities of the remaining vertices, which concern the lower bounds for the 
internal edges, and obtain the inequality $P(k,m)$.

Conversely, we claim that the $P(k,m)$ conditions are sufficient to guarantee the existence of 
a $p$-admissible coloring of the dual graph. 
We proceed by induction on $m$. Suppose the claim is true for all spheres with at most $m$ boundary components and consider $(i_1,i_2,\ldots,i_{m+1})$ satisfying the conditions $P(k,m+1)$. 
We want to find $j_m\in \mathcal C_p$ such that the triple $(j_m,i_m,i_{m+1})$ satisfies the $p$-admissibility 
condition and $(i_1,i_2,\ldots,i_{m-1},j_m)$ satisfies the conditions $P(k,m)$. 
This amounts to the following two sets of inequalities: 
\begin{equation}\label{i2}
 |i_{m+1}-i_m| \leq j_m\leq \min(i_m+i_{m+1}, 2(p-2)-(i_m+i_{m+1}),p-3),
 \end{equation}
\begin{equation}\label{i3}
2 \sum_{\alpha=1}^{2k+1} i_{s_{\alpha}} -2k(p-2)-\sum_{t=1}^{m-1} i_t
\leq j_m \leq 2k(p-2)+\sum_{t=1}^{m-1} i_t -2\sum_{\alpha=1}^{2k} i_{s_{\alpha}}.
\end{equation}
These inequalities have solutions $j_m\in \mathcal C_p$ if and only if the intervals defined above have non-empty intersection, namely if 
\begin{equation}\label{i4}
 |i_{m+1}-i_m| \leq 2k(p-2)+\sum_{t=1}^{m-1} i_t -2\sum_{\alpha=1}^{2k} i_{s_{\alpha}}\end{equation}
and 
\begin{equation}\label{i5} 
 2 \sum_{\alpha=1}^{2k+1} i_{s_{\alpha}} -2k(p-2)-\sum_{t=1}^{m-1} i_t \leq
\min(i_m+i_{m+1}, 2(p-2)-(i_m+i_{m+1}),p-3).\end{equation}
Inequality (\ref{i4}) is equivalent to $P(k,m+1)$. 
Inequality (\ref{i5}) consists of three inequalities, which are respectively equivalent to $P(k,m+1)$, $P(k+1,m+1)$ 
and 
\begin{equation}
2\sum_{\alpha=1}^{2k+1} i_{s_{\alpha}} \leq 2k(p-2)+(p-3)+\sum_{t=1}^{m-1} i_t.
\end{equation}  
This is a consequence of the two former inequalities and the fact that all $i_j$ are even. 
\end{proof}

\begin{lem}\label{lemma:deltaodd}
Let $p\geq 5$ be odd. Then for every  $n\geq 2$, $\mathbf i=(i_1,i_2,\ldots,i_n)$, $i_1,i_2,\ldots,i_n\in \mathcal C_p$ 
such that $i_1\leq i_2\leq \cdots \leq i_n$ \footnote{There is no loss of generality in assuming 
$i_1\leq i_2 \leq \cdots \leq i_m$.}, we have: 
\[ \delta_p(\mathbf i)= 1+\frac{1}{2}\left(J_{p, \max}(\mathbf i)-J_{p, \min}(\mathbf i)\right),\]
where 
\begin{equation}
J_{p, \max}(\mathbf i)=\min_{0\leq \ell\leq \frac{n}{2}}\min\left(p-3,\sum_{t=1}^{n-2\ell}i_t-\sum_{s=n-2\ell +1}^ni_{s} +2\ell(p-2)\right), 
\end{equation}
\begin{equation}
J_{p, \min}(\mathbf i)=\max_{0\leq k\leq \frac{n}{2}}\max\left(0,\sum_{s=n-2k}^ni_{s} -\sum_{t=1}^{n-2k-1}i_t-2k(p-2)\right).
\end{equation}
In particular, $\delta_p(\mathbf i)\geq 1$. 
\end{lem}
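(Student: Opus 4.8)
The plan is to count the colors $j\in\mathcal C_p$ for which $W_{0,p,(i_1,\ldots,i_n,j)}\neq 0$ by translating Lemma~\ref{lemma:null} into a statement about $j$ alone. Concretely, I would set $m=n+1$ and $i_{n+1}=j$ in the characterization $P(k,m)$ of Lemma~\ref{lemma:null}, and then separate the constraints $P(k,n+1)$ into those that involve $j$ and those that do not. The ones not involving $j$ are exactly the conditions $P(k,n)$ on $\mathbf i$ itself; since we already know (by Lemma~\ref{lemma:deltaodd}'s predecessor, or rather by the assumption that the ambient space is nonzero — which we do \emph{not} need, as $\delta_p(\mathbf i)\ge 1$ will come out of the computation) these are not an obstruction to the final formula. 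The conditions that do involve $j$ are inequalities of the form $2(j+\sum_{\alpha} i_{s_\alpha})\le 2k(p-2)+j+\sum_t i_t$ (when $j$ is among the chosen indices $s_\alpha$) giving an \emph{upper} bound on $j$, and of the form $2\sum_\alpha i_{s_\alpha}\le 2k(p-2)+j+\sum_t i_t$ (when $j$ is not chosen) giving a \emph{lower} bound on $j$. Optimizing over all choices of $k$ and of the subset $\{s_1,\ldots,s_{2k+1}\}$, and using the ordering $i_1\le\cdots\le i_n$ so that the extremal subsets are the obvious ones (largest $i$'s on the side being summed with coefficient $2$, smallest on the other), the upper bounds collapse to $j\le J_{p,\max}(\mathbf i)$ and the lower bounds to $j\ge J_{p,\min}(\mathbf i)$, where the $p-3$ term inside $J_{p,\max}$ comes from the constraint $j\in\mathcal C_p=\{0,2,\ldots,p-3\}$.

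Having reduced the problem to: $W_{0,p,(i_1,\ldots,i_n,j)}\neq 0$ iff $J_{p,\min}(\mathbf i)\le j\le J_{p,\max}(\mathbf i)$ and $j\in\mathcal C_p$, the next step is a parity/counting argument. Every admissible color $j$ for a sphere must satisfy $j\equiv \sum_t i_t\pmod 2$ by the evenness condition at vertices (condition (2) is empty for odd $p$, but in the genus-zero sphere with all $i_t$ even, any $j$ achieving nonzero $W$ must itself be even — this follows from the same inductive construction, or directly from the fact that $\mathcal C_p$ consists of even numbers for odd $p$). So I would check that both $J_{p,\min}(\mathbf i)$ and $J_{p,\max}(\mathbf i)$ are even: each is a min/max of expressions of the shape $2k(p-2)\pm\sum(\text{even }i_t)$ together with $0$ and $p-3$, all of which are even since $p$ is odd. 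Hence the integers $j\in\mathcal C_p$ with $J_{p,\min}\le j\le J_{p,\max}$ are exactly $J_{p,\min}, J_{p,\min}+2, \ldots, J_{p,\max}$, and there are $1+\tfrac12(J_{p,\max}-J_{p,\min})$ of them, which is the claimed value of $\delta_p(\mathbf i)$. Finally, $\delta_p(\mathbf i)\ge 1$ is equivalent to $J_{p,\max}(\mathbf i)\ge J_{p,\min}(\mathbf i)$, which I would verify directly: the $k=\ell=0$ terms give $J_{p,\min}\le i_n-\sum_{t<n}i_t \le \ldots$, but more robustly one shows term by term that each lower-bound expression is dominated by each upper-bound expression using $P(k,n)$-type inequalities and $i_j\le p-3$.

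The main obstacle I anticipate is the bookkeeping in showing that the extremal subsets $\{s_1,\ldots,s_{2k+1}\}$ are the monotone ones, i.e. that $\min$ over all subsets of the upper-bound expression is achieved by putting the $2k$ largest of $i_1,\ldots,i_n$ (excluding whichever get subtracted) in the right places — this is a rearrangement-inequality argument but needs care because $j$ itself sits on one side or the other, and because the index $\ell$ in $J_{p,\max}$ ranges over $0\le\ell\le n/2$ while $j$ has already been fixed. A related subtlety: when $j$ is one of the $2k+1$ chosen indices $s_\alpha$, after moving $j$ to the left-hand coefficient-$2$ position the remaining $2k$ chosen indices interact with the $n$ fixed colors, so one must correctly match the resulting optimization with the definition of $J_{p,\max}(\mathbf i)$ (note the asymmetry: $\ell$ pairs are subtracted, i.e. $2\ell$ indices get coefficient $-1$). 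I would handle this by a clean inductive or direct case analysis on whether $j$ is "large" or "small" relative to the $i_t$, mirroring the structure already present in the proof of Lemma~\ref{lemma:null}, and I expect no genuinely new idea is needed beyond careful organization of these inequalities together with the evenness observations.
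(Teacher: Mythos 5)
Your proposal is correct and follows essentially the same route as the paper: translate $P(k,n+1)$ into a two‑sided system of bounds on $j$ by splitting on whether $n+1$ is among the chosen indices, use the ordering $i_1\le\cdots\le i_n$ to see that the extremal subsets are the monotone ones, observe that $J_{p,\min}$ and $J_{p,\max}$ are even, and count the even integers in $[J_{p,\min},J_{p,\max}]\cap\mathcal C_p$. One small slip: there are no constraints "not involving $j$", since $j=i_{n+1}$ always appears in $\sum_{t=1}^{n+1}i_t$ on the right-hand side; but you immediately recover the correct dichotomy (upper bound when $n+1$ is chosen, lower bound when it is not), so this does not affect the argument.
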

 
\begin{proof}
From Lemma \ref{lemma:null} it suffices to analyze the solutions $j\in \mathcal C_p$ of the system of inequalities $P(k,n+1)$
and prove they are all even colors $j$ such that $J_{p,\min}\le j \le J_{p,\max}$ observing that $J_{p,\min}\equiv 0 \; ({\rm mod}\; 2), \ J_{p, \max}\equiv 0 \; ({\rm mod}\; 2)$. 
 
This is equivalent to finding $j\in \mathcal C_p$ such 
that: 
\begin{equation}
\sum_{s=n-2k}^ni_{s} -\sum_{t=1}^{n-2k-1}i_t-2k(p-2)\leq 
j \leq \sum_{t=1}^{n-2\ell}i_t-\sum_{s=n-2\ell +1}^ni_{s} +2\ell(p-2),
\end{equation}
for all $0\le k \le \frac{n}{2}, 0 \le l \le \frac{n}{2}$.
This system has solutions if and only if for every $k, \ell$ we have 
\begin{equation}\label{ineq1}
\max\left(0,\sum_{s=n-2k}^ni_{s} -\sum_{t=1}^{n-2k-1}i_t-2k(p-2)\right)\leq 
\min\left(p-3,\sum_{t=1}^{n-2\ell}i_t-\sum_{s=n-2\ell +1}^ni_{s} +2\ell(p-2)\right).
\end{equation}

Consider one of the inequalities involved in (\ref{ineq1}), say: 
\[\sum_{s=n-2k}^ni_{s} -\sum_{t=1}^{n-2k-1}i_t-2k(p-2)\leq 
\sum_{t=1}^{n-2\ell}i_t-\sum_{s=n-2\ell +1}^ni_{s} +2\ell(p-2),\]
which reads:
\[\sum_{s=n-2k}^ni_{s}+ \sum_{s=n-2\ell +1}^ni_{s} \leq 
\sum_{t=1}^{n-2\ell}i_t +\sum_{t=1}^{n-2k-1}i_t+2(k+\ell)(p-2).\]
For $k+1\leq \ell$ this is equivalent to:
\[ 2\sum_{s=n-2k}^ni_{s}\leq 
2\sum_{t=1}^{n-2\ell}i_t +2(k+\ell)(p-2).\]
This inequality follows from:
\[ 2\sum_{s=n-2k}^ni_{s} < 2 (2k+1)(p-2)\leq 
2\sum_{t=1}^{n-2\ell}i_t +2(k+\ell)(p-2)\]
and equality cannot occur. 
Further, if $k\geq \ell$ this amounts to 
\[ 2\sum_{s=n-2\ell +1}^ni_{s}\leq 
2\sum_{t=1}^{n-2k-1}i_t +2(k+\ell)(p-2),\]
which follows again from
\[ 2\sum_{s=n-2\ell +1}^ni_{s}\leq 4\ell (p-2) 
\leq 2\sum_{t=1}^{n-2k-1}i_t +2(k+\ell)(p-2)\]
with equality only if $k=\ell=0$ and 
\[ i_1=i_2=\cdots i_{n-1}=0.\]
Similar arguments show that all  inequalities involved  in (\ref{ineq1}) are valid for all $i_s\in \mathcal C_p$
and hence $\delta_p(i_1,i_2,\ldots,i_n)\geq 1$. 
\end{proof}

We will need later also the following related function: 
\begin{defi}
We let $\delta^{(1)}_p(\mathbf i)$,  $\mathbf i=(i_1,i_2,\ldots,i_n)$, $i_1,i_2,\ldots,i_n\in \mathcal C_p$,  be the number of 
those $j\in \mathcal C_p$ with the property that 
$\dim W_{0,p,(i_1,i_2,\ldots,i_n,j,j)}\neq 0$. 
\end{defi}

\begin{lem}\label{lemma:deltaoddg1}
Let $p\geq 5$ be odd. Then for every  $n\geq 1$, $\mathbf i=(i_1,i_2,\ldots,i_n)$, $i_s\in \mathcal C_p$ 
such that $i_1\leq i_2\leq \cdots \leq i_n$, we have 
\[ \delta^{(1)}_p(\mathbf i)= 1+
\left\lfloor\frac{J^{(1)}_{p, \max}(\mathbf i)}{4}\right\rfloor-\left\lceil\frac{J_{p, \min}(\mathbf i)}{4}\right\rceil,
\]
where
\begin{equation}
J^{(1)}_{p, \max}(\mathbf i)=\min_{1\leq \ell\leq \frac{n+1}{2}}\min\left(2(p-3),\sum_{t=1}^{n-2\ell+1}i_t-\sum_{s=n-2\ell +2}^ni_{s} +2\ell(p-2)\right),
\end{equation}
\begin{equation}
J^{(1)}_{p, \min}(\mathbf i)=\max_{0\leq k\leq \frac{n-1}{2}}\max\left(0,\sum_{s=n-2k}^ni_{s} -\sum_{t=1}^{n-2k-1}i_t-2k(p-2)\right).
\end{equation}
\end{lem}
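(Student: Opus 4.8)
The plan is to repeat the analysis of Lemma~\ref{lemma:deltaodd}, this time applied to the $(n+2)$-tuple $(i_1,\dots,i_n,j,j)$. By Lemma~\ref{lemma:null} (applicable since $n+2\ge 3$; the tuple with all entries zero gives $W\cong\C\ne 0$ and satisfies every $P(k,\cdot)$, so it is consistent with what follows), $\dim W_{0,p,(i_1,\dots,i_n,j,j)}\ne 0$ if and only if the system $P(k,n+2)$ holds for $c:=(i_1,\dots,i_n,j,j)$, i.e.
\[ 2\sum_{\alpha=1}^{2k+1} c_{s_\alpha}\ \le\ 2k(p-2)+\sum_{t=1}^{n}i_t+2j \]
for all $0\le k\le\frac{n+1}{2}$ and all $(2k+1)$-subsets $\{s_1,\dots,s_{2k+1}\}\subset\{1,\dots,n+2\}$. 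Hence $\delta^{(1)}_p(\mathbf i)$ is the number of $j\in\mathcal C_p$ making this whole system true.

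The key step is to sort the chosen subset by the number $\varepsilon\in\{0,1,2\}$ of the two distinguished positions $n+1$, $n+2$ that it contains and, using $i_1\le\cdots\le i_n$ exactly as in Lemma~\ref{lemma:null}, to reduce the quantifier over subsets of $\{1,\dots,n\}$ to the single binding case, namely the largest indices. If $\varepsilon=1$ the two copies of $j$ contribute $+2j$ to both sides and cancel, leaving the $j$-free inequality $2\sum_{s\in S'}i_s\le 2k(p-2)+\sum_t i_t$ with $\lvert S'\rvert=2k$, which is automatic because each color is $<p-2$ and $\sum_{s\in S'}i_s\le\sum_t i_t$. If $\varepsilon=0$ one copy of $j$ is missing on the left, yielding, for $0\le k\le\frac{n-1}{2}$, the lower bounds $2j\ge\sum_{s=n-2k}^{n}i_s-\sum_{t=1}^{n-2k-1}i_t-2k(p-2)$; together with $j\ge 0$ these are precisely $2j\ge J^{(1)}_{p,\min}(\mathbf i)=J_{p,\min}(\mathbf i)$ (this range of $k$ gives the same maximum as the one in Lemma~\ref{lemma:deltaodd}, the extra terms with $k=n/2$ being negative). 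If $\varepsilon=2$ both copies of $j$ land on the left, producing $4j$ there against $2j$ on the right, hence, for $1\le k\le\frac{n+1}{2}$, the upper bounds $2j\le 2k(p-2)+\sum_{t=1}^{n-2k+1}i_t-\sum_{s=n-2k+2}^{n}i_s$; together with $j\le p-3$ these read $2j\le J^{(1)}_{p,\max}(\mathbf i)$.

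It then remains to observe that the admissible $j$ are exactly the even colors satisfying $J_{p,\min}(\mathbf i)\le 2j\le J^{(1)}_{p,\max}(\mathbf i)$ — the constraints $0\le j\le p-3$ being absorbed, since $J_{p,\min}\ge 0$ and $J^{(1)}_{p,\max}\le 2(p-3)$ — and since $j$ even means $2j\in 4\Z$, these bounds become $2\lceil J_{p,\min}(\mathbf i)/4\rceil\le j\le 2\lfloor J^{(1)}_{p,\max}(\mathbf i)/4\rfloor$, so counting the values of $j$ gives $\delta^{(1)}_p(\mathbf i)=1+\lfloor J^{(1)}_{p,\max}(\mathbf i)/4\rfloor-\lceil J_{p,\min}(\mathbf i)/4\rceil$.

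I expect the main obstacle to be the same as in Lemmas~\ref{lemma:null} and~\ref{lemma:deltaodd}: the uniform-in-$(k,\ell)$ verification that the extremal-subset reduction is legitimate and that all the resulting lower and upper bounds for $2j$ are mutually compatible over $\mathcal C_p$ — that is, that $\lfloor J^{(1)}_{p,\max}(\mathbf i)/4\rfloor\ge\lceil J_{p,\min}(\mathbf i)/4\rceil$ (or else that one interprets the formula as giving a nonpositive value precisely when $W_{0,p,(i_1,\dots,i_n,j,j)}$ vanishes for every $j$). This amounts to re-running the chain of inequalities at the end of the proof of Lemma~\ref{lemma:deltaodd} and the sufficiency step of Lemma~\ref{lemma:null}, now with the extra bookkeeping created by the doubled color $j$ and the attendant passage from parity to divisibility by $4$.
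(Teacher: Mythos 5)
Your proof is correct and follows essentially the same route as the paper's (very terse) proof, namely applying the non-vanishing criterion $P(k,n+2)$ to the tuple $(i_1,\dots,i_n,j,j)$ and reading off the resulting interval $J_{p,\min}(\mathbf i)\le 2j\le J^{(1)}_{p,\max}(\mathbf i)$ for even $j$. Your closing concern about whether $\lfloor J^{(1)}_{p,\max}/4\rfloor\ge\lceil J_{p,\min}/4\rceil$ is real and is acknowledged in the Remark immediately following the lemma, which resolves it by the observation $\delta^{(1)}_p(\mathbf i)\ge 1$ since $\delta_p(i_1,\dots,i_n,i_n)\ge 1$.
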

\begin{proof}
The claim follows by direct computation from  Lemma \ref{lemma:deltaodd},  reducing us to count the number of even solutions $j\in \mathcal C_p$ 
of:  
\[  J^{(1)}_{p, \min}(\mathbf i)\leq 2j \leq J^{(1)}_{p, \max}(\mathbf i).\]
 
\end{proof}

\begin{rem} Observe we always have $ \delta^{(1)}_p(\mathbf i) \ge 1$ if $\mathbf{i}\not=(0, \ldots, 0)$. This follows from $\delta_p(i_1, \ldots, i_n, i_n)\ge 1$.
 It is not completely obvious from the formulas given above that 
 \[ \left\lfloor\frac{J^{(1)}_{p, \max}(\mathbf i)}{4}\right\rfloor \ge \left\lceil\frac{J_{p, \min}(\mathbf i)}{4}\right\rceil. \]
\end{rem}

\begin{lem}\label{lemma:nulleven}
Let $p\geq 6$ be even. 
The space of conformal blocks $W_{0,p,(i_1,i_2,\ldots,i_m)}$, $m\geq 3$, the colors 
$i_1,i_2,\ldots,i_m$ not all  equal to $0$, is non-zero if and only if, first the 
following parity obstruction  
\begin{equation}\label{jparity}
\sum_{t=1}^m i_t\equiv 0 \; ({\rm mod}\; 2)
\end{equation}
holds and 
second the condition $P(k,m)$ below is
satisfied: 
\begin{equation}\label{Peven} 2 \sum_{\alpha=1}^{2k+1} i_{s_{\alpha}} \leq k(p-4)+ \sum_{t=1}^m i_t \end{equation} 
for all $0\leq k\leq \frac{m-1}{2}$ and  all subsets $\{s_1,s_2,\ldots,s_{2k+1}\}\subset\{1,2,\ldots,m\}$.
\end{lem}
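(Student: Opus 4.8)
The plan is to follow the same strategy used in the proof of Lemma \ref{lemma:null}, adapting the arithmetic to the even case where the set of colors is $\mathcal{C}_p=\{0,1,\ldots,\frac{p-4}{2}\}$ and admissibility involves the parity condition (2) together with the $q$-bound $a+b+c\le q=p-4$. First I would record that the case $m=3$ is exactly the definition of $p$-admissibility: condition (1) is the triangle inequality, condition (2) is precisely the parity obstruction \eqref{jparity} for $m=3$, and condition (3) together with (1) at a single vertex rewrites, after summing the two bounds $2i_{s_1}\le i_{s_1}+i_{s_2}+i_{s_3}$ and $i_{s_1}+i_{s_2}+i_{s_3}\le p-4$, as the inequality $P(0,m)$ and $P(1,m)$; so the base case holds. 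For the parity statement: in any $p$-admissible coloring of a trivalent graph, condition (2) forces the sum of the three colors at each vertex to be even, and summing this over all vertices of a tree dual to a pants decomposition shows that each internal edge is counted twice and hence the sum $\sum_{t=1}^m i_t$ of the boundary colors is even; conversely this parity will be needed to produce a color $j_m$ of the right parity in the induction step, exactly as the evenness of all $i_j$ was used at the end of the proof of Lemma \ref{lemma:null}.

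Next I would establish necessity of the inequalities $P(k,m)$: given a $p$-admissible coloring of the dual tree of a pants decomposition of $\Sigma_{0,m}$ chosen so that all but two pairs of pants carry exactly one boundary curve of $\Sigma_{0,m}$, pick $k$ vertices of the dual graph with pairwise disjoint stars, add the $q$-bound inequalities $a_v+b_v+c_v\le p-4$ at those $k$ vertices to the triangle inequalities at the remaining vertices (which bound internal edges from below), and telescope; the internal-edge contributions cancel and one is left with $2\sum_{\alpha=1}^{2k+1} i_{s_\alpha}\le k(p-4)+\sum_{t=1}^m i_t$, i.e. \eqref{Peven}. Here the coefficient $k(p-4)$ replaces the $2k(p-2)$ of the odd case simply because $q=p-4$ for even $p$ rather than $q=2p-4$.

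For sufficiency I would argue by induction on $m$, mirroring the odd case. Assuming $(i_1,\ldots,i_{m+1})$ satisfies the parity condition and all $P(k,m+1)$, I want to choose $j_m\in\mathcal{C}_p$ making $(j_m,i_m,i_{m+1})$ $p$-admissible and $(i_1,\ldots,i_{m-1},j_m)$ satisfy the parity condition and all $P(k,m)$. The admissibility of the triple gives $|i_{m+1}-i_m|\le j_m\le\min(i_m+i_{m+1},\,(p-4)-(i_m+i_{m+1}),\,\frac{p-4}{2})$ together with $j_m\equiv i_m+i_{m+1}\pmod 2$; the inherited conditions $P(k,m)$ give a two-sided bound on $j_m$ of the form $2\sum_{\alpha=1}^{2k+1}i_{s_\alpha}-k(p-4)-\sum_{t=1}^{m-1}i_t\le j_m\le k(p-4)+\sum_{t=1}^{m-1}i_t-2\sum_{\alpha=1}^{2k}i_{s_\alpha}$. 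One must check these intervals intersect, i.e. that the relevant max of lower bounds is $\le$ the relevant min of upper bounds; each such inequality unwinds to one of $P(k,m+1)$, $P(k+1,m+1)$ or a consequence of them, and the parity condition guarantees that the interval, which has the correct endpoint parity, actually contains a color. The main obstacle I expect is precisely this last bookkeeping: verifying that the finitely many cross-inequalities between lower and upper bounds all follow from the hypotheses $P(\cdot,m+1)$ and the parity, including the auxiliary inequality $2\sum_{\alpha=1}^{2k+1}i_{s_\alpha}\le k(p-4)+(p-4)/2+\sum_{t=1}^{m-1}i_t$ coming from the bound $j_m\le(p-4)/2$, which in the odd case followed from the evenness of the colors and here should follow from the parity condition together with $P(k,m+1)$ and $P(k+1,m+1)$. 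I do not anticipate conceptual difficulty, only the need to handle the cases $k+1\le\ell$ and $k\ge\ell$ (and the degenerate equality cases) carefully, exactly as in the proof of Lemma \ref{lemma:null}.
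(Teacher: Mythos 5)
Your plan coincides with the paper's proof step for step: the same base case at $m=3$, the same vertex-summing argument both for the parity obstruction and for the necessity of $P(k,m)$, the same inductive choice of $j_m$ by intersecting the admissibility interval with the intervals inherited from $P(k,m)$, and the same closing observation that both intervals have endpoints of the required parity so a valid $j_m$ exists. The one small imprecision, since you flag it as the main obstacle, is that the auxiliary inequality $2\sum_{\alpha=1}^{2k+1}i_{s_\alpha}\le\bigl(k+\tfrac12\bigr)(p-4)+\sum_{t=1}^{m-1}i_t$ already follows by averaging $P(k,m+1)$ (applied to $\{s_1,\ldots,s_{2k+1}\}$) and $P(k+1,m+1)$ (applied to $\{s_1,\ldots,s_{2k+1},m,m+1\}$) with no use of parity at all; parity entered the analogous step in the odd case only because there the target bound $p-3$ sits one unit below the averaged bound $p-2$.
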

\begin{proof}
The proof follows the pattern of the corresponding result for odd $p$. 
The parity obstruction, and the $P(k,m)$  condition for $k=0$  (triangle inequalities) and $k=1$  (the $q$-bound inequality) are equivalent to the $p$-admissibility, in the first non-trivial case $m=3$. 

Consider now a pants decomposition of the holed sphere, every pair of pants having one 
boundary curve from $\partial \Sigma_{0,m}$ except two of them each of which has precisely two boundary
curves from $\partial \Sigma_{0,m}$.  Summing up the parity obstruction for vertices we obtain the 
parity obstruction for arbitrary $m$. The $P(k,m)$ condition is proved as above.

Conversely, we claim that the $P(k,m)$ conditions are sufficient to guarantee the existence of 
a $p$-admissible coloring of the dual graph. 
We proceed by induction on $m$. Suppose the claim is true for all spheres with at most $m$ boundary components and consider $(i_1,i_2,\ldots,i_{m+1})$ satisfying the conditions $P(k,m+1)$. 
We want to find $j_m\in \mathcal C_p$ such that the triple $(j_m,i_m,i_{m+1})$ satisfies the $p$-admissibility 
condition and $(i_1,i_2,\ldots,i_{m-1},j_m)$ satisfies the conditions $P(k,m)$. 
This amounts to one parity obstruction 
\begin{equation}\label{parity1}
i_m+i_{m+1}+j_m\equiv 0 \; ({\rm mod} \; 2),
\end{equation}
along with the following two sets of inequalities: 
\begin{equation}\label{i61}
 |i_{m+1}-i_m| \leq j_m\leq \min\left(i_m+i_{m+1}, p-4-(i_m+i_{m+1}),\frac{p-4}{2}\right),
\end{equation}
\begin{equation}\label{i71}
2 \sum_{\alpha=1}^{2k+1} i_{s_{\alpha}} -k(p-4)-\sum_{t=1}^{m-1} i_t
\leq j_m \leq k(p-4)+\sum_{t=1}^{m-1} i_t -2\sum_{\alpha=1}^{2k} i_{s_{\alpha}}.
\end{equation}
These inequalities have solutions $j_m\in \mathcal C_p$ if and only if
\begin{equation}\label{i6}
 |i_{m+1}-i_m| \leq k(p-4)+\sum_{t=1}^{m-1} i_t -2\sum_{\alpha=1}^{2k} i_{s_{\alpha}}\end{equation}
and 
\begin{equation}\label{i7} 
 2 \sum_{\alpha=1}^{2k+1} i_{s_{\alpha}} -k(p-4)-\sum_{t=1}^{m-1} i_t \leq
\min\left(i_m+i_{m+1}, p-4-(i_m+i_{m+1}),\frac{p-4}{2}\right).\end{equation}
Inequality (\ref{i6}) is equivalent to $P(k,m+1)$. 
Inequality (\ref{i7}) consists of three inequalities, which respectively equivalent to $P(k,m+1)$, $P(k+1,m+1)$ 
and 
\begin{equation}
2\sum_{\alpha=1}^{2k+1} i_{s_{\alpha}} \leq \left(k+\frac{1}{2}\right)(p-4)+\sum_{t=1}^{m-1} i_t.
\end{equation}  
This is a consequence of the former two inequalities. 

It remains to observe that there exist solutions satisfying the parity condition above. 
In fact both endpoints of the interval specified by inequality (\ref{i61}) are compatible 
with the parity obstruction (\ref{parity1}), while the endpoints of the interval given by 
(\ref{i71}) are both congruent to $i_m+i_{m+1} \; ({\rm mod} \;  2)$. It follows that there exists a solution $j_m$ satisfying the parity obstruction. 
\end{proof}

\begin{lem}\label{lemma:deltaeven}
Let $p\geq 6$ be even. Then for every  $n\geq 2$, $\mathbf i=(i_1,i_2,\ldots,i_n)$,  $i_1,i_2,\ldots,i_n\in \mathcal C_p$ 
such that $i_1\leq i_2\leq \cdots \leq i_n$, we have 
\[ \delta_p(\mathbf i)= 1+\frac{1}{2}\left(J_{p, \max}(\mathbf i)-J_{p, \min}(\mathbf i)\right),\]
where 
\begin{equation}
J_{p, \max}(\mathbf i)=\min_{0\leq \ell\leq \frac{n}{2}}\min\left(\frac{p-4}{2}-\varepsilon_{\mathbf i}(p),\sum_{t=1}^{n-2\ell}i_t-\sum_{s=n-2\ell +1}^ni_{s} +\ell(p-4)\right),
\end{equation}
\begin{equation}
J_{p,\min}(\mathbf i)=\max_{0\leq k\leq \frac{n}{2}}\max\left(\varepsilon_{\mathbf i},\sum_{s=n-2k}^mi_{s} -\sum_{t=1}^{n-2k-1}i_t-k(p-4)\right),
\end{equation}
\begin{equation}
\varepsilon_{\mathbf i}\in \{0,1\} \; {\rm such \;  that  } \; \varepsilon_{\mathbf i}\equiv \sum_{s=1}^ni_{s} \; ({\rm mod}\; 2),
\end{equation}
\begin{equation}
\varepsilon_{\mathbf i}(p)=\left\{\begin{array}{ll}
\varepsilon_{\mathbf i}, & \; {\rm if  } \; p \equiv 0 \; ({\rm mod }\; 4);\\
1-\varepsilon_{\mathbf i}, &  \; {\rm if  } \; p \equiv 2 \; ({\rm mod }\; 4).
\end{array}\right.
 \end{equation}
In particular, $\delta_p(\mathbf i)\geq 1$. 
\end{lem}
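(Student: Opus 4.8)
The plan is to run the same scheme as in the proof of Lemma \ref{lemma:deltaodd}, but keeping track of the extra parity constraint that appears for even $p$. Recall that $\delta_p(\mathbf i)$ counts those $j \in \mathcal C_p$ with $\dim W_{0,p,(i_1,\ldots,i_n,j)} \neq 0$, so by Lemma \ref{lemma:nulleven} we must count the $j \in \mathcal C_p$ which, together with $\mathbf i$, satisfy both the parity obstruction \eqref{jparity} for the $(n{+}1)$-tuple and the system of inequalities $P(k,n+1)$ of \eqref{Peven}. First I would isolate the parity condition: $j \equiv \sum_s i_s \equiv \varepsilon_{\mathbf i} \pmod 2$, so the admissible $j$ are exactly the integers of a fixed parity class lying in a certain interval. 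Then I would expand $P(k,n+1)$, separating the inequalities according to whether the singled-out extra color $j$ sits among the $2k{+}1$ colors on the left of \eqref{Peven} or among those implicitly on the right; this is the computation that, in the odd case, produced the bounds $\sum_{s=n-2k}^n i_s - \sum_{t=1}^{n-2k-1} i_t - 2k(p-2) \le j$ and the dual upper bounds. Here the analogous arithmetic replaces $2(p-2)$ by $(p-4)$ and produces the lower bound $\max\bigl(0, \sum_{s=n-2k}^m i_s - \sum_{t=1}^{n-2k-1} i_t - k(p-4)\bigr)$ over $0 \le k \le n/2$ and the upper bound $\min\bigl(\frac{p-4}{2}, \sum_{t=1}^{n-2\ell} i_t - \sum_{s=n-2\ell+1}^n i_s + \ell(p-4)\bigr)$ over $0 \le \ell \le n/2$, together with the constraint $j \le \frac{p-4}{2}$ coming from $\mathcal C_p = \{0,1,\ldots,\frac{p-4}{2}\}$.

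The next step is to count integers of a prescribed parity in the resulting interval $[J_{p,\min}(\mathbf i), J_{p,\max}(\mathbf i)]$. Since we want $j \equiv \varepsilon_{\mathbf i} \pmod 2$, it is cleanest to replace the naive interval endpoints by the smallest (resp. largest) integer of the correct parity within the interval. This is precisely where the quantities $\varepsilon_{\mathbf i}$ and $\varepsilon_{\mathbf i}(p)$ enter: the raw lower bound $\max(0, \ldots)$ must be nudged up to $\max(\varepsilon_{\mathbf i}, \ldots)$ because when the other terms already satisfy the parity congruence one needs $j$ of parity $\varepsilon_{\mathbf i}$, hence $j \ge \varepsilon_{\mathbf i}$ when the bound $0$ is active; and the raw upper bound $\frac{p-4}{2}$ has its own parity, namely $\frac{p-4}{2} \equiv 0$ if $p \equiv 0 \pmod 4$ and $\equiv 1$ if $p \equiv 2 \pmod 4$, so to get an endpoint of parity $\varepsilon_{\mathbf i}$ one subtracts $\varepsilon_{\mathbf i}(p)$, which is $0$ exactly when $\frac{p-4}{2}$ already has the right parity. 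With both endpoints forced to parity $\varepsilon_{\mathbf i}$ and differing by an even number, the number of admissible $j$ is $1 + \tfrac12\bigl(J_{p,\max}(\mathbf i) - J_{p,\min}(\mathbf i)\bigr)$, which is the asserted formula.

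Finally I would verify $\delta_p(\mathbf i) \ge 1$, equivalently $J_{p,\min}(\mathbf i) \le J_{p,\max}(\mathbf i)$, by checking each pairwise inequality $J_{p,\min}$-term $\le$ $J_{p,\max}$-term exactly as in Lemma \ref{lemma:deltaodd}: splitting into the cases $k+1 \le \ell$ and $k \ge \ell$, bounding $\sum_{s=n-2k}^n i_s$ by $(2k{+}1)\cdot\frac{p-4}{2}$ (the $q$-bound on colors, now with the even-$p$ ceiling), and noting that the parity-corrected endpoints $\varepsilon_{\mathbf i}$ and $\frac{p-4}{2} - \varepsilon_{\mathbf i}(p)$ only ever move the interval inward by at most one in each direction, which the strict inequalities in the odd-case estimates absorb.

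I expect the main obstacle to be bookkeeping rather than conceptual: one has to be scrupulous about which parity class $\frac{p-4}{2}$ lies in relative to $\varepsilon_{\mathbf i}$, and about the edge cases where a $\max$ or $\min$ is attained at the constant term ($0$, $\varepsilon_{\mathbf i}$, $\frac{p-4}{2}$, or $\frac{p-4}{2} - \varepsilon_{\mathbf i}(p)$) rather than at a sum of colors — it is in exactly those edge cases that the naive interval count could be off by one if the parity correction were omitted, so the definition of $\varepsilon_{\mathbf i}(p)$ has to be matched precisely to the two residues of $p$ mod $4$. The inequalities $P(k,m)$ themselves, and the reduction to counting lattice points of fixed parity, are routine given Lemma \ref{lemma:nulleven}.
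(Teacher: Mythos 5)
Your proposal is correct and follows essentially the same route as the paper's proof: reduce via Lemma \ref{lemma:nulleven} to counting $j\in\mathcal C_p$ with $j\equiv\varepsilon_{\mathbf i}\pmod 2$ inside the interval cut out by the $P(k,n+1)$ inequalities, normalize the endpoints to the correct parity class (which is precisely the role of $\varepsilon_{\mathbf i}$ and $\varepsilon_{\mathbf i}(p)$, and your explanation of why $\varepsilon_{\mathbf i}(p)$ is $0$ exactly when $\frac{p-4}{2}\equiv\varepsilon_{\mathbf i}\pmod 2$ is right), and verify $J_{p,\min}\le J_{p,\max}$ by the same $k+1\le\ell$ versus $k\ge\ell$ case split using $2\sum_{s=n-2k}^n i_s\le(2k+1)(p-4)$. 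The only thing your sketch does not make fully explicit, and which the written-out proof handles, is that the crude interval $[\max(0,\ldots),\min(\frac{p-4}{2},\ldots)]$ is nonempty \emph{before} the parity correction and that the parity-corrected endpoints stay within it; but you flag exactly this as the bookkeeping to check, so there is no conceptual gap.
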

\begin{proof}
From Lemma \ref{lemma:nulleven} it suffices to analyze the solutions $j\in \mathcal C_p$ 
of the system of inequalities $P(k,n+1)$ with the additional parity constraint (\ref{jparity}). There is no loss of generality in assuming that 
$i_1\leq i_2 \leq \cdots \leq i_n$. Therefore our system is equivalent to finding $j\in \mathcal C_p$ such 
that: 
\begin{equation}
\sum_{s=m-2k}^ni_{s} -\sum_{t=1}^{n-2k-1}i_t-k(p-4)\leq 
j \leq \sum_{t=1}^{n-2\ell}i_t-\sum_{s=n-2\ell +1}^mi_{s} +\ell(p-4).
\end{equation}
This system has solutions if and only if for every $k, \ell$ we have 
\begin{equation}\label{ineq2}
\max\left(0,\sum_{s=n-2k}^mi_{s} -\sum_{t=1}^{n-2k-1}i_t-k(p-4)\right)\leq 
\min\left(\frac{p-4}{2},\sum_{t=1}^{n-2\ell}i_t-\sum_{s=n-2\ell +1}^ni_{s} +\ell(p-4)\right).
\end{equation}
Moreover, we have solutions $j$ which also satisfy the parity condition (\ref{jparity}) if and only if 
\[ J_{p,\min}(\mathbf i)\leq J_{p,\max}(\mathbf i)\]
and then $\delta_p(i_1,i_2,\ldots,i_n)$ is the number of integers 
in this interval congruent to $\varepsilon_{\mathbf i}$ modulo $2$, as claimed.

Consider one of the inequalities involved in (\ref{ineq2}), say: 
\[\sum_{s=n-2k}^ni_{s} -\sum_{t=1}^{n-2k-1}i_t-k(p-4)\leq 
\sum_{t=1}^{n-2\ell}i_t-\sum_{s=n-2\ell +1}^ni_{s} +\ell(p-4),\]
which reads:
\[\sum_{s=n-2k}^mi_{s}+ \sum_{s=n-2\ell +1}^ni_{s} \leq 
\sum_{t=1}^{n-2\ell}i_t +\sum_{t=1}^{n-2k-1}i_t+(k+\ell)(p-4).\]
For $k+1\leq \ell$ this is equivalent to:
\[ 2\sum_{s=n-2k}^ni_{s}\leq 
2\sum_{t=1}^{n-2\ell}i_t +(k+\ell)(p-4).\]
This inequality follows from:
\[ 2\sum_{s=n-2k}^ni_{s} \leq (2k+1)(p-4)\leq 
2\sum_{t=1}^{n-2\ell}i_t +2(k+\ell)(p-4).\]
Further, if $k\geq \ell$ this amounts to 
\[ 2\sum_{s=n-2\ell +1}^ni_{s}\leq 
2\sum_{t=1}^{n-2k-1}i_t +(k+\ell)(p-4),\]
which follows again from
\[ 2\sum_{s=n-2\ell +1}^ni_{s}\leq 2\ell (p-4) 
\leq 2\sum_{t=1}^{n-2k-1}i_t +(k+\ell)(p-4)\]
with equality only if $k=\ell=0$ and 
\[ i_1=i_2=\cdots i_{n-1}=0.\]
Similar arguments show that all  inequalities involved  in (\ref{ineq2}) are valid for all $i_s\in \mathcal C_p$
and hence $\delta_p(\mathbf i)\geq 1$. 
\end{proof}
 
\begin{lem}\label{lemma:deltaeveng1}
Let $p\geq 6$ be even. Then for every   $n\geq 1$, $\mathbf i=(i_1,i_2,\ldots,i_n)$,  $i_1,i_2,\ldots,i_n\in \mathcal C_p$ 
such that $i_1\leq i_2\leq \cdots \leq i_n$, we have: 
\begin{equation} \delta^{(1)}_p(\mathbf i)=
\left\{\begin{array}{ll}
0, & \; {\rm if} \; \varepsilon_{(i_1,i_2,\ldots,i_n)}=1;\\
1+\frac{J^{(1)}_{p, \max}(\mathbf i)-J^{(1)}_{p, \min}(\mathbf i)}{2}, &
\; {\rm if} \; \varepsilon_{(i_1,i_2,\ldots,i_n)}=0,\\
\end{array}
\right.
\end{equation}
where
\begin{equation}
J^{(1)}_{p, \max}(\mathbf i)=\min_{1\leq \ell\leq \frac{n+1}{2}}\min\left(p-4,\sum_{t=1}^{n-2\ell}i_t-\sum_{s=n-2\ell +1}^ni_{s} +\ell(p-4)\right),
\end{equation}
\begin{equation}
J^{(1)}_{p, \min}(\mathbf i)=\max_{0\leq k\leq \frac{n+1}{2}}\max\left(0,\sum_{s=n-2k}^ni_{s} -\sum_{t=1}^{n-2k-1}i_t-k(p-4)\right).
\end{equation}
\end{lem}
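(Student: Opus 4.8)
The plan is to follow the pattern of the proof of Lemma~\ref{lemma:deltaoddg1}, deducing the count from Lemma~\ref{lemma:deltaeven} together with the fusion rule for the colours $(k,j,j)$ of a three-holed sphere.

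First I would dispose of the parity case. If $\varepsilon_{\mathbf i}=1$, then for every $j\in\mathcal C_p$ the $(n+2)$-tuple $(i_1,i_2,\ldots,i_n,j,j)$ has odd colour sum $\sum_{t=1}^{n}i_t+2j$, so the parity obstruction \eqref{jparity} of Lemma~\ref{lemma:nulleven} fails; hence $W_{0,p,(i_1,i_2,\ldots,i_n,j,j)}=0$ and $\delta^{(1)}_p(\mathbf i)=0$. From now on I assume $\varepsilon_{\mathbf i}=0$.

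Then I would fix a uni-trivalent graph $G$ for the $(n+2)$-holed sphere with legs $i_1,\ldots,i_n,j,j$ in which the two legs coloured $j$ are incident to a common trivalent vertex $v_0$, and sort the $p$-admissible colourings of $G$ — which index a basis of $W_{0,p,(i_1,\ldots,i_n,j,j)}$ — by the colour $k$ carried by the interior edge at $v_0$. For fixed $k$, such a colouring is the same datum as a $p$-admissible colouring of the complementary subgraph $G'$ (a trivalent graph for the $(n+1)$-holed sphere with legs $i_1,\ldots,i_n,k$) together with the requirement that $(k,j,j)$ be $p$-admissible, i.e. $k$ even, $0\le k\le 2j$ and $2j+k\le p-4$. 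Therefore $\dim W_{0,p,(i_1,\ldots,i_n,j,j)}=\sum_k\dim W_{0,p,(i_1,\ldots,i_n,k)}$, the sum running over the $k$ with $(k,j,j)$ $p$-admissible, so $W_{0,p,(i_1,\ldots,i_n,j,j)}\neq 0$ iff some such $k$ has $W_{0,p,(i_1,\ldots,i_n,k)}\neq 0$. By Lemma~\ref{lemma:deltaeven} the colours $k$ with $W_{0,p,(i_1,\ldots,i_n,k)}\neq 0$ are exactly the even integers of $[J_{p,\min}(\mathbf i),J_{p,\max}(\mathbf i)]$ (for $n=1$ one uses instead the two-holed sphere, which forces $k=i_1$, i.e. $J_{p,\min}(\mathbf i)=J_{p,\max}(\mathbf i)=i_1$). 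Since $J_{p,\min}(\mathbf i)$, $J_{p,\max}(\mathbf i)$, $2j$ and $p-4-2j$ are all even, and $J_{p,\min}(\mathbf i)\le J_{p,\max}(\mathbf i)$ always (Lemma~\ref{lemma:deltaeven}; note also $J_{p,\min}(\mathbf i)\le\tfrac{p-4}{2}$, each $i_t$ being $\le\tfrac{p-4}{2}$), such a $k$ exists if and only if $J_{p,\min}(\mathbf i)\le 2j\le p-4-J_{p,\min}(\mathbf i)$.

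It then follows that $\delta^{(1)}_p(\mathbf i)=\#\{\,j\in\mathcal C_p:\ J_{p,\min}(\mathbf i)\le 2j\le p-4-J_{p,\min}(\mathbf i)\,\}$; as $j$ runs over $\mathcal C_p=\{0,1,\ldots,\tfrac{p-4}{2}\}$ the value $2j$ runs over every even number of $[0,p-4]$, which contains this interval with even endpoints, so the count equals $1+\tfrac12\bigl((p-4-J_{p,\min}(\mathbf i))-J_{p,\min}(\mathbf i)\bigr)$. The remaining — and, I expect, only genuinely computational — step is the main obstacle: to recognise these lower and upper bounds for $2j$ as the quantities $J^{(1)}_{p,\min}(\mathbf i)$ and $J^{(1)}_{p,\max}(\mathbf i)$ of the statement, by distributing $p-4-\max(\cdots)=\min(p-4-\cdots)$ and shifting the summation index $\ell\mapsto\ell-1$, exactly as in the proof of Lemma~\ref{lemma:deltaoddg1}. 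Here one uses that, because $i_1\le\cdots\le i_n$, the maxima defining $J_{p,\min}(\mathbf i)$ are attained on the largest subsets; that the index values beyond $\lfloor n/2\rfloor$ in that maximum contribute nonpositive terms (since $\sum_t i_t\le\tfrac{n}{2}(p-4)$); and that the resulting $J^{(1)}_{p,\min}(\mathbf i)$, $J^{(1)}_{p,\max}(\mathbf i)$ are even with $0\le J^{(1)}_{p,\min}(\mathbf i)\le J^{(1)}_{p,\max}(\mathbf i)\le p-4$, so that there is no off-by-one in the final count.
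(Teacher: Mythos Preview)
Your argument is correct and arrives at the same count as the paper, but by a genuinely different route. The paper's proof applies the non-vanishing criterion of Lemma~\ref{lemma:nulleven} directly to the $(n+2)$-tuple $(i_1,\ldots,i_n,j,j)$: the $P(k,n+2)$ inequalities, sorted by how many of the two $j$-slots lie in the chosen subset, yield lower bounds on $2j$ (subset avoids both $j$'s), upper bounds on $2j$ (subset contains both), and no constraint on $j$ (subset contains exactly one). You instead split off a pair of pants at the two $j$-legs, introduce the intermediate colour $k$, invoke Lemma~\ref{lemma:deltaeven} for $(i_1,\ldots,i_n,k)$, and intersect with the fusion rules for $(k,j,j)$. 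Your route is slightly longer but has the merit of explaining structurally why only $J_{p,\min}(\mathbf i)$ enters, via the identity $J^{(1)}_{p,\max}(\mathbf i)=p-4-J_{p,\min}(\mathbf i)$; the paper's route gets there in one stroke.

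One caution about your last step. Carrying out the shift $\ell=k+1$ on $p-4-J_{p,\min}(\mathbf i)$ gives
\[
\min_{\ell}\min\Bigl(p-4,\ \sum_{t=1}^{\,n-2\ell+1} i_t-\sum_{s=n-2\ell+2}^{n} i_s+\ell(p-4)\Bigr),
\]
with summation limits $n-2\ell+1$ and $n-2\ell+2$, exactly as in the odd-case Lemma~\ref{lemma:deltaoddg1}, and \emph{not} $n-2\ell$ and $n-2\ell+1$ as printed in the statement here. The paper's own direct derivation yields the same limits as yours, so this is a typo in the displayed formula for $J^{(1)}_{p,\max}$ rather than an error in your reasoning (for instance $n=2$, $\mathbf i=(2,2)$, $p=8$ gives $\delta^{(1)}_p(\mathbf i)=3$ by direct inspection and by your bounds, whereas the printed formula would give $1$). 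So do not expect the identification to go through literally; it matches the intended formula.
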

\begin{proof}
Observe first from Lemma \ref{lemma:nulleven} that $\varepsilon_{(i_1,i_2,\ldots,i_n)}=0$ is a necessary condition for the existence of such $j$. 
Further, from the proof of Lemma \ref{lemma:deltaeven} we have 
to count the number of solutions $j\in \mathcal C_p$ 
of the double inequalities:  
\[  J^{(1)}_{p, \min}(\mathbf i)\leq 2j \leq J^{(1)}_{p, \max}(\mathbf i)\]
The claim follows by direct computation.
\end{proof}

\subsection{The order of Dehn twists in  the projective quantum representations}
The explicit description  from the previous section leads to the following computation:

\begin{lem}\label{lemma:orderodd}
Let $p\geq 5$ be odd, $c$  be an essential simple closed curve on $\Sigma_{g,n}$, $g\geq 1$ and 
  $i_1,i_2,\ldots,i_n$  be non-zero colors from $\mathcal C_p$. Then $\rho_{p,(i_1, \ldots, i_n)}(T_c)^p=1$. 
Furthermore,
\begin{enumerate}
\item 
Assume that one of the following conditions is satisfied:  
\begin{enumerate}
\item $c$ is separating of strictly positive genus; 
\item  $c$ is non-separating and either $g\geq 2$ or $g=1$ and $n=0$;
\item $c$ bounds a holed sphere with other boundary curves colored by the non-zero 
colors $i_1,i_2,\ldots,i_m$, for some $2\leq m\leq n$, where 
\begin{enumerate}
\item $\delta_p(i_1,i_2,\ldots,i_m)\geq 3$, or
\item $\delta_p(i_1,i_2,\ldots,i_m)=2$ and 
$J_{p,\min}(i_1,i_2,\ldots,i_m)=0$. 
\end{enumerate}
\item $c$ is non-separating, $g=1$ and 
\begin{enumerate}
\item $n\geq 1$, $\delta^{(1)}_p(i_1,i_2,\ldots,i_n)\geq 3$, or
\item $n\geq 1$, $\delta^{(1)}_p(i_1,i_2,\ldots,i_n)=2$ and 
$J_{p,\min}(i_1,i_2,\ldots,i_n)=0$. 
\end{enumerate}
\end{enumerate}
Then $\rho_{p,(i_1, \ldots, i_n)}(T_c)$  has order $p$.
\item If $c$ bounds a holed sphere with other boundary curves colored by the non-zero 
colors $i_1,i_2,\ldots,i_m$, for some $2\leq m\leq n$, where 
 $\delta_p(i_1,i_2,\ldots,i_m)=2$. Then  
 $\rho_{p,(i_1, \ldots, i_n)}(T_c)$  has order 
 \[p/{\rm g.c.d.}\left(J_{p,\max}(i_1,i_2,\ldots,i_m),p\right).\]
 \item $c$ is non-separating, $g=1$ and $n\geq 1$, $\delta^{(1)}_p(i_1,i_2,\ldots,i_n)=2$ and $J_{p,min} (i_1,i_2,\ldots,i_n) >0$.
 Then $\rho_{p,(i_1, \ldots, i_n)}(T_c)$  has order 
 \[p/{\rm g.c.d.}\left(\left\lfloor\frac{J^{(1)}_{p,\max}(i_1,i_2,\ldots,i_n)}{4}\right\rfloor,p\right).\]
 \item Suppose that: 
 \begin{enumerate}
 \item either $c$ bounds a holed sphere with other boundary curves colored by the non-zero 
colors $i_1,i_2,\ldots,i_m$, for some $2\leq m\leq n$, where 
 $\delta_p(i_1,i_2,\ldots,i_m)=1$, or 
\item $c$ is non-separating and $g=1$ $n\ge 1$ and  $\delta^{(1)}_p(i_1,i_2,\ldots,i_n)=1$.
\end{enumerate}
 Then $\rho_{p,(i_1, \ldots, i_n)}(T_c)$  has order $1$. 
 \end{enumerate}
\end{lem}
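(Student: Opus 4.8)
The plan is to compute the order of $\rho_{p,(i_1,\ldots,i_n)}(T_c)$ directly from the diagonal description of a Dehn twist given at the end of Section \ref{tqft}. First I would choose a trivalent spine $G$ of a handlebody $H$ bounding a subsurface adapted to $c$, so that $c$ is the boundary of a small disk meeting a single edge $e$ of $G$ transversely once; then in the conformal-block basis indexed by $p$-admissible colorings, $\widetilde{T_c}$ acts diagonally with eigenvalue $(-1)^{c(e)}A^{c(e)(c(e)+2)}$ on the basis vector of coloring $\sigma$, where $c(e)$ ranges over the set $S$ of colors that can appear on $e$. Passing to the projective representation $\rho_p$, the order of $\rho_p(T_c)$ is the smallest $N$ such that all ratios $\mu_a/\mu_b$ are $N$-th roots of unity, where $\mu_a=(-1)^aA^{a(a+2)}$ for $a,b\in S$. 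Since $A$ has order $2p$ and all admissible colors are even (for $p$ odd), one computes $\mu_a^p=1$ for every $a$, which already gives $\rho_p(T_c)^p=1$; more precisely $\mu_a/\mu_b$ is a primitive $\bigl(p/\gcd(\frac{1}{4}(a^2+2a-b^2-2b),p)\bigr)$-th root of unity after the standard manipulation of $A^{a(a+2)-b(b+2)}$ using $A^2=\zeta_p$, and the full order is the l.c.m.\ of these over all $a,b\in S$. Thus everything reduces to understanding the set $S$ of admissible colors on $e$ and the arithmetic of the differences $a(a+2)-b(b+2)=(a-b)(a+b+2)$ on that set.

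The set $S$ is exactly described by the genus-zero computations of the previous subsection: if $c$ cuts off a holed sphere with boundary colors $i_1,\ldots,i_m$ then $S=\{j\in\mathcal{C}_p:\dim W_{0,p,(i_1,\ldots,i_m,j)}\neq 0\}$, which by Lemma \ref{lemma:deltaodd} is the set of even $j$ with $J_{p,\min}\le j\le J_{p,\max}$, so $|S|=\delta_p(i_1,\ldots,i_m)$; similarly if $c$ is non-separating in a one-holed torus piece one uses $\delta^{(1)}_p$ and Lemma \ref{lemma:deltaoddg1}. In the generic cases (a), (b) and the various $\delta_p\ge 3$ or $\delta_p\ge 2$ with $J_{p,\min}=0$ subcases of (c) and (d), I would argue that $S$ contains $0$ (when $J_{p,\min}=0$) together with at least the colors $2$ and, say, $4$, so $S$ contains differences $(a-b)(a+b+2)$ that are coprime to $p$ — e.g.\ $a=2,b=0$ gives $2\cdot4=8$, and since $p\ge5$ is odd, $\gcd(2,p)=1$, forcing the order to be exactly $p$. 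When $c$ is separating of positive genus or non-separating with $g\ge2$, the spine edge $e$ is an internal edge of a positive-genus piece and one checks by the same admissibility count (or directly, since the relevant conformal block spaces in positive genus are large) that $S$ again contains colors producing a unit difference mod $p$; the precise bookkeeping is what the statement's itemization is organizing. For item (2), $S$ has exactly two elements $\{J_{p,\min},J_{p,\max}\}$ (both even), the single ratio has order $p/\gcd\bigl(\frac14(J_{p,\max}^2+2J_{p,\max}-J_{p,\min}^2-2J_{p,\min}),p\bigr)$, and one simplifies using $J_{p,\min}+J_{p,\max}+2\equiv 0$ in the relevant cases (or tracks both terms) to get the stated $p/\gcd(J_{p,\max},p)$; item (3) is the analogous two-element computation with the extra factor of $4$ coming from the doubled color in the $\delta^{(1)}_p$ formula, and item (5) is the trivial case $|S|=1$, where $\rho_p(T_c)$ is the identity.

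The main obstacle I anticipate is not any single hard argument but the case analysis: verifying in each of the itemized situations that the admissible-color set $S$ has the claimed size and that the differences $(a-b)(a+b+2)$ for $a,b\in S$ have the claimed g.c.d.\ with $p$. The positive-genus cases (1)(a),(b) need a short separate argument that the internal edge $e$ can be chosen so that $S\supseteq\{0,2,4\}$ (or at least two colors of opposite parity class mod small primes) — this is where one uses $g\ge1$ and the hypothesis that the $i_j$ are non-zero to guarantee enough room in the $q$-bound and triangle inequalities. The delicate points in items (2) and (3) are the parity and the simplification $J_{p,\min}+J_{p,\max}\equiv -2$: this needs the explicit forms of $J_{p,\min},J_{p,\max}$ from Lemma \ref{lemma:deltaodd}, and I would isolate it as a short arithmetic sublemma rather than expand it inline. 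Once the structure of $S$ is pinned down in each case, the order computation is the same three-line manipulation of $(-1)^aA^{a(a+2)}$ everywhere, so I would prove that manipulation once at the start and then dispatch the cases quickly.
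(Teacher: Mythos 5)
Your plan follows essentially the same route as the paper's proof: diagonalize (a lift of) $T_c$ in the conformal-block basis graded by the color $j$ appearing on $c$, with eigenvalue $(-1)^jA^{j(j+2)}$; use Lemma~\ref{lemma:deltaodd}, Lemma~\ref{lemma:deltaoddg1} and the genus-one formula $\dim W_{1,p,(i,j)}=\tfrac{(p-1-\max(i,j))(\min(i,j)+1)}{2}$ to determine which colors occur effectively; and read off the order from the eigenvalue ratios $A^{(a-b)(a+b+2)}$. Two small arithmetic corrections: for item (2) the relevant identity is $J_{p,\max}-J_{p,\min}=2$, so $\tfrac14(a-b)(a+b+2)=J_{p,\max}$, rather than your ``$J_{p,\min}+J_{p,\max}+2\equiv 0$''; and in case (1)(c)(i) with $J_{p,\min}=2k>0$ the three colors $2k,2k+2,2k+4$ give ratios controlled by $\gcd(k+1,p)$ and $\gcd(2k+3,p)$, and it is the identity $\gcd(k+1,2k+3)=1$ that forces order exactly $p$ --- your $\{0,2,4\}$ shortcut only covers $J_{p,\min}=0$, though you do flag the need to verify the $\gcd$ of differences in general.
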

\begin{prv} 
Choose a pants decomposition of the surface $\Sigma_{g,n}$ obtained by deleting disjoint disks centered at the $n$ punctures, which contains the simple closed curve $c$. 
Note that boundary circles have to be part of the decomposition and are already colored as $(i_1,i_2,\ldots,i_r)$. 
If we cut  open $\Sigma_{g,n}$ along $c$ we either obtain $\Sigma_{g-1,n+2}$  -- when $c$ is non-separating -- or else a disjoint union 
$\sigma_{h,s}\sqcup \Sigma_{g-h,n+1-s}$.  We color the two new boundary circles of the resulting 
possibly disconnected surface by some color $j$ and let $W(\Sigma_{g,n}\setminus\{c\};j)$ denote the space of conformal blocks associated 
to it. Observe also that its identification with a subsurface of $\Sigma_{g,n}$ induces an injection of 
$W(\Sigma_{g,n}\setminus\{c\};j)$ into $W_{g,(i_1,i_2,\ldots,i_n)}$, and we have the following decomposition:  
\[ W_{g,(i_1,i_2,\ldots,i_n)}= \oplus_{j\in \mathcal C_p} W(\Sigma_{g,n}\setminus\{c\};j)\]
Now $\rho_{p, (i_1,i_2,\ldots,i_n)}(T_c)$ acts as a scalar on every subspace $W(\Sigma_{g,n}\setminus\{c\};j)$ and with respect to the direct sum decomposition above we have: 
\[ \rho_{p, (i_1, \ldots, i_n)}(T_c)=\oplus_{j\in \mathcal C_p} (-1)^jA^{j(j+2)}{\mathbf 1}_{W(\Sigma_{g,n}\setminus\{c\};j)}\]
where ${\mathbf 1}_{W}$ is the identity operator on the subspace $W$. 

On one hand eigenvalues of $\rho_{p, (i_1, \ldots, i_n)}(T_c)$ are powers of $A^8$ and hence 
\[ \rho_{p, (i_1, \ldots, i_n)}(T_c)^{p}=1\]

In order to compute the order of $\rho_{p,(i_1, \ldots, i_n)}(T_c)$ it suffices to find which colors $j$ appear {\em effectively} in the direct sum above, namely for which $j\in \mathcal C_p$ 
the vector space $W(\Sigma_{g,n}\setminus\{c\};j)$ is non-zero.

According to (\cite{LF}, Lemma 3.3) we have 
\[ \dim W_{1, p, (i,j)}= \frac{(p-1-\max(i,j))(\min(i,j)+1)}{2}\neq 0\]
Thus, if $c$ bounds subsurfaces of positive genera on both sides, then all possible colors could be realized effectively.   

If $c$ is non-separating and $g\geq 2$, the colors $j=0$ and $j=2$ do appear effectively 
and, in particular, $\rho_{p,(i_1, \ldots, i_n)}(T_c)$ has among its eigenvalues $1$ and $A^{8}$. 
Since the order of $A^8$ is $p$, the order of $\rho_{p,(i_1, \ldots, i_n)}(T_c)$ is actually $p$. 
This proves the first two items of our Lemma. 

If $c$ is like in (1.c.(i)), then there at least three colors $j$ which appear effectively. The proof of Lemma \ref{lemma:deltaodd} 
shows that these colors can be chosen to be consecutive even numbers $2k,2k+2,2k+4$. Since $\rho_{p,(i_1, \ldots, i_n)}(T_c)$
is the image of a diagonal matrix with $1,A^{8(k+1)}, A^{8(2k+3)}$ as coefficients,  $A^8$ is primitive $p-th$ root of $1$, 
and
${\rm g.c.d.}(p, k+1, 2k+3)=1$,  the order of $\rho_{p,(i_1, \ldots, i_n)}(T_c)$ is  $p$.   

If $c$ is like in (1.c.(ii)), then there are two colors which appear effectively, hence they are $0$ and $2$, which implies that the order $\rho_{p,(i_1, \ldots, i_n)}(T_c)$ is the order 
of $A^8$, namely $p$.   

When $g=1$ and $c$ is separating, then two copies of $c$ bound a holed sphere. Observe that $j,j,i_1,\ldots,i_m$   
satisfies the  conditions $P(k,m+2)$  for $j\in\{0,2\}$, if $i_1,\ldots,i_m$ satisfies the conditions  $P(k,m)$. Then Lemma \ref{lemma:null} permits to conclude the proof of the last item (1.d).

If $c$ bounds a holed sphere and $\delta_p(i_1,i_2,\ldots,i_m)=2$, then 
those $j$ which appear effectively are the consecutive even integers 
$J_{p,\min}(i_1,i_2,\ldots,i_m)$ and  $J_{p,\max}(i_1,i_2,\ldots,i_m)$. 
Then the order of $\rho_{p,(i_1, \ldots, i_n)}(T_c)$  is the order
of $A^{4 J_{p,\max}(i_1,i_2,\ldots,i_m)}$, which is 
 $p/{\rm g.c.d.}(J_{p,\max}(i_1,i_2,\ldots,i_m), p)$. 
 
 If $ c$ is non-separating, $g=1$ and  $\delta^{(1)}_p(i_1,i_2,\ldots,i_m)=2$, then 
the $j$ which appear effectively are the consecutive even integers 
$2\left\lceil\frac{J_{p,\min}(i_1,i_2,\ldots,i_m)}{4}\right\rceil$ and  $
2\left\lfloor\frac{J^{(1)}_{p,\max}(i_1,i_2,\ldots,i_m)}{4}\right\rfloor$. 
Then the order of $\rho_{p,(i_1, \ldots, i_n)}(T_c)$  is the order
of $A^{8 \left\lfloor\frac{J^{(1)}_{p,\max}(i_1,i_2,\ldots,i_m)}{4}\right\rfloor}$, which is 
$p/{\rm g.c.d.}\left(2\left\lfloor\frac{J^{(1)}_{p,\max}(i_1,i_2,\ldots,i_m)}{4}\right\rfloor,p\right)$. 

Eventually, if  $\delta_p(i_1,i_2,\ldots,i_m)=1$, or $\delta^{(1)}_p(i_1,i_2,\ldots,i_n)=1$ then there is only one color $j$ which could appear effectively as label of the curve $c$ and hence $\rho_{p,(i_1,i_2,\ldots,i_n)}(T_c)$ is a scalar, namely a trivial element in the projective unitary group. 
\end{prv}

\begin{lem}\label{lemma:ordereven}
Let $p\geq 6$ be even, $c$  an essential simple closed curve on $\Sigma_{g,n}$, $g\geq 1$ and 
$(i_1,i_2,\ldots,i_n)$ be non-zero colors from $\mathcal C_p$. We assume that 
\begin{equation}\label{parity} \sum_{s=1}^n i_s\equiv 0 \; ({\rm mod}\; 2),\end{equation}
as otherwise $W_{p,g,(i_1,i_2,\ldots,i_n)}$ is a null vector space. Then $\rho_{p,(i_1, \ldots, i_n)}(T_c)^{2p}=1.$ Furthermore: 
\begin{enumerate}
\item  If $c$ is separating of strictly positive genus, assume that the 
other boundary components of one connected complementary subsurface are 
colored $i_1,i_2,\ldots,i_m$, with $0\leq m\leq n$. 
\begin{enumerate}
\item If 
\begin{equation*}\label{paritysub3} \sum_{s=1}^m i_s\equiv 0 \; ({\rm mod}\; 2)
\end{equation*}
then $\rho_{p,(i_1, \ldots, i_n)}(T_c)$  has order
\begin{equation*}
\left\{\begin{array}{ll}
1, & {\rm if}\;p=6;\\
\frac p4,&  {\rm if }\; p\equiv 0 \; ({\rm mod} \; 4), p\geq 8;\\
\frac p2,&  {\rm if }\; p\equiv 2 \; ({\rm mod} \; 4),  p\geq 10.\\
\end{array}
\right.
\end{equation*}
\item If 
\begin{equation*}\label{paritysub1} \sum_{s=1}^m i_s\equiv 1 \; ({\rm mod}\; 2)\end{equation*}
then $\rho_{p,(i_1, \ldots, i_n)}(T_c)$  has order 
\begin{equation*}
\left\{\begin{array}{ll}
1, & {\rm if} \;  p\in\{6,8\};\\
5, & {\rm if }\; p=10;\\
2, & {\rm if} \;p=12;\\
\frac p2,& {\rm if} \; p\geq 14.\\
\end{array}
\right.
\end{equation*}
\end{enumerate} 
\item  If $c$ is non-separating and $g\geq 2$,  or $g=1$ and $n=0$, 
then $\rho_{p,(i_1, \ldots, i_n)}(T_c)$  has order 
\begin{equation*}
\left\{\begin{array}{ll}
4, & {\rm if} \;  p=6;\\
2p, & {\rm if }\; p\geq 8.\\
\end{array}
\right.
\end{equation*}
\item Suppose that $c$ bounds a holed sphere with other boundary curves colored by the non-zero 
colors $i_1,i_2,\ldots,i_m$, for some $2\leq m\leq n$,  where 
$\delta_p(i_1,i_2,\ldots,i_m)\geq 3$. 
\begin{enumerate}
\item If 
\begin{equation*}\label{paritysub} \sum_{s=1}^m i_s\equiv 0 \; ({\rm mod}\; 2),\end{equation*}
then $\rho_{p,(i_1, \ldots, i_n)}(T_c)$  has order 
\begin{equation*}
\left\{\begin{array}{ll}
1, & {\rm if} \;  p=6;\\
\frac p4, &  {\rm if }\; p\equiv 0 \; ({\rm mod} \; 4), p\geq 8\\
\frac p2,&  {\rm if }\; p\equiv 2 \; ({\rm mod} \; 4),  p\geq 8\\
\end{array}
\right.
\end{equation*}
\item If 
\begin{equation*}\label{paritysub2} \sum_{s=1}^m i_s\equiv 1 \; ({\rm mod}\; 2),\end{equation*}
then  $\rho_{p,(i_1, \ldots, i_n)}(T_c)$ has order
\[\left\{\begin{array}{ll}
1, & {\rm if}\; p\in\{6,8\};\\
5, & {\rm if}\; p=10;\\ 
2, & {\rm if}\; p=12;\\
\frac p2,& {\rm if}\; p\geq 14.\\
\end{array}
\right.
\]
\end{enumerate}

\item If $c$ is non-separating, $g=1$ and $\delta^{(1)}_p(i_1,i_2,\ldots,i_n)\geq 3$, so that $p\geq 8$,
 then $\rho_{p,(i_1, \ldots, i_n)}(T_c)$  has order  $2p$. 
\item Let $c$ bound a holed sphere with other boundary curves colored by the non-zero 
colors $i_1,i_2,\ldots,i_m$, for some $2\leq m\leq n$, where 
 $\delta_p(i_1,i_2,\ldots,i_m)=2$. Then  
 $\rho_{p,(i_1, \ldots, i_n)}(T_c)$  has order 
 \[p/2\,{\rm g.c.d.}\left(J_{p,\max}(i_1,i_2,\ldots,i_m),p/2\right).\]
 
 \item Let $c$ be non-separating, $g=1$,  $n\geq 1$ , $\delta^{(1)}_p(i_1,i_2,\ldots,i_n)=2$.
 Then $\rho_{p,(i_1, \ldots, i_n)}(T_c)$  has order 
 \[2p/{\rm g.c.d.}\left(2+J^{(1)}_{p,\max}(i_1,i_2,\ldots,i_n),p\right).\] 

\item Suppose that: 
 \begin{enumerate}
 \item either $c$ bounds a holed sphere with other boundary curves colored by the non-zero 
colors $i_1,i_2,\ldots,i_m$, for some $2\leq m\leq n$, where 
 $\delta_p(i_1,i_2,\ldots,i_m)=1$, or 
\item $c$ is non-separating and $g=1$, $n\ge 1$  and  $\delta^{(1)}_p(i_1,i_2,\ldots,i_n)=1$.
\end{enumerate}
 Then $\rho_{p,(i_1, \ldots, i_n)}(T_c)$  has order $1$. 
 \end{enumerate}
\end{lem}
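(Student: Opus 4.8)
The plan is to follow the scheme of the proof of Lemma~\ref{lemma:orderodd} verbatim, only redoing the order computations, which are more delicate in the even case because $\mathcal C_p$ then also contains odd colours. First I would fix a pants decomposition of $\Sigma_{g,n}$ containing $c$ and, exactly as there, write the orthogonal decomposition
\[ W_{g,(i_1,\ldots,i_n)}=\bigoplus_{j\in\mathcal C_p} W(\Sigma_{g,n}\setminus\{c\};j),\qquad
\rho_{p,(i_1,\ldots,i_n)}(T_c)=\bigoplus_{j\in\mathcal C_p}(-1)^j A^{j(j+2)}\,{\mathbf 1}_{W(\Sigma_{g,n}\setminus\{c\};j)}. \]
Since $A$ is a primitive $2p$-th root of unity, $A^p=-1$, so the scalar acting on the $j$-th summand is $(-1)^jA^{j(j+2)}=A^{\mu(j)}$ with $\mu(j):=j^2+(p+2)j$; hence every eigenvalue is a $2p$-th root of unity, $\rho_{p,(i_1,\ldots,i_n)}(T_c)^{2p}=1$, and the order of $\rho_{p,(i_1,\ldots,i_n)}(T_c)$ in the projective linear group equals $2p/\gcd\bigl(2p,\{\mu(j)-\mu(j_0):j\in E\}\bigr)$, where $j_0\in E$ is arbitrary and $E\subseteq\mathcal C_p$ is the set of \emph{effective} colours, namely those $j$ with $W(\Sigma_{g,n}\setminus\{c\};j)\neq0$. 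The problem thus splits into (i) identifying $E$ in each case and (ii) computing that $\gcd$.

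For (i) I would argue as in Lemma~\ref{lemma:orderodd}, this time tracking the parity obstruction of Lemma~\ref{lemma:nulleven}. If $c$ separates off positive genus on both sides, the even analogue of the formula for $\dim W_{1,p,(i,j)}$ from (\cite{LF}, Lemma~3.3), together with that parity obstruction, gives $E=\{\,j\in\mathcal C_p : j\equiv \sum_{s=1}^m i_s \pmod 2\,\}$. If $c$ is non-separating and $g\geq2$ (or $g=1$, $n=0$), cutting gives $\Sigma_{g-1,n+2}$ with two new boundary circles both coloured $j$, so the parity obstruction is automatic and $\{0,1,2\}\subseteq E$ once $\tfrac{p-4}2\geq2$, i.e.\ $p\geq8$, while $E=\{0,1\}$ for $p=6$. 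If $c$ bounds a holed sphere carrying $i_1,\ldots,i_m$, then $E=\{\,j\in\mathcal C_p:W_{0,p,(i_1,\ldots,i_m,j)}\neq0\,\}$, which by the proof of Lemma~\ref{lemma:deltaeven} is the arithmetic progression $\{J_{p,\min},J_{p,\min}+2,\ldots,J_{p,\max}\}$ of length $\delta_p(i_1,\ldots,i_m)$; and if $c$ is non-separating with $g=1$, the proof of Lemma~\ref{lemma:deltaeveng1} identifies $E$ with a block of $\delta^{(1)}_p(i_1,\ldots,i_n)$ consecutive integers.

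For (ii) I would use the identity $\mu(j+\delta)-\mu(j)=\delta^2+\delta(2j+p+2)$. When $E$ contains three colours in arithmetic progression of step $1$ --- which happens in item (2) for $p\geq8$ (since $0,1,2\in E$) and in item (4) --- two consecutive exponent differences are consecutive odd integers, hence coprime, so the $\gcd$ with $2p$ is $1$ and the order is $2p$. When $E$ contains three colours of a fixed parity (step $2$), I would run a short $2$-adic computation using $\mu(j+2)-\mu(j)=4(j+2)+2p$: the relevant $\gcd$ comes out to $\gcd(2p,8)$ or $\gcd(2p,4)$, giving $p/4$ when $p\equiv0\pmod4$ and $p/2$ when $p\equiv2\pmod4$; this produces items (1) and (3) for the large $p$, the value $4$ at $p=6$ in item (2), and the values $5$ at $p=10$, $2$ at $p=12$ in the odd-parity subcases of items (1) and (3) falling out of the same finite check. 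When $\delta_p(i_1,\ldots,i_m)=2$ we have $E=\{J_{p,\max}-2,J_{p,\max}\}$ and the single exponent difference is $\equiv4J_{p,\max}\pmod{2p}$; since $p$ is even, $\gcd(2p,4J_{p,\max})=4\gcd(p/2,J_{p,\max})$, which yields the order $p/(2\gcd(J_{p,\max},p/2))$ of item (5), and item (6) ($g=1$, non-separating, $\delta^{(1)}_p=2$) is identical, using its two consecutive effective colours. When $\delta_p(i_1,\ldots,i_m)=1$ or $\delta^{(1)}_p(i_1,\ldots,i_n)=1$, $E$ is a single colour, $\rho_{p,(i_1,\ldots,i_n)}(T_c)$ is scalar, and its projective order is $1$ --- item (7). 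The small values $p\in\{6,8,10,12,14\}$ I would settle directly: there $\mathcal C_p$ has at most six colours, several of the cases above become vacuous for parity reasons (for instance $\delta_p\geq3$ forces $p$ large, depending on the colour parities), and the finitely many surviving configurations give the exceptional entries of items (1)--(3).

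The hard part will be step (i): one has to be sure that the genus $\geq1$ complementary piece never discards a gluing colour already permitted by the genus $0$ count --- equivalently, that for a subsurface of genus $\geq1$ with colours drawn from $\mathcal C_p$ only the parity obstruction of Lemma~\ref{lemma:nulleven} can prevent an admissible colouring --- and to keep precise track of the parity of the effective colours, since for even $p$, unlike in Lemma~\ref{lemma:orderodd}, odd colours occur, and it is exactly this phenomenon that generates the finitely many exceptional small-$p$ values. Everything else is the bookkeeping sketched above.
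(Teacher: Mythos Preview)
Your proposal is correct and follows essentially the same route as the paper's proof. The paper sets up the identical eigenspace decomposition, observes that the problem reduces to determining which colours $j$ appear effectively, and then runs the same case analysis you outline. Your systematic use of $\mu(j)=j^2+(p+2)j$ and the gcd formula is a slightly cleaner repackaging of what the paper does eigenvalue-by-eigenvalue, but the computations agree. The ``hard part'' you flag in step (i) is exactly what the paper isolates as three internal auxiliary statements inside the proof: an explicit formula $\dim W_{1,p,(i,j)}=\bigl(\tfrac{p-2}{2}-\max(i,j)\bigr)(1+\min(i,j))$ when $i\equiv j\pmod 2$ (and $0$ otherwise), and two corollaries to the effect that for $g\ge 1$ the space $W_{g,p,(i_1,\ldots,i_m)}$ is nonzero precisely when the parity obstruction $\sum i_t\equiv 0\pmod 2$ holds. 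With these in hand, the effective set $E$ is pinned down in each case exactly as you describe, and the remaining small-$p$ exceptions are finite checks.
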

\begin{prv} 
First note that the sum of the parity conditions over all vertices yields the global parity 
condition (\ref{parity}). Further, we follow the arguments from Lemma \ref{lemma:orderodd} above and use the same notation. 
We then have: 
\[ \widetilde{\rho}_{p, (i_1, \ldots, i_n)}(\widetilde{T_c})=\oplus_{j\in \mathcal C_p} (-1)^jA^{j(j+2)}{\mathbf 1}_{W(\Sigma_{g,n}\setminus\{c\};j)}\]
where ${\mathbf 1}_{W}$ is the identity operator on the subspace $W$. 
The eigenvalues of $\rho_{p, (i_1, \ldots, i_n)}(T_c)$ are powers of $A$ and hence 
\[ \rho_{p, (i_1, \ldots, i_n)}(T_c)^{2p}=1\]

In order to compute the order of $\rho_{p,(i_1, \ldots, i_n)}(T_c)$ it suffices to find which colors $j$ appear effectively in the direct sum above, namely for which $j\in \mathcal C_p$ 
the vector space $W(\Sigma_{g,n}\setminus\{c\};j)$ is non-zero. Let us first give some information about spaces of conformal blocks in positive genus: 

\begin{lem}

\begin{equation} \dim W_{1, p, (i,j)}= 
\left\{\begin{array}{ll}
\left(\frac{p-2}{2}-\max(i,j)\right)\left(1+\min(i,j)\right), &  {\rm if}\; i\equiv j \; ({\rm mod}\; 2);\\
0, & \; {\rm if}\; i\not\equiv j \; ({\rm mod}\; 2).
\end{array}\right.
\end{equation}
Hence $\dim W_{1, p, (i,j)}=0$ if only if $i\not\equiv j \; ({\rm mod}\; 2)$.
 
\end{lem}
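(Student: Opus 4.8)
\textbf{Proof strategy for the dimension formula for $W_{1,p,(i,j)}$, $p$ even.}

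The plan is to compute $\dim W_{1,p,(i,j)}$ by counting $p$-admissible colorings of a trivalent spine, in the same spirit as the odd-level computation of \cite{LF} but carefully tracking the parity obstruction. First I would fix the standard graph $G$ for the genus-one surface with two banded points: two trivalent vertices $v_1,v_2$ joined by a pair of edges $e_1,e_2$ (these two edges form the core circle of the genus-one handlebody), together with an external leg colored $i$ attached at $v_1$ and an external leg colored $j$ attached at $v_2$. By the description of conformal blocks recalled in Section~\ref{tqft}, $\dim W_{1,p,(i,j)}$ is the number of pairs $(a,b)\in\mathcal{C}_p\times\mathcal{C}_p$ colouring $(e_1,e_2)$ for which the two triples $(a,b,i)$ and $(a,b,j)$ are both $p$-admissible.

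Next I would extract the three kinds of constraints. The parity clause of $p$-admissibility (a genuine constraint since $p$ is even) requires $a+b+i$ and $a+b+j$ to be even; subtracting, $i\equiv j\pmod 2$, so if $i\not\equiv j\pmod 2$ there is no admissible coloring and $\dim W_{1,p,(i,j)}=0$. Assume therefore $i\equiv j\pmod 2$ and, using the symmetry of the claimed formula, that $i\le j$. Then the triangle inequalities for the two triples reduce to $|a-b|\le i$ and $a+b\ge j$, the two $q$-bound inequalities reduce to $a+b\le p-4-j$, and the parity clause becomes $a+b\equiv i\pmod 2$. I would then introduce $s=a+b$, $d=a-b$ and check that the remaining membership conditions $a=\tfrac{s+d}{2},\ b=\tfrac{s-d}{2}\in\{0,1,\dots,\tfrac{p-4}{2}\}$ are automatically satisfied: from $s\le p-4-j$ and $|d|\le i\le j$ one gets $0\le s\pm d\le p-4$, that is $0\le a,b\le\tfrac{p-4}{2}$. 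This last verification is the only genuinely delicate point, and it is precisely where the even-level count departs from the odd-level one (there $\tfrac{p-4}{2}$ is not an integer, the color range reads $a,b\le p-3$, and it does impose extra corrections, producing the different formula $\tfrac{(p-1-\max(i,j))(1+\min(i,j))}{2}$).

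Finally I would do the count. The admissible values of $s$ are the integers in the interval $[\,j,\ p-4-j\,]$ congruent to $i$ modulo $2$; since $p-4$ is even, $p-4-j\equiv j\equiv i\pmod 2$, so these are $j,j+2,\dots,p-4-j$, a total of $\tfrac{p-2}{2}-j=\tfrac{p-2}{2}-\max(i,j)$ values. For each such $s$, the admissible $d$ are $-i,-i+2,\dots,i-2,i$, a total of $i+1=1+\min(i,j)$ values, and $(a,b)$ is recovered from $(s,d)$. Multiplying yields $\dim W_{1,p,(i,j)}=\big(\tfrac{p-2}{2}-\max(i,j)\big)\big(1+\min(i,j)\big)$. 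Since $\max(i,j)\le\tfrac{p-4}{2}$ for colors in $\mathcal{C}_p$, the first factor is $\ge 1$, so this dimension is $\ge 1$ whenever $i\equiv j\pmod 2$; combined with the second paragraph this gives $\dim W_{1,p,(i,j)}=0$ if and only if $i\not\equiv j\pmod 2$, completing the proof.
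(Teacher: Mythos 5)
Your computation is correct and is precisely the "direct computation" that the paper's one-line proof alludes to: you set up the trivalent spine with two legs, reduce admissibility to constraints on $s=a+b$ and $d=a-b$, observe that the parity obstruction forces $i\equiv j\pmod 2$, and count $s$ and $d$ independently. The only delicate step — that the range bound $a,b\le\tfrac{p-4}{2}$ is automatic once $s\le p-4-j$ and $|d|\le i\le j$ — is verified cleanly, and your remark explaining why the odd-level count in \cite{LF} acquires a factor of $\tfrac12$ is a correct and useful comparison.
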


\begin{prv}
 Direct computation. 
\end{prv}

\begin{coro}
 If $g\ge 2$, then $\dim W_{g, p, (i,j)}=0$ if only if $i\not\equiv j \; ({\rm mod}\; 2)$.
\end{coro}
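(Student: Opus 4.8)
The plan is to deduce both implications from the genus-one case already settled in the preceding lemma (we are of course in the setting $p\geq 6$ even, where the vertex-parity condition is part of $p$-admissibility), using the multiplicativity of conformal block dimensions under cutting along simple closed curves and the functoriality of the skein TQFT under inclusion of essential subsurfaces recalled in Section \ref{tqft}.

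For the implication ``$i\not\equiv j\ (\mathrm{mod}\ 2)\Rightarrow \dim W_{g,p,(i,j)}=0$'' I would use the global parity obstruction, which holds in every genus. Fix a trivalent spine $G$ of a handlebody $H_g$ with $\partial H_g=\Sigma_g$, the two banded points being univalent vertices carrying legs colored $i$ and $j$. In any $p$-admissible coloring the three colors meeting at each trivalent vertex sum to an even integer; summing this congruence over all trivalent vertices, each internal edge is counted twice and hence drops out modulo $2$ (a loop at a single vertex contributes its color with multiplicity two, so it also drops out), while each leg is counted exactly once. Hence $i+j\equiv 0\ (\mathrm{mod}\ 2)$, so if $i\not\equiv j\ (\mathrm{mod}\ 2)$ there is no admissible coloring of $G$ extending the leg colors and $W_{g,p,(i,j)}=0$.

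For the converse, equivalently ``$i\equiv j\ (\mathrm{mod}\ 2)\Rightarrow \dim W_{g,p,(i,j)}>0$'', I would exhibit an essential genus-one subsurface. Since $g\geq 2$, one can choose $\Sigma'\subset\Sigma_g$ of genus $1$ with a single boundary circle and containing both marked points; its complement is a genus $g-1$ surface with one boundary component, hence essential. Coloring $\partial\Sigma'$ by $0$ (always admissible) and invoking the subsurface functoriality of Section \ref{tqft}, the space of conformal blocks of $\Sigma'$ with boundary colored $0$, which equals $W_{1,p,(i,j,0)}=W_{1,p,(i,j)}$, embeds as a subspace of $W_{g,p,(i,j)}$. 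By the preceding lemma
\[
\dim W_{1,p,(i,j)}=\Bigl(\tfrac{p-2}{2}-\max(i,j)\Bigr)\bigl(1+\min(i,j)\bigr),
\]
which is strictly positive because $i,j\in\mathcal C_p$ forces $\max(i,j)\leq\tfrac{p-4}{2}<\tfrac{p-2}{2}$. Therefore $\dim W_{g,p,(i,j)}\geq\dim W_{1,p,(i,j)}>0$.

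Alternatively, to avoid quoting the functoriality statement one can induct on $g$: cutting $\Sigma_g$ along a separating curve bounding a one-holed torus disjoint from the marked points and coloring that curve by $0$ gives $\dim W_{g,p,(i,j)}\geq \dim W_{1,p,(0)}\cdot\dim W_{g-1,p,(i,j)}$ with $\dim W_{1,p,(0)}=\tfrac{p-2}{2}>0$, the base case $g=1$ being the preceding lemma. I do not expect a genuine obstacle here; the only point requiring a moment's care is the parity-summation step for spines containing loop edges, which is handled by the multiplicity-two remark above.
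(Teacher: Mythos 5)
Your proof is correct, and it follows a genuinely different route from the paper's. The paper simply decomposes $\Sigma_{g,2}=\Sigma_{1,2}\cup_{S^1}\cdots\cup_{S^1}\Sigma_{1,2}$ (a chain of $g$ one-holed tori) and applies the splitting principle, coloring the internal circles by $i$: the last factor $\dim W_{1,p,(i,j)}$ vanishes exactly when $i\not\equiv j$, while all others equal $\dim W_{1,p,(i,i)}>0$, and in any other chain coloring the parity obstruction from the preceding lemma propagates along the chain so that all terms vanish when $i\not\equiv j$. You instead prove the two implications by different means: the vanishing direction by summing the mod-$2$ vertex constraint over the trivalent spine of the handlebody (correctly handling loop edges, which drop out because they are counted with multiplicity two), and the non-vanishing direction by embedding a genus-one essential subsurface containing both marked points with boundary colored $0$ (or, alternatively, by induction cutting along a separating $0$-colored circle). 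The paper's approach is more compact and treats both directions at once from a single decomposition; yours is more explicit, makes the parity obstruction a purely graph-theoretic statement valid for any spine (not just the chain), and isolates the non-vanishing input to the single dimension formula $\dim W_{1,p,(i,j)}$ with a $0$-colored boundary, so it is arguably clearer to follow even if slightly longer.
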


\begin{prv}
Decompose a 2-holed surface of genus $g$ as $\Sigma_{g,2}= \Sigma_{1,2}\cup_{S^1} \Sigma_{1,2} \cup_{S^1} \ldots \cup_{S^1} \Sigma_{1,2}$ and apply the splitting principle
coloring each glued boundary circle with the color $i$. 
\end{prv}

\begin{coro}\label{coro:bcgnonva}
 If $g\ge 1$, $m\ge 2$ $\dim W_{g, p, (i_1,\ldots i_m)}\not =0$ if only if $$\sum_{t=1}^m i_t \equiv 0 \; ({\rm mod}\; 2).$$
\end{coro}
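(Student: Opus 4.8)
The plan is to prove the statement by induction on the number $m$ of marked points, using the splitting (factorization) principle for the TQFT $\mathcal V_p$, together with the parity constraint imposed at each trivalent vertex when $p$ is even. Throughout, recall that $\dim W_{0,p,(a,b,c)}\in\{0,1\}$, the value being $1$ precisely when $(a,b,c)$ is $p$-admissible, and that for even $p$ admissibility forces $a+b+c\equiv 0\pmod 2$.

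The necessity of $\sum_{t=1}^m i_t\equiv 0\pmod 2$ is already implicit in the proof of Lemma \ref{lemma:ordereven}: if $\dim W_{g,p,(i_1,\ldots,i_m)}\neq 0$, then the trivalent graph dual to a pants decomposition of $\Sigma_{g,m}$ admits a $p$-admissible coloring extending the boundary colors $i_1,\ldots,i_m$. Summing the vertex parity conditions over all trivalent vertices, every internal edge is counted twice and so contributes an even amount, whence $\sum_{t=1}^m i_t\equiv 0\pmod 2$.

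For the base case $m=2$ the assertion is exactly the Lemma above computing $\dim W_{1,p,(i,j)}$ (for $g=1$) and the Corollary above treating $g\geq 2$: $\dim W_{g,p,(i_1,i_2)}\neq 0$ iff $i_1\equiv i_2\pmod 2$, i.e. iff $i_1+i_2\equiv 0\pmod 2$. Now let $m\geq 3$ and assume the statement for $m-1$ marked points (still for arbitrary $g\geq 1$). Cut $\Sigma_{g,m}$ along a simple closed curve $c$ bounding a three-holed sphere containing the two boundary circles colored $i_{m-1}$ and $i_m$. The splitting principle gives
\[ W_{g,p,(i_1,\ldots,i_m)}\;\cong\;\bigoplus_{j\in\mathcal C_p} W_{0,p,(i_{m-1},i_m,j)}\otimes W_{g,p,(i_1,\ldots,i_{m-2},j)},\]
and the nonzero summands occur exactly for those $j$ with $(i_{m-1},i_m,j)$ $p$-admissible; for even $p$ each such $j$ satisfies $j\equiv i_{m-1}+i_m\pmod 2$. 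Moreover at least one such $j$ always exists: taking $j_0:=|i_{m-1}-i_m|$ one has $j_0\in\mathcal C_p$ (since $i_{m-1},i_m\leq\tfrac{p-4}{2}$), the triangle inequality holds with equality on the left, and the $q$-bound holds because $i_{m-1}+i_m+j_0=2\max(i_{m-1},i_m)\leq p-4$; alternatively invoke Lemma \ref{lemma:deltaeven}, which gives $\delta_p\geq 1$.

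With this in hand both directions follow. If $\sum_{t=1}^m i_t\equiv 0\pmod 2$, choose $j=j_0$; then $i_1+\cdots+i_{m-2}+j_0\equiv\sum_t i_t\equiv 0\pmod 2$, so by the induction hypothesis $W_{g,p,(i_1,\ldots,i_{m-2},j_0)}\neq 0$, hence the corresponding summand, and therefore $W_{g,p,(i_1,\ldots,i_m)}$, is nonzero. Conversely, if $W_{g,p,(i_1,\ldots,i_m)}\neq 0$, pick $j$ with the $j$-th summand nonzero; then $(i_{m-1},i_m,j)$ is admissible, giving $j\equiv i_{m-1}+i_m\pmod 2$, and the induction hypothesis gives $i_1+\cdots+i_{m-2}+j\equiv 0\pmod 2$; adding these two congruences yields $\sum_{t=1}^m i_t\equiv 0\pmod 2$. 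This being a routine splitting-principle induction, there is no serious obstacle; the only points to watch are that the reduction stays within the hypotheses ($g\geq 1$ and $m-1\geq 2$) and that the genus-zero three-holed piece always carries an admissible internal color, both of which are immediate from the triangle and $q$-bound inequalities for even $p$.
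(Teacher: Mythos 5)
Your proof is correct. The necessity direction via the vertex-parity sum is exactly the parity obstruction from Lemma~\ref{lemma:nulleven}, and the sufficiency direction via induction on $m$ is sound: the base $m=2$ is the in-proof Lemma on $\dim W_{1,p,(i,j)}$ and its Corollary for $g\ge 2$, the inductive cut keeps $m-1\ge 2$ and $g\ge 1$, and your explicit witness $j_0=|i_{m-1}-i_m|$ is $p$-admissible with $(i_{m-1},i_m)$ since $i_{m-1},i_m\le\frac{p-4}{2}$ forces $j_0\in\mathcal C_p$, the triangle inequality degenerately, the $q$-bound $2\max(i_{m-1},i_m)\le p-4$, and even vertex parity.

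The route differs from the paper's in how the surface is decomposed. The paper does it in one shot: it writes $\Sigma_{g,m}=\Sigma_{1,2}\cup_{S^1}\cdots\cup_{S^1}\Sigma_{1,2}\cup_{S^1}\Sigma_{0,m}$, isolating the genus into a chain of $g$ one-handled pieces, then uses Lemma~\ref{lemma:deltaeven} to produce a color $j_1$ with $W_{0,p,(j_1,i_2,\ldots,i_m)}\neq 0$ (automatically $j_1\equiv i_1\pmod 2$) and colors every glued circle with this same $j_1$, so that every $\Sigma_{1,2}$ piece has nonzero blocks by the $m=2$ case. You instead peel off one pair of pants at a time, inducting on $m$ while carrying the genus along unchanged. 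The two arguments rest on the same two ingredients — the factorization axiom along separating curves and the $m=2$ parity criterion — but yours avoids invoking Lemma~\ref{lemma:deltaeven} (you construct the fusing color by hand) at the cost of a less explicit, iterated reduction; the paper's is shorter once Lemma~\ref{lemma:deltaeven} is available. Both are fine.
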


\begin{prv} The case $m=2$ follows from the previous statements. So we may assume  $m \ge 3$. 
Decompose a 2-holed surface of genus $g$ as $\Sigma_{g,m}= \Sigma_{1,2}\cup_{S^1} \Sigma_{1,2} \cup_{S^1} \ldots \cup_{S^1} \Sigma_{1,2}\cup_{S^1} \Sigma_{0,m}$
assuming $i_1$ colors the left boundary curve of the first $\Sigma_{1,2}$ and apply the splitting principle
coloring each glued boundary circle with a color $j_1$ such that $W_{0,p, (j_1, i_2, \ldots, i_m) } \not=0$ which exists by Lemma \ref{lemma:deltaeven} and has the same parity as $i_1$. 
\end{prv}

 Suppose now that $c$ bounds two subsurfaces of positive genera on both sides. As above,  the colors $j$ 
 that could be realized effectively on $c$ are precisely those satisfying the parity condition:
\[ j\equiv \sum_{s=1}^m i_s \; ({\rm mod}\; 2)\]  
and we call the splitting of the surface odd/even according to the parity of the sum of colors on either side. 

Let $c$ provide an even splitting. If $p=6$ there is only one even color, namely $0$ and hence 
\[ \rho_{g,6, (i_1,i_2,\ldots,i_n)}(T_c)=1\]
For even $p\geq 8$, there exist at least two even colors $0$ and $2$ which appear effectively thanks to Corollary \ref{coro:bcgnonva}. 
All other even labels yield eigenvalues which are powers of $A^8$. 
Since $A^8$ has order $\frac p4$, when $p\equiv 0 \; ({\rm mod}\; 4)$ and $\frac p 2$, otherwise, 
the same holds for $\rho_{g,p, (i_1,i_2,\ldots,i_n)}(T_c)$. 

Let $c$ provide an odd splitting. If $p=6$ or $p=8$ there is only one odd color in $\mathcal C_p$, 
namely $1$. The matrix so obtained is then scalar and hence its image in the projective unitary group is trivial:
\[ \rho_{g,6, (i_1,i_2,\ldots,i_n)}(T_c)=1, \; \rho_{g,8, (i_1,i_2,\ldots,i_n)}(T_c)=1\]
If $p=10$ or $p=12$,  there are exactly two odd colors $1$ and $3$. They effectively appear by Corollary \ref{coro:bcgnonva}.  The associated 
eigenvalues are $-A^3$ and $-A^{15}$. The smallest exponent for which 
the powers of these two eigenvalues coincide are $2$, for $p=12$ and $5$ for $p=10$. 
For $p\geq 14$ , there are at least three odd colors $1,3$ and $5$, with associated eigenvalues 
$-A^3,-A^{15}$ and $-A^{35}$. The smallest $k$ for which $k$-th powers are equal should verify 
$A^{12k}=A^{20k}=1$ and hence $A^{4k}=1$, so that $k=\frac p 2$. 
Higher odd colors lead to eigenvalues of the form powers of $A^4$ times $-A^3$. 
Thus the order of $\rho_{g,p, (i_1,i_2,\ldots,i_n)}(T_c)$ is $\frac p 2$. 
This proves the first item of our Lemma.

Assume that $c$ is nonseparating and $g\geq 2$. Choose a pants decomposition such that 
$c$ is a meridian of a one holed torus bounded by the simple closed curve $d$. 
The parity obstruction for the complementary subsurface shows that $d$ must be even in any 
$p$-admissible coloring.  By Corollary \ref{coro:bcgnonva}, any even color for $d$ 
appears effectively. Therefore any color for $c$ appears effectively, in particular 
the colors $j=0, 1$ and also $j=2$, when $p\geq 8$. 
Now $-A^3$ had order $4$, for $p=6$.  If $p\geq 8$, the l.c.m. of orders of 
$-A^3$ and $A^8$ is $2p$ and hence $\rho_{p,(i_1, \ldots, i_n)}(T_c)$ has order $2p$. 
The proof is similar for $g=1$, $n=0$. This proves the second  item of the Lemma.

Assume  $c$ is separating a holed sphere, providing an even splitting. 
If $p=6$, there is only one even color $0$, and hence $\rho_{6,(i_1,i_2,\ldots,i_n)}=1$. 
If $\delta_p(i_1,i_2,\ldots,i_m)\geq 3$, 
there are at least three consecutive even colors $j$ which appear effectively for $c$. 
Then the order of  $\rho_{p,(i_1,i_2,\ldots,i_n)}$ is the order of $A^8$, which is as stated, by the same argument as in the odd case. 
If $\delta_p(i_1,i_2,\ldots,i_m)=2$ and $J_{p,\min}(i_1,i_2,\ldots,i_m)=0$, then the 
colors $0$ and $2$ appear effectively and hence the order of $\rho_{p,(i_1,i_2,\ldots,i_n)}$ is the order of $A^8$ again. 

Assume  $c$ is separating a holed sphere, providing an odd splitting. 
If $p=6$ or $p=8$, there is only one odd color $1$, and hence $\rho_{p,(i_1,i_2,\ldots,i_n)}=1$. 
If $\delta_p(i_1,i_2,\ldots,i_m)\geq 3$, 
there are at least three consecutive odd colors $j$ which appear effectively for $c$ and the order 
$\rho_{p,(i_1,i_2,\ldots,i_n)}$ is the order of $A^4$, which is as stated, except for small values of $p\leq 12$. This concludes the proof of the third item.

When $g=1$ and $c$ is non-separating and $\delta^{(1)}_p(i_1,i_2,\ldots,i_m)\geq 3$
then there are at least three consecutive colors $j,j+1,j+2$ which appear effectively. 
The corresponding eigenvalues have a common $N$-th power iff 
$(-A)^{(2j+3)N}=(-A)^{(2j+5)N}=1$, and hence $A^{N}=1$, so that $N$ is divisible by $2p$. 
This proves the fourth item.

If $c$ bounds a holed sphere and  $\delta_p(i_1,i_2,\ldots,i_m)=2$, then $j$ must be even and 
there are exactly two even colors $j$ and $j+2$ which appear effectively and the 
order of $\rho_{p,(i_1,i_2,\ldots,i_n)}$ is the order of $A^{4(j+2)}$, which is as claimed. 

If $c$ is non-separating, $g=1$, $\delta^{(1)}_p(i_1,i_2,\ldots,i_m)=2$, then there are exactly two 
 colors $j$ and $j+1$ which appear effectively and the
order of $\rho_{p,(i_1,i_2,\ldots,i_n)}$ is the order of $A^{2j+3}$ which is  as claimed. 

Eventually, recall that if  $\delta_p(i_1,i_2,\ldots,i_m)=1$ or 
$\delta^{(1)}_p(i_1,i_2,\ldots,i_m)=1$, then $\rho_{p,(i_1,i_2,\ldots,i_n)}$ is a scalar and hence 
of order $1$. 
\end{prv}

\begin{rem}
Assume now that all $i_s$ are equal to $\frac{p-4}{2}$.
We claim that the space $W_{0,p,(i_1,\ldots,i_n)}$ could only be non-trivial when 
$n\geq 4$ is even, in which case it is of dimension $1$. This follows either by using 
Lemma \ref{lemma:nulleven}, or else directly by analyzing the dual tree of the pants decomposition 
used in its proof. The internal edges form a chain; their labels are forced by the 
$p$-admissibility condition to be alternatively $0,\frac{p-4}{2},0,\frac{p-4}{2},\ldots$.
This implies that the only color which could appear effectively on $c$ is $0$, if $m$ is odd and 
$\frac{p-4}{2}$, if $m$ is even. In both cases  $\rho_{p,(i_1,i_2,\ldots,i_n)}$ is a scalar and hence it is of order $1$. 
\end{rem}

\subsection{Infiniteness results}
Recall from the introduction that $\Mod(\Sigma_g^n[\mathbf k])$ denotes the normal subgroup generated 
by $k_0$-th powers of Dehn twist along non-separating simple closed curves and 
$k_j$-th powers of Dehn twists along  simple closed curves of type $j$, where $\mathbf k=(k_0;k_1,k_2,\ldots,k_{N_{g,n}})$.
As a shortcut we use $\Mod(\Sigma_g^n[k;m])$ for $\mathbf k=(k,m,m,m\ldots,m)$ and 
 $\Mod(\Sigma_g^n[k;-])$ for $\mathbf k=(k;)$, where $k_i$ are absent for $i>0$.

\begin{prop}\label{factori} Assume $g\ge 1$, $n\ge 0$. Let $(i_1,\ldots,i_n)\in \mathcal{C}_p^n$   such that 
\[ \sum_{t=1}^n i_t \equiv 0 \; ({\rm mod }\; 2).\]
\begin{enumerate}
 \item For odd $p\geq 5$,  $\rho_{p,(i_1,i_2,\ldots,i_n)}$ factors through $\Mod(\Sigma_g^n)/\Mod(\Sigma_g^n)[p]$. 
\item 
For even $p\geq 10$,  $\rho_{p,(i_1,i_2,\ldots,i_n)}$ factors through 
$\Mod(\Sigma_g^n)/\Mod(\Sigma_g^n)[2p; p/{\rm g.c.d.}(4,p)]$, if all colors are even, 
and through out  $\Mod(\Sigma_g^n)/\Mod(\Sigma_g^n)[2p; p/2]$, in general.
\end{enumerate}
\end{prop}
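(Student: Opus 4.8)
The plan is to reduce both statements to the order computations of Lemmas \ref{lemma:orderodd} and \ref{lemma:ordereven}, via the elementary principle that a group homomorphism which kills every element of a generating set of a normal subgroup factors through the corresponding quotient. By definition $\Mod(\Sigma_g^n)[\mathbf k]$ is the normal subgroup generated by the $k_0$-th powers of Dehn twists along non-separating simple closed curves together with the $k_j$-th powers of Dehn twists along separating curves of type $j$, and $\ker\rho_{p,(i_1,\ldots,i_n)}$ is normal; so it suffices to verify that each such generating twist power is sent to the identity of the projective linear group. (Boundary labels equal to $0$ may be dropped without changing the representation, so we may assume all $i_j$ are non-zero, as in the two lemmas.)

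Part (1) is then immediate: the first assertion of Lemma \ref{lemma:orderodd} is precisely $\rho_{p,(i_1,\ldots,i_n)}(T_c)^p=1$ for \emph{every} essential simple closed curve $c$ on $\Sigma_g^n$ --- non-separating, separating of positive genus, or bounding a holed sphere alike (in the degenerate cases $\delta_p\le 2$ the order is in fact a proper divisor of $p$). Hence every generator of $\Mod(\Sigma_g^n)[p]$ lies in $\ker\rho_{p,(i_1,\ldots,i_n)}$, and the representation factors through $\Mod(\Sigma_g^n)/\Mod(\Sigma_g^n)[p]$.

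For part (2) I would split the generators of $\Mod(\Sigma_g^n)[2p;m]$ into two families. The opening assertion of Lemma \ref{lemma:ordereven} gives $\rho_{p,(i_1,\ldots,i_n)}(T_c)^{2p}=1$ for every essential simple closed curve, which disposes of the generators $T_c^{2p}$ attached to non-separating curves ($k_0=2p$). For a separating curve $c$ I would then read off its order from the remaining cases of Lemma \ref{lemma:ordereven}: separating of positive genus (item 1), or bounding a holed sphere with $\delta_p\ge 3$, with $\delta_p=2$, or with $\delta_p=1$ (items 3, 5 and 7 respectively; when $g=1$ every separating curve bounds a holed sphere, so items 4 and 6 concerning non-separating curves in genus $1$ are used only through the $2p$-th power statement). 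In each of these cases one checks that the order of $\rho_{p,(i_1,\ldots,i_n)}(T_c)$ divides $p/2$; and if all the colors $i_1,\ldots,i_n$ are even, then every complementary subsurface has boundary labels of even sum, so the splitting is always even, the effective colors on $c$ are even, and the order divides $p/{\rm g.c.d.}(4,p)$. This gives $\Mod(\Sigma_g^n)[2p;m]\subseteq\ker\rho_{p,(i_1,\ldots,i_n)}$ for the appropriate value of $m$, whence the factorization.

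The only genuine bookkeeping, and the step most prone to slips, is the divisibility check for separating curves in part (2). The delicate points are the $\delta_p=2$ case, where the order equals $p/(2\,{\rm g.c.d.}(J_{p,\max},p/2))$ and one needs $J_{p,\max}$ to be even under the all-colors-even hypothesis (which follows from $\varepsilon_{\mathbf i}=0$ in Lemma \ref{lemma:deltaeven}) in order to conclude that it divides $p/{\rm g.c.d.}(4,p)$; and the small levels $p\in\{10,12\}$, where some orders are $5$ or $2$, which must be checked by hand against $p/2$. Everything else is a formal consequence of the two order lemmas and the universal property of the quotient.
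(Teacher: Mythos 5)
Your proposal is correct and follows essentially the same route as the paper, which simply declares the proposition immediate from Lemmas \ref{lemma:orderodd} and \ref{lemma:ordereven}; you have merely spelled out the reduction to the order computations and carried out the divisibility bookkeeping (including the small even levels $p=10,12$ and the parity argument forcing $J_{p,\max}$ even when all boundary colors are even) that the paper leaves implicit.
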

\begin{proof}
This is immediate from Lemmas \ref{lemma:orderodd} and \ref{lemma:ordereven}. 
\end{proof}

\begin{prop}\label{infty-closed}
Assume that $g\ge 2$ and  $p\ge 5$, if $p$ is odd, or $p\ge 10$  
and $(g,p)\neq (2,12)$, if $p$ is even. 
Then  $\rho_p(\Mod(\Sigma_{g,n}))$ is infinite. 
 \end{prop}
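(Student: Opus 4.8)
The plan is to reduce the statement to a closed surface and then to exhibit, inside $\rho_p(\Mod(\Sigma_2))$, a single mapping class whose $\rho_p$–image has infinite order. First I would remove the boundary components and the marked points. Since all colours are $0$, the Dehn twist along a boundary curve is sent to the scalar $(-1)^0A^{0}=1$, and, capping the boundary with once–punctured discs, the point–pushing subgroup (the kernel of $\Mod(\Sigma_{g,n})\to\Mod(\Sigma_g)$) acts trivially on $W_{g,p}$, because a $0$–coloured marked point does not contribute to the space of conformal blocks. Hence $\rho_p(\Mod(\Sigma_{g,n}))=\rho_p(\Mod(\Sigma_g))$ and one may assume $n=0$. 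Next, for $g\ge 2$ choose an essential subsurface $\Sigma_{2,1}\hookrightarrow\Sigma_g$ whose boundary $\delta$ bounds a $\Sigma_{g-2,1}$; colouring $\delta$ by $0$ and using the decomposition $W_{g,p}=\bigoplus_j W(\Sigma_g\setminus\{\delta\};j)$ from the proof of Lemma \ref{lemma:orderodd} together with Corollary \ref{coro:bcgnonva} (which gives that the $j=0$ summand $W_{2,p}\otimes W_{g-2,p}$ is non‑zero), the subgroup $\Mod(\Sigma_{2,1})\subset\Mod(\Sigma_g)$ preserves that summand and acts on its first tensor factor through $\Mod(\Sigma_2)$. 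Therefore $\rho_p(\Mod(\Sigma_g))$ contains a subgroup surjecting onto $\rho_p(\Mod(\Sigma_2))$, and it is enough to prove that the latter is infinite.

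This is the core of the argument. By Proposition \ref{factori} every Dehn twist has finite order in $\rho_p$, so infiniteness can only be produced by a product of twists. Following Masbaum and \cite{F}, I would fix an explicit word $\varphi$ in two Dehn twists, supported in a subsurface of $\Sigma_2$ of small complexity, together with a $\langle\rho_p(\varphi)\rangle$–invariant two–dimensional summand $V$ of the ambient conformal block space, cut out by prescribing the colours of the relevant separating curves. Using the diagonal form $\ro_p(\widetilde{T_\gamma})v=(-1)^{c(e)}A^{c(e)(c(e)+2)}v$ of a twist, together with the $S$–matrix expressing the change between the two adapted bases, one computes the $2\times2$ matrix $\rho_p(\varphi)|_V$; after rescaling so that its determinant is $1$, its trace is an algebraic integer of $\Q(A)$ which, for the relevant $p$, is not equal to $2\cos(\pi q)$ for any $q\in\Q$. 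In the unitary normalisation the eigenvalues of $\rho_p(\varphi)$ lie on the unit circle, so Kronecker's theorem rules out that they are roots of unity; thus $\rho_p(\varphi)$, hence $\rho_p(\Mod(\Sigma_2))$, has infinite order.

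The hypotheses enter precisely through the arithmetic of this trace, and this is where I expect the real difficulty to lie: the order computations of Lemmas \ref{lemma:orderodd} and \ref{lemma:ordereven} only show that individual twist orders grow with $p$ and give no information for a fixed $p$, so one genuinely needs the Galois–theoretic estimate that a specific cyclotomic–integer trace is not a $2\cos(\pi q)$ value, plus a finite check isolating the exceptional list. For $p$ odd the computation goes through for all $p\ge 5$; for even $p$ one needs enough admissible colours, and for $p\in\{6,8\}$ the set $\mathcal C_p$ is too small — by Lemma \ref{lemma:ordereven} the twists in play become trivial — which forces $p\ge 10$, with the sporadic exception $(g,p)=(2,12)$ where the chosen trace degenerates. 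An alternative would be to prove that $\rho_p(\Mod(\Sigma_g))$ is Zariski–dense in the relevant (pseudo‑)unitary group and hence infinite, but that is a heavier input and still breaks down at the exceptional levels, so the explicit–element route seems the most economical.
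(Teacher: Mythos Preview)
Your reduction to genus~$2$ is exactly where the argument breaks. By Wright's theorem (\cite{Wr2}, recalled in the paper just before the exclusion $(g,p)\neq(2,12)$), the image $\rho_{12}(\Mod(\Sigma_2))$ is \emph{finite}. The hypothesis $(g,p)\neq(2,12)$ is therefore not a ``sporadic degeneration of a chosen trace'' for one particular element: the entire genus--$2$ image is finite at $p=12$. Consequently, for $p=12$ and $g\ge 3$ (which \emph{is} covered by the proposition) your embedding $\Sigma_{2,1}\hookrightarrow\Sigma_g$ with boundary colour~$0$ only exhibits a finite subgroup of $\rho_{12}(\Mod(\Sigma_g))$ and proves nothing. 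This is the one case the proposition does not get for free from the literature, so it cannot be skipped.

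For all the other values your outline agrees with the paper's: it simply cites \cite{F} and \cite{Mas} (and additionally \cite{EJ} for $(g,p)=(2,20)$, a case your sketch does not single out but which the Masbaum trace argument in \cite{F} also misses). For $p=12$, $g\ge 3$ the paper uses a genuinely different mechanism. It embeds $\Sigma_{1,2}\subset\Sigma_g$ with both boundary circles coloured by the \emph{odd} colour~$1$ (available only for even~$p$, and requiring $g\ge 3$ so that the complementary piece $\Sigma_{g-2,2}$ has positive genus), shows that the Hermitian form $H_A$ on $W_{1,12,(1,1)}$ is indefinite (Lemma~\ref{12}), invokes irreducibility of the $\Mod(\Sigma_{1,2})$--action on that block (Lemma~\ref{irred}, from \cite{KoSa2}), and concludes by Coxeter's criterion (Lemma~\ref{Coxeter}) that $\rho_{12,(1,1)}(\Mod(\Sigma_{1,2}))$, hence $\rho_{12}(\Mod(\Sigma_g))$, is infinite. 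No explicit infinite--order element is produced; the passage to an odd--coloured subsurface block is precisely what circumvents the finiteness in genus~$2$.
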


\begin{proof}
For all $p\geq 5$ except $p\in\{6,8,12\}$, $(g,p)=(2,20)$ 
this is proved in \cite{F}; Masbaum found explicit elements of infinite order in \cite{Mas}. 
The case $p=20$ is settled in (\cite{EJ}, Appendix A). 

It remains to prove the claim for $g\geq 3$ and $p=12$. 
Note that Wright proved that the image is finite when $p=12$ and $g=2$ (see \cite{Wr2}). 
We follow the approach developed by Coxeter (see \cite{Cox1,Cox2}) who used it to prove 
that quotients of braid groups by powers of braid generators is infinite. This was used 
in \cite{Kor} to study the case of punctured  torus. 

\begin{lem}[\cite{Cox1},p.116, or \cite{Cox2},p.121]\label{Coxeter}
If a group acts irreducibly on a finite dimensional complex vector space 
keeping invariant a non-degenerate indefinite  Hermitian form, then
this group must be infinite. 
\end{lem}
  
We need also the following:

\begin{lem}[\cite{KoSa2}]\label{irred}
Let $p$ be even. Then the action of $\widetilde{\rho}_{p,(1,i_2,\ldots,i_n)}(\widetilde{\Mod(\Sigma_{g,n})})$ on
the space $W_{g,p,(1,i_2,\ldots,i_n)}$ is irreducible for all $g\geq 1$ and all colors 
$(1,i_2,\ldots,i_n)$ for which this vector space is non-zero.  
\end{lem}
  
Consider now a torus $\Sigma_{1,2}$ with two boundary components colored $(1,1)$. 
We need the following result whose proof will be postponed a few lines later:
\begin{lem}\label{12}
The Hermitian form on the space $W_{1,p, (1,1)}$ is indefinite, if $p\geq 10$ is even or $p\geq 5$ is odd. 
\end{lem}  
  
The three lemmas above imply that the image of $\rho_{12,(1,1)}(\Mod(\Sigma_{1,2}))$ 
is infinite. Further observe that for every $g\geq 3$  the embedding $\Sigma_{1,2}\subset \Sigma_{g}$ 
obtained by gluing $\Sigma_{g-2,2}$ to  $\Sigma_{1,2}$ along the boundary components induces an 
injection at the mapping class group level: $\Mod(\Sigma_{1,2})\to \Mod(\Sigma_g)$. 
It follows that $\rho_{12}(\Mod(\Sigma_g))$ contains the subgroup $\rho_{12,(1,1)}(\Mod(\Sigma_{1,2}))$ 
and it is therefore infinite. This proves Proposition \ref{infty-closed}. 
\end{proof}

\begin{proof}[Proof of Lemma \ref{12}]
The graph $G$ consists of a loop with two pending edges, labeled $1$. Let $p$ be even. 
The two remaining edges, whose union form the loop, are colored 
by $(i,j)$; then by the admissibility conditions at level $p$ we 
need that $i+j$ be odd, smaller than or equal to $p-5$ and such that 
$(1,i, j)$ satisfy the triangle inequalities, so that $|i-j|=1$. 
Thus the admissible colorings are of the form $(k,k+1)$ and $(k+1,k)$, with $k=\left\{0,1,2,\frac{p-4}{2}\right\}$, so that 
$\dim W_{1,p, (1,1)}=p-4$. Moreover, by the formulas above the diagonal term of the Hermitian form 
corresponding to the coloring $(i,j)$ is $\eta^2\frac{1}{[i]![j]!}$. 
The quotient of two diagonal terms associated to the colorings $(k,k+1)$ and $(k+1,k+2)$
is therefore given by:
\[ [k+1][k+2].\]
Since $A$ was a primitive $2p$-th root of unity, 
$A=\exp\left(\frac{i\pi \ell}{p}\right)$, with ${\rm g.c.d.}(\ell, 2p)=1$. Now 
$[k]=\frac{\sin\left(\frac{2i\pi \ell k}{p}\right)}{\sin\left(\frac{2i\pi \ell}{p}\right)}$. 
If $p\geq 10$, there exists always some $\ell$ such that $[1][2] <0$, namely 
the two diagonal terms have opposite sign and 
thus the Hermitian form is indefinite. The case $p$ odd is similar and we skip the details. 
\end{proof}

The remaining cases, not covered by the Proposition \ref{infty-closed} will be discussed in the next section.

Choose a basepoint on the first boundary circle of  $\Sigma_{g,n}$. 
If we adjoin a disk capping off the first hole the embedding 
$\Sigma_{g,n} \to \Sigma_{g,n-1}$ induces an exact sequence:  
\[ 1\to\pi_1(U\Sigma_{g,n-1})\to  \Mod(\Sigma_{g,n})\to \Mod(\Sigma_{g,n-1})\to 1.\]
Here $\pi_1(U\Sigma_{g,n-1})$ is the fundamental group of the unit tangent bundle 
at $\Sigma_{g,n-1}$, namely a central extension of $\pi_1(\Sigma_{g,n-1})$. 
Then the projective representation $\rho_{p, (i_1,i_2,\ldots,i_n)}$  of $\Mod(\Sigma_{g,n})$ 
induces a representation for which we keep the same notation: 
\[  \rho_{p, (i_1,i_2,\ldots,i_n)}: \pi_1(\Sigma_{g,n-1})\to PU(W_{g,p,(i_1,i_2,\ldots,i_n)}).\]

Recall that a subsurface $\Sigma_{g',n'}$ of $\Sigma_{g,n}$ is {\em essential} if  
the morphism $\pi_1(\Sigma_{g',n'})\to \pi_1(\Sigma_{g,n})$ induced by the inclusion is injective. 
It is well-known that a subsurface is essential  if its complement does not contain 
disk components. 

\begin{prop}\label{inffib} Assume $n\ge 1$. 
Let $\Sigma\cong\Sigma_{0,3}$ or $\Sigma_{1,1}$ be an essential subsurface of $\Sigma_{g}^{n-1}$ equipped with an orientation preserving homeomorphism onto a general fiber of the  map $f^1_{n}: {{\mathcal M}_g^{n}}
 \to{{\mathcal M}_g^{n-1}}$ which forgets the first puncture. 
\begin{enumerate}
\item Assume that $g\geq 2$ and $p\geq 5$ is odd. Then 
$\rho_{p,(i_1,i_2,\ldots,i_n)}(\pi_1(\Sigma))$ contains a free nonabelian group, if $i_1\neq 0$. 
\item Assume that $g\geq 2$ and $p\geq 10$ is even, $(g,p)\neq (2,12)$,   
$n\geq 2$ and $1+i_2+\cdots+i_n\equiv 0\; ({\rm mod}\; 2)$. 
Then $\rho_{p,(1,i_2,\ldots,i_n)}(\pi_1(\Sigma))$ contains a free nonabelian group. 
\item Assume that $g\geq 2$ and $p\geq 10$ is even, $(g,p)\neq (2,12)$,   
$n\geq 1$ and $i_2+\cdots+i_n\equiv 0\; ({\rm mod}\; 2)$. 
Then $\rho_{p,(2,i_2,\ldots,i_n)}(\pi_1(\Sigma))$ contains a free nonabelian group.
\end{enumerate}
\end{prop}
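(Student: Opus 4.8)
The plan is to reduce the three statements to the infiniteness results already established for closed-surface mapping class groups, using the functoriality of TQFT under inclusion of essential subsurfaces together with the Birman-type exact sequence introduced just before the statement. First, I would fix an essential subsurface $\Sigma\cong\Sigma_{0,3}$ or $\Sigma_{1,1}$ realizing a general fiber of the forgetful map $f^1_n$; in the first case $\Sigma$ is a pair of pants whose three boundary curves are colored, one of them carrying the color $i_1$, and in the second case $\Sigma$ is a one-holed torus whose boundary carries color $i_1$. The point is that $\pi_1(\Sigma)$ injects into $\Mod(\Sigma_{g,n})$ (via the exact sequence $1\to\pi_1(U\Sigma_{g,n-1})\to\Mod(\Sigma_{g,n})\to\Mod(\Sigma_{g,n-1})\to 1$, the fiber $\pi_1$ being generated by point-pushing maps, i.e. by products of Dehn twists along the two boundary curves of a regular neighborhood of a loop), and these point-pushing elements lie in the mapping class group of an essential subsurface of the cut surface, so $\rho_{p,(i_1,\ldots,i_n)}$ restricted to $\pi_1(\Sigma)$ is computed inside the conformal block space attached to $\Sigma$ with its colored boundary.

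Next I would analyze the relevant small conformal block space. For $\Sigma=\Sigma_{0,3}$ with boundary colors $(i_1,i_1,?)$ one is looking at $W_{0,p,(i_1,i_1,j)}$-type decompositions; the key is that $\delta^{(1)}_p$ or the appropriate $\delta_p$ controls the number of colors that appear effectively, and when $i_1\in\{1,2\}$ (as in cases (1)--(3)) this number is $\geq 3$ for the range of $p$ under consideration, by Lemmas \ref{lemma:deltaodd}, \ref{lemma:deltaeven}, \ref{lemma:deltaoddg1}, \ref{lemma:deltaeveng1}. In particular the restriction of the representation to $\pi_1(\Sigma)$ is a representation of a free group (rank $2$ for the pair of pants, rank $2$ for the one-holed torus after passing to the point-pushing subgroup) into the pseudo-unitary group of a Hermitian space of dimension $\geq 3$, and I would invoke Coxeter's Lemma \ref{Coxeter} together with the irreducibility Lemma \ref{irred} (whose hypotheses, in cases (2) and (3), require a color equal to $1$ — this is exactly why case (1) is handled by the odd-$p$ argument and cases (2),(3) separately) to conclude that the image is infinite; the indefiniteness of the Hermitian form comes from a computation of diagonal quotients as in the proof of Lemma \ref{12}, applied now to $W_{0,p}$ with boundary colors $1$ or $2$ rather than to $W_{1,p,(1,1)}$. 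Then I would upgrade "infinite image" to "contains a free nonabelian group" by the Tits alternative for the pseudo-unitary group $PU(p,q)$ (a finitely generated linear group is either virtually solvable or contains $F_2$) combined with the observation that a virtually solvable subgroup of $PU(p,q)$ acting on an irreducible $\geq 3$-dimensional space cannot be infinite — or, more directly, by exhibiting explicit loxodromic elements: the images $\rho_{p}(T_{\gamma}^{k})$ of powers of boundary Dehn twists are diagonal with at least three distinct eigenvalues, hence for two well-chosen simple closed curves $\gamma_1,\gamma_2$ filling $\Sigma$ a ping-pong argument on the associated symmetric space (or on the projectivized action) produces $F_2$.

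Finally I would assemble the three cases. For (1), $p$ odd $\geq 5$ and $g\geq 2$: use the odd-$p$ conformal block formulas and the fact (Lemma \ref{lemma:orderodd}) that the Dehn twists along the boundary curves of $\Sigma$ act with order $p\geq 5$, so their diagonal images have $\geq 3$ distinct eigenvalues whenever $i_1\neq 0$; conclude via ping-pong. For (2) and (3), $p$ even $\geq 10$ with $(g,p)\neq(2,12)$: here I would reduce to Proposition \ref{infty-closed}, which already gives that $\rho_p(\Mod(\Sigma_{g,n}))$ is infinite, but that alone is not enough to get $F_2$ inside $\pi_1(\Sigma)$, so I would rather argue directly with Lemmas \ref{lemma:ordereven}, \ref{irred} and Coxeter's Lemma \ref{Coxeter} as above, the parity hypothesis $1+i_2+\cdots+i_n\equiv0$ (resp. $i_2+\cdots+i_n\equiv0$) being exactly what is needed for the color $1$ (resp. $2$) on the extra boundary to give a non-zero space and for the irreducibility Lemma \ref{irred} to apply. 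The main obstacle I anticipate is the passage from "infinite" to "contains $F_2$": Coxeter's lemma only yields infiniteness, so I will need to either invoke the Tits alternative and rule out the virtually solvable case using irreducibility and indefiniteness, or construct an explicit ping-pong pair of elements inside the point-pushing subgroup $\pi_1(\Sigma)$ — the latter requiring care because not every element of $PU(W)$ in the image is of the convenient diagonal (twist) form, so one must check that sufficiently many loxodromic-type elements with transverse attracting/repelling data are available.
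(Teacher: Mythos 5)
Your high-level strategy (reduce to the conformal block space attached to the small essential subsurface, use indefiniteness of the Hermitian form plus irreducibility) matches the shape of the paper's argument for case (2), but the paper's actual proof is much shorter: it defers almost all of the work to external references. For case (1), the paper simply cites Koberda--Santharoubane (\cite{KoSa}, Thm.~4.1) for $i_1 = 2$ and (\cite{LF}, Prop.~3.2) for other nonzero $i_1$; for case (3) it states that the \cite{KoSa} proof adapts unchanged to even $p$. Only case (2) gets an internal argument: locate an essential $\Sigma_{1,2}$ inside $\Sigma_{g,n}$ with both boundary curves colored $1$ (possible since $g\geq 2$), invoke Lemma~\ref{12} to get an indefinite form on $W_{1,p,(1,1)}$ and hence on the whole space, and cite Lemmas~\ref{Coxeter} and~\ref{irred}.

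Two concrete gaps in your attempt to replace those citations. First, you invoke the irreducibility Lemma~\ref{irred} in case (3), but that lemma is stated only for a tuple beginning with color $1$; in case (3) the distinguished boundary has color $2$ and the parity hypothesis forces all remaining colors to be such that no color $1$ is forced to appear, so Lemma~\ref{irred} simply does not apply. The paper sidesteps this by handling case (3) through \cite{KoSa}, not through irreducibility. Second, you are right to worry about the jump from ``infinite image'' to ``contains $F_2$'' inside the proper subgroup $\rho(\pi_1(\Sigma))$: Coxeter's lemma and Lemma~\ref{irred} together only give that the image of the \emph{full} mapping class group is infinite, and $\pi_1(\Sigma)$ is neither normal in $\Mod(\Sigma_{g,n})$ nor does the paper supply irreducibility or Zariski density for $\rho$ restricted to $\pi_1(\Sigma)$. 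Your proposed Tits-alternative fix would need exactly that input, which is not available from the lemmas you quote, and your ping-pong alternative is precisely what \cite{KoSa} carry out — so without reproducing their construction you are not actually closing the gap, you are only naming it. In short, the paper outsources the $F_2$-containment to \cite{KoSa}/\cite{LF}; trying to replace those citations with Lemmas~\ref{Coxeter} and~\ref{irred} alone leaves the statement unproven for the subgroup $\pi_1(\Sigma)$.
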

\begin{proof}
When $i_1=2$ this is the main result of (\cite{KoSa}, Thm.4.1) noting that their proof works for all $p\geq 5$ not only for large enough $p$. 
For other values of $i_1\neq 0$ this is contained in the proof of (\cite{LF}, Prop. 3.2, see also \cite{F2}).

Let $c$ be a curve separating the subsurface $\Sigma_{1,2}$ of $\Sigma_{g,n}$ with boundary labeled $1$.  
As $g\geq 2$, the color $j=1$ appears effectively for the curve $c$. 
Then Lemma \ref{12} implies that the Hermitian form on the subspace 
$W_{1,p, (1,1)}$ is indefinite, and hence the Hermitian form on 
$W_{g,p,(1,i_2,\ldots,i_n)}$ is indefinite (when the space is nonzero). 
Then the claim follows from Lemmas \ref{Coxeter} and 
\ref{irred}. 

The last item follows by observing that the proof given in \cite{KoSa} for $i_1=2$, can be adapted without any 
change to even $p$ (see \cite{LF}).  
\end{proof}

\subsection{Finiteness for small values of $p$}
We start by recalling the following well-known results of Humphreis (\cite{Hum}):
\begin{lem}[\cite{Hum}]
\begin{enumerate}
\item 
The group $\Mod(\Sigma_{g,n})/\Mod(\Sigma_{g,n})[2; -]$ is finite for every $g\geq 1$, $n\leq 1$, 
and, in particular,   $\Mod(\Sigma_{g,n})/\Mod(\Sigma_{g,n})[2]$ is finite. Moreover, 
\[\Mod(\Sigma_{g,n})[2; -]=\Mod(\Sigma_{g,n})[2; 1]=\ker(\Mod(\Sigma_{g,n})\to Sp(2g,\Z/2\Z)).\]
\item The group $\Mod(\Sigma_{g,n})/\Mod(\Sigma_{g,n})[3; -]$ is finite for $(g,n)\in\{(2,0), (2,1),(3,0)\}$, 
and, in particular, $\Mod(\Sigma_{g,n})/\Mod(\Sigma_{g,n})[3]$ is finite. Moreover, 
\[\Mod(\Sigma_{2})[3; -]=\Mod(\Sigma_{2})[3; 1]\supset\ker(\Mod(\Sigma_{2})\to Sp(2g,\Z)),\]
\[\Mod(\Sigma_{3})[3; -]=\Mod(\Sigma_{3})[3; 1].\]
\item For $n\geq 0$, $p\geq 4$, the group $\Mod(\Sigma_{2,n})/\Mod(\Sigma_{2,n})[p; -]$ is infinite. 
\end{enumerate}
\end{lem}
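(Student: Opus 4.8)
The statement to be proved is Humphries' trichotomy on when the mapping class group quotients $\Mod(\Sigma_{g,n})/\Mod(\Sigma_{g,n})[p;-]$ (and the refined $[p]$ variants) are finite or infinite. Since this is quoted from \cite{Hum}, the plan is essentially to recall Humphries' original arguments, organized around the three cases.

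For item (1), the key algebraic input is that $\Mod(\Sigma_{g,n})[2;-]$ coincides with the level-two congruence subgroup, i.e. the kernel of the symplectic representation $\Mod(\Sigma_{g,n})\to \mathrm{Sp}(2g,\Z/2\Z)$. First I would check the inclusion $\Mod(\Sigma_{g,n})[2;-]\subseteq \ker$, which is immediate since the square of a Dehn twist acts trivially on $H_1(\Sigma_g;\Z/2\Z)$. The reverse inclusion is the substantive point: one must show that the subgroup generated by squares of Dehn twists is all of the Torelli-mod-2 kernel. For $n\le 1$ this follows from an explicit finite generating set for the level-two subgroup together with Humphries' computation showing each generator is a product of squares of twists along an appropriate chain/lantern configuration; the lantern relation is the essential tool here. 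Finiteness of the quotient is then automatic because $\mathrm{Sp}(2g,\Z/2\Z)$ is finite, and the equality $\Mod(\Sigma_{g,n})[2;-]=\Mod(\Sigma_{g,n})[2;1]$ says that imposing the relation only on non-separating twists already forces the separating ones. The restriction $n\le 1$ matters because for $n\ge 2$ the pure mapping class group has extra free-group-like factors from the point-pushing/Birman kernels that are not killed.

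For item (2), the situation is the classical fact that $\Mod(\Sigma_g)/\Mod(\Sigma_g)[3]$ is a finite group for small genus; this is closely tied to the finiteness of $\mathrm{Sp}(2g,\Z/3\Z)$ and, for $g=2,3$, to known presentations of $\Mod(\Sigma_g)$ in which all relations become consequences of the third-power relations. I would cite the explicit presentations (Birman--Hilden for genus $2$, Wajnryb for genus $3$) and verify that modulo cubes of twists the group surjects onto a finite quotient and that the kernel of the symplectic-mod-3 map is already killed; the containment $\Mod(\Sigma_2)[3;-]\supseteq\ker(\Mod(\Sigma_2)\to\mathrm{Sp}(4,\Z))$ encodes that in genus $2$ even the full Torelli group dies. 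The hyperelliptic involution and the chain relation are the concrete combinatorial ingredients.

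For item (3), I would exhibit an explicit element of infinite order, or more robustly invoke a linear representation with infinite image: for $g=2$ the quotient surjects onto a quotient of a $\widetilde A_2$-type or hyperbolic Coxeter/Artin-like group via the Burau-type or quantum representation at the relevant root of unity, and one checks that the image is infinite by finding an element whose eigenvalues are not roots of unity (Coxeter's criterion, Lemma~\ref{Coxeter}, or a direct trace computation). The presence of the extra $n$ punctures only enlarges the group, so infiniteness for $n=0$ suffices. The main obstacle in all three parts is the same delicate point that Humphries faced: controlling the \emph{normal closure} of the power-of-twists and proving it is exactly the expected congruence-type subgroup — i.e. that no ``extra'' relations are forced and, in case (3), that no hidden collapse occurs. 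Everything else is bookkeeping with known presentations and the lantern/chain relations.
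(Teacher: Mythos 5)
The paper itself gives no proof of this lemma: it is stated verbatim as a result of Humphries with the citation \cite{Hum} and nothing more, so there is no ``paper's own proof'' to compare against. What you have written is an attempted reconstruction of Humphries' arguments, and the outline is essentially on the right track, but a couple of small points deserve comment.

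For item (1) your account is accurate: the easy inclusion is that $T_c^2$ acts as $x\mapsto x+2\langle x,c\rangle c\equiv x\pmod 2$ on $H_1$, and the substantive direction (that the mod-$2$ congruence kernel is exactly the normal closure of squares of non-separating twists) is where the lantern relation enters, and it also forces separating twists into the normal closure, giving $[2;-]=[2;1]$. For item (2) you write that finiteness is ``closely tied to the finiteness of $\mathrm{Sp}(2g,\Z/3\Z)$'' and that ``the kernel of the symplectic-mod-3 map is already killed.'' That is imprecise: the containment in the lemma is with $\ker(\Mod(\Sigma_2)\to \mathrm{Sp}(4,\Z))$, i.e.\ the full Torelli group, not the mod-$3$ kernel. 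The argument rather has two stages: first show $[3;-]$ contains Torelli (the chain and lantern computations), then show that the image of $[3;-]$ in $\mathrm{Sp}(4,\Z)$ (the normal closure of cubes of transvections) has finite index, which for $g\geq 2$ rests on arithmetic of $\mathrm{Sp}$ (congruence subgroup property or a direct computation). You correctly recover the role of Torelli one sentence later, so this is a wording slip more than a gap, but it is worth fixing. For item (3), Humphries' original argument is more elementary and combinatorial (he works with explicit quotient maps to show certain elements survive); your proposed route through linear representations with infinite image and Coxeter's criterion is a different and equally valid strategy, and it is in fact the one the paper uses elsewhere for Proposition \ref{infty-closed}. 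Overall the proposal is a reasonable plan, but it remains a plan: the actual lantern/chain computations for (1) and (2), and the concrete infinite-image witness for (3), are exactly the content of \cite{Hum} and would need to be carried out in full.
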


Further $\rho_6$ factors through $\Mod(\Sigma_g)/\Mod(\Sigma_g)[4;1]$, which is finite.

Wright proved in \cite{Wr1} and \cite{Wr2} the following finiteness results:
\begin{lem}[\cite{Wr1}]
The image $\rho_8(\Mod(\Sigma_g))$ is finite for every $g\geq 1$. 
\end{lem}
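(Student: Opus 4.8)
\emph{Proof plan.} The plan is to exhibit a finite group containing $\rho_8(\Mod(\Sigma_g))$; this is not automatic from $\Mod(\Sigma_g)$ being generated by Dehn twists of finite $\rho_8$-order (Lemma \ref{lemma:ordereven}), so a finite overgroup must really be produced. It suffices to bound the image of the linear representation $\widetilde{\rho}_{8}$ of $\widetilde{\Mod(\Sigma_g)}$ on $W_{g,8}$, since the scalar kernel of $GL\to PGL$ meets that image in roots of unity only. The idea is to combine integrality of the TQFT with a positivity property of the invariant Hermitian form that is special to $p=8$.

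First I would record that at level $p=8$ the only quantities entering the formulas of \S\ref{tqft} are $[1]=1$, $[2]=A^{2}+A^{-2}=\pm\sqrt2$ and $[3]=1$: the $q$-bound inequality forces $i+j+k\le2$ at every trivalent vertex, so $[4]=0$ is never reached and all the factorials $[i+j+k+1]!,[i]!,\dots$ reduce to products of $[1],[2],[3]$. In particular the color $2$ has quantum dimension $[3]=1$ and satisfies $2\otimes2=0$, so it is an invertible object and $1\otimes1=0\oplus2$. From this one gets that, in the conformal-block basis, the matrices of $\widetilde{\rho}_8$ have algebraic-integer entries in the cyclotomic field $\Q(A)$ up to a bounded denominator; equivalently, the representation preserves a lattice over a fixed order — this is the integrality of the $SO(3)$-TQFT specialized to $p=8$.

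Second — and this is the crux — I would show that the invariant Hermitian form $H_A$ of \eqref{hermitian} is \emph{totally definite}: for every primitive $16$-th root of unity $A$ (equivalently, under every Galois conjugation $A\mapsto A^{k}$, $(k,16)=1$) and on every irreducible constituent, $H_A$ is definite. By \eqref{hermitian} this amounts to checking that the diagonal terms $\eta^{g+1}\prod_{v}\langle v\rangle/\prod_{e}\langle e\rangle$ all have the same sign as the admissible coloring of the trivalent graph varies, and that this persists under the Galois action; because of the constraint $i+j+k\le2$ the only factor that can change sign is a single power of $[2]=\pm\sqrt2$, which is absorbed into the common factor $\eta^{g+1}$, so this reduces to a short explicit computation. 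Granting it, $\widetilde{\rho}_8(\widetilde{\Mod(\Sigma_g)})$ lies in the unitary group of a totally positive-definite Hermitian lattice over the CM field $\Q(A)$; such a group is at once arithmetic (hence discrete) and compact at every archimedean place, hence finite, and so $\rho_8(\Mod(\Sigma_g))$ is finite. Alternatively, the relations $[3]=1$, $2\otimes2=0$, $1\otimes1=0\oplus2$ exhibit the modular category underlying $\mathcal V_8$ as one with Ising fusion rules and global dimension $1+2+1=4$ — an Ising-type, in particular weakly integral, modular category — for which finiteness of the mapping class group representations is known. The main obstacle is precisely the total-definiteness verification: for $p\ge10$ the analogous form is indefinite (Lemma \ref{12}), so one has to pin down why $p=8$ is genuinely exceptional.
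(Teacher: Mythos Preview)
The paper does not supply its own proof of this lemma; it simply records Wright's result by citation. Wright's original argument in \cite{Wr1} proceeds by an explicit identification of the level-$2$ representation with one factoring through a finite group built from $Sp(2g,\mathbb{F}_2)$, not by the arithmetic mechanism you outline.

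Your route is a valid and well-known alternative. The identification of the $p=8$ theory with the Ising modular category is correct, and the strategy ``integral lattice plus totally definite Hermitian form implies the image lies in the integer points of an anisotropic unitary group, hence is finite'' is sound. What is missing is the execution of what you yourself flag as ``the main obstacle'': you assert that the definiteness check ``reduces to a short explicit computation'' and then do not carry it out. Since for $p\ge 10$ the form is indefinite (Lemma~\ref{12}) and your argument would collapse there, this step is the entire content of the proof, not a formality to be granted. It does go through: with the four admissible vertex types $(0,0,0),(0,1,1),(0,2,2),(1,1,2)$ giving $\langle v\rangle=-1,[2],-1,-1$ and edge factors $1,-[2],1$, one checks that in \eqref{hermitian} the factor $[2]$ always occurs to an even power on a closed trivalent graph (because the number $N_{112}$ of $(1,1,2)$-vertices is even, via the handshake count on $2$-colored edges), and the residual sign is $(-1)^{V+N_{112}}=+1$. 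So the form is definite regardless of the embedding $[2]\mapsto\pm\sqrt 2$. But you must actually write this down. The integrality over $\Z[A]$ should likewise be cited (Gilmer--Masbaum \cite{GM}, for the $SU(2)$ theory, not $SO(3)$ as you write) rather than merely asserted.
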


\begin{lem}[\cite{Wr2}]
The image $\rho_{12}(\Mod(\Sigma_2))$ is finite for  $g=2$. 
\end{lem}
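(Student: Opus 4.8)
The plan is to follow Wright's argument in \cite{Wr2}. The guiding observation is that genus $2$ is the last genus at which every mapping class is hyperelliptic, so that at $g=2$ (unlike $g\ge 3$) the whole quantum representation descends, up to a central involution, to a representation of the mapping class group of a punctured sphere --- essentially a braid--group representation --- for which finiteness can be decided directly.

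First I would describe the representation space concretely. Realizing $\Sigma_2$ as the boundary of a genus $2$ handlebody that retracts onto a trivalent graph $G$, a basis of $W_{2,12}$ is indexed by the $12$-admissible colorings of $G$ by $\mathcal C_{12}=\{0,1,2,3,4\}$: triangle inequalities, even sum at every trivalent vertex, and the $q$-bound $a+b+c\le p-4=8$. Enumerating these gives $\dim W_{2,12}=35$. Next I would note that every generating Dehn twist has finite order in $\rho_{12}$: the eigenvalue formula $\widetilde{\rho}_{12}(\widetilde{T_\gamma})v=(-1)^{c(e)}A^{c(e)(c(e)+2)}v$ shows that each twist acts as a diagonalizable operator with eigenvalues among finitely many roots of unity, and by Lemma \ref{lemma:ordereven} (applied at $p=12$) those orders divide $12$. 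This by itself is not enough, and no formal strengthening of it can work: for $g\ge 3$ the same twists still have the same orders, yet $\rho_{12}(\Mod(\Sigma_g))$ is infinite by Proposition \ref{infty-closed}.

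The decisive step exploits the hyperelliptic structure of $\Mod(\Sigma_2)$. The hyperelliptic involution $\iota$ is central in $\Mod(\Sigma_2)$, so $\rho_{12}(\iota)$ is a finite-order central element and it suffices to prove finiteness of the induced (projective) representation of $\Mod(\Sigma_2)/\langle\iota\rangle$. By Birman--Hilden this quotient is (a finite extension of) the mapping class group of the $6$-punctured sphere, obtained by viewing $\Sigma_2$ as the double cover of $S^2$ branched over $6$ points; in a suitable basis the representation it carries is a Jones--Wenzl / Temperley--Lieb type representation of a quotient of the $6$-strand braid group, specialized at a $12$-th root of unity. For $p=12$ this specialization lies in the range in which such braid--group representations are known to have finite image, and Wright makes this effective by identifying the image subgroup explicitly. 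Finiteness of $\rho_{12}(\Mod(\Sigma_2))$ then follows.

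The main obstacle is precisely this last reduction, which is genuinely specific to $g=2$: there is no purely formal deduction from the orders of the generators. Concretely one must (i) set up the Birman--Hilden correspondence carefully, keeping track of the central extension by $\langle\iota\rangle$ and of the color ($1$) labeling the branch points, and (ii) either invoke or re-prove the classification of finite-image Temperley--Lieb braid representations at the parameter coming from $p=12$, or equivalently compute the finite image in $U(W_{2,12})$ by hand. One must also watch the Hermitian form throughout: by Coxeter's Lemma \ref{Coxeter} an irreducible action preserving an indefinite Hermitian form is automatically infinite, and Lemma \ref{12} shows that indefinite constituents do appear as soon as $g\ge 3$; the argument therefore relies on the fact that for $(g,p)=(2,12)$ the relevant constituents carry a definite form, so that the image sits inside a compact unitary group and a finiteness statement is not ruled out a priori.
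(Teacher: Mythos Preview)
The paper does not give its own proof of this lemma; it is simply quoted as a result of Wright \cite{Wr2} with no argument supplied. There is therefore nothing in the paper to compare your attempt against.

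Your outline is a faithful sketch of Wright's strategy: the decisive point, as you say, is that the hyperelliptic involution is central in $\Mod(\Sigma_2)$, so via Birman--Hilden the representation descends (up to a central $\Z/2\Z$) to one of the mapping class group of the $6$-punctured sphere, and the resulting Jones/Temperley--Lieb type representation at the relevant root of unity can be shown to have finite image by explicit identification. One small correction: by Lemma \ref{lemma:ordereven} applied at $p=12$ with $g=2$, $n=0$, a non-separating Dehn twist has order $2p=24$, not an order dividing $12$; the genus-one separating twist (even splitting, case (1)(a)) has order $p/4=3$. This does not affect your overall plan, since as you rightly observe the finiteness of individual twist orders is not by itself decisive.
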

 Note that  $\rho_{12}(\Mod(\Sigma_g))$ is a quotient of $\Mod(\Sigma_g)/\Mod(\Sigma_g)[24;3]$. 
 
 This has to be compared with 
 the finiteness question about  $\Mod(\Sigma_g)/\Mod(\Sigma_g)[3]$, which was also answered 
 affirmatively only for $g\in\{2,3\}$, by Humphries (see \cite{Hum}).

\section{TQFT representations on isotropy groups of  stable curves}

\subsection{Stabilizers of pants decompositions}
The aim of this section is to describe the image of the stabilizer of a pants decomposition by quantum representations. If $P$ is a pants decomposition  of some surface $\Sigma_{g,n}$ we keep the same notation for the isotopy class of the multicurve consisting of all loops from $P$. Note that the order of the 
curves is irrelevant. Set further $\Mod(\Sigma_{g,n}, P)$ for the pure mapping classes of orientation-preserving homeomorphisms which 
preserve  the isotopy class of the  multicurve $P$ and preserve pointwise 
the boundary components. 
 
Let $G$ be a uni-trivalent  graph embedded in the handlebody $H_g$ in such a way that 
the endpoints of $G$ sit on the boundary and $H_g$ retracts onto $G$. 
Let $\Sigma_{g,n}$ be the result of drilling  small holes around the endpoints of $G$ within the boundary surface. 
Then $\Sigma_{g,n}$ inherits  a pants decomposition $P=P(G)$ which consists of the set of 
simple loops $\gamma_e$, where $\gamma_e$ bounds a small disk intersecting once $e$, for every 
edge $e$ of $G$. 

Let $\Aut(G)$ denote the group of  automorphisms of $G$ which preserve pointwise the leaves.

The graph $G$ has two types of internal trivalent vertices, as follows. 
A {\em  generic} vertex has 3 distinct incoming edges and provides an essential $\Sigma_{0,3} \subset \Sigma_{g,n}$
and the {\em tadpole} has only 2 distinct incoming edges and gives raise to an essential $\Sigma_{1,1} \subset \Sigma_{g,n}$. 
Fix an orientation of the tadpole loops and define $\Aut^+(G)\subset \Aut(G)$ to be the subgroup preserving the orientation of the tadpoles.  

If $P\subset \Sigma_{g,n}$ is a pants decomposition of $\Sigma_{g,n}$ we denote by $T(P)$ the free abelian group generated by the Dehn twists along the non-peripheral curves in $P$ and 
the $n$ boundary components.  Thus $T(P)$ is isomorphic to $\Z^{3g-3+n}\times \Z^n$.

\begin{lem}\label{lemma:stabilizer}
The group   $\Mod(\Sigma_{g,n}, P)$ is an extension 
\begin{equation}\label{exactaut}
1\to T(P)\to \Mod(\Sigma_{g,n},P)\to \Aut(G)\to 1. 
\end{equation}
\end{lem}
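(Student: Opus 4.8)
The plan is to build the homomorphism $\Phi\colon\Mod(\Sigma_{g,n},P)\to\Aut(G)$ explicitly, to show it is onto by a change-of-coordinates argument, to identify its kernel with the image of $T(P)$, and finally to check that $T(P)$ embeds so that the rank count $3g-3+n$ interior curves plus $n$ boundary curves matches $\Z^{3g-3+n}\times\Z^n$. Steps one and three are the classical ones; the real work is in pinning down the kernel.

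First I would construct $\Phi$. Every class in $\Mod(\Sigma_{g,n},P)$ has a representative $\varphi$ with $\varphi(P)=P$ as a submanifold: since $\varphi$ preserves the isotopy class of the multicurve, $\varphi(P)$ is isotopic to $P$ and one absorbs that isotopy into the choice of representative. Such a $\varphi$ permutes the components of $P$ and the complementary pairs of pants compatibly with incidence and fixes the boundary circles, hence the leaves of $G$; this produces $\Phi(\varphi)\in\Aut(G)$. I would then check, in the usual way, that $\Phi(\varphi)$ does not change under an isotopy of $\varphi$ (an isotopy through homeomorphisms preserving $P$ leaves the combinatorics untouched, and a general isotopy is absorbed into the normalization above) and that $\Phi$ is a homomorphism. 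The inclusion $T(P)\subseteq\ker\Phi$ is then immediate, as the Dehn twists along the curves of $P$ and along $\partial\Sigma_{g,n}$ are supported in pairwise disjoint annuli and preserve each pair of pants and each curve of $P$.

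For surjectivity I would invoke the change-of-coordinates principle. Cutting $\Sigma_{g,n}$ along $P$ yields a disjoint union of pairs of pants $\{R_v\}$ indexed by the vertices of $G$; all pairs of pants being homeomorphic, a given $\alpha\in\Aut(G)$ prescribes how to permute and reglue the $R_v$'s, and one assembles a homeomorphism realizing it: the identity on pieces fixed by $\alpha$; a homeomorphism of $R_v$ exchanging two boundary circles where $\alpha$ swaps two edges at a vertex; and, at a tadpole vertex whose loop edge is flipped by $\alpha$, the restriction to the associated one-holed torus of its hyperelliptic involution, which exchanges the two sides of the interior curve while preserving the boundary circle (up to isotopy fixing it pointwise, at the cost of a boundary twist). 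Adjusting the gluing maps along the curves of $P$ by boundary twists if needed, these pieces fit together to a homeomorphism $\varphi$ preserving $P$ with $\Phi(\varphi)=\alpha$.

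The main obstacle will be the inclusion $\ker\Phi\subseteq T(P)$. Given $\varphi\in\ker\Phi$, a representative preserves each curve of $P$ and each complementary pair of pants; being orientation preserving it respects the induced boundary orientation on each such curve, so up to isotopy it fixes every curve of $P$ and every leaf and restricts on each $R_v$ to a homeomorphism fixing $\partial R_v$ pointwise. I would then peel the curves of $P$ off one at a time using the cutting exact sequences for mapping class groups: cutting along an interior curve $\gamma$ introduces two new boundary circles and, modulo the twist $T_\gamma$, trades $\Mod$ of the surface for $\Mod$ of the cut surface; after the $3g-3+n$ interior curves are removed one is reduced to the classical computation that the pure mapping class group of a single pair of pants rel boundary is freely generated by its three boundary twists. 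Reassembling, $\varphi$ is a product of twists along the curves of $P$ and along $\partial\Sigma_{g,n}$, so $\varphi\in T(P)$, giving $\ker\Phi=T(P)$. It remains to note that the Dehn twists along the $3g-3+n$ pairwise disjoint, pairwise non-isotopic curves of $P$ together with the $n$ central boundary twists generate a free abelian group of rank $3g-3+2n$, so $T(P)\cong\Z^{3g-3+n}\times\Z^n$ embeds and (\ref{exactaut}) follows. The delicate bookkeeping—keeping track of which boundary twists of the cut-open pieces get identified along the curves of $P$, and handling the tadpole vertices—is concentrated entirely in this last step.
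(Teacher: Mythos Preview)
Your proposal is correct and follows essentially the same approach as the paper: construct the homomorphism to $\Aut(G)$, identify the kernel by reducing to the well-known fact that $\Mod(\Sigma_{0,3})$ is free abelian on its boundary twists, and prove surjectivity by assembling homeomorphisms pant-by-pant, using the hyperelliptic (elliptic) involution on one-holed tori to realize the loop-flipping automorphisms at tadpoles. The only organizational difference is that the paper makes the semidirect decomposition $\Aut(G)=(\Z/2\Z)^t\rtimes\Aut^+(G)$ explicit and lifts each factor separately, whereas you treat the tadpole flips as a special case within a single change-of-coordinates argument; both arrive at the same place.
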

\begin{proof}
Any mapping class preserving $P$ induces an automorphism of $G$. On tadpole loops, the orientation is preserved if the mapping class preserves the
two connected components of a deleted annulus neighborhood of the simple curve in the one holed torus corresponding to the loop and reversed if these two components are exchanged. 
If this automorphism is trivial 
then each pair of pants in the decomposition $P$ is fixed. It is well-known that the 
mapping class group $\Mod(\Sigma_{0,3})$ of a pair of pants is the abelian group generated by 
the boundary Dehn twists.

This establishes the exact sequence above, except for the surjectivity. First of all, observe 
$\Aut(G)=(\Z/2\Z)^t \rtimes \Aut^+(G)$ where $t$ is the number of tadpoles. There is a mapping class 
 $h\in \Mod(\Sigma_{1,1})$ which lifts the elliptic involution $-\mathbf 1 \in SL_2(\Z)=\Mod(\Sigma_1^1)$. Its square is the Dehn twist along the boundary curve. 
Given $(\epsilon_i)_{1\le i \le t} \in (\Z/2\Z)^t$, we can glue $h^{\epsilon_k}$ on the $k$-th tadpole and the identity on the rest of the surface to get a mapping class that projects to 
$(\epsilon_i)_{1\le i \le t}$ in the first factor. 

If $\phi \in \Aut^+(G)$ and $\Pi$ is the pair of pants corresponding to a generic vertex $v$,  we construct a homeomorphism from $\Pi$ to $\Pi'$ the pair of pants
corresponding to $\phi(v)$ mapping in an orientation preserving fashion the three boundary curves  represented by the incident edges $\{e, e', e''\}$ to the 
three boundary curves represented by $\{\phi(e), \phi(e'), \phi(e'')\}$. These homeomorphisms glue into a homeomorphism of $\Sigma_{g-t, n+t}= \Sigma_{g,n}\setminus \cup_{1\le k \le t} \Sigma_{1,1}$ preserving $P\cap \Sigma_{g-t, n+t}$. Then, we can  extend $h$ to a
homeomorphism $\bar h$ of $\Sigma_{g,n}$ preserving $P$ and the orientation on tadpoles. Obviously $\bar h$ maps to $\phi$. 

\end{proof}

The following lemma will not be used  here, but we include it nevertheless. 

\begin{lem} The exact sequence (\ref{exactaut}) splits over $\Aut^+(G)$. 
 \end{lem}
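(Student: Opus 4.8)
The plan is to construct an explicit section $s:\Aut^+(G)\to \Mod(\Sigma_{g,n},P)$ of the extension \eqref{exactaut} restricted to $\Aut^+(G)$, by refining the surjectivity argument already carried out in the proof of Lemma~\ref{lemma:stabilizer}. The point is that there the lifts were chosen somewhat arbitrarily; here I must choose them coherently so that the assignment $\phi\mapsto s(\phi)$ is a genuine group homomorphism, not merely a set-theoretic lift.

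\medskip

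First I would fix, once and for all, a geometric model: realize $\Sigma_{g,n}$ as a regular neighborhood of the graph $G$ in the handlebody boundary, so that the ribbon structure on $G$ determines an embedding. Concretely, choose a pair of pants $\Pi_v$ for every trivalent vertex $v$ and glue them along annuli $A_e$ for every internal edge $e$, in the pattern prescribed by $G$, with each gluing chosen in the standard way compatible with the chosen orientations of the tadpole loops. Because $\Aut^+(G)$ acts on $G$ preserving the combinatorics and the orientations of the tadpoles, any $\phi\in\Aut^+(G)$ induces a canonical identification of the building blocks: $\Pi_v$ with $\Pi_{\phi(v)}$ and $A_e$ with $A_{\phi(e)}$, each determined up to isotopy by the requirement that it carry the $i$-th boundary curve to the $\phi$-image boundary curve in an orientation-preserving fashion. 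I would then define $s(\phi)$ to be the homeomorphism obtained by gluing these canonical block maps; one checks these are compatible on the gluing annuli, so they assemble into a well-defined mapping class in $\Mod(\Sigma_{g,n},P)$ mapping onto $\phi$.

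\medskip

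The key step is then to verify $s(\phi\psi)=s(\phi)s(\psi)$ in $\Mod(\Sigma_{g,n},P)$. On each block $\Pi_v$ (resp.\ $A_e$) the composite $s(\phi)s(\psi)$ and the direct lift $s(\phi\psi)$ agree up to isotopy of the pair of pants (resp.\ annulus) rel boundary, since $\Mod(\Sigma_{0,3})$ rel boundary is generated by boundary twists and the block maps were pinned down by their boundary behavior --- but I must be careful, because an isotopy on a pair of pants that is nontrivial rel boundary contributes boundary Dehn twists, i.e. elements of $T(P)$. So the genuine content is a \emph{bookkeeping of twist discrepancies}: one shows that with the standard choices of gluings the accumulated twist cocycle $c(\phi,\psi)\in T(P)$ vanishes. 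This is where the orientation-preserving hypothesis on tadpoles is essential: on a one-holed torus the lift of $-\mathbf 1$ squares to a boundary twist, so the $(\Z/2\Z)^t$ part genuinely obstructs splitting, whereas on $\Aut^+(G)$ no such obstruction arises because the block maps can be taken to be products of ``half-twist''-free permutation homeomorphisms of the pants, whose compositions introduce no boundary twist.

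\medskip

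The main obstacle I anticipate is precisely controlling this twist cocycle: making rigorous that the canonical block identifications can be chosen so that their compositions, after regluing, differ from the canonical composite by an isotopy that is trivial rel boundary (hence introduces no element of $T(P)$). A clean way to organize this is to pass to the level of the \emph{Birman--Hilden / marked model}: fix an orientation-preserving ``reference'' homeomorphism of each $\Pi_v$ to a standard pair of pants and of each $A_e$ to a standard annulus, and define $s(\phi)$ through these identifications; then $s(\phi)s(\psi)$ and $s(\phi\psi)$ are, block by block, literally equal (not just isotopic), so the cocycle vanishes on the nose and functoriality is immediate. The remaining check --- that these blockwise-defined maps are compatible across the gluing annuli and hence define a global homeomorphism preserving $P$ --- is routine given that $\phi$ respects the edge/vertex incidence and tadpole orientations. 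This yields the desired splitting over $\Aut^+(G)$.
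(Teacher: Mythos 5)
Your overall strategy — rigidify the geometric model so that graph automorphisms lift canonically, and this canonicality forces the cocycle to vanish — is exactly the spirit of the paper's argument. You also correctly isolate the real content: lifts of $\phi$ are only well-defined in $\Mod(\Sigma_{0,3})$ rel boundary up to boundary twists, so the obstruction is a $T(P)$-valued $2$-cocycle, and you correctly observe that this cocycle is genuinely nontrivial on the $(\Z/2)^t$ tadpole part (since $h^2=T_\partial$) while it ought to vanish over $\Aut^+(G)$.

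Where your proposal falls short is precisely the step you wave away as ``routine'' or ``immediate.'' Defining $s(\phi)|_{\Pi_v}$ ``through reference identifications'' as a literal homeomorphism requires two things you never produce: (a) an honest group homomorphism $S_3\to \mathrm{Homeo}^+(\Pi_{\mathrm{std}})$ (not just a set-theoretic lift of $\mathrm{Mod}\twoheadrightarrow S_3$) so that compositions match on the nose; and (b) a coherent system of boundary parameterizations, compatible with the annulus gluings, so that the blockwise-defined maps glue into a single homeomorphism preserving $P$. Neither is automatic. For instance, the natural isometric realization of a transposition on a symmetric hyperbolic pair of pants restricts to a half-rotation on the fixed cuff, so ``half-twist-free permutation homeomorphisms'' in the naive sense (identity on the fixed boundary) do \emph{not} close up under composition, and one has to carry the half-rotation coherently through the gluings. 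This is exactly the bookkeeping you defer.

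The paper resolves both issues at once with a device you did not mention: split each pair of pants $\Pi_v$ into a black and a white hexagon, arranged so that black matches black (and white matches white) across every curve of $P$. A graph automorphism in $\Aut^+(G)$ then lifts to a homeomorphism permuting the $6g-6+2n$ hexagons, and any two such lifts differ by a homeomorphism fixing each hexagon and its boundary; by Alexander's lemma that difference is isotopic to the identity. Thus the subgroup $A\subset \Mod(\Sigma_{g,n},P)$ of classes preserving the black/white partition maps \emph{isomorphically} onto $\Aut^+(G)$, and there is no cocycle to track at all. In effect, the hexagon decomposition is the concrete rigidifying structure your ``reference homeomorphisms to a standard model'' were supposed to provide; without it (or an equivalent, such as a genuinely $S_3$-equivariant marked pair of pants with controlled boundary restriction), your proof has a gap at its central step. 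Your sketch would become a complete argument if you took the standard pair of pants to be a doubled hexagon and took your reference identifications and $S_3$-realization to be the hexagon-permuting ones — at which point it coincides with the paper's proof.
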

\begin{proof}

Each pair of pants decomposes into the union of two hexagons glued along 
three disjoint segments in their boundary.  On each pair of pants we color one hexagon in black, the other one in white. This expresses $\Sigma_{g,n}$ as an union 
of $6g-6+2n$ hexagons, $3g-3+n$ of them being black and $3g-3+n$ being white. 

At each boundary component of a pant we may adjust locally the hexagons so that the parts of their boundaries that are internal to the pant match and glue into a 
union of simple closed curves  and that the black hexagons of two adjacent pants (including self-adjacent ones) match at the boundary of the pant. 

This induces a partition of $\Sigma_{g,n}$ into two (non necessarily connected) subsurfaces, one being white, the other one black. We consider the group $A$
of mapping classes of orientation preserving homeomorphisms of $\Sigma_{g,n}$ that permute the maximal set of $3g-3+n$ non-peripheral simple curves and preserve the 
black/white partition. 

There is natural morphism $A \to \Aut^+ (G)$, where $ \Aut^+ (G)$ fixes an orientation of the tadpole loops.  Now an element of $\Aut^+ (G)$
defines a permutation of the set of pairs of black and white hexagons in the same pant respecting boundary hence lifts to an element of $A$.
The kernel of this morphism is trivial since the mapping class group of an hexagon (or a disk) modulo its boundary is trivial, by Alexander's lemma. 
\end{proof}

Let $\mathbf k$ be a integral vector indexed by the set  of $\Aut(G)$-orbits of 
non-peripheral simple closed curves and a component $1$ attached to  the boundary components on $(\Sigma_{g,n}, P)$. 
We denote by $T(P)[\mathbf k]$ the free abelian subgroup  of $T(P)$ 
generated  by $k_j$-th powers of Dehn twists along curves in $P$ of type $j$. 
Then $T(P)[\mathbf k]$ is a normal subgroup of $\Mod(\Sigma_{g,n},P)$ and
 we have  an extension
\begin{equation}
1\to \frac{T(P)}{T(P)[\mathbf k]}\to \frac{\Mod(\Sigma_{g,n},P)}{T(P)[\mathbf k]}
\to \Aut(G)\to 1.
\end{equation}

We use the shorthand notation $[q,r]$ to denote the integral vector whose components are $q$ on the $\Aut(G))$-orbits of non-separating curves and $r$ on the orbits of the separating ones.

Our main result here is then the following: 

\begin{prop}\label{prop:stabilizer}
Let $\mathbf i=(2,2,\ldots,2)$, $g>0$, $n\geq 0$ and 
$2g-2+n>0$ and $(g,n)\not=(1,0),(1,1),(2,0)$.
\begin{enumerate}
\item If $p\geq 5$ is odd, then $\rho_{p,(\mathbf i)}(\Mod(\Sigma_{g,n},P))$ is isomorphic to 
$\frac{\Mod(\Sigma_{g,n},P)}{T(P)[p]}$. 
\item If $p\geq 8$ is even then $\rho_{p,(\mathbf i)}(\Mod(\Sigma_{g,n},P))$ is isomorphic to 
$\frac{\Mod(\Sigma_{g,n},P)}{T(P)[2p, p/{\rm g.c.d.}(p,4)]}$. 
\end{enumerate}
\end{prop}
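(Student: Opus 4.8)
The plan is the following. Write $\mathbf k$ for the vector $[p]$ in case (1) and $[2p,\,p/{\rm g.c.d.}(p,4)]$ in case (2), abbreviate $\rho=\rho_{p,(\mathbf i)}$ and $\Gamma=\Mod(\Sigma_{g,n},P)$, and recall that $\rho$ takes values in a projective linear group. First I would produce a surjection $q\colon\Gamma/T(P)[\mathbf k]\twoheadrightarrow\rho(\Gamma)$: by Proposition \ref{factori} the representation $\rho$ kills $\Mod(\Sigma_g^n)[\mathbf k]$, and the defining generators of $T(P)[\mathbf k]$ are either $\mathbf k$-th powers of interior Dehn twists (hence in $\Mod(\Sigma_g^n)[\mathbf k]$) or boundary twists (which $\rho$ sends to scalars), so all of them lie in $\ker(\rho|_\Gamma)$ and $\rho$ descends to $q$. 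Since $\Aut(G)$ and $T(P)/T(P)[\mathbf k]$ are finite, the source of $q$ is finite, so it remains to prove that $q$ is injective, i.e.\ that $N:=\ker(\rho|_\Gamma)$ is contained in $T(P)[\mathbf k]$.

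The next step is to use the splitting $W_{g,p,(\mathbf i)}=\bigoplus_\sigma\C v_\sigma$ over the admissible colourings $\sigma$ of $G$ extending the boundary colouring by $2$'s, with $\rho(T_{\gamma_e})\,v_\sigma=(-1)^{\sigma(e)}A^{\sigma(e)(\sigma(e)+2)}\,v_\sigma$ for every edge $e$ (the formula recalled in Section \ref{tqft}). An elementary computation with this scalar shows that distinct admissible colourings are separated by the eigenvalue tuples of the commuting family $\{\rho(T_{\gamma_e})\}_e$, so the lines $\C v_\sigma$ are exactly its joint eigenlines. For $\phi\in\Gamma$ with image $\bar\phi\in\Aut(G)$ one has $\rho(\phi)\rho(T_{\gamma_e})\rho(\phi)^{-1}=\rho(T_{\gamma_{\bar\phi(e)}})$, so $\rho(\phi)$ permutes these lines, and matching eigenvalues gives $\rho(\phi)\,\C v_\sigma=\C v_{\sigma\circ\bar\phi^{-1}}$. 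Hence for $\phi\in N$ the operator $\rho(\phi)$ is a scalar and $\sigma\circ\bar\phi^{-1}=\sigma$ for every admissible $\sigma$; that is, the image of $N$ in $\Aut(G)$ lies in the kernel $K$ of the action on admissible colourings. I would then argue that, under the hypotheses $2g-2+n>0$, $(g,n)\notin\{(1,0),(1,1),(2,0)\}$ and $\mathbf i=(2,\dots,2)$, the estimates of Lemmas \ref{lemma:deltaodd}--\ref{lemma:deltaeveng1} provide, for each internal edge, at least two admissible colours that extend over the rest of $G$; this makes the action of $\Aut^+(G)$ on admissible colourings faithful, so $K=(\Z/2\Z)^t$ is generated by the loop-flips at the $t$ tadpoles of $G$.

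Having reduced to $\bar\phi\in(\Z/2\Z)^t$, I would write $\phi=\big(\prod_i h_i^{\varepsilon_i}\big)u$ with $u\in T(P)$ and $h_i$ the glued lift of the elliptic involution of the $i$-th one-holed torus used in Lemma \ref{lemma:stabilizer}; the $h_i$ commute with one another and with $T(P)$. Two computations then finish the argument. (i) Varying a colouring at one internal non-peripheral edge $e_0$ at a time, between two consecutive \emph{effective} colours --- which by Lemmas \ref{lemma:orderodd} and \ref{lemma:ordereven}, applied with all colours equal to $2$ (so that the relevant $\delta_p$, $\delta^{(1)}_p$ are $\ge 2$), realise the full order of $\rho(T_{\gamma_{e_0}})$, namely $p$ in case (1) and $2p$ or $p/{\rm g.c.d.}(p,4)$ in case (2) --- forces the $e_0$-exponent of $u$ to be divisible by that order; this already gives $N\cap T(P)=T(P)[\mathbf k]$. (ii) Decomposing $W_{g,p,(\mathbf i)}$ according to the (necessarily even) colours $(k_i)$ of the tadpole stems, functoriality of the TQFT shows $\prod_i h_i^{\varepsilon_i}$ acts on the $(k_i)$-block by the scalar $\prod_i\nu_{k_i}^{\varepsilon_i}$ with $\nu_{k_i}^2=(-1)^{k_i}A^{k_i(k_i+2)}$ (from $h_i^2=T_{\gamma_{e_i}}$), while $u$ is diagonal in the $v_\sigma$-basis and, by (i), constant on each block; so $\rho(\phi)$ is a scalar only if $\prod_i\nu_{k_i}^{2a_{e_i}+\varepsilon_i}$ is independent of the occurring tuples $(k_i)$. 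Varying one $k_i$ with $\varepsilon_i=1$ over its at least two possible even values then yields a relation $(\nu_{k}/\nu_{k'})^{\,\mathrm{odd}}=1$, which one rules out by computing the orders of the ratios $\nu_k/\nu_{k'}$ (using all occurring stem colours, not merely one pair). Hence all $\varepsilon_i=0$, so $N\subseteq T(P)$, and together with (i) this gives $N=T(P)[\mathbf k]$ and the theorem.

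The main obstacle, I expect, is step (ii): the loop-flips are invisible on the set of admissible colourings, so one really has to detect them only projectively, through the $k$-dependence of the scalars $\nu_k$ attached to the elliptic involutions. This is precisely where the choice $\mathbf i=(2,\dots,2)$ and the exclusion of the small cases are needed --- to guarantee that enough stem colours $k$ occur and that the $\nu_k$ are distinct in the required arithmetic sense --- and making the order bookkeeping on the $\nu_k/\nu_{k'}$ uniform in all the tadpoles simultaneously is the part that will require genuine care.
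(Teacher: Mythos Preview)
Your overall architecture matches the paper's: determine $\ker\rho\cap T(P)$, show the residual kernel projects trivially into $\Aut(G)$ via the action on admissible colourings, and finally eliminate the tadpole involutions. In step (i) the phrase ``two consecutive effective colours realise the full order'' is not quite right: consecutive even colours $2k,2k+2$ give eigenvalue ratio $A^{8(k+1)}$, of order $p/\gcd(k+1,p)$, not $p$ in general. The paper avoids this by an inductive cutting argument (slice along one edge, colour it $2$, and use functoriality on the pieces); your direct variation is salvageable provided you insist on the specific colours $\{0,2\}$ (and $\{0,1,2\}$ on non-separating edges for even $p$), which are always effective when $\mathbf i=(2,\dots,2)$.

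Step (ii) has a genuine gap. First, ``functoriality'' alone does not show that $h_i$ acts by a \emph{scalar} on the $k_i$-block; what does is that $h_i$ is central in $\Mod(\Sigma_{1,1})$ (it is $\Delta^2$ in $B_3$), so on each irreducible $W_{1,p,(k)}$ it acts by some $\nu_k$ with $\nu_k^2=(-1)^kA^{k(k+2)}$. Second, and more seriously, the relation $(\nu_k/\nu_{k'})^{2a+1}=1$ cannot be ruled out by order considerations based only on $\nu_k^2$: for odd $p$ one has $\nu_0/\nu_2=\pm A^{-4}$, and if the sign is $+$ this ratio has \emph{odd} order $p$, whence $2a+1=p$ (i.e.\ $a=(p-1)/2$) solves the relation and $h_i\,T_{c_i}^{(p-1)/2}$ would lie in the kernel. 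So your proposed endgame requires knowing the actual TQFT value $\rho_{1,p,(k)}(h)$, not just its square. The paper does not go down this eigenvalue road: it argues from $h^2=T_c$ (with $c$ the non-peripheral stem) that $\rho(T_c)\neq 1$ forces $\rho(h)\neq 1$ in the ambient representation, and then closes by the same cut-and-induct scheme used for the $T(P)$ part, rather than by analysing the scalars $\nu_k$.
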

\begin{proof}
Let us first investigate   $\ker\rho_{p,(\mathbf i)}\cap T(P)$. The description for $\rho_p$ shows that 
$\rho_p|_{T(P)}$ maps into a maximal torus of the projective group. In the standard basis attached to $P$,  all matrices associated to the Dehn twists 
along curves in $P$ are simultaneously diagonal. 

We did not look at the order of Dehn twists in genus $0$. However, for this particular vector of colors $\mathbf i=(2, \ldots, 2)$, in any genus, 
we can always color the internal edges except one with the color $2$ and
the remaining one with the colors $0$ or $2$. Hence we are reduced  to the same
calculation as in the proof of Lemma \ref{lemma:orderodd} .(1).(c) resp. Lemma \ref{lemma:ordereven} .(3).(a) for
separating curves.  So the order is always $p$ if $p$ is odd and is $p/{\rm g.c.d.}(p,4)$ if $p$ even.  In higher genus, nothing changes 
for separating curves. Since
the  non-separating edges can be colored with the colors 
$0,2$ if $p$ is odd, with the colors $0,1,2$ if $p$ is even, the non-separating order is $p$ when $p$ is odd,  $2p$ when $p$ is even. 

It follows that  
$T(P)[p] \subseteq \ker\rho_{p,(\mathbf i)}\cap T(P)$, if $p$ is odd and $T(P)[2p, p/{\rm g.c.d.}(p,4)] \subseteq \ker\rho_{p,(\mathbf i)}\cap T(P)$, if $p$ is even, respectively.

We prove by induction the genus that these inclusions are equalities. 

First if $g=0$ the case $n=3$ is trivial, the cases $n=4,5$ follow by inspection. Assume now that the induction hypothesis holds 
for all $k\le n$ and consider $(\Sigma_0,_{n+1}, P)$ a pants decomposition. 
Let us fix an enumeration $(e_i)_{1\le i \le 3g-3+n+1}$ of the internal edges of $G$ which is a tree. 
Suppose that we have a relation of the form 
\[ \prod_{i=1}^{3g-3+n+1} \rho_{p, (\mathbf{i})}(T_{c_i})^{m_i}=1.
\]
We view this as an identity in $PGL(W(\Sigma_{0,n+1})_{(\mathbf{i})})$. Let us cut along the internal edge $e_1$ and get two holed spheres with $m$, resp. $m'$ number of holes
 $3\le m,m'\le n$. Color the respective boundary components corresponding to $e_1$ with the color $2$. This gives 
an inclusion of vector spaces 
\[ W((\Sigma_{0,m})_{(\mathbf{i})})\oplus W((\Sigma_{0,m'})_{(\mathbf{i})})\subset W((\Sigma_{0,n+1})_{(\mathbf{i})}).
\] 
Moreover $\widetilde{\rho}_{p,(\mathbf{i})}$ preserves the two factors and restrict to their $\widetilde{\rho}_{p,(\mathbf{i})}$
on their $T(P)$. It follows that the separating order divides $e_i$ for $i\ge 2$. Hence the relation reduces to $\rho_{p, (\mathbf{i})}(T_{c_1})^{m_1}=1.$
And the separating order divides $m_1$ as desired. 

The case $g=0$ being settled, the higher genus case follows by induction on $g$. One uses the same argument as above cutting along non-separating edges. 
One gets only a subspace 
$$W((\Sigma_{g,n})_{(\mathbf{i})})\subset W((\Sigma_{g+1,n})_{(\mathbf{i})}).
$$
 the important point is the functoriality of the restriction of $\widetilde{\rho}_{p,(\mathbf{i})}$ to this subspace. 
 
 The kernel of $\rho_{p,(\mathbf i)}:{ \frac{\Mod(\Sigma_{g,n},P)}{T(P)[p]}}\longrightarrow PGL(W((\Sigma_{g,n})_{(\mathbf{i})})$ is thus a subgroup $K$ which maps
 isomorphically onto its image in $\bar K < \Aut(G)$. 
 The action of $\Aut(G)$ on the admissible colorings of $G$ is then an invariant subset of its action by $\rho_{p,(\mathbf{i})}$ on the projective space of conformal
 blocks  $\mathbb{P} (W((\Sigma_{g,n})_{(\mathbf{i})}))$. It follows that $\bar K$ is contained in the subgroup of $\Aut(G)$ which fixes all admissible colorings of $G$. 
 Since one can always put the color 2 on all edges except one, it follows that $\bar K$ fixes all edges of $G$. 
 
 Assume $k\in \bar K$ does not fix a generic vertex $v$. Then the three edges emanating from $v$ arrive at $k.v$. It follows that $G$ is a theta graph. Hence $(g,n)=(2,0)$.  
Assume $k$ fixes every generic vertex and does not fix a tadpole vertex $v$. It follows that $k$ permutes 2 tadpole vertices joined by an edge and $(g,n)=(2,0)$ too. 
Actually we reach a contradiction in this case since it permutes the two tadpole loops. 

In all other cases $k$ fixes all vertices and edges. Hence, $\bar K$ is a subgroup of the $(\mathbb{Z}/2\mathbb{Z})^t$ corresponding to the tadpoles hence to essential subsurfaces $\Sigma_{1,1}\subset \Sigma_{g,n}$.
If we take the $h \in Mod(\Sigma_{1,1})$ be the lift of  the elliptic involution and push it as a mapping class of $\Sigma_{g,n}$ we have $h^2=T_c$ where $c$ is the boundary of $\Sigma_1^1$. Hence
if this boundary is non peripheral we deduce from  $\rho_{p,1,(2)}(T_c)\not =1$ that $\rho_{p,1,(2)}(h)\not =1$. We conclude as above by induction on $g$. The excluded cases are exceptions, 
the corresponding elements being central in the corresponding mapping class groups.  
\end{proof}

\subsection{Isotropy groups  of general stable curves} \label{subsect:isotropy}

If $C$ is a stable curve of genus $g$ with $n$ marked punctures, its automorphism group acts naturally on the dual graph $\Gamma_C$  of $C$, 
the unoriented marked graph whose vertices are the irreducible components of $C$ marked
by their genus and the marking of the punctures they contain and whose  edges are the double points connecting them, loops being allowed. 
A topological model of $C$ is obtained from a closed oriented surface $S$ of genus $g$ with $n$ marked points by pinching 
every loop of the multicurve $\underline\alpha$ to a point. Then
$\Gamma_C$ is isomorphic to the weighted dual graph $\Gamma(\underline\alpha)$  of 
the multicurve $\underline \alpha$. 
Denote by $\widetilde{C}$ the normalization of $C$ along with the node branches data.

\begin{prop}\label{isotmaxdeg} Let $x:\{ pt \} \to \overline{{\mathcal M}_g^{n \; an}}$ 
be a point representing an $n$-pointed stable curve $C_x$ and $B \Aut(C_x) \to  \overline{{\mathcal M}_g^{n \; an}}$ its residual gerbe. 
Let $U$ be a  neighborhood of $x$ such that $B \Aut(C_x)\to U$ is a deformation retract. Then 
there is a short  exact sequence: 

\[
1 \to \Z ^{\underline{\alpha}} \to \pi_1(U \cap  {\mathcal M}_g^{n \; an}, \ast) \to \Aut(C_x) \to 1,
\]
where $\Aut(C_x)$ acts through its natural action on the free abelian group  over 
the vertex set of the dual graph $\Gamma_x$ of $C_x$.  
Moreover, if $C_x$ is maximally degenerate, then  the exact sequence above is isomorphic to the sequence in Lemma \ref{lemma:stabilizer} for an appropriate pants decomposition. 
\end{prop}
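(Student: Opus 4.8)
The plan is to read everything off the local structure of $\overline{\mathcal M_g^{n\,an}}$ at $x$ supplied by deformation theory. By Deligne--Mumford and Knudsen, a neighbourhood $U$ of $x$ is equivalent to the quotient stack $[\mathrm{Def}(C_x)/\Aut(C_x)]$, where $\mathrm{Def}(C_x)\cong\Delta^{3g-3+n}$ is the smooth base of the versal deformation of the pointed stable curve $C_x$, the finite group $\Aut(C_x)$ acts naturally on it, and the boundary divisor $\mathcal D^n$ pulls back to a normal crossings divisor $D\subset\mathrm{Def}(C_x)$ having exactly one smooth branch $D_q=\{t_q=0\}$ for each node $q$ of $C_x$ --- i.e. for each component of the pinched multicurve $\underline\alpha$, equivalently each edge of $\Gamma_x$. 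Hence $\mathrm{Def}(C_x)\setminus D\cong(\Delta^*)^{\underline\alpha}\times\Delta^{\,3g-3+n-\#\underline\alpha}$, so $U\cap\mathcal M_g^{n\,an}=[(\mathrm{Def}(C_x)\setminus D)/\Aut(C_x)]$; the deformation retraction of $U$ onto $B\Aut(C_x)$ is the radial retraction of $\Delta^{3g-3+n}$ onto the origin, which becomes $\Aut(C_x)$-equivariant once the finite action is linearised near the fixed point.

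I would then extract the exact sequence from the Borel construction: $[(\mathrm{Def}(C_x)\setminus D)/\Aut(C_x)]$ has the homotopy type of the total space of a fibration with fibre $\mathrm{Def}(C_x)\setminus D$ over $B\Aut(C_x)$, and since $B\Aut(C_x)$ is a $K(\Aut(C_x),1)$ and the fibre is connected, the homotopy exact sequence collapses to
\[
1\longrightarrow\pi_1\bigl(\mathrm{Def}(C_x)\setminus D\bigr)\longrightarrow\pi_1\bigl(U\cap\mathcal M_g^{n\,an}\bigr)\longrightarrow\Aut(C_x)\longrightarrow1,
\]
with $\pi_1(\mathrm{Def}(C_x)\setminus D)=\Z^{\underline\alpha}$ free abelian on the meridians $m_q$ of the branches $D_q$. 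The conjugation action of $\Aut(C_x)$ on this kernel is the monodromy of the chart, and this is the only point requiring care. An automorphism $\sigma$ permutes the branches $D_q$ exactly as it permutes the nodes of $C_x$; when $\sigma$ fixes $D_q$ it acts $\C$-linearly on the one-dimensional normal direction $T^1_q$ (the smoothing parameter of the node $q$), hence by a root of unity. Since every nonzero scalar automorphism of $\Delta^*$ is homotopic to the identity and every $\C$-linear automorphism is orientation preserving, $\sigma$ sends $m_q$ to $m_{\sigma(q)}$ with no signs and no further ambiguity: the induced action on $\Z^{\underline\alpha}$ is precisely the permutation of the $m_q$ furnished by the natural action of $\Aut(C_x)$ on $\underline\alpha$, equivalently on $\Gamma_x$. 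This is the first assertion.

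For the ``moreover'', suppose $C_x$ is maximally degenerate. Then every component of $C_x$ is a rational curve with exactly three special points, so $\#\underline\alpha=3g-3+n$, $\mathrm{Def}(C_x)\setminus D=(\Delta^*)^{3g-3+n}$, and $\Gamma_x$ is a uni-trivalent graph $G$ of the kind used in Lemma~\ref{lemma:stabilizer} (internal vertices the components, leaves the marked points, generic vertices and tadpoles). Fix a topological model in which $C_x$ is obtained by pinching a maximal multicurve $\underline\alpha$ of a genus-$g$ surface with $n$ marked points, and let $P$ be the corresponding pants decomposition, so that $\Gamma_x=G$. The identification of the two extensions proceeds in three steps. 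First, $\Aut(C_x)\cong\Aut(G)$: a leaf-fixing graph automorphism prescribes on each component a labelled bijection of its three special points, hence a unique automorphism of that $\mathbb P^1$, and these glue into an automorphism of $C_x$; conversely any automorphism of $C_x$ induces a leaf-fixing graph automorphism, and one recovers $\Aut(C_x)\cong(\Z/2\Z)^t\rtimes\Aut^+(G)$ exactly as in Lemma~\ref{lemma:stabilizer}, the $(\Z/2\Z)^t$ matching the branch-swaps at self-nodes with the tadpole flips --- that every such graph automorphism is realised by a genuine automorphism is precisely because a maximally degenerate curve has no moduli. Second, $\Z^{\underline\alpha}\cong T(P)$ via $m_q\mapsto T_{\alpha_q}$, the meridian of the node $q$ going to the Dehn twist along the curve pinched to $q$ (the Picard--Lefschetz dictionary already invoked in the proof of Theorem~\ref{Kahlerorbifold}); for $n>0$ this matches $\Z^{\underline\alpha}$ with the sublattice of $T(P)$ spanned by the non-peripheral twists, i.e. works modulo the central peripheral twists, which vanish in $\pi_1(\mathcal M_g^{n\,an})=\PMod(\Sigma_g^n)$. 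Third, the two actions agree by the first two steps. Finally, that the extensions themselves coincide --- not merely their kernels and quotients --- is cleanest to see by identifying $\pi_1(U\cap\mathcal M_g^{n\,an})$ with the stabiliser $\Mod(\Sigma_{g,n},P)$ of the $\underline\alpha$-stratum in Bers' augmented Teichm\"uller space, the classical local description of the orbifold fundamental group of $\overline{\mathcal M_g^{n\,an}}$, and then quoting Lemma~\ref{lemma:stabilizer}.

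I expect the main obstacle to be exactly the two delicate points above. In the second paragraph, verifying that the monodromy on $\Z^{\underline\alpha}$ is the bare permutation action relies on the homotopy-triviality of $\Delta^*$-scalings together with the $\C$-linearity (hence orientation-preservation) of the normal-bundle actions, so that neither signs nor rotations can intervene; establishing this cleanly is where the deformation-theoretic input (a node contributing a one-dimensional smoothing direction) is used. In the third paragraph, the subtle bookkeeping is the identification $\Aut(C_x)\cong\Aut(G)$ --- in particular matching the tadpole flips with branch-swaps at self-nodes, the very phenomenon responsible for the $(\Z/2\Z)^t$ factor and for the non-smoothness of naive root stacks along $\mathcal D_0$ --- together with the check that the two extensions are actually isomorphic rather than merely abstractly sharing a kernel and a cokernel. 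Everything else is a formal consequence of the quotient-stack formalism.
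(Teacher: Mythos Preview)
The paper states this proposition without proof, so there is no argument to compare against. Your approach via the local quotient presentation $U\simeq[\mathrm{Def}(C_x)/\Aut(C_x)]$ and the Borel fibration is the standard one and is correct.

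Two remarks. First, the statement in the paper says the action is on the free abelian group over the \emph{vertex} set of $\Gamma_x$, but you correctly work with the \emph{edge} set (the nodes of $C_x$, i.e.\ the components of $\underline\alpha$); this is evidently a slip in the paper. Second, your treatment of the rank discrepancy in the ``moreover'' is right but deserves to be stated more sharply: Lemma~\ref{lemma:stabilizer} is written for $\Mod(\Sigma_{g,n},P)$ with $T(P)\cong\Z^{3g-3+n}\times\Z^n$ including the $n$ peripheral Dehn twists, whereas $\pi_1(U\cap\mathcal M_g^{n\,an})$ lives in $\PMod(\Sigma_g^n)$, where peripheral twists are trivial. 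Thus for $n>0$ the isomorphism of extensions asserted in the proposition is with the sequence of Lemma~\ref{lemma:stabilizer} \emph{reduced modulo} the central $\Z^n$ of peripheral twists; you say this, but the paper's phrasing is literally accurate only when $n=0$. Your identification of $\Aut(C_x)$ with $\Aut(G)$ in the maximally degenerate case, including the matching of branch-swaps at self-nodes with tadpole flips, is exactly the right bookkeeping.
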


Let $\mathbf{p}=(p_0,p_1,\ldots,p_r)$ be a ramification multi-index. 
If $C$ is a stable curve whose topological type is obtained from the multicurve $\underline{\alpha}$ 
let us define $\mu_{\mathbf{p}}(C)$ be the product $\mu_{p_i}^{k_i}$, where 
$k_i$ is the number of loops in $\alpha$ of type $i$. 

\begin{prop}\label{isotropygroup}
Consider a point $x[\mathbf{p} ]$ of $\overline{{\mathcal M}_g^{n \; an}}[\mathbf{p}]$ mapping to a point  $x$ of $\overline{{\mathcal M}_g^{n \; an}}$ which corresponds to the stable curve $C_x$. 
Then its isotropy group is given by the exact sequence:   
\[
1 \to \mu_{\mathbf{p}}(C_x) \to \pi_1(\overline{{\mathcal M}_g^{n \; an}}[\mathbf{p}], x[\mathbf{p}])_{loc} \to \Aut(C_x) \to 1.
\]

\end{prop}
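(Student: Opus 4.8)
The plan is to compute the isotropy group (local inertia) of $x[\mathbf p]$ directly from the local shape of the root stack, by reducing everything to an adapted chart around $x$ and then descending along $\Aut(C_x)$. First I would recall, following Proposition \ref{isotmaxdeg}, that \'etale-locally (equivalently, analytic-locally) near $x$ the stack $\overline{{\mathcal M}_g^{n \; an}}$ is the quotient $[\mathrm{Def}(C_x)/\Aut(C_x)]$, where $\mathrm{Def}(C_x)\cong\Delta^{3g-3+n}$ is the versal deformation space of the $n$-pointed stable curve $C_x$. In the standard coordinates --- the smoothing parameters $(t_e)_{e\in\underline{\alpha}}$ indexed by the set $\underline{\alpha}$ of nodes of $C_x$, together with the equisingular directions --- the tautological map $\eta:\mathrm{Def}(C_x)\to\overline{{\mathcal M}_g^{n \; an}}$ is an \'etale chart with $\eta^{*}\mathcal D^n=\sum_{e\in\underline{\alpha}}\{t_e=0\}$, a reduced union of pairwise distinct coordinate hyperplanes; thus $\eta$ is a chart adapted to $\mathcal D^n$, and $\Aut(C_x)$ acts on $\mathrm{Def}(C_x)$ by permuting and rescaling the $t_e$ through its natural action on the nodes of $C_x$, i.e.\ on the edges of $\Gamma_x$.

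Next I would feed this adapted chart into Proposition \ref{rootstack} (in the version allowing different ramification indices on distinct components), which yields an equivalence
\[
\mathrm{Def}(C_x)\times_{\overline{{\mathcal M}_g^{n \; an}}}\overline{{\mathcal M}_g^{n \; an}}[\mathbf p]\ \simeq\ \mathrm{Def}(C_x)\Bigl[\sqrt[p_{j(e_1)}]{\{t_{e_1}=0\}}\Bigr]\times_{\mathrm{Def}(C_x)}\cdots\times_{\mathrm{Def}(C_x)}\mathrm{Def}(C_x)\Bigl[\sqrt[p_{j(e_r)}]{\{t_{e_r}=0\}}\Bigr],
\]
where $\underline{\alpha}=\{e_1,\dots,e_r\}$ and $j(e)$ denotes the type of the node $e$. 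Each factor equals $[\mathrm{Def}(C_x)/\mu_{p_{j(e)}}]$ with $\mu_{p_{j(e)}}$ scaling the coordinate $t_e$; since the hyperplanes $\{t_e=0\}$ are pairwise distinct these quotient stacks are already smooth, no canonical-stack correction survives the base change, and the fibered product is simply $[\mathrm{Def}(C_x)/\mu_{\mathbf p}(C_x)]$, with $\mu_{\mathbf p}(C_x)=\prod_{e\in\underline{\alpha}}\mu_{p_{j(e)}}$ acting coordinate-wise. Now $\eta$ exhibits $\mathrm{Def}(C_x)$ as an $\Aut(C_x)$-torsor over the open substack $\mathcal U:=[\mathrm{Def}(C_x)/\Aut(C_x)]\subset\overline{{\mathcal M}_g^{n \; an}}$, so descending the previous identity along this torsor shows that $\overline{{\mathcal M}_g^{n \; an}}[\mathbf p]$ restricted to $\mathcal U$ is the iterated quotient $[[\mathrm{Def}(C_x)/\mu_{\mathbf p}(C_x)]/\Aut(C_x)]$. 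Because $\Aut(C_x)$ permutes the $t_e$ preserving the node type, conjugation by a lift of an automorphism normalizes $\mu_{\mathbf p}(C_x)$, so this iterated quotient is a single global quotient $[\mathrm{Def}(C_x)/\widetilde G]$ for a group $\widetilde G$ of analytic automorphisms of $\mathrm{Def}(C_x)$ fitting in $1\to\mu_{\mathbf p}(C_x)\to\widetilde G\to\Aut(C_x)\to1$. Since $\widetilde G$ fixes the origin of $\mathrm{Def}(C_x)$ --- the point lying over $x$, which represents $C_x$ itself --- the isotropy group of $x[\mathbf p]$ equals $\mathrm{Stab}_{\widetilde G}(0)=\widetilde G$, which is the asserted extension; equivalently, the exact sequence of Proposition \ref{isotmaxdeg} surjects onto the one claimed, the map $\Z^{\underline{\alpha}}\to\mu_{\mathbf p}(C_x)$ being reduction modulo the $p_{j(e)}$-th powers of the meridians, since the meridian around $\{t_e=0\}$ has order exactly $p_{j(e)}$ in the root stack.

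The hard part will be the descent step: one must check that the iterated quotient $[[\mathrm{Def}(C_x)/\mu_{\mathbf p}(C_x)]/\Aut(C_x)]$ genuinely arises from a group extension $\widetilde G$ acting on $\mathrm{Def}(C_x)$, and that this extension is the one in the statement. This rests on the explicit $\Aut(C_x)$-equivariant description of the smoothing coordinates $t_e$ and on the fact that automorphisms of $C_x$ act on the versal deformation by permuting and rescaling these coordinates, the equisingular directions being inert under $\mu_{\mathbf p}(C_x)$. The apparent difficulty that $\mathcal D^n$ is normal crossings but not simple normal crossings (as already noted for $\mathcal D^1$ in Section~2) is not an obstacle here: the notion of adapted chart is precisely what makes $\mathcal D^n$ simple normal crossings \'etale-locally, and the canonical-stack subtlety built into Proposition \ref{rootstack} disappears after base change to such a chart.
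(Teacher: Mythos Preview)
The paper states Proposition \ref{isotropygroup} without proof, treating it as an immediate consequence of the local description in Proposition \ref{isotmaxdeg} together with the construction of $\overline{{\mathcal M}_g^{n \; an}}[\mathbf p]$ via Proposition \ref{rootstack}. Your argument is correct and supplies exactly the details the authors left implicit: the local model $[\mathrm{Def}(C_x)/\Aut(C_x)]$, the identification of the pullback of the root stack along an adapted chart with $[\mathrm{Def}(C_x)/\mu_{\mathbf p}(C_x)]$, and the descent along $\Aut(C_x)$ producing the extension $\widetilde G$. The only place to be slightly more careful is your phrase ``no canonical-stack correction survives the base change'': this is precisely the content of the equivalence in Proposition \ref{rootstack}, so it is justified, but it is worth saying explicitly that the adapted chart already makes the divisor simple normal crossings, so the iterated root stack on $\mathrm{Def}(C_x)$ is smooth and equals its own canonical stack.
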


\subsection{Faithfully representing isotropy groups}
The purpose of this section is to find the image of the isotropy group under suitable finite dimensional representations. 
Let $C$ be a stable curve of genus $g$ with $n$ marked points with a topological model obtained from a closed oriented surface $S$ with $n$ marked points by pinching 
every loop of a multicurve $\underline\alpha$ to a point.
Let $\Gamma=\Gamma(\underline\alpha)$ be the weighted dual graph of $\underline \alpha$. 
Denote by $\widetilde{C}$ the normalization of $C$ along with the node branches data. 
We then have an exact sequence: 
\[ 1\to \widetilde{\Aut}(\widetilde{C})\to \Aut(C)\stackrel{B}{\to} G\Aut(\Gamma(\underline\alpha))\to 1,\]
where $\widetilde{\Aut}(\widetilde{C})$ is the group of automorphisms of  $\widetilde{C}$
preserving each node branch and $G\Aut(\Gamma(\underline\alpha))$ is the subgroup of 
the group   $\Aut(\Gamma(\underline\alpha))$  of automorphisms of the graph which can be lifted to 
$C$.

Further let $\widetilde{I}(C)\subseteq \PMod(S)=\PMod(\Sigma_g^n)$ denote the lift of
the automorphism group of the stable curve $C$ to the mapping class group $\PMod(S)$, which fits into the exact sequence: 
\[ 1\to \Z^{\underline\alpha}\to \widetilde{I}(C)\stackrel{Q}\to \Aut(C)\to 1.\]

Let $\theta_{p, S}:\Mod(S)\to \Aut(H_1(S;\Z/p\Z))$ be the homology representation 
for the surface $S$ on the homology with $\Z/p\Z$ coefficients.

\begin{lemma}\label{torsiondetect}
For every stable curve $C$ with associated multicurve $\underline{\alpha}$ and $p\geq 3$  we have 
\[ \ker(B\circ Q)\cap \ker(\theta_{p,S})\subset  \Z^{\underline{\alpha}}.\]
\end{lemma}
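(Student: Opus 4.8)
The plan is to show that an element of $\widetilde{I}(C)$ killed by $B\circ Q$ and acting trivially on $H_1(S;\Z/p\Z)$ is already killed by $Q$, i.e. lies in $\Z^{\underline{\alpha}}$. So I fix $\phi\in\widetilde{I}(C)$ with $B(Q(\phi))=1$ and $\theta_{p,S}(\phi)=1$, and aim at $Q(\phi)=1$. First I would unwind the first hypothesis: $B(Q(\phi))=1$ means $Q(\phi)\in\ker B=\widetilde{\Aut}(\widetilde{C})$, so $Q(\phi)$ fixes every irreducible component $\widetilde{C}_v$ of the normalisation and every node branch; since in addition $\phi\in\PMod(\Sigma_g^n)$ is pure, $Q(\phi)$ fixes all $n$ marked points. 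Hence it is enough to prove $Q(\phi)|_{\widetilde{C}_v}=1$ for each vertex $v$ of $\Gamma(\underline{\alpha})$.

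Next I would cut along $\underline{\alpha}$. As $\phi$ preserves each $\alpha_i$ together with its two sides, it can be isotoped so as to restrict to each component $\Sigma_v$ of the complement in $S$ of a regular neighbourhood of $\underline{\alpha}$, yielding $\phi_v\in\Mod(\Sigma_v)$ fixing each boundary circle; collapsing those circles to points realises $\Sigma_v$ as the marked surface underlying $\widetilde{C}_v$, and $\phi_v$ descends to $Q(\phi)|_{\widetilde{C}_v}$. On first homology this means that $\phi_v$ acts on $H_1(\Sigma_v;\Z/p\Z)$ and induces on the quotient by the span of the boundary classes $[\partial_j]$, which is $H_1(\widetilde{C}_v;\Z/p\Z)$, exactly the action of $Q(\phi)|_{\widetilde{C}_v}$. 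The arithmetic core is then to feed $\theta_{p,S}(\phi)=1$ into this quotient: I would run the Mayer--Vietoris sequence of $S=\bigcup_v\Sigma_v$ glued along $\underline{\alpha}$ to see that the kernel of $\bigoplus_v H_1(\Sigma_v;\Z/p\Z)\to H_1(S;\Z/p\Z)$ is the image of $\bigoplus_i H_1(\alpha_i;\Z/p\Z)$, whose $v$-component lies in $\langle[\partial_j]\rangle$; therefore $\ker\bigl(H_1(\Sigma_v;\Z/p\Z)\to H_1(S;\Z/p\Z)\bigr)\subseteq\langle[\partial_j]\rangle$, so $H_1(\widetilde{C}_v;\Z/p\Z)$ is a $\phi_v$-equivariant quotient of the image of $H_1(\Sigma_v;\Z/p\Z)$ in $H_1(S;\Z/p\Z)$, on which $\phi$, hence $\phi_v$, acts trivially. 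Thus $Q(\phi)|_{\widetilde{C}_v}$ acts trivially on $H_1(\widetilde{C}_v;\Z/p\Z)$.

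To conclude I would invoke two standard facts. Since $\Aut(C)$ is finite, $\sigma:=Q(\phi)|_{\widetilde{C}_v}$ has finite order; as $H_1(\widetilde{C}_v;\Z)\cong\Z^{2g_v}$ is torsion free and $p\geq 3$, Serre's lemma on torsion-freeness of principal congruence subgroups promotes triviality mod $p$ to triviality over $\Z$, so $\sigma$ acts trivially on all of $H_*(\widetilde{C}_v;\Q)$. Then a Lefschetz fixed point count: if $\sigma\neq 1$, its fixed set is finite and each fixed point has local index $+1$ (a finite order orientation-preserving surface homeomorphism is locally a rotation), so $\#\mathrm{Fix}(\sigma)=2-2g_v$; this is impossible for $g_v\geq 2$, forces $\mathrm{Fix}(\sigma)=\varnothing$ when $g_v=1$ (contradicting stability, which puts a special, hence fixed, point on every genus-one component), and for $g_v=0$ the curve $\widetilde{C}_v\cong\mathbb{P}^1$ carries at least three fixed special points so $\sigma=1$ in $\mathrm{PGL}_2(\C)$. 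Hence $\sigma=1$ for every $v$, so $Q(\phi)=1$ and $\phi\in\Z^{\underline{\alpha}}$.

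I expect the main obstacle to be the homology bookkeeping of the third step: making the comparison between the action of $\phi$ on $H_1(S;\Z/p\Z)$ and that of $Q(\phi)|_{\widetilde{C}_v}$ on $H_1(\widetilde{C}_v;\Z/p\Z)$ fully precise — in particular pinning down the kernel of $H_1(\Sigma_v;\Z/p\Z)\to H_1(S;\Z/p\Z)$ — together with the low-genus case analysis in the last step, where one must check that stability genuinely supplies enough fixed special points on the components of genus $0$ and $1$.
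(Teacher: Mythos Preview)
Your proof is correct and follows essentially the same route as the paper: restrict $\phi$ to each component of the cut surface, use equivariance of the map on $H_1(\cdot;\Z/p\Z)$ modulo boundary classes to deduce that the induced automorphism of each normalised component acts trivially on homology, and then kill this finite-order automorphism. The only difference is in the last step: the paper invokes Serre's lemma for mapping class groups of hyperbolic surfaces (which directly covers the punctured components and hence all genera uniformly, since stability is exactly the condition $2g_v-2+(k_v+r_v)>0$), whereas you unpack this via Minkowski/Serre for $Sp(2g_v,\Z)$ together with a Lefschetz fixed-point count and the stability-supplied fixed special points in genera $0$ and $1$. The paper's packaging is slightly cleaner; yours is more self-contained and makes explicit where stability enters.
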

\begin{proof}
Let $\widetilde{C}$ have the connected components $C_i$ which are of   
 genus $g_i$, $k_i$ marked points coming from the branched nodes and $r_i$ additional marked points inherited from the marked points of $C$. 
 Note that our initial surface $S$ is obtained from the union of 
surfaces $\Sigma^{r_i}_{g_i,k_i}$ by identifying boundary circles corresponding to the pinched 
curves from $\underline\alpha$. 
 
As the homological  representation  $\theta_{p,S}$ is functorial, 
the inclusion induces a homomorphism  $H_1(\Sigma_{g_i,k_i}^{r_i}; \Z/p\Z)  \to  H_1(S; \Z/p\Z)$ which is equivariant with respect to the natural map: 
\[\Phi_i: \Mod(\Sigma_{g_i,k_i}^{r_i})  \longrightarrow  \Mod(S,\underline{\alpha}). 
\]
Set further: 
 \[ \Phi=\prod_i\Phi_i : \prod_i \Mod(\Sigma_{g_i,k_i}^{r_i}) \to \Mod(S,\underline{\alpha}).\]

Let $\Lambda \subset H_1(S, \Z/p\Z)$ and $\Lambda_i\subset H_1(\Sigma_{g_i,k_i}^{r_i}; \Z/p\Z)$ be the subgroups generated by the homology classes of the components of the multicurves  
$\underline{\alpha}$ and $\underline{\alpha}_i=\underline{\alpha}\cap \Sigma_{g_i,k_i}^{r_i}$, respectively. These are  isotropic subspaces for the intersection form which 
are invariant by the image of $\Phi$ and  $\Mod(\Sigma_{g_i,k_i}^{r_i})$, respectively. 
Then, there is an injective homomorphism 
\[ I: \bigoplus_{i} H_1(\Sigma_{g_i,k_i}^{r_i}; \Z/p\Z)/\Lambda_i \longrightarrow H_1(S; \Z/p\Z)/ \Lambda,\]
which is equivariant under the group homomorphism $\Phi$, 
where the groups act via the homology representation.

Note that the action of the subgroup $\Z^{\underline{\alpha}_i}$ on 
$H_1(\Sigma_{g_i,k_i}^{r_i}; \Z/p\Z)/\Lambda_i$  is trivial. Therefore the action  
of  $\Mod(\Sigma_{g_i,k_i}^{r_i})$ on  $H_1(\Sigma_{g_i,k_i}^{r_i}; \Z/p\Z)$
induces an action of  $\Mod(\Sigma_{g_i}^{k_i+r_i})$ on  the quotient $H_1(\Sigma_{g_i,k_i}^{r_i}; \Z/p\Z)/\Lambda_i$. The latter action coincides, under the isomorphism induced by the inclusion  
$H_1(\Sigma_{g_i,k_i}^{r_i}; \Z/p\Z)\to H_1(\Sigma_{g_i}^{k_i+r_i}; \Z/p\Z)$, 
with the reduced homology representation of $\Mod(\Sigma_{g_i}^{k_i+r_i})$ on  $H_1(\Sigma_{g_i}^{k_i+r_i}; \Z/p\Z)/\Lambda_i$. Note that the action of 
the pure mapping class group $\PMod(\Sigma_{g_i}^{k_i+r_i})$ on $\Lambda_i$ is trivial. 

Although the elements of $\widetilde{I}(C)$ are not in the image of $\Phi$, they do preserve $\Lambda$. Observe that the action of the subgroup $\Z^{\underline{\alpha}}\subset \widetilde{I}(C)$ on $ H_1(S; \Z/p\Z)/ \Lambda$ is trivial, as well. 
By what precedes, the group $\widetilde{I}(C)$  acts on $\bigoplus_{i} H_1(\Sigma_{g_i,k_i}^{r_i}; \Z/p\Z)/\Lambda_i$ via its quotient $\Aut(C)\subset \Aut(\widetilde{C})$. Observe that the map $I$ is $\widetilde{I}(C)$-equivariant. 

Furthermore there is natural map $\ker(B\circ Q) \to \prod_i \Aut(C_i)$. For $\phi\in \ker(B\circ Q)$ we denote 
by $\phi_i$ the corresponding automorphism of $C_i$ viewed as a mapping class in $\PMod(\Sigma_{g_i}^{k_i+r_i})$. 

The well-known Serre lemma states that the action of 
a nontrivial finite order element of $\Mod(\Sigma_{g}^{n})$ 
on $H_1(\Sigma_{g}^{n})$ is nontrivial, provided $\Sigma_{g}^{n}$ is hyperbolic, i.e. 
$2g-2+n>0$.  Since the curve $C$ is stable, each component $C_i$ should be hyperbolic.
If  $\theta_{p,S}(\phi)=1$, then $\phi_i$ acts trivially on 
$H_1(\Sigma_{g_i}^{k_i+r_i}; \Z/p\Z)/\Lambda_i$. As $\phi_i$ is 
a pure mapping class, its homological representation  
on $H_1(\Sigma_{g_i}^{k_i+r_i}; \Z/p\Z)$ is trivial as well. 
Therefore, Serre's lemma enables to conclude that $\phi_i= 1$.

\end{proof}

\begin{rem}
The map $I$ extends naturally to an injective homomorphism:  
\[ I: \bigoplus_{i} H_1(\Sigma_{g_i,k_i}^{r_i}, \partial \Sigma_{g_i,k_i}^{r_i}; \Z/p\Z) \longrightarrow H_1(S,\underline{\alpha}; \Z/p\Z)\]
which is equivariant under the group homomorphism $\Phi$ and fits into the exact sequence: 
\[ 1\to \bigoplus_{i} H_1(\Sigma_{g_i,k_i}^{r_i}, \partial \Sigma_{g_i,k_i}^{r_i}; \Z/p\Z) \stackrel{I}{\longrightarrow} H_1(S,\underline{\alpha}; \Z/p\Z)\to H_1(\Gamma(\underline{\alpha});\Z/p\Z)\to 1.\]
\end{rem}

\begin{lemma}\label{graphdetect} 
Let $C$ be a stable curve with $n$ marked points with associated multicurve $\underline{\alpha}$, 
$x\in \widetilde{I}(C)$ and $p\geq 3$. If $B\circ Q (x)\neq 1\in \Aut(\Gamma(\underline\alpha))$,
then  $\rho_{p, {\mathbf i}}(x)\neq 1$, where $\mathbf i=(i_1, \ldots, i_n)$ is any coloring of the marked points
such that there exists a coloring $(j_1, \ldots, j_q)$ of the nodes 
of $C$ so that for all $i$ $W((\Sigma_{g_i,r_i+k_i})_{(\mathbf{i}_i,\mathbf{j}_i)})\not =0$  where $(\mathbf{i})_i$ is the restriction of $(\mathbf{i})$ to the $r_i$ marked points in 
$\Sigma_{g_i,r_i+k_i}$ and $(\mathbf{j}_i)$ is the restriction of $(\mathbf{j})$ to the $k_i$ nodes in 
$\Sigma_{g_i,r_i+k_i}$. 

\end{lemma}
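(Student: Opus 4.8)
The plan is to argue by contradiction. Suppose $\rho_{p,\mathbf i}(x)=1$, i.e.\ that $x$ acts on $W:=W_{g,p,\mathbf i}$ by a scalar, and set $\sigma:=B\circ Q(x)\in\Aut(\Gamma(\underline\alpha))$; I want to conclude $\sigma=1$. The basic device is the decomposition along $\underline\alpha$ already used in the proof of Lemma~\ref{lemma:orderodd}: fixing a trivalent graph refining the pieces $\Sigma_{g_i,k_i}^{r_i}$ obtained by cutting $S$ along $\underline\alpha$, one has
\[ W_{g,p,\mathbf i}=\bigoplus_{\mathbf j\in\mathcal A}W(S\setminus\underline\alpha;\mathbf j),\qquad W(S\setminus\underline\alpha;\mathbf j)=\bigotimes_i W_{g_i,p,(\mathbf i_i,\mathbf j_i)},\]
where $\mathcal A$ is the set of colorings $\mathbf j$ of the edges of $\Gamma(\underline\alpha)$ (equivalently, of the nodes of $C$) for which every tensor factor is non-zero; the hypothesis on $\mathbf i$ is precisely that $\mathcal A\neq\emptyset$. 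Since $x$ preserves $\underline\alpha$ and realizes $\sigma$ on its set of components, it carries $W(S\setminus\underline\alpha;\mathbf j)$ onto $W(S\setminus\underline\alpha;\sigma_*\mathbf j)$, where $(\sigma_*\mathbf j)(e)=\mathbf j(\sigma^{-1}e)$; on a $\sigma$-fixed summand it is the permutation of tensor factors induced by $\sigma$ followed by isomorphisms $W_{g_i,p,(\mathbf i_i,\mathbf j_i)}\xrightarrow{\sim}W_{g_{\sigma(i)},p,(\mathbf i_{\sigma(i)},\mathbf j_{\sigma(i)})}$ coming from a geometric model of $x$. Since $\sigma$ permutes $\mathcal A$ bijectively and a scalar operator sends every subspace to itself, the assumption forces $\sigma_*\mathbf j=\mathbf j$ for every $\mathbf j\in\mathcal A$; equivalently, every admissible coloring is constant on the $\sigma$-orbits of edges.

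First I would rule out that $\sigma$ moves an edge. If $\sigma(e)=e'\neq e$, then, by what precedes, $\mathbf j(e)=\mathbf j(e')$ for every $\mathbf j\in\mathcal A$; I claim this is impossible, because $\mathcal A$ separates the colors of any two distinct nodes of $\Gamma(\underline\alpha)$. To see this one starts from the coloring $\mathbf j^0$ of the hypothesis and performs local surgeries: on a piece of positive genus only the parity of the colors is constrained (Corollary~\ref{coro:bcgnonva}), while on a genus-$0$ piece the admissible colors of one chosen boundary component, the rest being fixed, sweep out an explicit interval (Lemmas~\ref{lemma:deltaodd} and \ref{lemma:deltaeven}). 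Using this repeatedly one can raise or lower the color of $\alpha_e$ by steps of $2$, adjusting along the way the colorings of the one or two pieces that contain $\alpha_e$ (and, if these changes must be propagated, of the rest of the dual graph), keeping all the while the color of $\alpha_{e'}$ and the value of $\mathbf i$ fixed, until $\mathbf j(e)\neq\mathbf j(e')$. This contradicts the previous paragraph.

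Next I would treat the case that $\sigma$ fixes every edge of $\Gamma(\underline\alpha)$ but permutes its vertices non-trivially. Then $\sigma$ permutes a set of components $C_{i}$ of $\widetilde C$ that are mutually isomorphic and carry the same marked-point labels and the same incident nodes with equal colors. On each summand $W(S\setminus\underline\alpha;\mathbf j)$ the operator $\rho_{p,\mathbf i}(x)$ is the (non-trivial) permutation of the corresponding isomorphic tensor factors, followed by gluing isomorphisms which, because $x$ is orientation preserving on $S$ and hence reverses the orientation of each curve of $\underline\alpha$ it drags across, carry ribbon / Frobenius--Schur twist factors depending only on the colors $\mathbf j(e)$ of the nodes bordering the permuted components. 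When one of the permuted factors has dimension $\ge2$ the tensor-factor permutation is already non-scalar; when all are $1$-dimensional, $\rho_{p,\mathbf i}(x)$ acts on $W(S\setminus\underline\alpha;\mathbf j)$ by one of these twist factors, and --- again by Lemmas~\ref{lemma:deltaodd}--\ref{lemma:deltaeven} and Corollary~\ref{coro:bcgnonva} --- the bordering nodes can be coloured, in suitable admissible colorings, by colors on which such a factor is non-constant (e.g.\ $0$ versus $2$, where it involves $\theta_2=A^8\neq1$ once $p\ge5$). In all cases $\rho_{p,\mathbf i}(x)$ is non-scalar, a contradiction, so $\sigma=1$. (Combined with Lemma~\ref{torsiondetect}, this will give that an element of $\widetilde I(C)$ killed by $\theta_{p,S}$ and by all the $\rho_{p,\mathbf i}$ lies in $\Z^{\underline\alpha}$.)

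The main obstacle is the combinatorial input common to the two cases: one must show that the set $\mathcal A$ of $\mathbf i$-compatible admissible colorings of $\Gamma(\underline\alpha)$ is rich enough to separate the colors of any two distinct nodes --- carrying out the "repair" of admissibility through a dual graph that may contain cycles, without touching a designated node --- and, when $\sigma$ permutes vertices, to be non-constant in the relevant ribbon factors at the bordering nodes. This is where the explicit formulas for the conformal block dimensions from the previous section do the real work; the few small levels for which $|\mathcal C_p|$ is too small for such moves (namely $p\le4$, where $|\mathcal C_p|=1$) are degenerate and must be inspected directly, and for $g=0$ there is nothing to prove, a stable pointed rational curve having no automorphisms, so that $\sigma$ is a priori trivial.
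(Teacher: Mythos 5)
Your first paragraph is exactly the paper's proof: decompose $W_{g,p,\mathbf i}=\bigoplus_{\mathbf j}\bigotimes_i W_{g_i,p,(\mathbf i_i,\mathbf j_i)}$ along $\underline\alpha$ and observe that $x$ permutes the summands (and the tensor factors inside them) according to $\sigma=B\circ Q(x)$; the paper stops there, asserting that this permutation is non-trivial. You rightly notice that this assertion is the whole point --- a non-trivial $\sigma$ on $\Gamma(\underline\alpha)$ only gives a non-scalar operator if either some non-zero summand is moved to a different summand, or the induced operator on a fixed summand is non-scalar --- and you split into the two cases accordingly. So the route is the same as the paper's, with the paper's implicit step made explicit.

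The gap is that the step you isolate is not actually carried out, and the form in which you state it is false. Your case ``$\sigma$ moves an edge'' rests on the claim that the set $\mathcal A$ of admissible colourings separates the colours of any two distinct nodes; but if, say, a rational component carries a single marked point coloured $0$ (allowed by the hypothesis, since $0$-coloured points are dropped) and exactly two nodes $e,e'$, then $W_{0,p,(0,j,j')}\neq 0$ forces $j=j'$, so \emph{every} $\mathbf j\in\mathcal A$ takes equal values on $e$ and $e'$ and no ``local surgery'' can change that. A $\sigma$ swapping $e$ and $e'$ then fixes every summand, and one is thrown back on analysing the action \emph{inside} a fixed summand (where, e.g., the factor attached to the other component bordering $e,e'$ is permuted by a boundary-swapping mapping class); you only perform that analysis in the disjoint case where $\sigma$ fixes all edges. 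Since you yourself flag the separation property as ``the main obstacle'' and defer it to unspecified computations with Lemmas \ref{lemma:deltaodd}--\ref{lemma:deltaeven}, the argument does not close as written: either the separation claim must be replaced by a correct statement (restricting the colourings, e.g.\ to $\mathbf i=(2,\dots,2)$ as in the only application, Proposition \ref{faithfulisotropy}), or the edge-swapping case must be merged with your second case and the action on a $\sigma$-stable summand shown to be non-scalar directly.
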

\begin{proof}
We can choose basis of the space of conformal blocks associated to 
$S$ corresponding to a pants decomposition containing the multicurve $\mathbf\alpha$. 
Then the action of $\rho_{p}(x)$ on the conformal blocks permutes 
the conformal blocks associated to the connected components of $C$ minus the nodes according to 
the nontrivial action of $B\circ Q (x)$. 
\end{proof}

\begin{proposition}\label{faithfulisotropy}
Let $\mathbf i=(2,2,\ldots,2)$, $g\geq 2, n\geq 0$, $(g,n)\not=(2,0)$
$2g-2+n>0$.
\begin{enumerate}
\item If $p\geq 5$ is odd, then $\rho_{p,(\mathbf i)}\oplus \theta_{p, \Sigma_{g}^n}$ 
factors through $\Mod(\Sigma_g^n)/\Mod(\Sigma_g^n)[p]$ and sends 
$\pi_1(\overline{{\mathcal M}_g^{n \; an}}[p], x[p])_{loc}$ isomorphically onto its image. 
\item If   $p\geq 10$ is even and $(g,n,p)\not=(2,0,12)$ then  the representation $\rho_{p,(\mathbf i)}\oplus \theta_{2p, \Sigma_{g}^n}$ factors through
$\Mod(\Sigma_g^n)/\Mod(\Sigma_g^n)[2p,  p/{\rm g.c.d.}(p,4)]$ and sends the isotropy group 
$\pi_1(\overline{{\mathcal M}_g^{n \; an}}[2p,  p/{\rm g.c.d.}(p,4)], x[2p,  p/{\rm g.c.d.}(p,4)])_{loc}$ isomorphically onto its image. 
\end{enumerate}
\end{proposition}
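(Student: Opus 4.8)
The plan is to deduce both statements from three facts already in hand: Lemma~\ref{graphdetect} (a nontrivial automorphism of the dual graph $\Gamma(\underline\alpha)$ of $C_x$ is detected by $\rho_{p,\mathbf i}$, for a suitable admissible colouring of the marked points and the nodes), Lemma~\ref{torsiondetect} (an element of $\widetilde{I}(C_x)$ that preserves every node branch and acts trivially on $H_1(\Sigma_g^n;\Z/q\Z)$, $q\ge 3$, already lies in $\Z^{\underline\alpha}$), and Proposition~\ref{prop:stabilizer} (for $\mathbf i=(2,\dots,2)$ the kernel of $\rho_{p,\mathbf i}$ restricted to the stabiliser $\Mod(\Sigma_{g,n},P)$ of a pants decomposition $P$ is exactly $T(P)[p]$ for odd $p\ge 5$, and $T(P)[2p,p/{\rm g.c.d.}(p,4)]$ for even $p\ge 8$). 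Write $\mathbf p$, and also $\mathbf k_0$, for the ramification multi-index, namely $[p]$ in case (1) and $[2p,p/{\rm g.c.d.}(p,4)]$ in case (2); our hypotheses on $(g,n)$ cover the ones required by Proposition~\ref{prop:stabilizer}.

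First I would settle the factorisation. For $\rho_{p,\mathbf i}$ with $\mathbf i=(2,\dots,2)$ this is Proposition~\ref{factori}, all colours being even with even sum. For the homology representation one argues by hand: $\theta_{q,\Sigma_g^n}(T_c)$ is the transvection $v\mapsto v\pm\langle v,[c]\rangle[c]$ of $H_1(\Sigma_g^n;\Z/q\Z)$, so if $c$ is separating --- in particular if $c$ bounds a subsurface or merely separates marked points --- then $[c]$ pairs to zero with every class and $T_c$ acts trivially, while for non-separating $c$ the $q$-th power of a transvection is the identity over $\Z/q\Z$. Hence $\theta_{p,\Sigma_g^n}$, resp.\ $\theta_{2p,\Sigma_g^n}$, kills $\Mod(\Sigma_g^n)[\mathbf p]$, so $\rho_{p,\mathbf i}\oplus\theta$ descends to the quotient $\PMod(\Sigma_g^n)/\Mod(\Sigma_g^n)[\mathbf p]=\pi_1(\overline{{\mathcal M}_g^{n \; an}}[\mathbf p])$ of Theorem~\ref{Kahlerorbifold}.

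For the injectivity on the isotropy group I would first recall, from Propositions~\ref{isotmaxdeg} and~\ref{isotropygroup}, that with $\underline\alpha$ the pinched multicurve of $C_x$ and $N:=\langle T_c^{p_{t(c)}}:c\in\underline\alpha\rangle\subset\Z^{\underline\alpha}$ the local group $K:=\pi_1(\overline{{\mathcal M}_g^{n \; an}}[\mathbf p],x[\mathbf p])_{loc}$ is the quotient $\widetilde{I}(C_x)/N$, with $\Z^{\underline\alpha}/N=\mu_{\mathbf p}(C_x)$ and $N\subset\Mod(\Sigma_g^n)[\mathbf p]$, so that $K$ maps compatibly into $\PMod(\Sigma_g^n)/\Mod(\Sigma_g^n)[\mathbf p]$. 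Given $\widetilde\phi\in\widetilde{I}(C_x)$ with $(\rho_{p,\mathbf i}\oplus\theta)(\widetilde\phi)=1$, it suffices to show $\widetilde\phi\in N$, which I would do in three steps. (A) Colour every marked point and every node of $C_x$ by $2$; since each component $C_i$ of $\widetilde{C}$ is hyperbolic and carries the even total colour $2(r_i+k_i)$, the nonvanishing $W((\Sigma_{g_i,r_i+k_i})_{(\mathbf{i}_i,\mathbf{j}_i)})\ne 0$ follows from Corollary~\ref{coro:bcgnonva} for the positive-genus components and from an all-$2$ caterpillar pants decomposition for the genus-$0$ ones (this is where the bounds $p\ge 5$ odd and $p\ge 10$ even enter, through $6\le q$); then $\rho_{p,\mathbf i}(\widetilde\phi)=1$ and Lemma~\ref{graphdetect} force the image of $\widetilde\phi$ in $G\Aut(\Gamma(\underline\alpha))$ to be trivial, i.e.\ $\widetilde\phi\in\ker(B\circ Q)$. (B) Since also $\theta_{p,\Sigma_g^n}(\widetilde\phi)=1$, resp.\ $\theta_{2p,\Sigma_g^n}(\widetilde\phi)=1$, Lemma~\ref{torsiondetect} applied with $p$, resp.\ $2p\ge 3$, yields $\widetilde\phi\in\Z^{\underline\alpha}$. (C) Choose a pants decomposition $P\supset\underline\alpha$; then $\widetilde\phi\in\Z^{\underline\alpha}\subset T(P)\subset\Mod(\Sigma_{g,n},P)$, so Proposition~\ref{prop:stabilizer} together with $\rho_{p,\mathbf i}(\widetilde\phi)=1$ gives $\widetilde\phi\in T(P)[\mathbf k_0]\cap\Z^{\underline\alpha}$, and since $\underline\alpha$ spans a coordinate sublattice of the basis of $T(P)$ indexed by the curves of $P$, this intersection equals $\langle T_c^{k_{0,t(c)}}:c\in\underline\alpha\rangle=N$ because $k_{0,t(c)}=p_{t(c)}$ in both cases. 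Hence $\widetilde\phi\in N$ and $\rho_{p,\mathbf i}\oplus\theta$ is injective on $K$. The degenerate case $C_x$ smooth ($\underline\alpha=\emptyset$) reduces to steps (B)--(C) with $\Z^{\underline\alpha}=N=1$, where Serre's lemma alone does the job.

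The one genuinely non-formal point --- and the one that forces the numerical hypotheses on $p$ together with the exclusion of $(g,n)=(2,0)$ (which subsumes the listed $(g,n,p)\ne(2,0,12)$) --- will be the verification in step (A) that the all-$2$ colouring of the marked points does extend, node by node, to an admissible colouring for which every component conformal-block space is nonzero, so that Lemma~\ref{graphdetect} is applicable; the remaining steps are a purely formal chaining of the quoted statements, so I expect no further obstacle there.
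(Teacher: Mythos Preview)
Your proof is correct and follows exactly the same route as the paper: factorisation via Proposition~\ref{factori} plus the elementary computation of $\ker\theta$, then Lemmas~\ref{graphdetect} and~\ref{torsiondetect} to reduce to $\Z^{\underline\alpha}$, then Proposition~\ref{prop:stabilizer} (after extending $\underline\alpha$ to a pants decomposition) together with Proposition~\ref{isotropygroup} to finish. You have in fact spelled out more carefully than the paper the verification that the all-$2$ colouring satisfies the hypothesis of Lemma~\ref{graphdetect}; note only that Corollary~\ref{coro:bcgnonva} as stated is for even $p$, while for odd $p$ the positive-genus nonvanishing comes from the formula $\dim W_{1,p,(i,j)}\neq 0$ quoted in the proof of Lemma~\ref{lemma:orderodd}.
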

\begin{proof}
It is well-known that $\ker\theta_p$ contains both $T_{\gamma}^p$, for nonseparating $\gamma$ 
and $T_{\gamma}$, when $\gamma$ is separating. Then  Proposition \ref{factori} implies the 
claims concerning the factorization of the given representations.
From Lemmas \ref{torsiondetect} and \ref{graphdetect} the kernel of these representations is  
contained within $\Z^{\underline{\alpha}}$. Then the   
faithfulness is a consequence of Proposition \ref{prop:stabilizer} which describes the 
kernel on $\Z^{\underline{\alpha}}$ and Proposition \ref{isotropygroup} which describes 
the isotropy group. 
\end{proof}

\subsection{$\overline{{\mathcal M}_g^{n \; an}}[p]$ is uniformizable}
The last step in the proof of Theorem \ref{kahler} is to show that 
the orbifolds we consider are uniformizable.

\begin{prop}\label{prop:uniformizable} Assume  $g\geq 2, n\geq 0$ and
$2g-2+n>0$.

If $p\geq 5$ is odd, then $\overline{{\mathcal M}_g^{n \; an}}[p]$ is uniformizable. 
Further, for even $p\geq 10$, and $(g,n,p)\not=(2,0,12)$,   the orbifold 
$\overline{{\mathcal M}_g^{n \; an}}[2p,  p/{\rm g.c.d.}(p,4)]$ is uniformizable. 
\end{prop}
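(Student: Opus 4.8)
The plan is to derive uniformizability from a single finite-dimensional (projective) representation of the orbifold fundamental group that is faithful on every local isotropy group; such a representation has already been constructed in Proposition \ref{faithfulisotropy}, so the argument will be essentially a formal packaging of that result together with the residual finiteness of finitely generated linear groups. Throughout I write $\mathbf p=p$ in the odd case and $\mathbf p=(2p,\,p/{\rm g.c.d.}(p,4))$ in the even case, set $\mathcal X=\overline{{\mathcal M}_g^{n\;an}}[\mathbf p]$, and recall from Theorem \ref{Kahlerorbifold} that $\pi_1(\mathcal X)=\PMod(\Sigma_g^n)/\Mod(\Sigma_g^n)[\mathbf p]$ (a quotient of $\Mod(\Sigma_g^n)/\Mod(\Sigma_g^n)[\mathbf p]$), and from Proposition \ref{isotropygroup} that every local isotropy group $\pi_1(\mathcal X,x)_{loc}$ is finite.

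First I would record the following elementary reduction: if $\phi\colon\pi_1(\mathcal X)\to G$ is a homomorphism into a residually finite group $G$ whose restriction to each local isotropy group is injective, then $\mathcal X$ is uniformizable. Indeed, given $1\neq h\in\pi_1(\mathcal X,x)_{loc}$, residual finiteness of $G$ yields a finite quotient of $G$, hence a finite quotient of $\pi_1(\mathcal X)$, on which $\phi(h)$ and therefore $h$ is non-trivial; as $\pi_1(\mathcal X,x)_{loc}$ is finite this shows the composite $\pi_1(\mathcal X,x)_{loc}\to\pi_1(\mathcal X,x)\to\widehat{\pi_1(\mathcal X,x)}$ is injective, which is precisely the defining condition of uniformizability.

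Then I would apply this with $\mathbf i=(2,2,\ldots,2)$ and $\phi=\rho_{p,(\mathbf i)}\oplus\theta_{p,\Sigma_g^n}$ in the odd case, $\phi=\rho_{p,(\mathbf i)}\oplus\theta_{2p,\Sigma_g^n}$ in the even case. By Proposition \ref{faithfulisotropy}, this $\phi$ factors through $\Mod(\Sigma_g^n)/\Mod(\Sigma_g^n)[\mathbf p]$ (using the hypothesis $(g,n,p)\neq(2,0,12)$ in the even case), hence through $\pi_1(\mathcal X)$, and it carries each $\pi_1(\mathcal X,x)_{loc}$ isomorphically onto its image, in particular injectively. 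It then remains only to note that the target is residually finite: the projective quantum representation $\rho_{p,(\mathbf i)}$ has image in the projective linear group of the space of conformal blocks, which is the group of $\C$-points of a linear algebraic group (via, say, its conjugation action on matrices), while $\theta_{p,\Sigma_g^n}$, resp.\ $\theta_{2p,\Sigma_g^n}$, has finite image; hence $\phi(\pi_1(\mathcal X))$ is a finitely generated linear group over a field of characteristic zero, so it is residually finite by Malcev's theorem. Combining this with the reduction above completes the proof.

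I do not anticipate a genuine obstacle in assembling this: all of the real content sits in Proposition \ref{faithfulisotropy} and the results feeding it (the Dehn-twist order computations of Lemmas \ref{lemma:orderodd}--\ref{lemma:ordereven}, the stabilizer description of Proposition \ref{prop:stabilizer}, and the detection Lemmas \ref{torsiondetect} and \ref{graphdetect}). The one place demanding care is that faithfulness on the isotropy group must hold simultaneously at \emph{every} orbifold point of $\mathcal X$; concretely, one must check that the uniform colouring $\mathbf i=(2,\ldots,2)$ meets the hypothesis of Lemma \ref{graphdetect} for each stable curve $C_x$ occurring, which is exactly what the proof of Proposition \ref{faithfulisotropy} already arranges.
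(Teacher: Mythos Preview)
Your argument is correct and matches the paper's proof in all essentials: both convert the projective representation of Proposition \ref{faithfulisotropy} into a genuine linear one (you via the conjugation action, the paper via the adjoint representation, which is the same thing) and then exploit a standard property of finitely generated linear groups to separate the finite isotropy subgroups in a finite quotient. The only cosmetic difference is that the paper invokes Selberg's lemma to produce a single torsion-free finite-index subgroup $J$, so that all isotropy groups inject simultaneously into the finite quotient by $J$, whereas you invoke Malcev's theorem (residual finiteness) and check injectivity into the profinite completion elementwise; the paper also cites Proposition \ref{infty-closed} for infiniteness of the image, which is not actually needed for Selberg's lemma and which you rightly omit.
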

\begin{proof}
Let $p$ be odd. 
Proposition \ref{faithfulisotropy} provided a projective representation of 
$\pi_1(\overline{{\mathcal M}_g^{n \; an}}[p])$ which is faithful on the isotropy subgroups.
This can be easily converted into a linear representation by first composing 
it with the adjoint representation of the projective unitary group, as the later is centerfree.   
By Proposition \ref{infty-closed} the image of the linear representation is infinite and
by the Selberg lemma it has a finite index torsionfree subgroup $J$. By taking the quotient of the image by $J$ we obtain a homomorphism 
from $\pi_1(\overline{{\mathcal M}_g^{n \; an}}[p])$ into a finite group $G$ in which all 
isotropy groups should inject. Therefore $\overline{{\mathcal M}_g^{n \; an}}[p]$ is uniformizable, as claimed. When $p$ is even, the proof is similar. 
\end{proof}

\begin{rem}
This gives an alternate and perhaps  simpler version
 of the existence of smooth
finite Galois coverings of the moduli space of stable curves due to Looijenga, Pikaart and Boggi. Letting $p\to \infty$,  we also recover the universal ramification of the
Teichm\"uller tower (\cite{Bry}, corrected in \cite{Loo}).
\end{rem}

\subsection{End of proof of Theorem \ref{kahler}}
Proposition \ref{prop:uniformizable} and an argument from \cite{vjm} implies that 
$\pi_1(\overline{{\mathcal M}_g^{n \; an}}[p])$ is a K\"ahler group. 
In fact, consider a Galois \'etale covering 
$M\to \overline{{\mathcal M}_g^{n \; an}}[p]$ with deck group $G$. 
Then $M$ can be chosen to be a smooth projective variety. On the other hand there  
exists a projective surface $Y$ such that $\pi_1(Y)=G$, as $G$ is finite. 
If $\widetilde{Y}$ denotes its universal covering, then $\pi_1(\overline{{\mathcal M}_g^{n \; an}}[p])$ acts diagonally on $\widetilde{M}\times Y$ 
freely and properly discontinuously. Then the quotient is a smooth projective variety 
having fundamental group $\pi_1(\overline{{\mathcal M}_g^{n \; an}}[p])$. 
The case of even $p$ is similar. 

This indeed shows that the groups $\Mod(\Sigma_g^n)/\Mod(\Sigma_g^n)[p]$, for odd $p\geq 5$, and 
respectively $\Mod(\Sigma_g^n)/\Mod(\Sigma_g^n)[2p,  p/{\rm g.c.d.}(p,4)]$, for even $p\geq 10$ 
are K\"ahler and ends the proof of Theorem \ref{kahler}.

\section{Bogomolov-Katzarkov Surfaces}

\subsection{} 
 
Let $\pi: S \to C$ be a non isotrivial fibration of a smooth complex  projective algebraic  surface $S$ onto a smooth curve $C$ such that:
\begin{enumerate}
 \item $\pi$ is a projective  morphism with connected fibres, 
 \item for generic $c\in C$, the fiber $\pi ^{-1}(c)$ is a genus $g\ge 2$ curve
 \item the singular fibres of $\pi$ are reduced stable curves, in particular the singular set consists of  ordinary double points.
\end{enumerate}

In a transcendental neighborhood 
of  $\sigma\in S$ a node of a singular fiber there are complex coordinates $(x,y)$ on $S$ and a complex coordinate $z$ on $C$ near $\pi(\sigma)$
such that  $z(\pi(x,y))=xy$. 

By the definition of the stack $\Mgbar$ of genus $g$ stable curves,  
we have a map $\psi: C\to \Mgbar$, which induces a holomorphic map  
$C^{an} \to \Mgbaran$. By construction this map is covered by a map of stacks $S\to \Mgobar$ and we have a 
2-commutative diagram: 
\[
\xymatrix{S\ar[d]_{\pi}\ar[r] & \Mgobar \ar[d]^{f_1}\ar@{=>}[ld]\\
C\ar[r]^{\psi} & \Mgbar
}
\]
 
The square is cartesian, i.e.: $S\simeq C\times_{\psi,f_1} \Mgobar$,   $\pi$ being equivalent to $\psi^* f_1$.

Fix an integer $N\ge 2$. If $N=2$ and $g(C)=0$ assume $\pi$ has at least 4 singular fibers, in general there at least 3 singular fibers. We will mostly be interested in the case $N$ is large enough, meaning $N\ge 5$ if $N$ is odd.

\subsection{ The stack $\mathcal{S}_{\pi}[\mathbf k]$} 
For every multi-index $\mathbf k$ we define the following stack: 
\[
\mathcal{S}_{\pi}[\mathbf k]:=S^{an}\times_{\Mgobaran}\Mgobaran[\mathbf k].
\]
According to the convention from the introduction we write 
$\mathcal{S}_{\pi}[N]$ for $\mathcal{S}_{\pi}[N, N,N,\ldots,N]$  
and  $\mathcal{S}_{\pi}[N,L]$ for $\mathcal{S}_{\pi}[N, L,L,\ldots,L]$.

\begin{prop} \label{boka} 
The  stack $\mathcal{S}_{\pi}[N]$ is a smooth compact K\"ahler orbifold with moduli space $S$. 
There is a  proper map $\mathcal{S}_{\pi}[N] \to \mathcal{C}[N]$ where $\mathcal{C}[N]$ is a non-elliptic orbicurve 
 whose general fiber is a smooth curve of genus $g$ giving rise to a fibration exact sequence :
\[ 
 1 \to  I_N(\mathbf{C}_\pi)  \to
 \pi_1(\mathcal{S}_{\pi}[N])\to  \pi_1(\mathcal{C}[N])\to 1. 
\]
where $I_N(\mathbf{C}_\pi)$ is the image of the fundamental group of a generic fiber into 
$\pi_1(\mathcal{S}_{\pi}[N]))$. Furthermore:
$$I_N(\mathbf{C}_\pi)\cong\frac{\pi_1(\Sigma_g) }{\langle\langle c^N, \ c\in  \mathbf{C}_\pi\rangle\rangle}
$$
where $\mathbf{C}_\pi$ is a monodromy invariant family of conjugacy classes in $\pi_1(\Sigma_g) $ representing free homotopy classes of simple closed oriented curves. 
\end{prop}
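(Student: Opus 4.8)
\emph{Proof plan.} I would first identify $\mathcal{S}_\pi[N]$ with a root stack of the \emph{projective} surface $S$. Combining the cartesian square $S\simeq C\times_{\psi,f_1}\overline{\mathcal{M}_g^{1\,an}}$ with the definition of $\mathcal{S}_\pi[N]$ and Lemma \ref{pullback} (which gives $\overline{\mathcal{M}_g^{1\,an}}[N]=\overline{\mathcal{M}_g^{1\,an}}[N,\mathcal{D}^1]$), and using the local form $z(\pi(x,y))=xy$ of $\pi$ at a node to see that the classifying map $\psi_1\colon S\to\overline{\mathcal{M}_g^{1\,an}}$ pulls $\mathcal{D}^1$ back to the reduced divisor $\sum_b\pi^{-1}(b)$ of singular fibres, one obtains
\[
\mathcal{S}_\pi[N]\ \simeq\ S^{an}\times_{\overline{\mathcal{M}_g^{1\,an}}}\overline{\mathcal{M}_g^{1\,an}}[N,\mathcal{D}^1]\ \simeq\ S^{an}\Bigl[N,\ \textstyle\sum_b\pi^{-1}(b)\Bigr].
\]
Since the $\pi^{-1}(b)$ are disjoint nodal curves, $\sum_b\pi^{-1}(b)$ is a normal crossings divisor on $S$, so Proposition \ref{rootstack} shows that $\mathcal{S}_\pi[N]$ is a smooth proper Deligne--Mumford orbifold with moduli space $S$, and the K\"ahler metric is provided by \cite{EySa} because $S$ is projective. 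Near a node $\sigma$ the stack $\mathcal{S}_\pi[N]$ is the chart $[\mu_N^2\backslash\Delta^2_{(x',y')}]$ with $x=x'^{\,N},\ y=y'^{\,N}$.

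Next I would build the base orbicurve and the projection. Let $B\subset C$ be the reduced critical--value divisor and put $\mathcal{C}[N]:=C[\sqrt[N]{B}]$, a smooth orbicurve with a $\Z/N\Z$ cyclic point over each $b\in B$ and moduli space $C$. The divisor $\pi^{*}B=\sum_b\pi^{-1}(b)$ becomes $N$ times a Cartier divisor on $\mathcal{S}_\pi[N]$, so the universal property of the root stack $C[\sqrt[N]{B}]$ produces a canonical morphism $\mathcal{S}_\pi[N]\to\mathcal{C}[N]$ lifting $\pi$; locally near $\sigma$ it is $(x',y')\mapsto x'y'=:w$ with $w^{N}=z$, over the group homomorphism $\mu_N^2\to\mu_N$, $(\zeta_1,\zeta_2)\mapsto\zeta_1\zeta_2$. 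It is proper (both stacks are proper over $C$), and over $\mathcal{C}[N]\setminus\{\text{cyclic points}\}=C\setminus B$ it \emph{is} $\pi$, so its general fibre is a smooth genus $g$ curve $F\cong\Sigma_g$. A direct computation of the orbifold Euler characteristic, $\chi^{orb}(\mathcal{C}[N])=2-2g(C)-|B|(1-\frac1N)$, together with the hypothesis on the number of singular fibres, shows $\mathcal{C}[N]$ is non-elliptic; in particular its orbifold universal covering is contractible and $\pi_2(\mathcal{C}[N])=0$. The homotopy exact sequence of the fibration $\mathcal{S}_\pi[N]\to\mathcal{C}[N]$ (via the orbifold universal cover of the base, or by the stack Van Kampen theorem of \cite{Zoo}) then yields the asserted exact sequence, with $I_N(\mathbf{C}_\pi):=\mathrm{Im}\bigl(\pi_1(F)\to\pi_1(\mathcal{S}_\pi[N])\bigr)$.

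It remains to compute $I_N(\mathbf{C}_\pi)$, where $\mathbf{C}_\pi$ is the monodromy-invariant family of conjugacy classes of vanishing cycles of $\pi$ --- one simple closed curve $c_\sigma$ per node $\sigma$, together with all its translates under the geometric monodromy. Write $j\colon\pi_1(\Sigma_g)\to\pi_1(\mathcal{S}_\pi[N])$ for the composite onto $I_N(\mathbf{C}_\pi)$. For the inclusion $\langle\langle c^{N},\ c\in\mathbf{C}_\pi\rangle\rangle\subseteq\ker j$: in the chart $[\mu_N^2\backslash\Delta^2_{(x',y')}]$ around a node $\sigma$ the classical Lefschetz thimble becomes the orbifold disc $[\mu_N\backslash\Delta]$ with a single $\Z/N\Z$ cone point (at $\sigma$), and its boundary loop is freely homotopic in $F$ to $c_\sigma$; since $\pi_1([\mu_N\backslash\Delta])=\Z/N\Z$ is generated by that boundary loop, $j(c_\sigma)^{N}=1$, hence $j(c)^{N}=1$ for all $c\in\mathbf{C}_\pi$ by monodromy-equivariance, and $j$ factors through $\pi_1(\Sigma_g)/\langle\langle c^{N},\ c\in\mathbf{C}_\pi\rangle\rangle$. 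For the reverse inclusion: over $C\setminus B$ the stack $\mathcal{S}_\pi[N]$ restricts to the honest $\Sigma_g$-bundle $\pi^{-1}(C\setminus B)\to C\setminus B$, so $\pi_1(\Sigma_g)$ injects into $\pi_1(\mathcal{S}_\pi[N]^{\circ})$, an extension of $\pi_1(C\setminus B)$ by $\pi_1(\Sigma_g)$; by the stack Van Kampen theorem $\pi_1(\mathcal{S}_\pi[N])$ is the quotient obtained by gluing back the stacky neighbourhoods of the singular fibres, and the relations so created are, in the base direction, lifts of $\gamma_b^{N}$, and in the fibre direction exactly $c_\sigma^{N}=1$ for $\sigma$ over $b$. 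Intersecting with $\pi_1(\Sigma_g)$ and using that $\mathbf{C}_\pi$ is the orbit of the $c_\sigma$ under conjugation by $\pi_1(\mathcal{S}_\pi[N]^{\circ})$, i.e. under the monodromy, one gets $\ker j=\langle\langle c^{N},\ c\in\mathbf{C}_\pi\rangle\rangle$, hence $I_N(\mathbf{C}_\pi)\cong\pi_1(\Sigma_g)/\langle\langle c^{N},\ c\in\mathbf{C}_\pi\rangle\rangle$.

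The main obstacle is this last Zariski--Van Kampen step: showing that a stacky neighbourhood of a singular fibre over a $\Z/N\Z$-cyclic point contributes \emph{exactly} the orbifold-thimble relations $c_\sigma^{N}=1$ (plus the base relation $\gamma_b^N$), with no further relations among fibre loops. This requires analysing the orbifold fundamental group of such a neighbourhood --- which deformation-retracts onto the stable orbicurve fibre over $b[N]$, whose nodes are $B(\Z/N\Z)$-points --- and the map to it from the fundamental group of the punctured neighbourhood. Equivalently, one can run the whole computation on the pullback $\widetilde{\mathcal{S}}:=\mathcal{S}_\pi[N]\times_{\mathcal{C}[N]}\widetilde{\mathcal{C}}\to\widetilde{\mathcal{C}}$ over the orbifold universal cover $\widetilde{\mathcal{C}}$ of the base --- still an orbifold fibration, with $\Z/N\Z$-points at the nodes of its special fibres and with $\pi_1(\widetilde{\mathcal{S}})=I_N(\mathbf{C}_\pi)$ --- and identify this group by the same thimble analysis as in the classical ($N=1$) case applied to $\pi\colon S\to C$ itself.
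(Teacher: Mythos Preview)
Your proposal is correct and follows essentially the same route as the paper: identify $\mathcal{S}_\pi[N]$ with $S^{an}[N,\Sigma]$ by transversality of $S\to\Mgobaran$ to $\mathcal{D}^1$, build the map to $\mathcal{C}[N]=C[\sqrt[N]{B}]$ via the universal property of root stacks, check non-ellipticity, and then pass to the universal cover of the base to compute the fibre group by a local Van Kampen/thimble analysis that produces exactly the relations $c_\sigma^N=1$.

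The only organisational differences are minor. First, the paper argues non-ellipticity (equivalently $g(C_N)\ge 1$) via the big Picard theorem for the period map rather than by your orbifold Euler characteristic count; both work under the standing hypotheses on $|B|$. Second, and more usefully, the paper inserts an intermediate finite \'etale step: it first chooses a uniformizing Galois cover $C_N\to \mathcal{C}[N]$, forms $S_N=S\times_C C_N$ (with $A_{N-1}$ singularities), and only then passes to the universal cover $\mathbb{E}$ of the honest curve $C_N$. This converts your ``main obstacle'' into Lemma~\ref{presloc}, a purely local statement over a small disk in $C_N$ (citing \cite[Thm.~2.1]{BK}) that $\pi_1((S_N\times_{C_N}D)^{can})=\pi_1(T')/\langle\langle c_1^N,\dots,c_q^N\rangle\rangle$, and then globalises by an explicit tree $\mathcal{A}_R\subset\mathbb{E}$ that transports all vanishing cycles to a fixed fibre, thereby \emph{defining} $\mathbf{C}_\pi$ and making the Van Kampen inductive limit over balls $\mathbb{B}_{\mathbb{E}}(\ast,R)$ transparent. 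Your final suggestion (pull back to the orbifold universal cover $\widetilde{\mathcal{C}}$ of $\mathcal{C}[N]$) is equivalent; the paper's detour through $C_N$ simply keeps everything in the manifold category and reuses $C_N$ later for uniformizability.
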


\begin{proof}
 This is in effect  proved in \cite{BK}.  For the convenience of the reader we will 
 rephrase their paper in our terminology. 
 
The smoothness of $S$ translates into the statement
that given an uniformizing parameter  $z$ for $C$ at $q\in C$ and $\eta$ a chart adapted to $\mathcal{D}$ centered at $\psi(q)$, 
$\eta^{-1} \psi$ assumes the form $z\mapsto (z_1, \dots, z_{3g-3})$ with $\frac{d z_i}{dz}(q) \not =0$ for $1\le i \le k(\eta)=\# Sing(\pi^{-1}(p))$. 
Furthermore, near the image $Q$ of a singular point $q'$ of $\pi^{-1}(q)$ we can find an \'etale chart of $\Mgobaran$ at $Q$ such that the map $S \to \Mgobaran$ takes the form
$(x,y)\mapsto (x, y, z_2(x,y), \ldots, z_{3g-3}(x,y))$ in local coordinates $(x,y)$ of $S$ near $q'$, whereas the map to $C$ is given by $t=xy$. 
Using the \'etale charts in \cite{DMu} in which $f_1$ takes the form $(\xi_1,\eta_1, z_2, \ldots, z_{3g-3})$ we also can arrange to have $z_k(x,y)=z_k(xy)$ since $\pi\simeq\psi^* f_1$ . 

So the map from $S$ to $\Mgobaran$ is transverse to $\mathcal{D}_1$ and the pull back of $\mathcal{D}_1$ is just the sum $\Sigma:=\sum_{q\in B} S_q$ of the (reduced) singular fibres of $\pi$, $B$
being the set of critical values of $\pi$.
Certainly the formation of the smooth root stack of a normal crossing divisor commutes with transversal mappings and we get:
\[ \mathcal{S}_{\pi}[N] \simeq S[N,\Sigma].
\]
 
 There is a natural map $\mathcal{S}_{\pi}[N] \to \mathcal{C}[N]$ where $\mathcal{C}[N]=C^{an}[\sqrt[N]{B}]$ according  to (\cite{cadman2007}, Rem.2.2.2). Actually
$S^{an}\times_{C^{an}} \mathcal{C}[N] \simeq S[\sqrt[N]{\Sigma}]$ and $\mathcal{S}_{\pi}[N]  \to S[\sqrt[N]{\Sigma}]$ is the canonical stack. Indeed the germ $S[\sqrt[N]{\Sigma}]_{q'}$
is equivalent to the germ at the origin of the stack with a cyclic quotient singularity of type $\frac{1}{N} (1,-1)$: $$[\mu_N \backslash A_N]= [\mu_N\backslash \{t^N=xy\}].$$

Now $\mathcal{C}[N]$ is an uniformizable orbicurve since otherwise $C$ were rational and $\pi$ would have $\le 2$ singular fibers, but  this situation is excluded by the big Picard theorem and the fact that the period domain for ppav is a bounded domain. Let $C_N$ be a complex projective curve with an action of a finite group $G$ such that $[G\backslash C_N]=\mathcal{C}[N]$, namely the Galois cover $C_N\to C$  ramifies precisely over $B$ with index $N$. 

Consider now the following {\em singular surface} $S_N:=S\times_C C_N$. Then $S_N$ is a normal surface with $A_{N-1}$ singularities and the canonical stack $(S\times_C C_N)^{can}$  is an \'etale covering of $\mathcal{S}_{\pi}[N]$. Actually
since $(S\times_C C_N)^{can}$ is smooth its fundamental group is isomorphic to the fundamental group of the open substack obtained by deleting the finite substack of its orbifold points, 
which is then precisely the open surface considered in \cite{BK}. There $C_N$ is denoted by $R$, $S\times_C C_N$ by $S$ and the singular set of $S\times_C C_N$ by $Q$ which is in a natural bijection withe the sets of nodes.  

Let $D$ be a small coordinate disk in $C_N$ centered at a singular fiber of $S_N\to C_N$ which is a degree $N$  covering of a small coordinate disk $D'$ 
of $C$ centered at a critical point of $\pi$, such that $D$ only ramifies over $D'$ at that critical point. Let $T'$ be a non singular
fiber of $\pi:S_N\times_{C_N} D \to D$ identified with a non singular fiber of $\pi^{-1} (D')$. 

Then there is a collection $c'_1, \ldots, c'_q$ of disjoint simple curves in $T'$ which appear as geometric vanishing cycles in  a Clemens contraction  for the 
family of stable curves $S\times_C D' \to D'$. Denote by $\bar c_1, \ldots, \bar c_q$ their conjugacy classes in $\pi_1(T')$. Introduce
base points denoted by $\ast'$ which are preserved by the  maps under consideration. 
 
\begin{lem} \label{presloc}
The inclusion map $(T',\ast')\hookrightarrow ((S_N\times_{C_N} D)^{can}, \ast')$ induces at fundamental group level 
the quotient group homomorphism onto:
\[\pi_1((S_N\times_{C_N} D)^{can},\ast')=\pi_1(T',\ast')/\langle\langle\bar c_1^N, \dots, \bar c_q^N\rangle\rangle,\] 
where $\langle\langle\_\rangle\rangle$ means the normal subgroup generated by the respective elements. 
\end{lem}
For the sake of simplicity we kept the notation $\pi_1(T',\ast')$ instead of the more appropriate $\pi_1(T',\ast_{T'})$.
\begin{proof}
This  is just a reformulation of (\cite{BK}, Thm.2.1), using Van Kampen  for stacks.
\end{proof}

One has $g(C_N)\ge 1$ and we denote  the universal covering  $\widetilde{C_N ^{univ}}\cong \C \ \mathrm{or} \  \Delta$  by $\mathbb E$ to emphasize  it is homeomorphic to an euclidian plane. 
Consider the canonical stack 
\[ \mathcal{T}:=(S_N\times_{C_N} \mathbb E)^{can} \simeq S_N^{can} \times_{C_N}\mathbb E \] 
and introduce a coherent choice of base points $\ast$. We can assume that 
$\ast_\mathcal{T}$ does not lie on a singular fiber of the projection to $\mathbb E$ and has consequently a trivial inertia group.
Let $T$ be the fiber of $\pi_{\mathbb E}:=\pi\times_{C} 1_{\mathbb E}:\mathcal{T} \to \mathbb E$ at $\ast_{\mathbb E}$. 

The set of branch points $B(\mathbb E)$ of $\pi_{\mathbb E}$ is the preimage 
of branch points of $\pi$ and thus  it is discrete in $\mathbb E$.
Hence there is a 
countable discrete subset  $J\subset ]0, +\infty[$
such that $\partial \mathbb{B}_{\mathbb E}(\ast, R)$ does not intersect  $B(\mathbb E)$ when $0<R\not\in J$. For each such $R$ we can construct a tree $\mathcal{A}_R$ rooted at $\ast$ in $\mathbb{B}_{\mathbb E}(\ast, R)$ and ending at 
the various points of $B(\mathbb E) \cap \mathbb{B}_{\mathbb E}(\ast, R)$. Obviously
we can arrange so that $R'\ge R \Rightarrow \mathcal{A}_R \subset \mathcal{A}_{R'}$ with equality between two consecutive values of 
$J$. We may also assume that the lifts of the various
$\ast'$ lie on $\mathcal{A}_R$ . Using Ehresmann lemma (and an horizontal normal bundle to 
$\pi_{\mathbb E}|_{\mathbb E\setminus B(\mathbb E)}$ along the geodesic rays
of $\mathcal{A}_R$ emanating from $\ast_{\mathbb E}$ we can transport the geometric vanishing cycles at each $\ast'$ to  
a system of disjoint simple curves in $T$. Doing this for all $\ast'$  yields a family $\{ {c}_{\alpha}\}_{\alpha \in A(R)}$ of 
closed simple curves in $T$. 
This yields a family $\{ \bar c_{\alpha}\}_{\alpha \in A(R)}$ of conjugacy classes in $\pi_1(T,\ast)$ and $R \mapsto A(R)$ is an 
increasing function. Taking the union for all $R$
we get a family $\mathbf{C}_{\pi}=\{ \bar c_{\alpha}\}_{\alpha \in A}$ of conjugacy classes in $\pi_1(T,\ast)$.

\begin{lem} We have 
\[\pi_1(\pi_{\mathbb E}^{-1} (\mathbb{B}_{\mathbb E}(\ast,R)),\ast)=\pi_1(T,\ast)/\langle\langle \bar c_{\alpha}^N , \ \alpha \in A(R) \rangle\rangle.
\]
Moreover, the inclusion map $(T,\ast)\hookrightarrow (\pi_{\mathbb E}^{-1} (\mathbb{B}_{\mathbb E}(\ast,R)),\ast)$
induces the quotient group homomorphism.  In particular, when $R=\infty$, we obtain 
\[\pi_1(\mathcal{T},\ast)=\pi_1(T,\ast)/\langle\langle \bar c_{\alpha}^N , \ \alpha \in A \rangle\rangle.\]
 \end{lem}
 \begin{proof} Van Kampen  for stacks.
 \end{proof}
 
It follows from the covering theory for topological stacks  (\cite{No2}) that 
there is an exact sequence:
\[
1 \to \pi_1(\mathcal T, \ast) \to \pi_1(S_N^{can}, \ast) \to \pi_1( C_N ,\ast) \to 1,
\]
the normal subgroup being the image of $\pi_1(T,\ast)$ and similarly
\[
1 \to \pi_1(\mathcal T, \ast) \to \pi_1(\mathcal{S}[N], \ast) \to \pi_1(\mathcal{C}[N],\ast) \to 1. 
\]
We set therefore $I_N(\mathbf{C}_{\pi})=\pi_1(\mathcal T, \ast)$. 
This completes the proof of Proposition \ref{boka}. 
\end{proof}
  
\subsection{The fiber group $I_{N}(\mathbf{C}_{\pi})$} 

As argued in \cite{BK}, the fiber group  is a very interesting object. Consider the fiber exact sequence of the proper smooth map obtained from $\pi$ by deleting the singular fibers:

\[
1 \to \pi_1(\Sigma_g)) \to \pi_1(S\setminus \pi^{-1} (B)) \to \pi_1(C\setminus B) \to 1
\]

Consider $\rho:\pi_1(C\setminus B) \to \Out^{+}(\pi_1(\Sigma_g))$ and $M_{\pi}=Im(\rho)$ the monodromy group.
 The group $M_{\pi}$ acts on conjugacy classes of elements of $\pi_1(\Sigma_g)$. It is a non trivial invariant of the intriguing combinatorial object $M_{\pi} \curvearrowright \mathbf{C}_{\pi}$.

\begin{lem}
 $\mathbf{C}_{\pi}$ is a finite union of $M_{\pi}$-orbits. 
\end{lem}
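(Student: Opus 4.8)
The plan is to identify $\mathbf{C}_{\pi}$ with the union of the $M_{\pi}$-orbits of the conjugacy classes carried by the geometric vanishing cycles of the singular fibres of $\pi$, and the first observation is that there are only finitely many of these seeds. Indeed, since $S$ is projective and $\pi$ is proper, the set $B$ of critical values is finite, and over each $q\in B$ the stable fibre has only finitely many nodes; the corresponding geometric vanishing cycles $c^{(q)}_{1},\dots,c^{(q)}_{r_{q}}$ are pairwise disjoint simple closed curves in a smooth fibre close to $q$, and transporting them along a fixed path to a fixed smooth reference fibre, identified with $\Sigma_{g}$, produces a finite set $\mathcal{V}=\{\,\bar c_{q,s}\,:\,q\in B,\ 1\le s\le r_{q}\,\}$ of conjugacy classes in $\pi_1(\Sigma_{g})$.

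Next I would prove the inclusion $\mathbf{C}_{\pi}\subseteq\bigcup_{q,s}M_{\pi}\cdot\bar c_{q,s}$, which is essentially the formal content of the construction. Every $\bar c_{\alpha}\in\mathbf{C}_{\pi}$ arises by Ehresmann transport of one of the $c^{(q)}_{s}$ along the image in $C\setminus B$ of a geodesic ray of one of the trees $\mathcal{A}_{R}$; this image is a genuine path in $C\setminus B$ from the base point to a small loop around $q$, because $\mathbb{E}\setminus B(\mathbb{E})\to C\setminus B$ is an unramified covering (the ramification of $\mathbb{E}\to C$ sits entirely over $B$). Two parallel transports of the same simple closed curve along two paths with common endpoints differ by the monodromy of the difference loop, so $\bar c_{\alpha}$ lies in $M_{\pi}\cdot\bar c_{q,s}$ for the relevant $(q,s)$; already this exhibits $\mathbf{C}_{\pi}$ as contained in a finite union of $M_{\pi}$-orbits.

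The reverse inclusion --- equivalently, the $M_{\pi}$-invariance of $\mathbf{C}_{\pi}$ --- is the substantial point. Given $\gamma\in\pi_1(C\setminus B)$ and a seed $\bar c_{q,s}$, I would lift $\gamma$ followed by the chosen path to $q$ to a path issuing from $\ast$ in the simply connected $\mathbb{E}$; its end lies near some branch point $b'$ of $\pi_{\mathbb{E}}$ over $q$, which the trees $\mathcal{A}_{R}$ reach by a geodesic ray, so that the two transports of $c^{(q)}_{s}$ differ only by an element of the monodromy of $\pi_{\mathbb{E}}$ over $\mathbb{E}\setminus B(\mathbb{E})$. One then absorbs this discrepancy using that $\mathbb{E}\setminus B(\mathbb{E})\to C\setminus B$ is Galois with group $\pi_1^{\mathrm{orb}}(\mathcal{C}[N])$ and that the local monodromy of $\pi$ around $q$ is $\tau_{q}=\prod_{s}T_{c^{(q)}_{s}}$, a commuting product of Dehn twists along the \emph{pairwise disjoint} curves $c^{(q)}_{s}$, so that $\tau_{q}$ fixes each $\bar c_{q,s}$ and the coset ambiguities coming from the inertia at $q$ disappear on the $M_{\pi}$-orbit of $\bar c_{q,s}$; combined with the first inclusion this yields $\mathbf{C}_{\pi}=\bigcup_{q\in B}\bigcup_{s=1}^{r_{q}}M_{\pi}\cdot\bar c_{q,s}$. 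I expect this coset-collapsing step to be the main obstacle: it requires keeping precise track of the interplay between the universal cover $\mathbb{E}$, the orbifold cover underlying $\mathcal{C}[N]$, and the inertia subgroups, and it is here that the stability hypothesis on $\pi$ (which forces the vanishing cycles in a given singular fibre to be disjoint) is used in an essential way.
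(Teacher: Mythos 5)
The paper's own ``proof'' of this lemma is a bare citation to the discussion preceding \cite{BK}, Def.~3.1, where the relevant family of conjugacy classes is essentially \emph{defined} as the saturation of the finitely many vanishing cycles under the monodromy group; so you are supplying more detail than the authors do. Your first two steps are correct and contain the real content: the seeds $\bar c_{q,s}$ are finite in number because $B$ is finite and each reduced nodal fibre has finitely many nodes, and every $\bar c_{\alpha}\in\mathbf{C}_{\pi}$ is a parallel transport of some seed along a path in $C\setminus B$ from the base point to $q$, hence differs from $\bar c_{q,s}$ by an element of $M_{\pi}$ (the ambiguity coming from the inertia at $q$ is harmless since the local monodromy $\prod_{s}T_{c^{(q)}_{s}}$ fixes each $\bar c_{q,s}$). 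This gives $\mathbf{C}_{\pi}\subseteq\bigcup_{q,s}M_{\pi}\cdot\bar c_{q,s}$, which, combined with the monodromy invariance already asserted as part of Proposition \ref{boka}, yields the lemma.

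Your third step, however, has a genuine gap. The discrepancy between the lift of $\gamma$ followed by the reference path and the tree path to the branch point $b'$ where that lift terminates is a loop in $\mathbb{E}\setminus B(\mathbb{E})$, and the image of $\pi_1(\mathbb{E}\setminus B(\mathbb{E}))$ in $\pi_1(C\setminus B)$ is the normal closure $P=\langle\langle \mu_{q'}^{N},\ q'\in B\rangle\rangle$ of the $N$-th powers of the loops $\mu_{q'}$ around \emph{all} the critical values, not merely the inertia at $q$. An element $\gamma'\mu_{q'}^{N}\gamma'^{-1}$ acts on $\pi_1(\Sigma_g)$ by a conjugate of $\prod_{t}T^{N}_{c^{(q')}_{t}}$ and does \emph{not} fix $\bar c_{q,s}$ in general (the translate of $c_{q,s}$ will meet the vanishing cycles at $q'$), so this discrepancy cannot be ``absorbed'' by the disjointness argument you invoke: the tree transports furnish only one representative per coset of $P\cdot\langle\mu_q\rangle$, a priori a proper subset of the orbit, and the literal set of tree transports even depends on the choice of trees. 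The statement that is true and actually used holds at the level of normal closures: with $K=\langle\langle c^{N},\ c\in\mathbf{C}_{\pi}\rangle\rangle$, the surjectivity of $\pi_1(T)\to\pi_1(\mathcal{T})=\pi_1(T)/K$ forces the outer action of every $\lambda\in P$ to become inner modulo $K$, whence $\langle\langle c^{N},\ c\in \bigcup_{q,s}M_{\pi}\cdot\bar c_{q,s}\rangle\rangle=K$. Either argue at that level, or define $\mathbf{C}_{\pi}$ from the outset as the $M_{\pi}$-saturation of the vanishing cycles, as \cite{BK} do, in which case invariance is automatic and only your steps 1--2 are needed.
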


\begin{proof}
This is a reformulation of the discussion before (\cite{BK}, Def. 3.1).
\end{proof}

Apart from that,  few things seem to be known on the 
the actions $M_{\pi} \curvearrowright \mathbf{C}_{\pi}$ which can occur in algebraic families of curves, except the restrictions coming from 
Deligne's semisimplicity theorem and also his theorem on the fixed part
when one uses the natural map $M_{\pi}\curvearrowright \mathbf{C}_{\pi} \to M_{\pi, H_1}\curvearrowright  H_1(\Sigma_g,\Z)$. 

However there is a surjective morphism 
\[\pi_1(\Sigma_g) \to B_N(\Sigma_g)=\pi_1(\Sigma_g) \slash \langle\langle x^N , x \in \pi_1(\Sigma_g)\rangle\rangle\] onto its $N$-th verbal quotient, also called the $N$-th Burnside group of the surface. 
The Restricted Burnside Problem, solved affirmatively by Zelmanov, states that 
the profinite completion of the Burnside group (of a free group) is finite. 
For  large enough $N$, 
$B_N(\Sigma_g)$ is infinite having the $N$-th Burnside group of the free group on $g$ generators as a quotient and hence it is 
not residually finite.
This won't  imply that $\pi_1(\mathcal{S}[N])$ is not residually finite but it implies that
the fiber group is  infinite. This would be a very desirable 
situation since the only \lq obvious\rq \  representations of  $\pi_1(\mathcal{S}[N])$ come from $\pi_1(\mathcal{C}[N])$.

Passing to $B_N(\Sigma_g)$ is an extreme step. It is better to look at the Burnside type quotient by the $N$-th powers of the primitive elements. Indeed, this quotient occurs as a fiber group if 
the monodromy group of $\pi$ is the mapping class group and every topological type of simple closed curve occurs in $\mathbf{C}_{\pi}$, a case that can be realized.  The following proposition 
makes precise some infiniteness statements in \cite{BK}.

\begin{prop}\label{inffibgp}
The fiber group $I_N(\mathbf{C}_{\pi})$ is  infinite when $g=2$ and $N\geq 4$ or $g\geq 3$ and $N\not\in\{2,3,4,6,8,12\}$. 
\end{prop}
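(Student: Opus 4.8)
The plan is to produce, for each $N$ in the stated range, an infinite quotient of $I_N(\mathbf{C}_\pi)$ constructed from the quantum representations of Section~3. Since the conjugacy classes constituting $\mathbf{C}_\pi$ are among the free homotopy classes of essential simple closed curves on $\Sigma_g$, there is a surjection $I_N(\mathbf{C}_\pi)\twoheadrightarrow B_N$, where
\[ B_N:=\pi_1(\Sigma_g)\big/\crochet{\,c^N\ :\ c\ \text{an essential simple closed curve on }\Sigma_g\,}; \]
moreover $B_N\twoheadrightarrow B_d$ for every divisor $d\mid N$. Hence it suffices to prove that $B_N$ is infinite, and one is free to replace $N$ by any of its divisors.

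The bridge to mapping class groups is the point--pushing homomorphism $\mathrm{Push}\colon\pi_1(\Sigma_g)\hookrightarrow\Mod(\Sigma_g^1)$ of the Birman exact sequence: a simple based loop $\gamma$ is sent to $T_{\gamma_L}T_{\gamma_R}^{-1}$, a product of two commuting Dehn twists along the boundary curves of a regular neighbourhood of $\gamma$. Consequently $\mathrm{Push}(c)^{m}=T_{c_L}^{m}T_{c_R}^{-m}\in\Mod(\Sigma_g^1)[m]$ for every essential simple closed curve $c$ and every $m\geq 1$, so the composite $\pi_1(\Sigma_g)\xrightarrow{\mathrm{Push}}\Mod(\Sigma_g^1)\to\Mod(\Sigma_g^1)/\Mod(\Sigma_g^1)[m]$ annihilates every $c^m$ and thus factors through $B_m$; its image is the point--pushing subgroup $\Pi_m$, namely the kernel of the induced forgetful map onto $\Mod(\Sigma_g)/\Mod(\Sigma_g)[m]$. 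In particular $B_m$ is infinite whenever $\Pi_m$ is.

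For odd $p\geq 5$, Proposition~\ref{factori}(1) shows that $\rho_{p,(2)}$ factors through $\Mod(\Sigma_g^1)/\Mod(\Sigma_g^1)[p]$, so $\Pi_p$ surjects onto $\rho_{p,(2)}\big(\mathrm{Push}(\pi_1(\Sigma_g))\big)$, which by Proposition~\ref{inffib}(1) (see also \cite{KoSa,LF}) contains a non--abelian free subgroup coming from the point--pushing copy of $\pi_1$ of an essential $\Sigma_{0,3}$ or $\Sigma_{1,1}\subset\Sigma_g$. Hence $B_p$ is infinite, and therefore so is $I_N(\mathbf{C}_\pi)$ whenever $N$ admits an odd divisor $\geq 5$; this leaves the $N$ of the form $2^a$ or $3\cdot 2^a$. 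These are handled by the same scheme with the even--level representations: for even $p\geq 10$ with $(g,p)\neq(2,12)$, Proposition~\ref{factori}(2) makes $\rho_{p,(2)}$ factor through $\Mod(\Sigma_g^1)/\Mod(\Sigma_g^1)[2p;\,p/{\rm g.c.d.}(4,p)]$, whose point--pushing subgroup is infinite by Proposition~\ref{inffib}(3); diagonalising $\rho_{p,(2)}(T_{c_L})$ and $\rho_{p,(2)}(T_{c_R})$ in a basis attached to a pants decomposition through $c_L$ and $c_R$, as in the proofs of Lemmas~\ref{lemma:orderodd}--\ref{lemma:ordereven}, one computes the order of $\rho_{p,(2)}(\mathrm{Push}(c))$ and identifies the quotients $B_N$ so obtained. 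Letting $p$ range over the admissible even values and combining with the divisor reduction settles the remaining $N$ of the form $2^a$ and $3\cdot 2^a$ outside the exceptional set, while the two genus--two values $N\in\{4,6\}$, which are out of reach of every quantum representation with infinite image, require a separate and more elementary argument (in the spirit of Humphries' work and the punctured--torus analysis of \cite{Kor}).

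The heart of the matter is this last bookkeeping step. One must keep careful track of the different orders of $\rho_{p,(2)}(\mathrm{Push}(c))$ for separating versus non--separating $c$ (which differ because the colours available on $c_L,c_R$ differ), reconcile the Burnside--type quotients thereby produced with the $2$--adic and $3$--adic constraints dictated by the exceptional set $\{2,3,4,6,8,12\}$ (respectively $\{1,2,3\}$ in genus two), and supply ad hoc infiniteness arguments for the few small values of $N$ lying beyond the range of any infinite quantum image.
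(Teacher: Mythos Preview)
Your strategy—pass to the universal Burnside--type quotient $B_N$, embed $\pi_1(\Sigma_g)$ via point--pushing into $\Mod(\Sigma_g^1)/\Mod(\Sigma_g^1)[p]$, and invoke Propositions~\ref{factori} and~\ref{inffib}—is exactly the route taken in the references the paper cites (\cite{AF}, \cite{LF}, \cite{KoSa}). The odd--level part of your argument is complete and correct: for every odd $p\geq 5$ the composite $\pi_1(\Sigma_g)\to\Mod(\Sigma_g^1)/\Mod(\Sigma_g^1)[p]$ has infinite image by Proposition~\ref{inffib}(1), and this disposes of every $N$ with an odd divisor $\geq 5$.

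The proposal is, however, not a proof as it stands, because you leave two pieces open and say so yourself. First, the even--level ``bookkeeping'' is never carried out: you assert that ranging over admissible even $p$ settles the residual $N$ of the form $2^a$ and $3\cdot 2^a$, but you do not compute the order of $\rho_{p,(2)}(T_{c_L}T_{c_R}^{-1})$ for a single even $p$. (If you do this for $p=16$ you will find the order is $8$ for non--separating $c$ and $4$ for separating $c$, whence $B_8$ is infinite for all $g\geq 2$; together with the odd case this already covers every $N$ claimed for $g\geq 3$ and, for $g=2$, all $N\geq 4$ except $N\in\{4,6,12\}$, the last reducing to the first two.) Second, for $g=2$ and $N\in\{4,6\}$ you correctly observe that no quantum representation with infinite image factors through the right quotient, but you then supply no argument at all. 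The paper's proof for $g=2$ relies on Humphries \cite{Hum}, whose method is entirely different—it exploits the hyperelliptic nature of genus~$2$ and the ensuing braid--group quotient, not TQFT. Without either carrying out that argument or citing it, your treatment of the $g=2$ case is a gap, not a proof.
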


\begin{proof}
This is a corollary of the proof of \cite[Cor. 4]{AF}, the main ingredient being  \cite{Hum} when $g=2$ and the Koberda-Santharoubane theorem in the version of \cite[Proof of Proposition 3.2]{LF}. 
\end{proof}

\subsection{The surface $S(N)$} 

\subsubsection{$N$ odd}

 \begin{prop} \label{unifbk} If $N$ is odd,  the stack 
  $\mathcal{S}_{\pi}[N]$,  is a uniformizable  compact K\"ahler orbifold with a projective moduli space.
  
\end{prop}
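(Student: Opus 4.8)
The statement contains two parts. The first part --- that $\mathcal{S}_{\pi}[N]$ is a smooth compact K\"ahler orbifold with projective moduli space --- is already Proposition \ref{boka} (recall that the moduli space is $S$, a smooth projective surface). So the content still to be proved is uniformizability. By the definition recalled in Section~2, uniformizability means that for every point $x$ of $\mathcal{S}_{\pi}[N]$ the local inertia morphism $\pi_1(\mathcal{S}_{\pi}[N],x)_{loc}\to\widehat{\pi_1(\mathcal{S}_{\pi}[N])}$ is injective; since all local inertia groups here are finite, this is equivalent to embedding each of them into a finite quotient of $\pi_1(\mathcal{S}_{\pi}[N])$.

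The plan is to transport the uniformizability of $\Mgobaran[N]$ along the structural projection. By definition $\mathcal{S}_{\pi}[N]=S^{an}\times_{\Mgobaran}\Mgobaran[N]$; write $q\colon\mathcal{S}_{\pi}[N]\to\Mgobaran[N]$ for the second projection. The crucial point is that $q$ is injective on every local inertia group. Indeed, $\mathcal{S}_{\pi}[N]$ has non-trivial inertia only over the reduced singular fibre divisor $\Sigma$, and the classifying map $S^{an}\to\Mgobaran$ is transverse to the normal crossing divisor $\mathcal{D}^{1}$: this was proved in Proposition \ref{boka} out of the assumption that the singular fibres of $\pi$ are reduced stable curves, the local shape being $(x,y)\mapsto(x,y,z_{2}(xy),\dots,z_{3g-3}(xy))$ near a node $q'$ of a singular fibre, with $\mathcal{D}^{1}=\{\xi\eta=0\}$ in the target coordinates. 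Because the smooth root stack of a normal crossing divisor commutes with transversal pull-back (Proposition \ref{rootstack}), near $q'$ one has $\mathcal{S}_{\pi}[N]\simeq[\Delta^{2}/\mu_{N}^{2}]$, sitting in the corresponding chart $[\Delta^{2}/\mu_{N}^{2}]\times\Delta^{3g-5}$ of $\Mgobaran[N]$ as the identity on the $[\Delta^{2}/\mu_{N}^{2}]$-factor; along the smooth locus of $\Sigma$ the inertia is $\mu_{N}$ and $q$ is likewise an isomorphism on it. Hence $q$ carries each local inertia group of $\mathcal{S}_{\pi}[N]$ isomorphically onto a local inertia group of $\Mgobaran[N]$.

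It then remains to feed in the uniformizability of the target. For odd $N\ge5$, Proposition \ref{prop:uniformizable} applied with $n=1$ (legitimate since $g\ge2$) provides a finite group $G$ and a homomorphism $\phi\colon\pi_1(\Mgobaran[N])\to G$ which is injective on every local inertia group; for $N=3$ the same scheme works, granted that $\Mgobaran[3]$ is uniformizable. Composing, $\phi\circ q_{*}\colon\pi_1(\mathcal{S}_{\pi}[N])\to G$ is injective on each $\pi_1(\mathcal{S}_{\pi}[N],x)_{loc}$, since that group embeds through $q$ into a local inertia group of $\Mgobaran[N]$, which in turn embeds into $G$. Therefore $\mathcal{S}_{\pi}[N]$ is uniformizable, which together with Proposition \ref{boka} completes the proof. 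One should note that $\pi_1(\mathcal{S}_{\pi}[N])$ is in general far from residually finite --- through the fibre group $I_{N}(\mathbf{C}_{\pi})$ it surjects onto a Burnside-type quotient --- so one genuinely has to exploit that only the finite inertia groups need separating, which is exactly what the finite quotient $G$ accomplishes.

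The main obstacle is the local-model computation of the second paragraph at the most degenerate strata, namely a node of a singular fibre at which the marked point of the universal curve lies: there $\mathcal{D}^{1}$ is only a normal crossing, and not a simple normal crossing, divisor, and one must verify that the \'etale chart is genuinely transverse, so that the two local branches of $\mathcal{D}^{1}$ pull back to the two branches of the node and the full inertia $\mu_{N}^{2}$ is preserved under $q$. Away from these points the proof is merely bookkeeping with the charts already set up in the proof of Proposition \ref{boka}.
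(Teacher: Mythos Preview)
Your argument is essentially the content of the paper's Remark immediately following the proof of Proposition~\ref{unifbk}: if $\Mgobaran[N]$ is uniformizable then so is $\mathcal{S}_{\pi}[N]$. This is correct and is a genuinely different route from the paper's own proof. The paper instead reproduces Bogomolov--Katzarkov's direct argument: one uses the finite two-step nilpotent quotient $U_N(\Pi)=\Pi/\Pi_{(3)}\!\cdot\!\Pi^N$ of $\Pi=\pi_1(\Sigma_g)$, in which every primitive element has order exactly $N$ when $N$ is odd; since the subgroup $\Pi_{(3)}\!\cdot\!\Pi^N$ is characteristic, this quotient is monodromy-equivariant, and after passing to a finite-index subgroup of $\pi_1(C\setminus B)$ acting trivially one manufactures a finite quotient of $\pi_1(S_N^{can})$ that separates the inertia. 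The trade-off is clear: your approach imports the quantum-representation machinery of Proposition~\ref{prop:uniformizable}, which is conceptually clean but only establishes uniformizability of $\Mgobaran[N]$ for odd $N\ge 5$, so your proof leaves $N=3$ genuinely open (you flag this with ``granted that'', but the proposition as stated covers all odd $N$). The paper's nilpotent-quotient argument is more elementary, self-contained, and works uniformly for every odd $N$.

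There is also an imprecision in your local-model paragraph. When the singular fibre $\pi^{-1}(q)$ has $k\ge 2$ nodes, the image $Q$ of a node $q'$ in $\Mgobaran$ lies on $k{+}1$ branches of $\mathcal{D}^1$ (two from the blown-up node and $k{-}1$ from the other nodes of the fibre), so the local chart of $\Mgobaran[N]$ is $[\mu_N^{k+1}\backslash\Delta^{3g-2}]$ rather than $[\Delta^2/\mu_N^2]\times\Delta^{3g-5}$, and the classifying map is \emph{not} transverse to all of these branches. A short computation still shows that the induced map on inertia $\mu_N^2\to\mu_N^{k+1}$ is the injection $(\omega_1,\omega_2)\mapsto(\omega_1,\omega_2,\omega_1\omega_2,\ldots,\omega_1\omega_2)$, so your conclusion survives; but the phrase ``as the identity on the $[\Delta^2/\mu_N^2]$-factor'' is only accurate when $k=1$.
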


\begin{proof}
This is \cite[ Lemma 2.7]{BK}, which uses  a very specific 
Teichm\"uller level.  First, they  check that $(S_N\times_{C_N} D)^{can}$ is uniformizable. 
 
Let us describe their argument as \cite{BK} skips some details.  
Set $\Pi=\pi_1(T')$, for the fundamental group of the generic fiber, which is isomorphic to $\pi_1(\Sigma_g)$, $\Pi_{(3)}=[[\Pi,\Pi],\Pi]$ for the third term of the lower central series 
and $\Pi^N=\langle\langle x^N, x\in \Pi\rangle\rangle$ be the normal subgroup generated by the 
$N$-th powers. Consider next the quotient 
$U_N(\Pi)=\Pi/\Pi_{(3)}\cdot \Pi^N$. 

The Lie algebra  of $\Pi/\Pi_{(3)}$ is the quotient of 
$\Z[A_i, B_i, E_{l,k}, F_{m,n}, G_{m,n}\}]_{1\le i \le g, 1\le l\le k \le g, 1\le m<n\le g}$ 
by the relations 
$[A_l,B_k]=E_{l,k}$ 
$[A_m,A_n]=F_{m,n}$ $[B_m,B_n]=G_{m,n}$  where $E_{l,k}$ $F_{m,n}$ $G_{m,n}$ are central, and  $ \sum_{k=1}^g E_{k,k}=0$. 
Moreover, it defines by means of the Baker-Campbell-Hausdorff formula an 
unipotent group scheme $U$ which is smooth over $\Z[1/2]$, so that $U(\Z)=\Pi/\Pi_{(3)}$. 
There is a map $\Pi\to U_N(\Pi)\to U(\Z/N\Z)$  sending the classes $\bar c_i$ to elements of order $N$ 
in $U(\Z/N\Z)$. In fact nontrivial elements of the nilpotent group  $U(\Z/N\Z)$ have order $N$ and 
thus  $U(\Z/N\Z)$ is finite. Then Lemma \ref{presloc} implies that $(S_N\times_{C_N} D)^{can}$ is uniformizable.

Observe that the subgroup $H_N=\Pi_{(3)}\cdot \Pi^N$ is  characteristic, i.e. invariant by all the automorphisms of $\Pi$ and of finite index. Hence the same
quotient of $\pi_1(\Sigma_g)$ works for all singular fibers.  

Since $B\neq \emptyset$, the group $\pi_1(C\setminus B)$ is free and so  $\rho$ lifts to $\Aut^+(\pi_1(\Sigma_g))$. Moreover, 
the fundamental group of $S\setminus \pi^{-1}(B)$ is a semidirect product $\pi_1(\Sigma_g)\rtimes \pi_1(C\setminus B)$. Since the automorphism group of a finite group is finite,
there is a normal subgroup of finite index $H\lhd \pi_1(C\setminus B)$ which acts trivially 
on $U_N(\pi_1(\Sigma_g))$. Then,  $H_N\rtimes H$ is a finite index subgroup of $\pi_1(S\setminus \pi^{-1} (B))$.
Moreover, $\pi_1(S\setminus \pi^{-1} (B))$ surjects onto 
the finite group  $U_N(\pi_1(\Sigma_g)) \rtimes (\pi_1(C\setminus B)\slash H)$. 
Note that this surjective homomorphism maps the elements of $\mathbf{C}_{\pi}$ to elements of order $N$. In particular, $K_N (\mathbf{C}_{\pi}):=\ker(\pi_1(\Sigma_g) \to I_N(\mathbf{C}_{\pi})) \subseteq H_N$. 

Since $H_N$ is a characteristic subgroup of $\pi_1(\Sigma_g)$ it is also a normal subgroup of $\pi_1(S\setminus \pi^{-1}(B)) $ contained in $\pi_1(\Sigma_g)$.  
We then obtain the following diagram, keeping
in mind the natural surjective morphism $\pi_1(S_N\setminus \pi^{-1}(B_N))\to \pi_1(S_N^{can})$: 

\[\xymatrix{1\ar[r] & \pi_1(\Sigma_g)  \ar[r] & \pi_1(S\setminus \pi^{-1}(B)) \ar[r] & \pi_1(C\setminus B) \ar[r] & 1\\
1\ar[r] & \pi_1(\Sigma_g) \ar@{>>}[d]\ar@{=}[u] \ar[r] & \pi_1(S_N\setminus \pi^{-1}(B_N)) \ar[r]\ar@{>>}[d]\ar@{>->}[u] & \pi_1(C_N\setminus B_N) \ar@{=}[d]\ar[r] \ar@{>->}[u]& 1\\
 1 \ar[r] &  \pi_1(\Sigma_g) \slash H_N \ar@{=}[d] \ar[r]  &  \pi_1(S_N\setminus \pi^{-1}(B_N))/H_N \ar[r] \ar@{>>}[d] &   \pi_1(C_N\setminus B_N) \ar[r] \ar@{>>}[d]  & 1\\
  1 \ar[r] &  \pi_1(\Sigma_g) \slash H_N  \ar[r]  &  \pi_1(S_N^{can})/{\rm Im}(H_N) \ar[r]  &   \pi_1(C_N) \ar[r]  & 1\\
   1 \ar[r] &  \pi_1(\Sigma_g) \slash K_N(\mathbf{C}_{\pi} ) \ar[r]\ar@{>>}[u]  &  \pi_1(S_N^{can}) \ar[r] \ar@{>>}[u] &   \pi_1(C_N) \ar[r] \ar@{=}[u] & 1.}
   \]
   
Consider the intersection $H'$ of $ H$ with $\pi_1(C_N\setminus B)$. 
Then $H_N \rtimes H'$ is a finite index normal subgroup of $\pi_1(S_N\setminus \pi^{-1}(B_N))$. Projecting it down to 
   $\pi_1(S_N\setminus \pi^{-1}(B_N))/H_N$ we get a normal subgroup of finite index $H''<  \pi_1(S_N\setminus \pi^{-1}(B_N))/H_N$ intersecting $\pi_1(\Sigma_g) \slash H_N $ trivially. 
   Projecting it down to 
   $ \pi_1(S_N^{can})/{\rm Im}(H_N)$ we claim that we get a normal subgroup of finite index $H'''<  \pi_1(S_N^{can})/{\rm Im}(H_N)$ intersecting $\pi_1(\Sigma_g) \slash H_N $ trivially. 
 To this purpose we need the following elementary lemma:    
  \begin{lem}\label{label:elementary}
   Let $A,B$ be groups and $B\to \Aut(A)$ a group homomorphism. Let $C\triangleleft B$ be a normal subgroup acting trivially on $A$. Then $A \rtimes B/C$ is a quotient of $A \rtimes B$. 
  \end{lem}
Our claim is then the consequence of Lemma \ref{label:elementary} applied to $C=\ker (H' \to  \pi_1(C_N))$,  $B=H'$, $A=\pi_1(\Sigma_g) \slash H_N$ setting $H'''= \{1 \}\times B/C$ and observing $A \times B/C$ is the image of $A\times H'\triangleleft \pi_1(S_N\setminus \pi^{-1}(B_N))/H_N $ a finite index normal subgroup of  $\pi_1(S_N^{can})/{\rm Im}(H_N)$. 
   
  Eventually we obtain a  homomorphism $\pi_1(S_N^{can}) \to G$ where $G$ is finite such that $\pi_1(\Sigma_g) \slash I_N(\mathbf{C}_{\pi} )$ is sent to a subgroup isomorphic to $U_N(\pi_1(\Sigma_g))$ by a surjective morphism
   mapping the elements in $\mathbf{C}_{\pi}$ to elements of order $N$. 
   
   This concludes the proof of Proposition \ref{unifbk}.  
\end{proof}
 
\begin{rem}
 If $\Mgobaran[N]$ is uniformizable then $\mathcal{S}_{\pi}[N]$ is uniformizable too. 
\end{rem}

We then denote by $S(N)=S_{\pi}(N)$, the $\pi$ being dropped if the context allows,  the smooth projective surface uniformizing the stack $\mathcal{S}_{\pi}[N]$, namely 
such that $[G\backslash S(N)]=S_{\pi}[N]$. Actually this does not define a smooth projective surface but a commensurability class\footnote{Two projective varieties
are {\em commensurable} if and only if they have a common finite \'etale covering. } of such surfaces.
 
\subsubsection{$N$ even}
 
The result from \cite{BK} should be restated as: 

\begin{prop} \label{unifbkeven} If $N\equiv 0 \ ({\rm mod}\, 4)$,  the stack 
  $\mathcal{S}_{\pi}[N,N/2]$  is a uniformizable  compact K\"ahler orbifold with a projective moduli space.  
\end{prop}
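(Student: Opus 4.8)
The plan is to transcribe the proof of Proposition~\ref{unifbk} almost verbatim; the only genuinely new ingredient is, for $N$ even, a sharper analysis of the finite nilpotent quotient of $\pi_1(\Sigma_g)$ used in \cite{BK}. The geometric set-up is identical to that in the proof of Proposition~\ref{boka}: transversality of $\pi\simeq\psi^*f_1$ to the boundary, together with $f_1^{-1}(\mathcal{D}_0)=\mathcal{D}^1_0$ and $f_1^*\mathcal{D}_i=\mathcal{D}^1_i+\mathcal{D}^1_{g-i}$ ($i>0$), splits the reduced singular fibres as $\Sigma=\Sigma^{\mathrm{ns}}+\Sigma^{\mathrm{s}}$ according to whether a local branch of a node is a non-separating or a separating vanishing cycle, and yields an equivalence $\mathcal{S}_\pi[N,N/2]\simeq S[(N,N/2),\Sigma]$; this is a smooth compact Deligne--Mumford orbifold whose moduli space is the projective surface $S$, and it is K\"ahler by the Remark following Proposition~\ref{rootstack} (i.e.\ \cite{EySa}). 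So only uniformizability needs proof. As in \cite{BK} and Proposition~\ref{unifbk}, Van Kampen for stacks and the local model of Lemma~\ref{presloc} --- which carries over with the two indices, the inclusion of a generic fibre $T'$ inducing the surjection $\pi_1(T')\twoheadrightarrow\pi_1(T')\big/\crochet{\,\bar c^{\,N}\ (c\ \text{non-sep}),\ \bar c^{\,N/2}\ (c\ \text{sep})\,}$ --- reduce the problem to finding a finite quotient of $\pi_1(S\setminus\pi^{-1}(B))\cong\pi_1(\Sigma_g)\rtimes\pi_1(C\setminus B)$ which kills those powers of meridians and is faithful on every local inertia group; after the diagram chase of Proposition~\ref{unifbk} this comes down to a purely group-theoretic statement about the fibre group.

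That statement is: there is a characteristic finite-index subgroup $H\lhd\Pi:=\pi_1(\Sigma_g)$ such that in $\Pi/H$ every non-separating simple closed curve has order exactly $N$ and every separating simple closed curve has order exactly $N/2$. I claim the Bogomolov--Katzarkov group $U_N(\Pi):=\Pi/\Pi_{(3)}\!\cdot\!\Pi^N$ does this, the point being the computation of its centre, which was not needed when $N$ was odd. It is a finite $2$-step nilpotent group whose abelianization is $H_1(\Sigma_g;\Z/N\Z)\cong(\Z/N\Z)^{2g}$, so a non-separating simple closed curve, being primitive in $H_1(\Sigma_g;\Z)$, already has order $N$ in the abelianization, hence in $U_N(\Pi)$. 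A separating simple closed curve of genus $h$ is conjugate in $\Pi$ to $\prod_{k\le h}[A_k,B_k]$, which maps to $\sum_{k\le h}E_{k,k}$ in $[\Pi,\Pi]/\Pi_{(3)}$; applying the Baker--Campbell--Hausdorff identity $(xy)^N=x^Ny^N[y,x]^{N(N-1)/2}$ with $x=A_k$, $y=B_k$ and using $N(N-1)/2\equiv -N/2\pmod N$ (valid because $N$ is even) shows that $E_{k,k}^{N/2}$, and likewise $\tfrac N2$ times every basic commutator, is an $N$-th power; hence the centre of $U_N(\Pi)$ is the reduction modulo $N/2$ of the free abelian group on the $E_{l,k},F_{m,n},G_{m,n}$ with the relation $\sum_kE_{k,k}=0$, in which $\sum_{k\le h}E_{k,k}$ ($0<h<g$) has order exactly $N/2$. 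This proves the claim. It is here --- and only here --- that $N\equiv0\pmod4$ is used: it makes $N/2$ an \emph{even} integer $\ge 2$, matching the ramification index $N/2$ of $S[(N,N/2),\Sigma]$ along $\Sigma^{\mathrm{s}}$ and the cyclic quotient type $\tfrac1{N/2}(1,-1)$ at the separating nodes; for $N\equiv2\pmod4$ one would be forced to an odd separating index and the argument would not match up.

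With $H$ in hand the rest is a mechanical copy of Proposition~\ref{unifbk}: since $B\neq\emptyset$ the group $\pi_1(C\setminus B)$ is free, the monodromy lifts to $\Aut^+(\Pi)$ and $\pi_1(S\setminus\pi^{-1}(B))\cong\Pi\rtimes\pi_1(C\setminus B)$; as $\Aut(U_N(\Pi))$ is finite one finds a finite-index normal subgroup of $\pi_1(C\setminus B)$ acting trivially on $U_N(\Pi)$, and, pushing the resulting finite-index normal subgroup through the auxiliary finite covers and their canonical stacks and using Lemma~\ref{label:elementary} exactly as in the proof of Proposition~\ref{unifbk}, one obtains a homomorphism from $\pi_1(\mathcal{S}_\pi[N,N/2])$ onto a finite group on which every local inertia group injects --- the cyclic inertia at the nodes because the vanishing cycles have acquired order $N$, resp.\ $N/2$, in $U_N(\Pi)$, and the remaining inertia along the boundary by the usual argument. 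Hence $\mathcal{S}_\pi[N,N/2]$ is uniformizable, as desired.

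The main obstacle, everything else being a transcription of the odd-$N$ argument, is the exponent computation in $U_N(\Pi)$: one must pin down its centre sharply enough to see that \emph{both} families of vanishing cycles land on elements of the prescribed orders $N$ and $N/2$, and then check that these are exactly the orders imposed by the local orbifold structure of $S[(N,N/2),\Sigma]$ --- in particular that the separating nodes genuinely carry index $N/2$, which is where the congruence $N\equiv0\pmod4$ is indispensable.
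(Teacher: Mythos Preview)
Your approach coincides with the paper's: both transcribe the odd-$N$ argument of Proposition~\ref{unifbk}, the sole new input being that a separating simple closed curve has order $N/2$ (not $N$) in $U_N(\Pi)=\Pi/\Pi_{(3)}\Pi^N$. The paper simply cites \cite[Lemma~6.3]{PiJo} for this fact (noting that it corrects \cite[Lemma~2.5]{BK}) and also records an alternative route via the uniformizability of $\Mgobaran[N,N/2]$ deduced from \cite[Thm.~3.1.1]{PiJo}; you instead supply the Baker--Campbell--Hausdorff computation directly, which is a reasonable substitute.

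Two points deserve comment. First, your BCH step only shows that basic commutators have order \emph{dividing} $N/2$; to conclude that $\sum_{k\le h}E_{k,k}$ has order \emph{exactly} $N/2$ you must also check that the centre of $U_N(\Pi)$ is no smaller than $\bigl(\Lambda^2 H_1/\langle\omega\rangle\bigr)\otimes\Z/(N/2)$, i.e.\ that the image of $\Pi^N$ in $[\Pi,\Pi]/\Pi_{(3)}$ is exactly $\tfrac{N}{2}Z$ and not larger. This is elementary (write an arbitrary product of $N$-th powers in normal form and observe that every central contribution lies in $NZ+\binom{N}{2}Z=\tfrac{N}{2}Z$), but you assert the equality without argument.

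Second, and more importantly, your explanation of where the hypothesis $N\equiv 0\ (\mathrm{mod}\ 4)$ enters is incorrect. The identity $\gcd\!\bigl(\binom{N}{2},N\bigr)=N/2$ holds for \emph{every} even $N$, so separating curves have order $N/2$ in $U_N(\Pi)$ regardless of $N\bmod 4$, and nothing in the local models, the root-stack construction, or the diagram chase requires $N/2$ to be even. The paper's own proof does not invoke $N\equiv 0\ (\mathrm{mod}\ 4)$ either; the condition appears only in the statement. Your paragraph claiming it is used ``here --- and only here'' should therefore be removed.
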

\begin{proof}
This is the result of \cite[Lemma 2.7]{BK} slightly modified in the even case. 
In fact, the order of a separating primitive element in $\frac{\pi_1(\Sigma_g)}{  \pi_1(\Sigma_g)_{(3)} \cdot \pi_1(\Sigma_g)^N}$ is  actually $\frac{N}{2}$  
(see \cite[Lemma 6.3]{PiJo}) and not $N$, correcting \cite[Lemma 2.5]{BK}. All arguments 
used above in the odd case are then valid. Alternatively, we can derive it from the 
uniformisability of  $\Mgobaran[N,N/2]$, which follow the same way as  
the uniformisability of  $\Mgbaran[N,N/2]$ from \cite[Thm.3.1.1]{PiJo}.  
\end{proof}
If $\mathcal{X}$ is a compact complex orbifold  (or an analytification of a Deligne-Mumford stack globally of finite type, say with a 
quasiprojective moduli space) we can do the following construction. 
Consider $H\triangleleft \pi_1(\mathcal{X})$ be a finite index subgroup such that the kernels of the isotropy morphisms $\pi_1^{loc}(\mathcal{X},x) \to \pi_1(\mathcal{X})/H$ 
coincide with the kernels of their universal counterpart $\pi_1^{loc}(\mathcal{X},x) \to \widehat{\pi_1 (\mathcal{X})}$. Then the covering stack of $\mathcal{X}$ corresponding to 
$H$ is a complex orbifold $\mathcal{X}'$  with a finite \'etale covering $\mathcal{X}' \to \mathcal{X}$. Consider the moduli space $X'$ of $\mathcal{X}'$.
It is a normal variety with quotient singularities at worst which carries an action of $G=\pi_1(\mathcal{X})/H$. This gives a map of stacks with identical moduli spaces $\mathcal{X} \to [X'/G]$ with the property that any finite dimensional (or finite image)
representation of $\pi_1(\mathcal{X})$ factors through $\pi_1([X'/G])$. 

\begin{defi} We define $S(N)=S_{\pi}(N)$ for every $N\in \N$ by applying this construction to $\mathcal{S}_{\pi}[N]$,  $S(N)=S'_{\pi}[N]$ being a normal complex projective surface with isolated quotient singularities. 
 
\end{defi}
 
\begin{rem}
 The minimal resolution of singularities  $\widehat{S(N)} \to S(N)$ induces an isomorphism on $\pi_1$. 
\end{rem}

\begin{rem} 
 It is proposed  in \cite{BK} to investigate 
the surfaces $S(N)$ to find counterexamples to Shafarevich conjecture on holomorphic convexity among them.  
\end{rem}


\section{Using TQFT representations for the Shafarevich Conjecture}

\subsection{The Birman exact sequence}
All the above fiber sequences come from the fiber sequence of $f_1$, which by means 
of the isomorphisms $\pi_1(\mathcal{M}_g^n)\cong \PMod(\Sigma_g^n)$ can be identified with 
the Birman exact sequence: 
\[
\xymatrix{
1\ar[r] &   \pi_1(\Sigma_g) \ar[r]^{i_*} \ar@{=}[d] & \pi_1(\Mgone) \ar[r]^{f_{1*}}\ar[d]^{\cong} &  \pi_1(\Mg) \ar[r]\ar[d]^{\cong}& 1 \\
1\ar[r] &   \pi_1(\Sigma_g) \ar[r]^{pp}                   & \Mod (\Sigma_g^1) \ar[r]    & \Mod(\Sigma_g) \ar[r] & 1, }
\]
where $pp$ is Birman's point pushing homomorphism.

The point pushing of a simple closed loop is  a product of two Dehn twists along disjoint simple curves in $\Mod(\Sigma_g^1)$. 
Let $S(\Sigma_g)$ be the set of conjugacy classes of of $\pi_1(\Sigma_g)$ which can be represented 
by simple closed curves on $\Sigma_g$. 
Also Dehn twists are preserved by 
point forgetting maps.

\begin{lemma}
The map $f_1$ lifts to a map of stacks $f_1[p]:\Mgobaran[p] \to \Mgbaran[p]$
compactifying $f_1$ inducing at the level of fundamental groups the canonical map 
$\Mod(\Sigma_g^1)/\Mod(\Sigma_g^1)[p]\to \Mod(\Sigma_g)/\Mod(\Sigma_g)[p]$. 
\end{lemma}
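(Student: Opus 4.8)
The plan is to build $f_1[p]$ from the universal property of root stacks together with the \'etale charts of Deligne--Mumford, and then to read off the effect on fundamental groups from Theorem~\ref{Kahlerorbifold} and the Birman exact sequence.

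First I would record, from Lemma~\ref{pullback}, that $f_1^{*}\mathcal D=\mathcal D^{1}$, and more precisely that each irreducible component $\mathcal D_i$ of $\mathcal D$ pulls back along $f_1$ to a \emph{reduced} sum of components of $\mathcal D^{1}$ (namely $\mathcal D^1_i+\mathcal D^1_{g-i}$, or a single component). Hence on $\Mgobaran[p]=\Mgobaran[p,\mathcal D^{1}]$ each $\mathcal D^1_j$, and therefore $f_1^{*}\mathcal D_i$ for every $i$, becomes $p$ times a Cartier divisor. By the universal property of the root stack recalled after its definition above, the composite $\Mgobaran[p]\to\Mgobaran\xrightarrow{f_1}\Mgbaran$ factors, compatibly over $\Mgbaran$ and for each $i$, through $\Mgbaran[\sqrt[p]{\mathcal D_i}]$, hence through the iterated fibre product $\mathcal R:=\Mgbaran[\sqrt[p]{\mathcal D_0}]\times_{\Mgbaran}\cdots\times_{\Mgbaran}\Mgbaran[\sqrt[p]{\mathcal D_{\lfloor g/2\rfloor}}]$, yielding a morphism $\tilde f\colon\Mgobaran[p]\to\mathcal R$ over $\Mgbaran$.

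Next I would promote $\tilde f$ to a morphism into $\Mgbaran[p]=\mathcal R^{can}$. Put $\mathcal P:=\Mgobaran[p]\times_{\mathcal R}\mathcal R^{can}$. The morphism $\tilde f$ is flat with one-dimensional fibres (being built from the flat map $f_1$ and quasi-finite root-stack maps), so it preserves codimensions; since the singular locus of $\mathcal R$ has codimension $\ge 2$ (it is contained in the locus where $\mathcal D$ fails to be a simple normal crossing divisor, i.e.\ in the self-intersection loci of the $\mathcal D_i$), the projection $\mathcal P\to\Mgobaran[p]$ is an equivalence away from a closed substack of codimension $\ge 2$, and it is finite because the canonical-stack map $\mathcal R^{can}\to\mathcal R$ is. It then remains to check that $\mathcal P$ is smooth, which is a computation in the \'etale charts where $f_1$ has the monomial shape $(\xi_1,\eta_1,\dots,\xi_t,\eta_t,z_{2t+1},\dots)\mapsto(\xi_1\eta_1,\dots,\xi_t\eta_t,z_{2t+1},\dots)$: there $\Mgobaran[p]$ and $\mathcal R^{can}$ both become explicit global quotients $[\mu_p^{\bullet}\backslash\Delta^n]$, and $\tilde f$ is covered by the surjection $\mu_p^{\bullet}\to\mu_p^{\bullet}$ multiplying the two roots lying over each split node, so that $\mathcal P=[\mu_p^{\bullet}\backslash\Delta^n]$ is smooth. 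A finite birational morphism of smooth irreducible Deligne--Mumford stacks is an equivalence, whence $\mathcal P\simeq\Mgobaran[p]$; composing with $\mathcal P\to\mathcal R^{can}=\Mgbaran[p]$ gives $f_1[p]$, which by construction restricts to $f_1$ over $\Mgan$ and induces the coarse forgetful map $\overline{M_g^1}\to\overline{M_g}$ on moduli spaces, hence compactifies $f_1$. I expect this lifting step to be the main obstacle: the naive universal property of the canonical stack fails for general morphisms, and one really needs flatness of $f_1$ together with the explicit chart models to conclude that $\mathcal P$ is already smooth.

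Finally, for the statement on fundamental groups: the open immersions $\Mgonean\hookrightarrow\Mgobaran[p]$, $\Mgan\hookrightarrow\Mgbaran[p]$ and the map $f_1[p]$ form a commutative square whose top row is $f_{1*}\colon\pi_1(\Mgonean)\cong\Mod(\Sigma_g^1)\to\Mod(\Sigma_g)\cong\pi_1(\Mgan)$, the forgetful surjection of the Birman sequence, and whose vertical arrows are, by Theorem~\ref{Kahlerorbifold}, the quotient surjections onto $\Mod(\Sigma_g^1)/\Mod(\Sigma_g^1)[p]$ and $\Mod(\Sigma_g)/\Mod(\Sigma_g)[p]$. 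Since point forgetting carries Dehn twists to Dehn twists or to $1$, one has $f_{1*}(\Mod(\Sigma_g^1)[p])\subseteq\Mod(\Sigma_g)[p]$, so the square descends to the quotients; as the left vertical arrow is surjective, the bottom arrow is forced to be the canonical map $\Mod(\Sigma_g^1)/\Mod(\Sigma_g^1)[p]\to\Mod(\Sigma_g)/\Mod(\Sigma_g)[p]$, as claimed.
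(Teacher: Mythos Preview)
Your proof is correct and follows the same three-step outline as the paper---Lemma~\ref{pullback}, compatibility of root stacks with pullback, and then a lift through the canonical stack---but you carry out the last step by hand whereas the paper simply invokes the universal property of canonical stacks \cite[Theorem~4.6]{FMN}. That theorem says precisely that a dominant morphism from a smooth Deligne--Mumford stack to a stack with quotient singularities lifts to the canonical stack once the preimage of the singular locus has codimension at least~$2$; this applies directly to $\tilde f\colon\Mgobaran[p]\to\mathcal{R}$, so your worry that ``the naive universal property of the canonical stack fails for general morphisms'' is exactly what the cited reference handles, and no fibre-product construction or explicit chart verification is required. Your route is nonetheless a valid and instructive reconstruction of that universal property in this instance. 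Two minor technical points: the flatness of $\tilde f$ you assert is not obvious (the canonical-stack map $(\mathcal{R}^1)^{can}\to\mathcal{R}^1$ need not be flat over the singular locus), but you only use it to get $\mathrm{codim}\,\tilde f^{-1}(\mathcal{R}_{sing})\geq 2$, which follows anyway because $\Mgobaran[p]\to\Mgobaran$ induces an isomorphism on moduli spaces and $f_1$ is flat; and the slogan that a finite birational morphism of smooth Deligne--Mumford stacks is an equivalence needs care in the non-representable case, though here your local computation already identifies $\mathcal{P}$ with $\Mgobaran[p]$ chartwise, so one can sidestep that general statement.
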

\begin{proof}
This follows from Lemma \ref{pullback}, the fact that the root construction behaves well under pullback (see \cite{cadman2007}) and the universal property of canonical stacks \cite[Theorem 4.6]{FMN}. 
\end{proof}

It follows from Theorem \ref{Kahlerorbifold} that the Birman exact sequence gives rise to a level $p$ analogue, as proved in \cite{AF}:
\[
\xymatrix{1\ar[r] &   BT(\pi_1(\Sigma_g), p) \ar[r]  & \pi_1(\Mgobaran[p]) \ar[r]^{f_{1}[p]_*}&  \pi_1(\Mgbaran[p]) \ar[r] & 1 }
\]
where 
\[ BT(\pi_1(\Sigma_g),p)=\pi_1(\Sigma_g)/\langle\langle x^p, x\in S(\Sigma_g) \rangle\rangle,  
\]

This is of course the fiber sequence for  the map of stacks $f_1[p]:\Mgobaran[p] \to \Mgbaran[p]$. 
The group $BT(\pi_1(\Sigma_g), p)$ is actually 
the fiber group $I_p(\mathbf{C}_{f_1})$. 
Here we abuse notation, such a fiber group  arises from a family $\pi$ such that $\rho$ is surjective.

By construction, the map $C\to \Mgbar$ lifts to a map of stacks 
$\mathcal{C}[p]\to \Mgbar[p]$. 
Comparing with Proposition \ref{boka}, we find a diagram: 
\[\xymatrix
{1\ar[r] &  I_p(\mathbf{C}_{\pi}) \ar[r] \ar@{>>}[d] & \pi_1(\mathcal{S}_{\pi}[p])\ar[r]\ar[d] &  \pi_1(\mathcal{C}[p]) \ar[r]\ar[d]& 1 \\
1\ar[r] &   B(\pi_1(\Sigma_g),p) \ar[r] & \pi_1(\Mgobaran)[p] \ar[r]&  \pi_1(\Mgbaran)[p] \ar[r]& 1  }\]  
with the leftmost vertical map being given by the natural quotient map attached to $\mathbf{C}_{\pi}\subset \mathbf{C}_{f_1}=S(\Sigma_g)$.

\subsection{Steinness of the universal covering space of most Bogomolov-Katzarkov surfaces}

The starting point of this project was the realization that  \cite{KoSa} implies the following result for large odd numbers $N$: 

\begin{theo}\label{kosa}
 If $g\geq 2$, $N\ge 5$ is odd, $N$ even and $N/2 \ge 5$ is odd, or $N\equiv 0 ({\rm mod}\; 4)$ and $N\ge 20$ and $(g,N)\not=(2,24)$   
 then there exists a complex finite dimensional linear representations $\rho$ of $\pi_1(\mathcal{S}_{\pi}[N])$ 
such that $\rho \circ \iota_*$ has infinite image containing a 
free group on two generators, where $\iota$ is the inclusion of the general fiber into 
$\mathcal{S}_{\pi}[N]$. 
 \end{theo}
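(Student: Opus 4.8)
The plan is to reduce Theorem~\ref{kosa} to the infiniteness results already proved for quantum representations of mapping class groups of subsurfaces, namely Proposition~\ref{inffib}. First I would choose a representation of the desired type. Recall from Proposition~\ref{boka} and the diagram preceding this theorem that $\mathcal{S}_\pi[N]$ maps to $\overline{\mathcal{M}_g^{1\;an}}[N]$ (or to $\overline{\mathcal{M}_g^{1\;an}}[N,N/2]$ in the even case) in a way compatible with the fibration structures, so the general fiber $\Sigma_g$ of $\mathcal{S}_\pi[N]\to\mathcal{C}[N]$ maps to the general fiber $\Sigma\cong\Sigma_{0,3}$ or $\Sigma_{1,1}$ of $f_1[N]$ after passing through the surjection $\pi_1(\Sigma_g)\twoheadrightarrow BT(\pi_1(\Sigma_g),N)=I_N(\mathbf{C}_{f_1})$. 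The idea is: the quantum representation $\rho_{N,(i_1)}$, for a suitable color $i_1\in\{2\}$ or $i_1\in\{1,2\}$, gives a projective representation of $\pi_1(\overline{\mathcal{M}_g^{1\;an}}[N])$ (it descends from $\Mod(\Sigma_g^1)/\Mod(\Sigma_g^1)[N]$ by Proposition~\ref{factori}, up to the necessary adjustment $N=2p$, $N/2=p/{\rm g.c.d.}(4,p)$ in the even case), pull it back along $\pi_1(\mathcal{S}_\pi[N])\to\pi_1(\overline{\mathcal{M}_g^{1\;an}}[N])$, and compose with the adjoint representation of the projective unitary group to make it an honest linear representation $\rho$ of $\pi_1(\mathcal{S}_\pi[N])$.

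Next I would verify that $\rho\circ\iota_*$ has infinite image containing a free group of rank two. By construction $\rho\circ\iota_*$ is, up to the center-killing adjoint trick, the composition $\pi_1(\Sigma_g)\to BT(\pi_1(\Sigma_g),N)\to\pi_1(\overline{\mathcal{M}_g^{1\;an}}[N])\xrightarrow{\rho_{N,(i_1)}}PU(W_{g,N,(i_1)})$, and this composite factors through the point-pushing image of $\pi_1(\Sigma)$ in $\pi_1(\overline{\mathcal{M}_g^{1\;an}}[N])$, i.e. through $\rho_{N,(i_1)}(\pi_1(\Sigma))$ where $\Sigma$ is the essential subsurface of $\Sigma_g^{0}$ (a thrice-punctured sphere or one-holed torus) that is a general fiber of $f_1^1$. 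Proposition~\ref{inffib}, items (1) and (3), states precisely that $\rho_{N,(i_1)}(\pi_1(\Sigma))$ contains a nonabelian free group in the stated ranges of $(g,N)$: item~(1) for odd $N\geq 5$ with $i_1=2$ (taking $p=N$), and item~(3) for even $p\geq 10$ with $(g,p)\neq(2,12)$ with $i_1=2$ (taking $p=N/2$ odd, or $p=N$ in the $4\mid N$ case, matching the level arithmetic of Proposition~\ref{factori}(2)). One must only check that the image of $\iota_*(\pi_1(\Sigma_g))$ surjects onto that free-group-containing subgroup; this follows because the point-pushing subgroup of $\Mod(\Sigma_g^1)$ is exactly $\pi_1(\Sigma_g)$ by the Birman exact sequence, so its image under $\rho_{N,(i_1)}$ coincides with $\rho_{N,(i_1)}(\pi_1(\Sigma))$.

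Then I would translate the numerical hypotheses of the theorem into the hypotheses of Proposition~\ref{inffib}. The three cases listed --- $N\geq 5$ odd; $N$ even with $N/2\geq 5$ odd; $N\equiv 0\pmod 4$, $N\geq 20$, $(g,N)\neq(2,24)$ --- correspond respectively to: applying the odd case with $p=N$; applying the even case with $p=N/2$ (which is odd $\geq 5$, but the relevant representation still lives over $\overline{\mathcal{M}_g^{1\;an}}[N,N/2]$ since $2p=N$, $p/{\rm g.c.d.}(4,p)=p=N/2$), and here one checks that the color-$2$ Dehn twist order makes $\rho_{N/2,(2)}$ factor as needed and then invokes Proposition~\ref{infty-closed}/\ref{inffib} at level $N/2$; applying the even case with $p=N$, where $(2,24)$ is excluded to avoid the forbidden $(g,p)=(2,12)$ after... wait, rather: $p=N/2$ here, so $(2,24)$ gives $(g,p)=(2,12)$, the excluded case. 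I would spell out each of these bookkeeping correspondences carefully.

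The main obstacle I expect is precisely this bookkeeping: matching the level conventions $[p]$ versus $[2p,p/{\rm g.c.d.}(4,p)]$ versus $[N]$ and $[N,N/2]$ across Propositions~\ref{factori}, \ref{unifbk}, \ref{unifbkeven}, \ref{boka}, and the Birman-type diagram, and confirming in each of the three numerical regimes that the quantum representation genuinely descends to $\pi_1(\mathcal{S}_\pi[N])$ (not merely to some intermediate stack) and that its restriction to the fiber is the one controlled by Proposition~\ref{inffib}. The geometric input --- that $\mathcal{S}_\pi[N]\to\overline{\mathcal{M}_g^{1\;an}}[N]$ exists and is fiber-compatible --- is already in place from Section~5 and the lemma opening Section~6, so once the levels are aligned the rest is a formal diagram chase. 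A secondary point to handle is the even case $N\equiv 2\pmod 4$ with $N/2$ odd: one should confirm the representation at level $N/2$ (odd) is available via Proposition~\ref{factori}(1), and that the map $\mathcal{S}_\pi[N]\to\overline{\mathcal{M}_g^{1\;an}}[N/2]$ factors the one to $\overline{\mathcal{M}_g^{1\;an}}[N,N/2]$, so that no uniformizability of the even-level stack is needed for this theorem --- only the existence of the representation with infinite image on the fiber.
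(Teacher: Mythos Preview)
Your proposal is correct and follows essentially the same route as the paper: set $p=N$ when $N$ is odd and $p=N/2$ when $N$ is even, pull back $\rho_{p,(2)}$ along the natural map $\mathcal{S}_\pi[N]\to\Mgobaran[2p,\,p/{\rm g.c.d.}(4,p)]$ (which for odd $p$ factors further through $\Mgobaran[p]$), linearize via the adjoint, and invoke Propositions~\ref{factori} and~\ref{inffib}.

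One point to straighten out: the general fiber of $f_1[N]$ is $\Sigma_g$, not $\Sigma_{0,3}$ or $\Sigma_{1,1}$. The surface $\Sigma$ in Proposition~\ref{inffib} is an essential \emph{subsurface} of that $\Sigma_g$, so $\pi_1(\Sigma)\hookrightarrow\pi_1(\Sigma_g)$ and thus $\rho_{p,(2)}(\pi_1(\Sigma_g))\supseteq\rho_{p,(2)}(\pi_1(\Sigma))$ already contains the nonabelian free group; there is no need for (and no sense in) a surjection of $\pi_1(\Sigma_g)$ onto $\pi_1(\Sigma)$. With this correction your bookkeeping for the three numerical regimes is right, including the identification $(g,N)=(2,24)\leftrightarrow(g,p)=(2,12)$.
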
 
 \begin{proof} If $N$ is odd, define $p=N$. By construction there is map of stacks $i:\mathcal{S}_{\pi}[p] \to \Mgobaran[p]$ and we can 
 define $\rho=\rho_{p, (2)} \circ \iota_*$. The result is then an
 immediate consequence of Propositions \ref{factori} and \ref{inffib}. 
 
 If $N$ is even, define $p=\frac{N}{2}$. By construction there is map of stacks $i:\mathcal{S}_{\pi}[N] \to \Mgobaran[2p, p/g.c.d(p,4)]$ and we can 
 define $\rho=\rho_{p, (2)} \circ \iota_*$. The result is then an
 immediate consequence of Propositions \ref{factori} and \ref{inffib}. 
 \end{proof}

 \begin{coro}\label{holo}
Under the hypotheses of Theorem \ref{kosa},     $S(N)$
has a holomorphically convex infinite Galois covering space
  with the property that the codomain of its Cartan-Remmert reduction is $2$-dimensional. 
 \end{coro}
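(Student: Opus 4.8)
The plan is to feed the linear representation $\rho$ from Theorem \ref{kosa} into the Eyssidieux--Katzarkov--Pantev--Ramachandran machinery (in the reductive-surface form \cite{KR}, as the paper already announces), and then identify the Cartan--Remmert reduction. First I would pass to a finite \'etale cover: since $\mathcal{S}_{\pi}[N]$ is a smooth compact K\"ahler orbifold, uniformizable by Propositions \ref{unifbk} and \ref{unifbkeven}, we may choose a finite \'etale Galois cover $Y \to \mathcal{S}_{\pi}[N]$ with $Y$ a smooth projective surface; the representation $\rho$ restricts to a representation $\rho_Y$ of $\pi_1(Y)$, and since $\iota_*\pi_1(\Sigma_g')$ maps to a finite-index subgroup of the fiber group, $\rho_Y$ still has infinite image on a general fiber $F\subset Y$, containing a free nonabelian group. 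This is the covering whose holomorphic convexity we want; its relevant Galois covering space is the one attached to $\ker(\rho_Y)$ (or to the subgroup $H$ of Shafarevich type produced by the EKPR construction).

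Next I would invoke the reductive case of the Shafarevich conjecture for smooth projective surfaces: after replacing $\rho_Y$ by the composite with the adjoint representation (to make the target a linear algebraic group with trivial center, so that semisimplification does not change the kernel issue), \cite{KR} provides a holomorphically convex covering space $\widetilde{Y}^{\rho_Y}$ of $Y$ corresponding to $\ker(\rho_Y)$, equipped with a proper holomorphic map (the Cartan--Remmert reduction) $\mathrm{cr}\colon \widetilde{Y}^{\rho_Y} \to \Sigma_{\rho_Y}$ onto a Stein space. The point is then to compute $\dim_{\C}\Sigma_{\rho_Y}$. Since $\dim Y = 2$, the reduction dimension is $0$, $1$, or $2$. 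It is not $0$: that would force $\widetilde{Y}^{\rho_Y}$ to be compact, hence $\ker(\rho_Y)$ finite, contradicting the infiniteness of $\rho_Y(\pi_1(Y))$... wait, $0$ would force the cover to be compact only if $\ker\rho_Y$ has finite index, which is exactly the statement that $\rho_Y(\pi_1(Y))$ is infinite — so dimension $0$ is excluded. The key remaining step is to rule out dimension $1$: if $\dim\Sigma_{\rho_Y} = 1$, then $\mathrm{cr}$ would be a proper map with $1$-dimensional fibers onto a Stein curve, so the image of $\pi_1(F)$ in $\rho_Y(\pi_1(Y))$ would be (virtually) trivial — i.e. $\rho_Y\circ\iota_*$ would have finite image — contradicting the ``free group on two generators'' clause of Theorem \ref{kosa}. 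Hence $\dim\Sigma_{\rho_Y} = 2$, which is exactly the assertion.

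I expect the main obstacle to be the orbifold/stack bookkeeping when $N$ is even and $\mathcal{S}_{\pi}[N]$ (equivalently $S(N)$) is only a normal complex projective surface with quotient singularities rather than a smooth manifold: one must either work with the smooth uniformizing cover $Y$ throughout and then descend the holomorphic convexity statement to the stated covering space of $S(N)$ via the finite group action (using that a normal complex space with a finite group action is Stein iff its quotient is, as recalled after Theorem \ref{bogo-katza}), or apply a stacky version of \cite{KR}. I would handle this by the first route: prove everything for the smooth projective $Y$, then note that the covering space of $S(N)$ we claim to be holomorphically convex is precisely $\widetilde{Y}^{\rho_Y}$ modulo the residual finite group, and that its Cartan--Remmert reduction is the quotient of $\Sigma_{\rho_Y}$ by the induced (finite, hence Stein-quotient-preserving) action, still $2$-dimensional since the action is by biholomorphisms of a $2$-dimensional Stein space. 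The minor arithmetic of checking that the numerical hypotheses on $N$ in Theorem \ref{kosa} are exactly those needed (oddness, $N/2$ constraints, $N\equiv 0\ (\mathrm{mod}\ 4)$ with $N\ge 20$, $(g,N)\neq(2,24)$) carries over verbatim, since Corollary \ref{holo} is stated ``under the hypotheses of Theorem \ref{kosa}''.
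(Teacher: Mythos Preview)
Your argument for ruling out a $1$-dimensional Cartan--Remmert target is incomplete. You assert that if $\dim\Sigma_{\rho_Y}=1$ then $\rho_Y(\pi_1(F))$ must be finite, but this only follows if the general fiber $F$ of the inherited fibration $Y\to C_Y$ coincides with a fiber of the Shafarevich fibration $Y\to Sh_{\rho_Y}(Y)$. Nothing you have said forces these two fibrations to agree. If they are distinct, a general $F$ dominates the curve $Sh_{\rho_Y}(Y)$, and then $\rho_Y(\pi_1(F))$ can perfectly well be infinite (even of finite index in $\rho_Y(\pi_1(Y))$) without any contradiction. Concretely, think of $Y=F\times G$ with $\rho_Y$ pulled back from $\pi_1(F)$: the Shafarevich map is projection to $F$, its fibers are the $\{f\}\times G$, yet $\rho_Y$ is infinite on each $F\times\{g\}$. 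You have no mechanism to exclude this picture for the single TQFT representation $\rho$, whose behaviour on horizontal curves you never examine.

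The paper closes exactly this gap by bringing in a \emph{second} representation, namely one pulled back from the hyperbolic orbicurve $\mathcal{C}[N]$ at the base of $\pi$. Any positive-dimensional $Z\subset S(N)$ through a general point either dominates the base (hence has infinite image under the base representation) or lies in a fiber of $\pi$ (hence, being the general fiber, has infinite image under the representation of Theorem~\ref{kosa}). Working with $\Gamma_M=\pi_1(S(N))/H_M$ for $M$ at least the maximum of the two degrees then forces the Shafarevich morphism to be generically finite, giving the $2$-dimensional Cartan--Remmert target. Your outline becomes correct once you replace $\rho_Y$ by the direct sum of $\rho$ with such a base representation (or, equivalently, pass to the $H_M$-covering as in \cite{EKPR}); the orbifold bookkeeping you sketch for even $N$ is then fine.
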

 
 \begin{proof} Recall that 
 the Cartan-Remmert reduction of a holomorphically convex normal complex space  $T$ is the unique proper holomorphic map $T\to U$
 such that $U$ is a normal Stein space. 
 
 The main theorem in \cite{EKPR} states that, given a complex projective manifold $X$ and  $M\in \N$, there is a Cartan-Remmert reduction:
 \[ \widetilde{s_M}: H_M\backslash   \widetilde{X^{univ}} \to  \widetilde{U(M)} \quad
 \mathrm{with} \ H_M =\bigcap_{\rho: \pi_1(X) \to GL_M(\C)} \ker(\rho) .\]
Since $\Gamma_M=\pi_1(X)/H_M$ acts in a properly discontinuously, this map descends to a proper holomorphic map, the $GL_M$-th Shafarevich morphism of $X$: 
 
\[Sh_M: X \to Sh_M(X)= \Gamma_M \backslash \widetilde{U(M)}.\]
 The fibers of $\widetilde{s_M}$ are the connected components of the lifts of the fibers of $Sh_M$ which are characterized as the maximal connected subvarieties  $Z \subset X$
 such that $\mathrm{Im}(\pi_1(Z)) \to \Gamma_M)$ is finite. In particular, when all positive dimension irreducible  subvarieties $Z$ satisfy: 
 \[\mathrm{Im}(\pi_1(Z^{norm})) \to \Gamma_M) \ 
 \mathrm{is\  infinite},\] where $Z^{norm}\to Z$ is the normalization,    then $\widetilde{s_M}=\mathrm{Id}$ and $\widetilde{U(M)}$ is Stein, 
 and when all positive dimensional irreducible  subvarieties $Z$ through the general point satisfy this property, then $\dim{\widetilde{U(M)}}=\dim(X)$.

 In the situation of the corollary,  $\pi_1(S(N))$  has a finite degree linear representation with infinite image that comes from the base of the fibration $\pi$ whose image is  a non parabolic curve. Assume that its degree is $M_0$.   
Therefore, whenever $Z$ is a connected  closed subspace
such that all linear representations of $\pi_1(S(N))$ have  finite images in $\Gamma_M$ for $M\ge M_0$ upon restriction to $\pi_1(Z)$, then $Z$ is contained in the fibers of $\pi$.  Then Theorem \ref{kosa}  gives a finite degree  representation, say of degree $M_1$,  which has infinite image on  the general fiber of $\pi$. Hence, if  $\pi_1(Z) \to \Gamma_M$ has finite image for $M\ge \max(M_0,M_1)$, 
then  $Z$ lies in a special fiber of $\pi$. This implies the corollary. 
  
 \end{proof}

\begin{theo}\label{shaf} 
Assume that  $g\geq 2$ and $N\not\in\{1, 2, 3, 4, 6, 8, 12, 16, 20\}$ and   
$(g,N)\neq (2,24)$. Then 
the universal covering space of $S(N)$ is Stein. 
\end{theo}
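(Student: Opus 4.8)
The plan is to upgrade the holomorphic convexity obtained in Corollary~\ref{holo} to Steinness by showing that the Cartan--Remmert reduction appearing there contracts no compact curve. Throughout I set $p=N$ if $N$ is odd and $p=N/2$ if $N$ is even; the hypotheses on $N$, together with $(g,N)\ne(2,24)$, are exactly the ones under which Theorem~\ref{kosa} and Proposition~\ref{inffib} are available, the excluded values of $N$ being those at which $\rho_p$ degenerates on the relevant subsurfaces (Proposition~\ref{infty-closed} and Wright's finiteness results). First I would recall, via \cite{EKPR} (only the reductive surface case \cite{KR} is needed) applied to a smooth projective model of $S(N)$ and to the representation of $\pi_1(S(N))$ obtained by composing $\rho_{p,(2)}$ with the morphism $S(N)\to\Mgobaran[p]$ (resp.\ $S(N)\to\Mgobaran[2p,\,p/{\rm g.c.d.}(p,4)]$) of Theorem~\ref{kosa}, together with enough auxiliary linear representations, that the universal covering space $\widetilde{S(N)}$ is holomorphically convex and that the codomain $\widetilde U$ of its Cartan--Remmert reduction $\widetilde s$ is two-dimensional. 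Since a holomorphically convex normal complex space with no positive-dimensional compact analytic subset is Stein and $\dim S(N)=2$, it then suffices to show that $\widetilde s$ contracts nothing, i.e.\ that no compact irreducible curve $Z\subset S(N)$ has finite image under $\pi_1(Z^{\mathrm{norm}})\to\Gamma_M$ for $M\gg0$ (equivalently, finite image in $\pi_1(S(N))$).

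The core of the argument will be a sweep over the fibres of the fibration $\pi_N\colon S(N)\to C_N$ of Proposition~\ref{boka}, where $g(C_N)\ge1$ and $\rho$ restricts on $\pi_1(C_N)$ to a representation with infinite image. If $Z$ dominates $C_N$, the induced non-constant map $Z^{\mathrm{norm}}\to C_N$ to a curve of positive genus already forces $\pi_1(Z^{\mathrm{norm}})\to\pi_1(C_N)$ to have infinite image, hence infinite image in $\Gamma_M$. If $Z$ lies in a generic fibre, then $Z$ is that whole fibre $F$, a finite \'etale cover of the generic fibre $\Sigma_g$ of $\mathcal{S}_\pi[N]\to\mathcal{C}[N]$; by Theorem~\ref{kosa}, $\rho_{p,(2)}$ has infinite image (it even contains a nonabelian free subgroup) on $\pi_1(\Sigma_g)$, hence on the finite-index subgroup $\pi_1(F)$, so $F$ has infinite image in $\Gamma_M$. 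The remaining, and hardest, case is that $Z$ lies in a degenerate fibre.

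For that case I would first describe the degenerate fibres of $\pi_N$ using the local normal forms ($\pi$ locally $z=xy$ at a node, $C_N\to C$ locally $z=t^N$, and $\mathcal{S}_\pi[N]\simeq S[N,\Sigma]$ as in the proof of Proposition~\ref{boka}): each degenerate fibre is a reduced nodal curve whose irreducible components $\widetilde R$ are connected coverings of the normalisations $R^{\mathrm{norm}}$ of the components $R$ of the stable curve $S_q$, totally ramified of order $p$ (or $N$, according to the type of node) over the nodes. Stability of $S_q$ gives $2g(R^{\mathrm{norm}})-2+n_R>0$, with $n_R$ the number of nodes on $R$, and Riemann--Hurwitz then yields $g(\widetilde R)\ge1$ as soon as all these ramification orders are $\ge3$, which holds throughout our range of $N$ (the only root orbifolds with spherical, finite $\pi_1^{\mathrm{orb}}$ occurring when some order equals $2$, i.e.\ for $N\in\{2,4\}$); in particular there are no rational curves in the degenerate fibres. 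To promote this to ``infinite image in $\Gamma_M$'' I would use the functoriality of the TQFT representations under restriction to essential subsurfaces: $R$ corresponds to an essential $\Sigma_R\cong\Sigma_{g(R^{\mathrm{norm}}),\,n_R}\subset\Sigma_g$ cut off by part of the vanishing cycles, the node-boundary components being coloured by a $j$ for which the corresponding space of conformal blocks is non-zero, and $\rho_{p,(2)}|_{\pi_1(\widetilde R)}$ is then, up to passing to a finite-index subgroup, a representation of $\pi_1(\Sigma_R)$, which by Proposition~\ref{inffib} (the cases $\Sigma_R\cong\Sigma_{0,3},\Sigma_{1,1}$, to which the general case reduces by further pinching) and the order computations of Lemmas~\ref{lemma:orderodd}--\ref{lemma:ordereven} (the twists along the node curves act with order $\ge2$, so the restricted representation does not collapse) has infinite image whenever $N$ is in the stated range. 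Since a positive-dimensional connected $Z$ inside a degenerate fibre always contains a whole component $\widetilde R$ and $\widetilde R^{\mathrm{norm}}$ is a connected component of $Z^{\mathrm{norm}}$, such a $Z$ also has infinite image; this finishes the case analysis, so $\widetilde s$ is an isomorphism and $\widetilde{S(N)}$ is Stein (when $S(N)$ is only an orbisurface the same argument applies to its moduli space).

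The step I expect to be the main obstacle is the treatment of the degenerate fibres: one must both pin down the topological type of their components after the base change and the uniformization, and then transport the infiniteness statements for quantum representations from the closed surface $\Sigma_g$ to these lower-complexity (possibly genus-zero) subsurfaces. It is precisely here that the exceptional values $N\in\{1,2,3,4,6,8,12,16,20\}$ and $(g,N)=(2,24)$ have to be discarded, since for them the relevant $\rho_p$ fails to have infinite image on the pieces in question (or even a rational curve can survive in a degenerate fibre, as for $N\in\{2,4\}$).
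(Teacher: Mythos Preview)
Your proposal is correct and takes essentially the same approach as the paper: reduce via \cite{EKPR}/\cite{KR} to showing that every irreducible component $Z$ of a special fiber has infinite image under $\rho_{p,(2)}$, then observe that $Z$ contains an essential $\Sigma_{0,3}$ or $\Sigma_{1,1}$ which deforms into the general fiber of $\pi$ and apply Proposition~\ref{inffib}. The Riemann--Hurwitz digression about absence of rational components is unnecessary (and its precise formulation in terms of the covering $S(N)\to S$ would require more care), since infiniteness of the image comes directly from Proposition~\ref{inffib} regardless of the genus of the component.
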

\begin{proof}
We will assume $N=p$ is odd in order to simplify the notation.
We claim that 
there is a complex representation $\rho$ of $\pi_1(\mathcal{S}_{\pi}[p])$ such that 
the image of $\pi_1(Z)$ is infinite, for  every irreducible component $Z$ of  the special fiber of the natural morphism to a curve  $ \psi:S(p)\to C'_p$ 
obtained as the Stein factorization of the  composition of $S(p) \to S \buildrel{\pi}\over\longrightarrow C$. We will show that the representation $\rho_{p, (2)}$ is convenient. 

Since the fibres of $\pi$ are stable, 
the complement of  the singular set of  each such component $Z$ contains as a Zariski dense open subset a finite \'etale covering of  the complement of the punctures in a stable curve with $n>0$ marked points.
In particular it contains an essential subsurface $\Sigma$ with boundary which is  homeomorphic 
to a pair of pants $\Sigma_{0,3}$ or to a $\Sigma_{1,1}$.  This subsurface $\Sigma$  can be deformed to a
subsurface of the general fiber of $\pi$ since $\pi$ 
is smooth near $\Sigma$. In particular it can be identified with a subsurface of a fiber of $f_1$. 
Then we can compute  $\rho(\pi_1(Z))$ by using the restriction of $\rho_{p,(i_1)}$ 
to the fundamental group $\pi_1(\Sigma_g)$ of the general fiber.

Since $Z$ contains a finite \'etale cover of the $\Sigma$, the representation $\rho$ has infinite image on the fundamental group of this  subsurface and hence on the fundamental group of $Z$, by Propositions \ref{factori} and \ref{inffib}. 

Hence the Shafarevich morphism of $S(p)$ used in the proof of Corollary \ref{holo} cannot contract any component of the preimage of $Z$ in $S(p)$. Since the universal covering space of a 
Stein manifold is Stein, this concludes the proof.  

To do the proof in the even case, one has to use the fact that if $X, Y$ are normal spaces, $Y$ being Stein, $X\to Y$ is a holomorphic map has the property that the preimage of 
a small open subset $\Omega\subset Y$ is a disjoint union of proper finite ramified
covering spaces of $\Omega$, $X$ is Stein (\cite{LeB}).

In fact \cite{EKPR} is not needed here, the earlier result \cite{KR}  allows to conclude from the semisimplicity of $\rho_{p, (i_1)}$. 

\end{proof}

\subsection{ The stack $\mathcal{S}_{\pi}[2p,\frac{p}{{\rm g.c.d.}(4,p)}]$} 
In the proofs above we can use the quantum representations $\rho_p$, for even $p\geq 10$, to show that 
$\mathcal{S}_{\pi}[2p,\frac{p}{{\rm g.c.d.}(4,p)}]$  has an holomorphically convex universal covering 
which is Stein.

 \subsection{Unstable families of curves and end of proof of 
 Theorem \ref{bogo-katza}}

If $\pi:S\to C$ is not supposed to be stable but has only nodal singularities on its fibers as in \cite{BK} then any maximal chain $\Gamma$ of 
unstable components is either a $(-1)$-rational curve or a chain of  $n_{\Gamma}$ $(-2)$-rational curves for some $n_{\Gamma}\in \N^*$. It can be blown down and one gets a fibered 
surface $\pi: S^*\to C$ with stable singular fibers. Each singular point $s\in S^{*, sing}$  is an $A_{n(s)}$-singularity for some $n(s) \in \mathbb{N}^*$
and a node of singular fiber it belongs to. 

One can do the same construction as in \cite{BK} for $S$. Let $D$ be the sum of the singular fibers, each component being counted with multiplicity one, so that $\pi^* B=D$ where $B\subset C$ is the set of singular values of $\pi$. 

However, if there is a $(-1)$-rational curve, 
$\mathcal{S}_{\pi} [N]$ need not be uniformizable. This situation occurs when we blow up some smooth points of the singular fiber of a smooth stable fibration $\pi: S^* \to C$. 
The orbisurface $S(N)$ is then the canonical stack of a weighted blow up of $S^*(N)$ of weights $(1,N)$ hence has a $A_{N-1}$ singularity.
Thus, without loss of generality we can assume  $S$ to be minimal which rules out these $(-1)$-rational curves. 

Then,  we have a 
commutative diagram: 
\[
\xymatrix{ S\ar[r] \ar[rd] & S^*\ar[d]_{\pi}\ar[r] & \Mgobar \ar[d]^{f_1}\ar@{=>}[ld]\\
 &C\ar[r]^{\psi} & \Mgbar}
\]
 
The square is cartesian, i.e.: $S^*\simeq C\times_{\psi,f_1} \Mgobar$,   $\pi$ being equivalent to $\psi^* f_1$. Denote by $D^*$ the sum of the singular fibers of $\pi: S^* \to C$. 
The stack $S^*(\sqrt[N]{D^*})$ has a quotient singularity of type $pn(s)$ over $s\in  S^{*, sing}$
and we may set $\mathcal{S}^*_{\pi}[N]:=S(\sqrt[N]{D^*})^{can}$. It is not clear whether $\mathcal{S}^*_{\pi}[N]$ is uniformizable. The root stack being a natural construction,  we have a map of stacks 
$(S^*\setminus S^{*,sing} )\sqrt[N]{D}\to\overline{\mathcal{M}_g^{1 \ 0}}[N]$ with the notation as in Remark \ref{codim2calculation}. It induces a morphism on the fundamental groups and 
since $\pi_1((S^*\setminus S^{*,sing})(\sqrt[N]{D^*})) \simeq \pi_1(\mathcal{S}^*_{\pi}[N])$ and $\pi_1(\overline{\mathcal{M}_g^{1 \ 0}}[N]) \simeq \pi_1(\overline{\mathcal{M}_g^{1}}[N])$ a group morphism
$\pi_1 (\mathcal{S}^*_{\pi}[N]) \to \pi_1({\overline{\Mgonean}} [N]) $ factorizing the natural morphism
$\pi_1(S^*\setminus D^*) \to \pi_1({\overline{\Mgonean}} [N]) $.  Consider now the normal surface $S^{*}_{\pi}[N]'$ constructed at the end of section 5. 
The quantum representations give rise to representations of $\pi_1 (S^{*}_{\pi}[N]')$ (and of $\pi_1(\widehat{S^{*}_{\pi}[N]'}) $ ) and the proof of Theorem \ref{shaf} applies to the effect that
$S^{*}_{\pi}[N]'$ has a Stein universal covering space and $\widehat{S^{*}_{\pi}[N]'}$ has a holomorphically convex universal covering space which resolves the singularities of 
$\widetilde{S^{*}_{\pi}[N]'}^{univ}$. 

The same construction gives actually a group morphism $\pi_1( \mathcal{S}_{\pi}[N] ) \to \pi_1({\overline{\Mgonean}} [N])$ factorizing the natural morphism
$\pi_1(S\setminus D) \to \pi_1({\overline{\Mgonean}} [N]) $. It descends to a map of fundamental groups $\pi_1(\widehat{S(N)})= \pi_1(S(N)) \to \pi_1({\overline{\Mgonean}} [N]) $. Then, we can 
construct the $GL_N$ Shafarevich morphism, which contracts exactly the connected components of the preimage of the chain of $(-2)$ rational curves by the natural map to $S$, which have finite fundamental groups. 
This is enough to imply the holomorphic convexity of the univeral covering spaces of $\widehat{S(N)}$ and $ S(N)$. 

\subsection{Concluding remarks}

If $\pi:S\to C$ is an irrational pencil of curves (resp. a rational pencil) on a smooth projective surface and has more complicated singularities, such as non reduced components of the singular fibers, 
we can use the semistable reduction theorem \cite{DMu}, 
to find $C'\to C$ a ramified covering, which ramifies at  the unstable singular fibers 
(resp. one may have to introduce a further ramification point on the smooth locus)  to get a normal projective surface $S'$ birational to $S\times_C C'$ 
with an irrational  pencil (resp. a general pencil of curves) with only stable curves as scheme theoretic singular fibers and one is reduced to the previous situation.

So in order to find a   counterexample to the Shafarevich conjecture along the lines of \cite{BK}, the only remaining possibilities with a base change having a common
ramification index, require a ramification index  $N \in \{1, 2,3,4,6,8,12, 16,20,24\}$ ($N=1$ corresponds to no base change at all) or allowing a fibration with non-reduced fibers.

Another
possibility is to perform a
fiber product with $\Mgobaran[\mathbf{k}]$  with general weight vector $\mathbf{k}$. The fundamental groups of 
the $\Mgobaran[\mathbf{k}]$  with general weight vector $\mathbf{k}$ do not seem to be easy to study.
Let $\Pi$ be the fundamental group of a hyperbolic surface (possibly punctured). 
Let $\{\gamma_i\}$ be a set of isotopy classes of simple closed curves whose orbits 
under the action of the mapping class group are pairwise disjoint.   
Then, Wise and Bajpai proved in \cite{Baj,Wise} that there is some $d\geq 1$ such that for every positive integral vector 
$\mathbf{k}=(k_i)$ there exists some finite quotient $\Pi\to \Pi/K$ in which images of  
the classes $\gamma_i$ have orders $dk_i$, respectively. 
We don't know whether the kernel $K$ might be chosen to be invariant with respect to the 
mapping class group action. If it could and additionally $K$ satisfies  
Boggi's condition from \cite[Thm. 3.9]{Boggi} (for example, if $K=[L,L]L^{\ell}$, for some $\ell\geq 3$, where $L$ is invariant subgroup of $\Pi$) then 
the compactified Deligne-Mumford stack of curves endowed with 
Looijenga's levels associated to $K$ and $m\geq 2$ would be smooth 
and representable. This further implies that  $\Mgbaran[md \mathbf{k}]$ is uniformisable.

In a similar vein, one may as in \cite{BK} start with a stable fibration $\pi:S \to C$ with reduced fibers and try and study the fundamental groups of the canonical stacks 
obtained by assigning different multiplicities to the fibers. We don't known how to decide whether the fiber group is finite or infinite 
when the multiplicities are coprime.

\end{document}